\documentclass[a4paper, 10pt, twoside, notitlepage]{amsart}
\setcounter{tocdepth}{1}
\usepackage[utf8]{inputenc}
\usepackage{color}
\usepackage{amsmath} 
\usepackage{amssymb} 
\usepackage{amsthm}
\usepackage{graphicx}
\usepackage{esint}
\usepackage[colorlinks=true,linkcolor=blue]{hyperref}
\usepackage[export]{adjustbox}
\usepackage{thmtools}

\usepackage{enumitem}

\theoremstyle{plain}
\newtheorem{thm}{Theorem}
\newtheorem{prop}{Proposition}[section]

\newtheorem{lem}[prop]{Lemma}

\newtheorem{defi}[prop]{Definition}
\newtheorem{rmk}[prop]{Remark}

\newtheorem{alg}[prop]{Algorithm}
\newtheorem{claim}{Claim}

\newcommand {\R} {\mathbb{R}} 
 \newcommand {\N} {\mathbb{N}}

\newcommand {\diam} {\text{diam}}

\DeclareMathOperator{\diag}{diag}
\DeclareMathOperator{\tr}{tr}

\DeclareMathOperator {\dist} {dist}

\DeclareMathOperator {\intconv} {intconv}
\DeclareMathOperator {\conv}{conv}
\DeclareMathOperator {\inte} {int}

\DeclareMathOperator {\Skew} {Skew}

\DeclareMathOperator {\rank} {rank}
\DeclareMathOperator {\Per} {Per}
\DeclareMathOperator {\sgn} {sgn}

\pagestyle{headings}

\begin{document}

\title[Higher Regularity of Convex Integration Solutions]{Higher Sobolev Regularity of Convex Integration Solutions in Elasticity: The Dirichlet Problem with Affine Data in $\inte(K^{lc})$}

\author{Angkana R\"uland }
\author{Christian Zillinger}
\author{Barbara Zwicknagl}

\address{
Mathematical Institute of the University of Oxford, Andrew Wiles Building, Radcliffe Observatory Quarter, Woodstock Road, OX2 6GG Oxford, United Kingdom }
\email{ruland@maths.ox.ac.uk}

\address{
Department of Mathematics,
University of Southern California,
Los Angeles, CA 90089-2532, US}
\email{zillinge@usc.edu}

\address{
Institut f\"ur Mathematik,
Universit\"at W\"urzburg,
Emil-Fischer-Stra{\ss}e 40,
97074 W\"urzburg Germany  }
\email{barbara.zwicknagl@mathematik.uni-wuerzburg.de}

\begin{abstract}
In this article we continue our study of higher Sobolev regularity of flexible convex integration solutions to differential inclusions arising from applications in materials sciences. We present a general framework yielding higher Sobolev regularity for Dirichlet problems with affine data in $\inte(K^{lc})$. This allows us to simultaneously deal with linear and nonlinear differential inclusion problems. We show that the derived higher integrability and differentiability exponent has a lower bound, which is independent of the position of the Dirichlet boundary data in $\inte(K^{lc})$. 
As applications we discuss the regularity of weak isometric immersions in two and three dimensions as well as the differential inclusion problem for the geometrically linear hexagonal-to-rhombic and the cubic-to-orthorhombic phase transformations occurring in shape memory alloys.  
\end{abstract}

\thanks{
A.R. acknowledges a Junior Research Fellowship at Christ Church.
B.Z. acknowledges support from the DFG through CRC 1060 ``The mathematics of emergent effects''.}

\maketitle
\tableofcontents

\section{Introduction}
\label{sec:intro}

In this article, we continue our investigation of higher regularity properties of convex integration solutions, which was started in \cite{RZZ16}. We hence analyse regularity properties on a \emph{Sobolev scale} of solutions to \emph{$m$-well problems}, which are motivated by materials sciences, in particular by shape-memory alloys.\\ 

Shape-memory alloys are materials which undergo a first order diffusionless solid-solid phase transformation in which the underlying crystalline lattice loses some of its symmetries. Here, in general, the \emph{austenite}, which is the high temperature phase, has many symmetries, whiles the low temperature phase, the \emph{martensite}, loses some of these. As a consequence of this loss of symmetry several variants of martensite coexist at temperatures below the critical transformation temperature $\theta_c$. \\

Often these materials are modelled in the framework of the \emph{phenomenological theory of martensite} \cite{B3}, which adopts a variational point of view. Here energy functionals of the type
\begin{align}
\label{eq:min}
\mathcal{E}(\nabla u, \theta):=\int\limits_{\Omega} W(\nabla u, \theta) dx
\end{align}
are minimized (e.g. subject to certain boundary conditions), where $\Omega \subset \R^{3}$ corresponds to the \emph{reference configuration} (which is often chosen to be the undeformed austenite configuration at a fixed temperature), $u:\Omega \rightarrow \R^3$ denotes the \emph{deformation} (and $\nabla u$ the associated \emph{deformation gradient}) and $\theta: \Omega \rightarrow \R$ is the \emph{temperature}. The \emph{stored energy function} $W$ reflects the symmetry properties of the material under consideration. In particular, at high temperatures, it has a single energy minimum corresponding to the austenite phase, while at low temperatures it has various energetically equivalent minima corresponding to the variants of martensite. As the energies have to be invariant with respect to the material symmetries and in addition have to satisfy the requirement of frame indifference, the minimization of energies as in \eqref{eq:min} in general leads to very complex non-quasiconvex problems. \\

Hence instead of analysing \eqref{eq:min}, we in the sequel fix a temperature $\theta<\theta_c$ and consider only \emph{exactly stress-free} deformations $u$, i.e. deformations such that
\begin{align*}
\nabla u(x) \in K \mbox{ for a.e. } x \in \Omega,
\end{align*}
where $K=K(\theta)$ denotes the absolute minima of $W$ at temperature $\theta$. In the setting of transformations in shape-memory alloys this leads to an \emph{$m$-well problem}
\begin{align}
\label{eq:incl_nonlin}
\nabla u(x) \in \bigcup\limits_{j=1}^m SO(3)U_j \mbox{ for a.e. } x \in \Omega,
\end{align}
where $U_j=U_j^t$ are positive definite matrices modelling the variants of martensite. The $SO(3)$-invariance is a result of frame indifference.\\

Due to the nonlinear structure of $SO(3)$, it is often convenient to carry out a further simplification step and to linearise the problem around the identity. This then leads to a \emph{geometrically linearised $m$-well problem} for the displacement, which reads
\begin{align}
\label{eq:incl_lin}
\nabla u \in \{e^{(1)}, \dots, e^{(m)}\} + \Skew(3)
\end{align}
for $e^{(1)},\dots,e^{(m)} \in \R^{3 \times 3}_{sym}$.
Here due to its linear structure, it is often easier to handle $\Skew(3)$
invariance (which is the linearisation of $SO(3)$) than to work with the full
$SO(3)$ symmetry.\\

Considering the Dirichlet problem for the differential inclusions  \eqref{eq:incl_nonlin}, \eqref{eq:incl_lin} within a suitable class of (for instance) affine boundary conditions results in interesting behaviour: If the matrix space geometry of the set $K$ is suitable (in the sense that the lamination or rank-one convex hulls $K^{lc}$ or $K^{rc}$ are sufficiently large), it can be shown by means of convex integration \cite{MS} or the Baire category approach \cite{DaM12} that there is a very large set of solutions with very strong non-uniqueness properties. In a Baire category sense, the solution set is residual in a suitable topology (for instance in the space $W^{1,\infty}$ equipped with the $L^{\infty}$ topology). In this sense the differential inclusions are very \emph{flexible}. 

Since the inclusion problems \eqref{eq:incl_nonlin}, \eqref{eq:incl_lin} however have physical origins, it is a natural question whether all of these solutions are really the \emph{physically significant} ones or whether they are only \emph{mathematical artefacts}. In this context, it is known for specific problems \cite{DM1, K1, R16}, that for certain inclusion problems surface energy constraints, which mathematically correspond to regularity assumptions on $\nabla u$ rule out many of these multiple ``wild" solutions and the problems \eqref{eq:incl_nonlin}, \eqref{eq:incl_lin} become very \emph{rigid}. In fact only very few solutions exist for the problems studied in \cite{DM1, DM2, K1, R16} under $BV$ (or $BV$ type) constraints on $\nabla u$. Hence, a strong dichotomy between the rigid (for $\nabla u \in BV$) and the flexible (for $\nabla u \in L^{\infty}$) behaviour is present.

In this article we seek to study the described dichotomy further by investigating the regularity properties of convex integration solutions, showing that the flexible regime also exists on a Sobolev $W^{s,p}$ scale beyond the mere $\nabla u \in L^{\infty}$ bounds. This approach can be viewed complementary to the studies of rigidity of laminates or branching structures, or to quantitative rigidity estimates \cite{CO,CO1,ChermisiConti2010,JerrardLorent2013,TS}.\\

\subsection{The main result}
In studying higher Sobolev regularity properties in the flexible regime, we simultaneously consider both \emph{geometrically linearised} and \emph{nonlinear $m$-well problems}, if their underlying matrix space geometries are sufficiently ``simple", thus allowing us to focus on the analytical aspects of the problem. In this context we are in particular interested in the following three model problems:
\begin{itemize}
\item[(a)] \emph{Weak isometric immersions.} We consider the inclusion problem
\begin{align*}
\nabla u(x) \in O(n) \mbox{ a.e. in } \Omega,
\end{align*}
where $n\in\{2,3\}$ and $\Omega \subset \R^n$ is an open, bounded Lipschitz set. Here Liouville's theorem ensures that solutions with $C^1$ regularity are rigid, while classical results of Gromov prove the flexibility of the differential inclusion for $\nabla u \in L^{\infty}$ \cite{G, DaM12, MS, KSS15, S}. If, however, 
\begin{itemize}
\item $n=2$,
\item or if $n=3$ and additionally zero Dirichlet data are assumed,
\end{itemize}
stronger results are available: An ``origami" convex integration scheme due to Dacorogna, Marcellini and Paolini \cite{DMP08a, DMP08b, DMP08c, DMP10} shows that it is possible to construct solutions which are in any $W^{s,p}$ with $s\in(0,1)$, $p\in[1,\infty)$ and $sp\in(0,1)$. Hence, in these cases the ``complete" dichotomy is understood. The only obstruction ruling out flexible convex integration solutions is the presence of trace estimates, which requires ``high" Sobolev regularity.
This lack of rigidity can be viewed as a consequence of the very flexible structure of $K$ and the presence of multiple rank-one connections.
\item[(b)] \emph{The hexagonal-to-rhombic phase transformation.} The hexagonal-to-rhombic phase transformation can be viewed as a model setting for a very flexible differential inclusion in the context of shape-memory alloys:
\begin{align*}
\nabla u &\in K_h:=\left\{ 
\begin{pmatrix}
1 & 0 \\
0& -1
\end{pmatrix},
 \frac{1}{2}\begin{pmatrix}
-1 & \sqrt{3} \\
\sqrt{3}& 1
\end{pmatrix},
\frac{1}{2}\begin{pmatrix}
-1 & -\sqrt{3}\\
-\sqrt{3}& 1
\end{pmatrix} \right\} \\
& \quad \quad \qquad+ \Skew(2) \mbox{ a.e. in } \Omega.
\end{align*}
Due to its flexible structure, in \cite{RZZ16} we studied the
hexagonal-to-rhombic phase transformation as a model problem (with physical
significance \cite{KK91}, \cite{CPL14}) and derived higher
order Sobolev regularity for a class of convex integration solutions. While the flexibility of the transformation makes it an interesting test case to study convex integration solutions, we remark that there is no known rigidity result complementing the flexible regime.
\item[(c)] \emph{The cubic-to-orthorhombic phase transformation.} The inclusion problem for the cubic-to-orthorhombic transformation is given by
\begin{align*}
\nabla u \in K_{co}:=\{e^{(1)},\dots, e^{(6)}\} + \Skew(3) \mbox{ a.e. in } \Omega, 
\end{align*}
with
\begin{equation*}
\begin{split}
&e^{(1)}:= \begin{pmatrix}
1 & \delta & 0\\
\delta& 1 & 0\\
0 & 0 & -2
\end{pmatrix},
e^{(2)}:= \begin{pmatrix}
1 & -\delta & 0\\
-\delta& 1 & 0\\
0 & 0 & -2
\end{pmatrix},
e^{(3)}:= \begin{pmatrix}
1 & 0 & \delta\\
0& -2 & 0\\
\delta & 0 & 1
\end{pmatrix},\\
&e^{(4)}:= \begin{pmatrix}
1 & 0 & -\delta\\
0& -2 & 0\\
-\delta & 0 & 1
\end{pmatrix},
e^{(5)}:= \begin{pmatrix}
-2 & 0 & 0\\
0& 1 & \delta\\
0 & \delta & 1
\end{pmatrix},
e^{(6)}:= \begin{pmatrix}
-2 & 0 & 0\\
0& 1 & -\delta\\
0 & -\delta & 1
\end{pmatrix}.
\end{split}
\end{equation*} 
In addition to being a full three-dimensional inclusion (which makes the matrix space geometry harder than in (ii)), the cubic-to-orthorhombic phase transformation is a good model problem, since, due to the results in \cite{R16}, it is known that this transformation displays a dichotomy between rigidity and flexibility. Hence, it is particularly interesting to study finer properties of the arising convex integration solutions to understand whether the presence of the dichotomy gives rise to ``hidden" regularity constraints for convex integration solutions. 
\end{itemize}

In the context of these model problems our main result can be formulated as follows:

\begin{thm}
\label{thm:reg}
Let $n\in\{2,3\}$ and let $\Omega \subset \R^n$ be a bounded Lipschitz domain. Consider
\begin{equation}
\label{eq:incl}
\begin{split}
&\nabla u \in K \mbox{ a.e. in } \Omega,\\
& \nabla u  = M \mbox{ in } \R^n \setminus \overline{\Omega},
\end{split}
\end{equation}
for $M \in \inte(K^{lc})$. Then, if 
\begin{itemize}
\item[(a)] $n=2$ and $K=O(2)$ or $K= K_{h}$, 
\item[(b)] or if $n=3$ and $K=O(3)$ or $K=K_{co}$, 
\end{itemize}
there exists $ \theta_0 \in (0,1)$, which depends on $n$ and $K$ but not on $M$, such that for all $s\in (0,1)$, $p\in (1,\infty)$ with $0<s p < \theta_0$ there exist solutions $u \in W^{1,\infty}_{\text{loc}}(\R^n)$ to \eqref{eq:incl} with $\nabla u \in W^{s,p}(\Omega)$.
\end{thm}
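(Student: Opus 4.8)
The plan is to prove this through an iterative convex integration scheme that tracks Sobolev norms quantitatively, following the strategy initiated in \cite{RZZ16} but now unified across the four model problems. First I would set up the abstract framework: starting from the affine map with gradient $M \in \inte(K^{lc})$, I construct a sequence of piecewise affine deformations $u_k$ with $\nabla u_k$ taking finitely many values in (a neighbourhood of) $K^{lc}$, converging uniformly to a limit $u$ with $\nabla u \in K$ a.e. The key geometric input — which must be verified separately for $O(2)$, $K_h$, $O(3)$ and $K_{co}$, and for which one invokes the known lamination-convexity structure recalled in the introduction — is that every $M' \in \inte(K^{lc})$ sufficiently close to the relevant stage can be written as a rank-one convex combination of matrices that are again in $\inte(K^{lc})$, with a uniform bound on the depth of the lamination and on the distance to the boundary. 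This uniformity is exactly what produces a $\theta_0$ independent of $M$.

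The central quantitative step is the single replacement construction: given a region where $\nabla u_k \equiv M'$, I subdivide it (using a covering by scaled copies of a fixed Vitali-type building block adapted to the rank-one direction) and define $u_{k+1}$ on it so that $\nabla u_{k+1}$ oscillates between the two (or finitely many, after iterating the simple laminate) matrices whose rank-one average is $M'$, connected by ``boundary layers'' on which the gradient moves through $\inte(K^{lc})$. The essential estimates are: (i) an $L^\infty$ bound $\|\nabla u_{k+1} - \nabla u_k\|_{L^\infty} \le C$ depending only on $\diam(K^{lc})$; (ii) a smallness estimate $\|u_{k+1}-u_k\|_{L^\infty} \le \lambda_k$ where $\lambda_k$ is the length scale of the $k$-th refinement, which I can choose as small as I like; and (iii) the crucial perimeter/$BV$-type bound $|D\nabla u_{k+1}|(\Omega) \le C_k$, where $C_k$ grows geometrically — roughly $C_k \sim C^k$ — because each stage introduces interfaces on the fine scale $\lambda_k$. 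Interpolating the bounds (i) (an $L^\infty$, hence $W^{0,p}$, bound on $\nabla u_{k+1}-\nabla u_k$) against the $BV$ bound (iii) via the standard interpolation inequality $\|f\|_{W^{s,p}} \lesssim \|f\|_{L^p}^{1-s}\|f\|_{W^{1,p}}^{s}$ (or its $BV$ analogue), one gets $\|\nabla u_{k+1}-\nabla u_k\|_{W^{s,p}(\Omega)} \le C^{(1-s)k}\cdot C^{sk} \cdot (\text{gain from choosing }\lambda_k)$. The point is that the $L^\infty$-smallness is \emph{free} — I control it by shrinking $\lambda_k$ — so by choosing the refinement scales to decay fast enough at each stage (doubly geometrically, say $\lambda_{k+1} = \lambda_k^{N}$) the product telescopes and $\sum_k \|\nabla u_{k+1}-\nabla u_k\|_{W^{s,p}} < \infty$ precisely when $sp < \theta_0$, with $\theta_0$ determined by the ratio of the geometric growth rate of the interface measure to the growth rate one can extract from the $L^\infty$ gain — i.e. $\theta_0$ depends on $C$ and hence on $n$ and $K$ but not on $M$.

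Assembling the pieces: the $W^{1,\infty}_{\mathrm{loc}}$ bound follows from (i) since $\nabla u_k$ stays in a fixed bounded set; uniform convergence of $u_k$ from (ii) gives the limit $u$; the inclusion $\nabla u \in K$ a.e. follows because the measure of the ``transition'' (bad) set is driven to zero and the gradient is pushed into $K$ on the good set in the limit (one fixes the Dirichlet datum by only modifying $u_k$ on compactly contained subsets, preserving $\nabla u_k = M$ on $\R^n\setminus\overline\Omega$ at every stage); and $\nabla u \in W^{s,p}(\Omega)$ follows from the summability of the increments in $W^{s,p}$ together with completeness of $W^{s,p}$. The main obstacle I anticipate is the bookkeeping in step (iii): one must show that the interface measure genuinely grows only geometrically in $k$ — this requires a careful design of the building block so that the boundary-layer contribution at scale $\lambda_k$ is controlled by the number of sub-cells times $\lambda_k^{n-1}$ times the layer thickness, and that refining a cell at the next stage does not amplify the \emph{previously} created interfaces (so the construction must be ``nested'' with the new modifications supported well inside the old good cells). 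Getting a single $\theta_0$ that works simultaneously for all four geometries then amounts to taking the worst constant $C$ among the four cases, which is legitimate precisely because in each case the relevant rank-one connections and hull distances are explicit and the constants are finite.
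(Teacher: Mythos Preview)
Your overall architecture --- an iterative convex integration with quantitative $BV$ bookkeeping and interpolation --- matches the paper's, but the interpolation step contains a genuine gap that makes the argument as written fail.

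You propose to interpolate the $L^{\infty}$ bound (i) on $\nabla u_{k+1}-\nabla u_k$ against the $BV$ bound (iii), and then claim that ``the $L^{\infty}$-smallness is free'' because you can shrink the refinement scale $\lambda_k$. But (i) only gives $\|\nabla u_{k+1}-\nabla u_k\|_{L^{\infty}}\le C$, which is \emph{not} small; the quantity that shrinks with $\lambda_k$ is $\|u_{k+1}-u_k\|_{L^{\infty}}$, and this does not enter any standard interpolation for $\|\nabla u_{k+1}-\nabla u_k\|_{W^{s,p}}$. Worse, $\lambda_k$ is not a free parameter: refining at scale $\lambda_k$ multiplies the interface measure by a factor of order $\lambda_k^{-1}$, so any gain you extract from $\lambda_k$ on one side is exactly cancelled by the extra $BV$ growth on the other. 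There is no competition to win by choosing $\lambda_k$ super-geometrically.

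The paper supplies the missing mechanism: it is \emph{$L^{1}$ decay of $\nabla u_{k+1}-\nabla u_k$}, not $L^{\infty}$ smallness of $u_{k+1}-u_k$, that is interpolated against the $BV$ growth (Proposition~\ref{prop:CDDD}). This $L^{1}$ decay does not come from a choice of scales but from the structure of the in-approximation: at step $k$ the gradient is already within distance $\sim c_2^{k}$ of $K$, and the replacement is designed so that on a set of volume fraction $\ge 1-Cc_2^{k}$ the new gradient differs from the old one by at most $Cc_2^{k}$ (condition~\ref{item:A4}). A Bernoulli-tree counting argument (Lemma~\ref{lem:L1_in}) then turns this into a geometric rate $\tilde c^{\,k}$ for $\|\nabla u_{k+1}-\nabla u_k\|_{L^{1}}$. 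The threshold $\theta_0$ is fixed by balancing this $\tilde c^{\,k}$ against the geometric $BV$ growth, and its independence of $M$ hinges on the two-tier in-approximation $(\tilde U_k^{j},U_k^{j})$ together with the covering constructions of Section~\ref{sec:covering}, which guarantee that the perimeter growth constant $C_1$ is uniform once the algorithm exits the (finitely many, $M$-dependent) initialization steps governed by $\tilde U_k^{j}$. Your outline gestures at uniformity but does not identify either of these two ingredients.
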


\begin{rmk}
\label{rmk:char}
We remark that a similar result is true on the level of the associated characteristic functions, c.f. Proposition \ref{prop:char_reg}.
\end{rmk}

Let us comment on the result of Theorem \ref{thm:reg}:
\begin{itemize}
\item Theorem \ref{thm:reg} improves the result in \cite{RZZ16} in various directions: Firstly, the regularity exponent $sp$ is \emph{independent} of the position of the Dirichlet boundary data $M\in \R^{n\times n}$ in $\inte(K^{lc})$. Secondly, our argument allows us to deal with both the linearised and the nonlinear problems simultaneously. Thirdly, Theorem \ref{thm:reg} extends the higher regularity result for convex integration solutions from two to three dimensions, which allows us to deal with the particularly interesting model setting of the cubic-to-orthorhombic phase transformation. 
\item The restriction in Theorem \ref{thm:reg} to the dimensions $n=2,3$ is a consequence of the fact that we only prove suitable covering results in two and three dimensions (see Section \ref{sec:covering}). Although we believe that these results remain valid in higher dimensions, we opted to avoid the associated difficulties and to restrict our attention to the physically relevant regimes of $n=2,3$. We stress that apart from the covering results in Section \ref{sec:covering} all other arguments of our analysis are valid in any dimension.
\item In the case of $K=O(2)$, or $K=O(3)$ and zero boundary conditions, our results are worse than the ones by Dacorogna, Marcellini and Paolini \cite{DMP08a,DMP08b,DMP08c,DMP10}. This is mainly due to the fact that our building block constructions are not optimally fitted to the specific problem at hand, but can be used for a larger class of problems. We expect that the results of Dacorogna, Marcellini and Paolini extend to the case of $O(n)$ with boundary data in $\intconv(O(n))$.
\item As already noted, our estimates on the Sobolev exponents are \emph{not} optimal. Showing that in the two-dimensional case the value $sp$ can be \emph{uniformly} chosen independently of the position in $\inte(K^{lc})$ however is a major improvement with respect to \cite{RZZ16}. We believe that in order to obtain qualitatively improved estimates one has to exploit the finer structure of the underlying specific problem (similarly as in the $O(n)$ cases).
\end{itemize}

We emphasize that our overall set-up is more general than the results explained in Theorem \ref{thm:reg} in the sense that we show higher Sobolev regularity of solutions to more general 
differential inclusions of the type \eqref{eq:incl}, if they obey several structural assumptions \ref{item:A1}-\ref{item:A4} which are discussed in Section
\ref{sec:assumptions}. These assumptions are such that they lead to a similar structure as the ones of the model problems from Theorem \ref{thm:reg}.\\

\subsection{Outline of the article}
The remainder of the article is organized as follows: In Section \ref{sec:assumptions} we formulate a collection of assumptions \ref{item:A1}-\ref{item:A4}. In Sections \ref{sec:convex_int}-\ref{sec:quant} we show that these conditions suffice to deduce higher regularity for solutions to \eqref{eq:incl}. To this end, relying on the assumptions \ref{item:A1}-\ref{item:A4}, we formulate and analyse a suitable convex integration algorithm in Section \ref{sec:convex_int} (Algorithm \ref{alg:convex_int}, Lemma \ref{lem:well-def}). In Section \ref{sec:quant} we then complement this with the suitable $L^1$ and $BV$ estimates (Lemmas \ref{lem:L1_in}, \ref{lem:BV_in}), which follow from the requirements in \ref{item:A1}-\ref{item:A4}. This allows us to conclude the existence of higher regularity convex integration solutions in the general framework outlined in Section \ref{sec:assumptions} (Proposition \ref{prop:char_reg}, Theorem \ref{thm:reg_gen}). In Sections \ref{sec:Ex}-\ref{sec:covering}, we discuss the examples (a)-(c) from above, which satisfy the assumptions \ref{item:A1}-\ref{item:A4} and which can hence be dealt with by means of the outlined quantitative convex integration scheme. Here the presentation is split into two main parts: In Section \ref{sec:Ex} we first show the validity of the assumptions \ref{item:A1}-\ref{item:A4} for specific diamond-shaped domains. Then in Section \ref{sec:covering} we extend this to more general domains by presenting several covering strategies in two and three dimensions.

\section{Assumptions on the Matrix Space Geometry of $K$}
\label{sec:assumptions}

In the following we introduce a collection of assumptions which we impose on the set $K
\subset \R^{n\times n}$ in order to be able to construct convex integration
solutions with higher Sobolev regularity.
Here, the typical sets $K$ which we have in mind are given by
\begin{align*}
  K_{n}=O(n)=SO(n)\cup SO(n)
  \diag(-1,1,\dots,1), \ K_{l}= \{e^{(1)},\dots,e^{(m)}\}+ \Skew(n),
\end{align*}
where $e^{(i)},e^{(j)}$ are pairwise symmetrised rank-one connected.
We note that the connected components of $K$ are given by orbits of suitable
representatives under the action of a group $G$, where $G$ may be bounded
($G=SO(n)$) or unbounded ($G=\Skew(n)$).
In this section we introduce a unified description of these and similar settings and
briefly recall fundamental notions such as lamination convexity.

\begin{defi}
\label{defi:lam_conv}
Let $K \subset \R^{n\times n}$. Then $K^{lc}:=\bigcup\limits_{l=0}^{\infty} R_l(K)$, where $R_0(K)=K$ and for $l\geq 1$ 
\begin{align*}
R_l(K)&:= \{M \in \R^{n\times n}: M= \lambda A + (1-\lambda) B \mbox{ for some } \lambda \in (0,1), \\
& \quad \quad \ \rank(A-B)=1, \ A,B \in R_{l-1}(K) \}
\end{align*}
are the laminates of order at most $l$.
\end{defi}

We recall that if $U \subset \R^{n\times n}$ is open, then also $U^{lc}$ is open and that $K^{lc} \subset \conv(K)$, where, in general, the inclusion is strict. In the examples in Section \ref{sec:Ex}, we however additionally also have the opposite inclusion, i.e. the
convex and lamination convex hulls of $K$ coincide. This implies that in all our applications, we will mainly concentrate on the analytical side of the convex integration scheme and do not have to focus on an underlying complicated geometry in matrix space (for instance when verifying the conditions \ref{item:A3tilde}-\ref{item:A4} below).\\

As in \cite{MS} the construction of our solutions to the differential inclusion \eqref{eq:incl} proceeds iteratively by solving auxiliary open inclusion problems, which approximate the inclusion \eqref{eq:incl} increasingly well. To this end, we use the notion of an in-approximation (in a slightly modified version with respect to \cite{MS}): 

\begin{defi}
\label{defi:in_approx}
Let $\{U_k\}_{k\in \N}$ be a sequence of open sets $U_k \subset \R^{n\times n}$. Then the sequence is an \emph{in-approximation of $K$} if the following conditions are satisfied:
\begin{itemize}
\item[(i)] we have the inclusion $U_k \subset U_{k+1}^{lc}$,
\item[(ii)] $U_k \rightarrow K$ in the sense that if $V_k \in U_k$ and $V_k \rightarrow V$, then $V \in K$.
\end{itemize}
\end{defi}

We emphasize that in contrast to \cite{MS}, we do not assume that the sets $U_k$ are necessarily bounded. This is motivated by the desire to deal with the geometrically nonlinear and linear settings simultaneously. In order to compensate this potential lack of compactness in matrix space, we however require boundedness in our construction (see Assumption (A5) below).

\subsection{Assumptions on the differential inclusion}
\label{sec:assume}
We next specify the assumptions which our differential inclusion \eqref{eq:incl} has to obey. They should be read as conditions on the set $K$.

First we require that $G \subset \R^{n\times n}$ is a (continuous Lie) group
acting on $K^{lc} \subset \R^{n \times n}$ through
\begin{align*}
h: K^{lc} \times G \rightarrow K^{lc}.
\end{align*}
In this context, for $g\in G$, $M\in K^{lc}$ we use the shorthand notation $g M= h(M,g) \in K^{lc}$. Also we write $GM$
to denote the orbit of $M \in K^{lc}$ under $h$.

We then impose the following conditions on $K \subset \R^{n \times n}$. In particular, the connected components of $K \subset \R^{n \times n}$ are then assumed to be compatible with the group action:
\begin{enumerate}[label=(A\arabic*)]
\item\label{item:A1}There exist $M_1,\dots,M_m \in K$ such that
\begin{align*}
K=\bigcup_{l=1}^{m} G M_{l},
\end{align*}
and this union is disjoint. Suppose further that there exists $c_{1}>0$ such
  that $\dist(G M_{i}, GM_{j}) \geq c_{1}$ if $i \neq j$.

We assume that the (relative) interior $\inte(K^{lc}) \subset \R^{n\times n}$ of the lamination convex hull of $K$ is non-empty. In the following we do not distinguish between the relative interior (with respect to a subset of $\R^{n\times n}$) and the interior of $\R^{n \times n}$, but always assume that this is used in a consistent way, i.e. we always mean the interior or the relative interior.

\item \label{item:A2} There exist sets $\tilde{U}_{k}^{j}, U_{k}^{j} \subset
  \inte (K^{lc})$ for $k \in \N$ and
  $j \in \{0, \dots, m\}$ such that:
  \begin{align*}
    \tilde{U}_{k}^{j}&= G\tilde{U}_{k}^{j}, U_{k}^{j}= GU_{k}^{j} \mbox{ for all }  j,k , \\
   \inte(K^{lc})&= \bigcup_{k,j} \tilde{U}_{k}^{j}, \\
    \tilde{U}^m_k&=U_{k}^{m}= U_{k+1}^0,
  \end{align*}
  and such that for any $k$,
  \begin{align*}
  \tilde{U}_{k}^{0},\tilde{U}_{k}^{1}, \dots,
  \tilde{U}_{k}^{m}=U_{k+1}^{0}, U_{k+1}^{1}, \dots,
  U_{k+1}^{m}=U_{k+2}^{0}, U_{k+2}^{1}, \dots 
  \end{align*}
  is an in-approximation of $K$.
  Further assume that for every $j$, the sequence $\sup\{\dist(M, K): M \in U_{k}^{j}\}$ tends to zero
    as $k \rightarrow \infty$.
  
\item \label{item:A3tilde} There is a replacement construction associated with
  $\tilde{U}_{k}^{j}$ for the given class $\mathcal{C}$ of domains (e.g. right-angled rectangles, c.f. Section \ref{sec:covering}). That is, there exists $v_1 \in (0,1)$ such that for any
  domain $\tilde{\Omega} \in \mathcal{C}$ and any $M \in \tilde{U}^{j}_{k}, j<m,$ there exists a
  piecewise affine function $u:\tilde{\Omega} \rightarrow \R^n$, such that $\nabla u$
  has only finitely many level sets $\tilde{\Omega}_1, \dots, \tilde{\Omega}_N$ and there exists a set $\tilde{\Omega}_g \subset \tilde{\Omega}$ which is the union of finitely many of the level sets of $\nabla u$ such that
  \begin{align*}
&u(x) = M x \mbox{ on } \partial \tilde{\Omega} \cup (\tilde{\Omega} \setminus \tilde{\Omega}_g),\\
& \nabla u \in \tilde{U}^{j+1}_k  \mbox{ in } \tilde{\Omega}_g,\\
&|\tilde{\Omega}_g| \geq v_1 |\Omega|.
  \end{align*}
  Furthermore, we assume that the level sets $\tilde{\Omega}_{1},\dots, \tilde{\Omega}_{N}$ of $\nabla u$ are
  all (finite unions of) elements in $\mathcal{C}$ and satisfy the following
  perimeter estimates:
  \begin{itemize}
  \item[(i)] For all the level sets contained in $\tilde{\Omega}_{g}$, we obtain an
    estimate by
    \begin{align*}
      \sum_{\overline{\Omega} \subset \tilde{\Omega}_{g}, \overline{\Omega} \in \{\tilde{\Omega}_1,\dots, \tilde{\Omega}_N\}} \Per(\overline{\Omega}) \leq C_{0} \Per(\tilde{\Omega}),
    \end{align*}
    where $C_{0}=C_{0}(j,k)\geq 1$.
    \end{itemize}
For the level sets in $\tilde{\Omega} \setminus \tilde{\Omega}_g$ we assume that one of the following two perimeter estimates holds:
    \begin{itemize}
  \item[(ii)] If $\tilde{\Omega} \in \mathcal{C}^{1}\subset \mathcal{C}$ is in a special
    class (self-similar structure in our application), then also
    $\tilde{\Omega}\setminus \tilde{\Omega}_{g} \in \mathcal{C}^{1}$ and
    \begin{align*}
      \sum_{\overline{\Omega} \subset \tilde{\Omega}\setminus \Omega_{g}, \overline{\Omega} \in \{\tilde{\Omega}_1,\dots, \tilde{\Omega}_N\}} \Per(\overline{\Omega}) \leq C_{2} \Per(\tilde{\Omega}),
    \end{align*}
    where $C_{2}\geq 1$ is a uniform constant. 
  \item[(iii)] If $\tilde{\Omega} \in \mathcal{C}\setminus \mathcal{C}^{1}$, there
    exists a splitting $\tilde{\Omega}^{[1]}\cup \tilde{\Omega}^{[2]}= \tilde{\Omega}\setminus
  \tilde{\Omega}_{g}$ such that $\tilde{\Omega}^{[1]} \in \mathcal{C}^{1}$ and
  $\tilde{\Omega}^{[2]} \in \mathcal{C}$ such that
  \begin{align*}
      \sum_{\overline{\Omega} \subset \tilde{\Omega}^{[1]}, \overline{\Omega} \in \{\tilde{\Omega}_1,\dots, \tilde{\Omega}_N\}} \Per(\overline{\Omega}) &\leq C_{0} \Per(\tilde{\Omega}), \\
      \sum_{\overline{\Omega} \subset \tilde{\Omega}^{[2]}, \overline{\Omega} \in \{\tilde{\Omega}_1,\dots, \tilde{\Omega}_N\}} \Per(\overline{\Omega}) &\leq C_{2} \Per(\tilde{\Omega}),
    \end{align*}
    where the constants $C_0,C_2$ are the ones from (i), (ii). In particular, $C_0=C_0(j,k)$, while $C_2$ is uniform and thus independent of $j,k$.
  \end{itemize}
 \item \label{item:A3} There is a replacement construction associated with
  $U_{k}^{j}$ for the given class $\mathcal{C}$ of domains. That is, there exists $v_1 \in (0,1)$ such that for any
  domain $\tilde{\Omega} \in \mathcal{C}$ and any $M \in U^{j}_{k}$, $j<m$, there exists a
  piecewise affine function $u:\tilde{\Omega} \rightarrow \R^n$, such that $\nabla u$
  has only finitely many level sets and $\tilde{\Omega}_g \subset \tilde{\Omega}$ such that
  \begin{align*}
&u(x) = M x \mbox{ on } \partial \tilde{\Omega} \cup (\tilde{\Omega} \setminus \tilde{\Omega}_g),\\
& \nabla u \in U^{j+1}_k  \mbox{ in } \tilde{\Omega}_g,\\
&|\tilde{\Omega}_g| \geq v_1 |\tilde{\Omega}|.
  \end{align*}
  Furthermore, the level sets $\tilde{\Omega}_{1},\dots, \tilde{\Omega}_{N}$ of $\nabla u$ are
  all (finite unions of) elements in $\mathcal{C}$ and
  \begin{align*}
    \sum_{l=1}^{N} \Per(\tilde{\Omega}_{l}) \leq C_{1} \Per(\tilde{\Omega}).
  \end{align*}
  Here, $C_{1}\geq 1$ is required to be uniform in $k$ and $j$.

\item \label{item:A4} For the construction in \ref{item:A3}, there exist constants
  $0<c_{2}<1$ and $C_{3}>1$
  and sets $\tilde{\Omega}_{g}^{\star} \subset \tilde{\Omega}_g$ with $|\tilde{\Omega}_{g}^{\star}|\geq (1-C_{3} c_{2}^{k})|\tilde{\Omega}_g|$ and such that
\begin{align*}
|\nabla u - M|\leq C_{3} c_{2}^{k} \mbox{ in } \tilde{\Omega}_{g}^{\star}.
\end{align*}
Furthermore, for both \ref{item:A3} and \ref{item:A3tilde} the function $\nabla u$ is uniformly bounded:
\begin{align*}
  |\nabla u| \leq C_{3} \mbox{ in } \tilde{\Omega}.
\end{align*}
\end{enumerate}

We emphasize that in \ref{item:A1}-\ref{item:A4} and also in the sequel, we always use the notion of ``a piecewise affine function" in the sense of ``a piecewise affine and continuous function".\\

Let us comment on the assumptions from \ref{item:A1}-\ref{item:A4}:
\begin{itemize}
\item The requirement in \ref{item:A1} states that, up to group actions, we are interested
in an $m$-well problem. In the present framework we can simultaneously deal with the geometrically nonlinear and linear theory of elasticity by setting $G=SO(n)$ or $G=\Skew(n)$. Here
the unboundedness of $\Skew(n)$ (and of possibly other unbounded continuous Lie groups) leads to several technical issues. 
For instance, the conditions on the distances of the connected components in
  \ref{item:A1} and the convergence of $\sup\{\dist(M,K): M
  \in U_{k}^{j}\}$ to zero in \ref{item:A2} are
  imposed to ensure that for a convergent sequence of matrices $N_{l}$, the
  mapping to the closest connected component $G M_j$ of $K$ remains constant for large $l$. 
  These conditions hold for any in-approximation by convergence and continuity, if
one additionally assumes that $K$ is a closed, bounded set and hence compact.
Using the boundedness assumption in \ref{item:A4}, we may reduce to this
setting, even if $G$ and thus $K$ are unbounded.
The additional assumptions in \ref{item:A1}, \ref{item:A2} dealing with the potential unboundedness of $G$ could hence be omitted. But as they allow us to simplify notation for
instance in Section \ref{sec:L1}, where we then do not have to distinguish
between $K^{lc}$ and $K^{lc} \cap B_{C_3}(0)$, we opted for including them.

If additional constraints, e.g. a trace constraint or a determinant constraint, are taken into account, we always work with the relative interior of $K^{lc}$. All conditions on the \emph{interior} of $K^{lc}$ should then be read as conditions on the \emph{relative interior} of $K^{lc}$. In the examples, which we discuss in Section \ref{sec:Ex}, this enters in the investigation of the (geometrically) linearised hexagonal-to-rhombic and the cubic-to-orthorhombic phase transformations (c.f. Section \ref{sec:lin_trans}).

\item Condition \ref{item:A2} provides an in-approximation which is invariant under the action of the symmetry group.
  The sets $\tilde{U}_{k}^{j}$ in \ref{item:A2} are auxiliary sets which ensure
  that we can start with any initial datum $M \in \inte(K^{lc})= \bigcup_{k,j}
  \tilde{U}_{k}^{j}$.
  Using the construction in \ref{item:A3tilde}, after a small number of steps we
  then obtain that $\nabla u \in U^{j_0}_{k_0}$ for some $j_0, k_0$.
  We remark that the constant $C_0\geq 1$ in \ref{item:A3tilde} may depend on $k$ and $j$ and thus on the initial data.
  However, on the sets $U_{k}^{j}$ we require \emph{uniform} estimates, which are
  essential to obtain convergence in $W^{s,p}$ with $s,p$ \emph{independent} of the
  initial data.
\item 
The constructions \ref{item:A3tilde} and \ref{item:A3} allow us to carry out the convex
integration scheme of Müller and {\v{S}}ver{\'a}k \cite{MS}. In addition to the requirements which are also needed for a non-quantitative convex integration scheme, the perimeter estimates in \ref{item:A3} and \ref{item:A3tilde} provide a first quantitative ingredient.
Verifying it in our applications requires
fine control of the scales involved as discussed in Section \ref{sec:BV} (and Section \ref{sec:covering}). 
In particular, in the model situations (a)-(c) from Section \ref{sec:intro} we have to consider various different scenarios for the underlying geometry of the covering, which we have hence formalized in splitting the assumption \ref{item:A3tilde} into the three cases \ref{item:A3tilde} (i)-(iii).

\item The last requirement \ref{item:A4} ensures very strong control over the group action in our
  convergence estimate. That is, not only does $\dist(G \nabla u_{k}, M)$ tend to
  zero, but also $\nabla u_{k}$ forms a Cauchy sequence with an exponential
  convergence rate in $L^{1}$. This provides the second main quantitative ingredient in our scheme. It is used in combination with the BV estimate to derive quantitative higher regularity estimates by interpolation \cite{CDDD03}.
\end{itemize}

\subsection{The higher regularity result}
\label{sec:thm2}

Under the assumptions from Section \ref{sec:assume}, we can then construct convex integration solutions
using the algorithm described in Section \ref{sec:convex_int}. More precisely, we show that the conditions collected in Section \ref{sec:assume} imply the following higher regularity result:

\begin{thm}
\label{thm:reg_gen}
Let $\Omega \subset \R^n$ be a bounded domain with $\Omega \in \mathcal{C}$ and
suppose that the conditions formulated in Section \ref{sec:assume} hold. Then there
exists $ \theta_0 \in (0,1)$ depending only on the dimension $n\in \N$ and the constants $C_1, C_2, v_1, c_2$ in \ref{item:A1}-\ref{item:A4} such that for all values $s\in (0,1)$, $p\in (1,\infty)$ with $0<s p < \theta_0$ and for any $M_0 \in \inte(K^{lc})$ there exist solutions $u \in W^{1,\infty}_{\text{loc}}(\R^n)$ of 
\begin{align}
\label{eq:incl_K}
\begin{split}
\nabla u &\in K \mbox{ in } \Omega,\\
\nabla u &= M_0 \mbox{ in } \R^3 \setminus \Omega,
\end{split}
\end{align}
such that $\nabla u \in W^{s,p}_{loc}(\R^n)$. Moreover, for some constant 
$C>1$ which depends on $C_0,C_1,C_2,C_3,v_1,c_2,n,\Omega$, we have
\begin{align*}
\|\nabla u\|_{W^{s,p}(\Omega)} \leq C.
\end{align*}
\end{thm}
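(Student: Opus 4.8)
The plan is to realize the solution as the limit of an iterative convex integration scheme built from the replacement constructions in \ref{item:A3tilde} and \ref{item:A3}, and then to quantify the convergence by combining an exponential $L^1$-rate with uniformly controlled $BV$-growth via an interpolation inequality. First I would set up the iteration: starting from $u_0(x)=M_0 x$ with $M_0\in\inte(K^{lc})=\bigcup_{k,j}\tilde U_k^j$, I apply the $\tilde U$-replacement of \ref{item:A3tilde} finitely many times (advancing the index $j$ up to $m$, which moves us from $\tilde U_{k_0}^{j_0}$ into some $U_{k_1}^{0}$), and from then on apply the $U$-replacement of \ref{item:A3} on each level set in the ``good" region, advancing $j\to j+1$ and, when $j$ wraps around at $m$, increasing $k$. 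At each stage the replacement is applied on every level-set subdomain separately (all of which lie in $\mathcal{C}$ by the closure assumptions in \ref{item:A3tilde}, \ref{item:A3}), the new map agreeing with the old on the boundary of each piece, so the global map stays continuous, piecewise affine, equals $M_0 x$ outside $\Omega$, and has gradient in $\inte(K^{lc})$ with $|\nabla u_k|\le C_3$ by the last line of \ref{item:A4}. That this produces a well-defined sequence converging in $L^\infty$ to a solution $u$ of \eqref{eq:incl_K} with $\nabla u\in W^{1,\infty}_{\mathrm{loc}}$ is exactly the content of the convex integration lemma of Section~\ref{sec:convex_int} (Algorithm~\ref{alg:convex_int}, Lemma~\ref{lem:well-def}) together with the in-approximation property from \ref{item:A2} and the distance-to-$K$ convergence $\sup\{\dist(M,K):M\in U_k^j\}\to 0$; the convergence $\nabla u_k\to\nabla u$ a.e.\ combined with \ref{item:A1} (distances between components bounded below) pins down the limit inside $K$.

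Next I would extract the two quantitative estimates, which are the real content. The $L^1$ estimate: by \ref{item:A4}, on the good set $\tilde\Omega_g^\star$ at stage $k$ we have $|\nabla u_{k+1}-\nabla u_k|\le C_3 c_2^{k}$, while on the exceptional set $\tilde\Omega_g\setminus\tilde\Omega_g^\star$ of measure $\le C_3 c_2^{k}|\tilde\Omega_g|$ the difference is bounded by $2C_3$; summing over the finitely many level sets at stage $k$ and telescoping, $\|\nabla u_{k+1}-\nabla u_k\|_{L^1(\Omega)}\lesssim c_2^{k}$, so $\{\nabla u_k\}$ is Cauchy in $L^1$ with geometric rate — this should be Lemma~\ref{lem:L1_in}. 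The $BV$ estimate: each newly introduced piece of jump set of $\nabla u_{k+1}$ inside a level set $\tilde\Omega_\ell$ of $\nabla u_k$ contributes its perimeter, and the perimeter bounds in \ref{item:A3} (or \ref{item:A3tilde}(i)--(iii) during the finitely many initial steps) give $\sum_\ell \Per(\cdot)\le C_1\Per(\tilde\Omega_\ell)$ with $C_1$ \emph{uniform} in $k,j$; iterating, $|\nabla u_k|_{BV(\Omega)}\le C_1^{k}\Per(\Omega)$ up to the bounded factors $|\nabla u_k|\le C_3$ and the fixed factors $C_0=C_0(j_0,k_0)$, $C_2$ from the finitely many $\tilde U$-steps — this should be Lemma~\ref{lem:BV_in}. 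So $\|\nabla u_k\|_{BV}\le C\,\mu^{k}$ for some $\mu>1$ depending only on $C_1$ (and $n$), while $\|\nabla u_{k+1}-\nabla u_k\|_{L^1}\le C\,\nu^{k}$ with $\nu=c_2<1$.

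Finally I would feed these two one-sided exponential bounds into the standard interpolation-of-sequences argument (in the spirit of \cite{CDDD03}): writing $\nabla u=\nabla u_{k_0}+\sum_{k\ge k_0}(\nabla u_{k+1}-\nabla u_k)$ and using $W^{s,p}=(L^p,BV)$-type interpolation together with $\|f\|_{W^{s,p}}\lesssim \|f\|_{L^1}^{1-sp}\,|f|_{BV}^{sp}$ (after first using $|\nabla u_k|\le C_3$ to pass from $L^1$ to $L^p$ at the cost of a constant), one gets $\|\nabla u_{k+1}-\nabla u_k\|_{W^{s,p}}\lesssim (\nu^{k})^{1-sp}(\mu^{k})^{sp}=(\nu^{1-sp}\mu^{sp})^{k}$. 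This geometric series converges precisely when $\nu^{1-sp}\mu^{sp}<1$, i.e.\ when $sp<\theta_0:=\frac{\log(1/\nu)}{\log(1/\nu)+\log\mu}=\frac{\log(1/c_2)}{\log(1/c_2)+\log\mu}\in(0,1)$, which depends only on $n$ and $C_1$ (equivalently on $C_1,C_2,v_1,c_2$ as stated) and \emph{not} on $M_0$; summing yields $\|\nabla u\|_{W^{s,p}(\Omega)}\le C$ with $C$ depending on the remaining constants. The main obstacle is the uniformity of the $BV$ growth rate: one must ensure the per-step perimeter multiplier $C_1$ genuinely does not degenerate as $k\to\infty$ (so that $\mu$ is finite and $\theta_0>0$), and that the finitely many non-uniform $\tilde U$-steps (with $C_0$ depending on the position of $M_0$) only contribute a fixed multiplicative constant rather than affecting the rate — this is exactly why Assumption~\ref{item:A2} separates the auxiliary $\tilde U_k^j$ from the $U_k^j$ and why \ref{item:A3} demands $C_1$ uniform; the bulk of the work in Sections~\ref{sec:Ex}--\ref{sec:covering} is precisely verifying these uniform perimeter bounds for the model geometries, but for the present theorem they may be assumed.
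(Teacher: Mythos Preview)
Your overall architecture is correct and matches the paper's: build $u_k$ via Algorithm~\ref{alg:convex_int}, prove an exponentially decaying $L^1$ bound and an exponentially growing $BV$ bound for $\nabla u_{k+1}-\nabla u_k$, then interpolate via Proposition~\ref{prop:CDDD}. But there is a genuine gap in how you obtain both rates, coming from a conflation of two different ``$k$''s and an incorrect picture of the $\tilde U$-phase.

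The $k$ in \ref{item:A4} is the \emph{in-approximation depth} (the index of $U_k^j$), not the algorithm step. In the algorithm, each piece $\tilde\Omega\in\hat\Omega_k$ carries its own depth $l_k(\tilde\Omega)$, which increments only when that piece lands in $\tilde\Omega_g$ and the index $j$ wraps around. Pieces that fall into the bad region (volume fraction up to $1-v_1$) are left unchanged and do \emph{not} advance; in particular there is no global bound on how many algorithm steps a piece spends in the $\tilde U$-phase, contrary to your ``finitely many initial $\tilde U$-steps'' picture. Hence on such a piece \ref{item:A4} only gives $|\nabla u_{k+1}-\nabla u_k|\le C_3 c_2^{l_k(\tilde\Omega)}\approx C_3 c_2^{q_k(\tilde\Omega)/m}$, not $C_3 c_2^k$, and your claimed bound $\|\nabla u_{k+1}-\nabla u_k\|_{L^1}\lesssim c_2^k$ does not follow.

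The paper closes this gap (Lemma~\ref{lem:L1_in} and the proof of Theorem~\ref{thm:reg_gen}) with a combinatorial ``Bernoulli-tree'' argument: viewing the successive partitions as a branching process with success probability $\ge v_1$, one proves the expectation-type bound
\[
\sum_{\tilde\Omega\in\hat\Omega_k} c_2^{q_k(\tilde\Omega)/m}|\tilde\Omega|\ \le\ \tilde c^{\,k}|\Omega|,\qquad \tilde c:=v_1 c_2^{1/m}+(1-v_1)\in(0,1),
\]
which is why $v_1$ and $m$ enter $\theta_0$. The same structural issue affects your $BV$ argument: since case~($c_1$) is not confined to the first few steps, one must show (Lemma~\ref{lem:BV_in}) that along any chain of descendants the non-uniform factor $C_0$ from \ref{item:A3tilde} is incurred at most $m{+}1$ times; this uses the splitting \ref{item:A3tilde}(i)--(iii) and monotonicity of $q_k$, and produces the growth rate $3^k\max(C_1,C_2)^k$ rather than $C_1^k$. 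Your sketch has the right shape, but this bookkeeping is the missing idea.
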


Similarly as in Theorem \ref{thm:reg} we here show that
the constructed solutions to the differential inclusion \eqref{eq:incl_K} exhibit $W^{s,p}$ regularity, where the achieved regularity exponent is
\emph{independent} of the choice of the initial data (though the size of the norm may
depend on it). In particular, this improves the results from \cite{RZZ16} significantly, if the Dirichlet data in \eqref{eq:incl} are close to the boundary of the corresponding convex hulls. 
\\

In Section \ref{sec:Ex} we  will show that for our model cases (a)-(c) from Section \ref{sec:intro} the conditions \ref{item:A1}-\ref{item:A4} are satisfied. Here in particular the derivation of \emph{uniform} bounds for $C_1,C_2>1$ requires careful covering arguments, which could be significantly simplified if the uniformity of the result was given up. An application of Theorem \ref{thm:reg_gen} to the model cases (a)-(c) from Section \ref{sec:intro} then entails Theorem \ref{thm:reg}.

\section{The Convex Integration Algorithm}
\label{sec:convex_int}

In the whole following section we always suppose that the assumptions of Section \ref{sec:assume} hold.
With these at hand, we proceed to the definition of our quantitative convex integration algorithm.
Here, we first provide a construction for the case when $\Omega$ is in a
  given class of domains $\mathcal{C}$. This is then extended in Section
  \ref{lem:Lip} to general Lipschitz domains.

\subsection{The formulation of the convex integration algorithm}
We construct solutions to the differential inclusion at hand by abiding to the following construction rule:

\begin{alg}
\label{alg:convex_int}
Let $\Omega \subset \R^n$ be such that $\Omega \in \mathcal{C}$. Let $U_{k}^j$ and $\tilde{U}_{k}^{j}$ be as in (A2). Let $M_0 \in \inte(K^{lc})$.
\begin{itemize}
\item[(a)] Data: For $k\in \N\cup\{0\}$ we consider tuples $(u_k,\hat{\Omega}_k, l_k, j_k, q_k)$, where
\begin{itemize}
\item $u_k: \Omega \rightarrow \R$ is a piecewise affine, uniformly (in $k$) bounded Lipschitz function.
\item $\hat{\Omega}_k=\{\Omega^{k,1},\dots,\Omega^{k,i_k}\} \subset \mathcal{C}$ is a collection of (up to null-sets) disjoint sets covering $\Omega$. We have that $\nabla u_k|_{\Omega^{k,j}}=const$ for all $j\in\{1,\dots,i_k\}$. 
\item $(l_k, j_k): \hat{\Omega}_k \rightarrow (\N\cup\{-1,0\}) \times
  \{0,\dots,n-1\}$ denotes the depth of the iteration. For any $\tilde{\Omega} \in
  \hat{\Omega}_k$ we construct $(l_k,j_k)$ such that either $\nabla u_k \in U_{l_k}^{j_k}$ or
  $\nabla u_k \in \tilde{U}_{m_0}^{j_k}$, where $m_0 \in \N$ is defined in Step (b).
\item $q_k:\hat{\Omega}_k \rightarrow \N\cup\{0\} $ denotes the number of times the function $u_0$ (defined in (b) below) has been modified on a given domain.
\end{itemize}
\item[(b)] Initialization: We set 
\begin{align*}
  &u_0(x) := M_0 x,  \ \hat{\Omega}_0:= \{\Omega\},
\ l_0:=-1, \ q_0:=0. 
\end{align*}
Further, we define 
\begin{align*}
  m_0&:=\min \{m\in \N: M_0 \in \tilde{U}_{m}^0\}, \\
  j_0&:=0.
\end{align*}

\item[(c)] Replacement construction: Assume now that for $k\geq 0$ the tuple
  $(u_k, \hat{\Omega}_k, l_k,j_k, q_k)$ is given and let
  $\Omega^{k,i}\in \hat{\Omega}_k$. We then distinguish two situations:
\item[($c_1$)] 
Assume that $l_k(\Omega^{k,i})=-1$. Let $j_{k,i}:=j_k(\Omega^{k,i})$ and suppose that on the domain $\Omega^{k,i}$ it holds
that $\nabla u_k \in \tilde{U}_{m_0}^{j_{k,i}}$. We apply the replacement
construction from Assumption \ref{item:A3tilde}. This returns
\begin{itemize}
\item[(i)] a piecewise affine function $w: \Omega^{k,i} \rightarrow \R^n$ such that on a subset $\Omega^{k,i}_g \subset \Omega^{k,i}$, which consists of a union of
elements of $\mathcal{C}$, and which satisfies $|\Omega^{k,i}_g|\geq v_1 |\Omega^{k,i}|$, it holds that  
\begin{align*}
\nabla w(x) \in \tilde{U}_{m_0}^{j_{k,i}+1}  \mbox{ for a.e. } x \in \Omega^{k,i}_g. 
\end{align*}
Moreover, $w(x) = u_k(x) \mbox{ for a.e. } x\in (\Omega^{k,i} \setminus \Omega^{k,i}_g) \cup \partial \Omega^{k,i}$.
\item[(ii)] a collection $\hat{\Omega}_{k+1,i}:=\{\Omega^{k+1,1},\dots,\Omega^{k+1,r_i}\} \subset \mathcal{C}$ of pairwise (up to null sets) disjoint domains, which are the level sets of $\nabla w$.
\end{itemize}
We define the following set-functions (the remaining ones for the tuple $(u_k,
\hat{\Omega}_k, l_k, j_k, q_k)$ do not differ from those which
occur in the case ($c_2$) and are hence given in a unified way  below, c.f. \eqref{eq:iterstep}):
\begin{align*}
&l_{k+1,i}: \hat{\Omega}_{k+1,i} \rightarrow \N\cup\{0,-1\}, \\
  &l_{k+1,i}(\Omega^{k+1,r}) =
    \begin{cases}
      m_0+1 &\mbox{ if } j_{k+1}(\Omega^{k+1,r})=0 \mbox{ but } j_{k}(\Omega^{k,i})\neq 0,\\
l_k(\Omega^{k,i}) &\mbox{ else},
    \end{cases}
\end{align*}
where the function $j_{k+1}$ is defined in \eqref{eq:iterstep} at the end of the algorithm after step ($c_2$). 

\item[($c_2$)] 
Assume that $l_{k,i}:=l_k(\Omega^{k,i}) \neq -1$ and abbreviate $j_{k,i}:=j_k(\Omega^{k,i})$. Suppose that on the domain $\Omega^{k,i}$ it holds
that 
\begin{align*}
\nabla u_k \in U_{l_{k,i}}^{j_{k,i}}. 
\end{align*}
On $\Omega^{k,i}$ we thus apply the replacement construction from Assumption \ref{item:A3}. This returns
\begin{itemize}
\item[(i)] a piecewise affine function $w:\Omega^{k,i} \rightarrow \R^n$ and a subset $\Omega^{k,i}_{g} \subset \Omega^{k,i}$, which is a union of 
elements from $\mathcal{C}$, which satisfies $|\Omega^{k,i}_{g}|\geq v_1|\Omega^{k,i}|$, and on which 
\begin{align*}
\nabla w \in U_{l_{k,i}}^{j_{k,i}+1}.
\end{align*} 
For a.e. $x\in \Omega^{k,i} \setminus \Omega^{k,i}_g$ it holds that 
\begin{align*}
\nabla w = \nabla u_k \in  U_{l_{k,i}}^{j_{k,i}}.
\end{align*}
Furthermore, $w(x) = u_k(x)$ for $x\in \partial \Omega^{k,i}$.
\item[(ii)] a collection $\hat{\Omega}_{k+1,i}:=\{\Omega^{k+1,1},\dots,\Omega^{k+1, r_i}\}$ of pairwise (up to null-sets) disjoint domains, which are level sets of $\nabla w$.
\end{itemize}
We then define the set function
\begin{align*}
&l_{k+1,i}: \hat{\Omega}_{k+1,i} \rightarrow \N\cup\{0,-1\}, \\
  &l_{k+1,i}(\Omega^{k+1,r}) =
    \begin{cases}
      l_{k}(\Omega^{k,i})+1  &\mbox{ if } j_{k+1}(\Omega^{k+1,r})=0 \mbox{ but } j_{k}(\Omega^{k,i})\neq 0,\\
      l_k(\Omega^{k,i}) &\mbox{ else}.
    \end{cases}
\end{align*}
\end{itemize}
In both cases ($c_1$) and ($c_2$) we set
\begin{eqnarray}
  \label{eq:iterstep}
&j_{k+1,i}: \hat{\Omega}_{k+1,i} \rightarrow \{0,\dots,m\}, \nonumber\\
  &j_{k+1,i}(\Omega^{k+1,r}) =
    \begin{cases}
      j_{k}(\Omega^{k,i})+1 \mbox{ mod } m & \mbox{ if } \Omega^{k+1,r}\subset \Omega^{k,i}_g,\nonumber\\
      j_{k}(\Omega^{k,i}) &\mbox{ else},
    \end{cases}\nonumber\\
&q_{k+1,i}: \hat{\Omega}_{k+1,m} \rightarrow \R, \\
  &q_{k+1,i}(\Omega^{k+1,r}) =
    \begin{cases}
      q_{k}(\Omega^{k,i})+1 &\mbox{ if } \Omega^{k+1,i} \subset \Omega^{k,i}_g,\nonumber\\
      q_k(\Omega^{k,i}) &\mbox{ else}.
    \end{cases}
\end{eqnarray}
and set $u_{k+1}|_{\Omega^{k,i}}:=w$. 
Finally, for $f\in\{j,l,q\}$ we define
\begin{align*}
&\hat{\Omega}_{k+1}:= \bigcup\limits_{j=1}^{i_k} \hat{\Omega}_{k+1,j},\\
& f_{k+1}: \hat{\Omega}_{k+1} \rightarrow \R,\ f_{k}(\Omega_{m}) := f_{k,i}(\Omega_m) \mbox{ if } \Omega_m \in \hat{\Omega}_{k,i}.
\end{align*}
\end{alg}

\begin{rmk}
\label{rmk:alg}
We make the following observations: Assuming that Algorithm \ref{alg:convex_int} is well-defined and with slight abuse of notation, identifying the set functions $l_k, q_k$ with functions on $\Omega$ by setting $l_k(x)=l_{k}(\Omega^{k,i})$ for $x\in \Omega^{k,i}$ (which is a.e. well-defined), we have that
\begin{itemize}
\item $l_k$ is an increasing function in $k \in \N$,
\item $q_k$ is an increasing function in $k\in \N$.
\end{itemize}
\end{rmk}

\subsection{Well-definedness of the algorithm}

In order to construct the desired solutions of the differential inclusion \eqref{eq:incl_K}, we seek to follow the prescription of Algorithm \ref{alg:convex_int}. To this end, we however first have to ensure its well-definedness. In order to simplify notation, we thus introduce the \emph{descendent} of a domain $\widetilde{\Omega} \in \hat{\Omega}_k$.

\begin{defi}
Let $\widetilde{\Omega}\in \hat{\Omega}_k$ for some $k\geq 0$. Then we say that
$\Omega \in \hat{\Omega}_l$ with $l>k$ is a \emph{descendant of $\widetilde{\Omega}$}, if 
$\Omega \subset \widetilde{\Omega}$. We denote the set of all descendants of $\widetilde{\Omega}$ by $\mathcal{D}(\widetilde{\Omega})$. 
\end{defi}

With this notation available, we discuss the well-definedness of Algorithm \ref{alg:convex_int}:

\begin{lem}
\label{lem:well-def}
Let $\Omega \subset \R^n$ with $\Omega \in \mathcal{C}$. Let further $M_0\in \inte(K^{lc})$ and construct the tuple $(u_k, \hat{\Omega}_k, l_k,j_k,q_k)$ as in Algorithm \ref{alg:convex_int}. 
Then, beginning with the initialization step, only the cases ($c_1$), ($c_2$) can occur in the course of the algorithm. In particular, the cases ($c_1$), ($c_2$) cover all possible situations. Moreover, the function $u_{k+1}$ defined by restriction on the sets $\widetilde{\Omega} \in \hat{\Omega}_k$ is piecewise affine, in particular it is continuous.
Furthermore, if $q_{k}(\tilde{\Omega}) \geq m$, then $\nabla u_{k}|_{\tilde{\Omega}}$ is in case ($c_{2}$).
\end{lem}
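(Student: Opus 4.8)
The statement to be proved is Lemma~\ref{lem:well-def}: that Algorithm~\ref{alg:convex_int} is well-defined, in the sense that (i) starting from the initialization only cases $(c_1)$ and $(c_2)$ arise and these exhaust all possible situations; (ii) the $u_{k+1}$ produced by patching together the local functions $w$ on the cells $\widetilde\Omega\in\hat\Omega_k$ is piecewise affine and continuous; and (iii) once $q_k(\widetilde\Omega)\ge m$ the cell $\widetilde\Omega$ is in case $(c_2)$. The plan is to prove all three assertions simultaneously by induction on $k$, carrying along the invariant that \emph{on every cell $\widetilde\Omega\in\hat\Omega_k$ either $l_k(\widetilde\Omega)=-1$ and $\nabla u_k|_{\widetilde\Omega}\in\tilde U_{m_0}^{j_k(\widetilde\Omega)}$, or $l_k(\widetilde\Omega)\ge 0$ and $\nabla u_k|_{\widetilde\Omega}\in U_{l_k(\widetilde\Omega)}^{j_k(\widetilde\Omega)}$.} This invariant is precisely what makes either $(c_1)$ or $(c_2)$ applicable, so establishing it propagates through the induction is the whole game.

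\textbf{Base case and the dichotomy.} For $k=0$ we have $\hat\Omega_0=\{\Omega\}$, $u_0=M_0x$, $l_0=-1$, $j_0=0$, and by definition of $m_0$ in Step~(b), $M_0\in\tilde U_{m_0}^0$; since $\nabla u_0\equiv M_0$, the invariant holds, and since $l_0=-1$ we are in case $(c_1)$. For the inductive step, fix a cell $\Omega^{k,i}\in\hat\Omega_k$ and suppose the invariant holds. If $l_k(\Omega^{k,i})=-1$, then $\nabla u_k\in\tilde U_{m_0}^{j_{k,i}}$; here I must check $j_{k,i}<m$, so that Assumption~\ref{item:A3tilde} applies (it only provides the replacement construction for $j<m$). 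This is where one uses that, by \eqref{eq:iterstep}, $j$ increments mod $m$ and — crucially — whenever $j$ would wrap to $0$ the index $l$ is simultaneously bumped away from $-1$ (to $m_0+1$ in case $(c_1)$), so a cell can only sit at $l=-1$ while $j\in\{0,\dots,m-1\}$; hence $(c_1)$ is legitimately applicable. If instead $l_k(\Omega^{k,i})\ne-1$, the invariant gives $\nabla u_k\in U_{l_{k,i}}^{j_{k,i}}$; again one needs $j_{k,i}<m$, which follows from the same bookkeeping: when $j$ wraps to $0$ in case $(c_2)$, $l$ is incremented to $l_k+1$, and the sets are arranged (Assumption~\ref{item:A2}, the identification $\tilde U^m_k=U^m_k=U^0_{k+1}$) so that a cell with $\nabla u_k\in U_{l}^{m}$ is relabelled consistently as $\nabla u_k\in U_{l}^{0}$... wait — more carefully, one simply observes that the algorithm never leaves a cell with recorded value $j_k=m$; the update \eqref{eq:iterstep} produces values in $\{0,\dots,m-1\}$ because it is taken mod $m$, so $j_{k,i}<m$ always and the relevant replacement construction is always available. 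Then, reading off $(c_1)$(i) or $(c_2)$(i), on $\Omega^{k,i}_g$ one has $\nabla w\in\tilde U_{m_0}^{j_{k,i}+1}$ (resp.\ $U_{l_{k,i}}^{j_{k,i}+1}$), and on $\Omega^{k,i}\setminus\Omega^{k,i}_g$ one has $\nabla w=\nabla u_k$ unchanged; combined with the definitions of $l_{k+1,i}$ and $j_{k+1,i}$, these cover exactly the two alternatives in the invariant for each child cell — for instance on a child cell inside $\Omega^{k,i}_g$ with $j_{k,i}+1<m$ the invariant passes with the same $l$-regime, while if $j_{k,i}+1=m$ then $j_{k+1}=0$, $l_{k+1}$ is incremented/set, and the identification $\tilde U^m_{m_0}=U^{m}_{m_0}=U^{0}_{m_0+1}$ of Assumption~\ref{item:A2} converts the membership $\nabla w\in\tilde U_{m_0}^{m}$ into the required $\nabla w\in U_{l_{k+1}}^{0}$. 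This closes the induction on (i) and the invariant.

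\textbf{Continuity and the $q$-claim.} For (ii): on each $\Omega^{k,i}$ the function $w$ furnished by \ref{item:A3tilde}/\ref{item:A3} is piecewise affine and continuous, and by $(c_1)$(i)/$(c_2)$(i) it matches $u_k$ on $\partial\Omega^{k,i}$; since the $\Omega^{k,i}$ form an (up to nullsets) disjoint cover of $\Omega$ by elements of $\mathcal C$ and $u_k$ is itself continuous by the inductive hypothesis, the patched function $u_{k+1}$ agrees across every interface and is therefore globally continuous and piecewise affine on the refined partition $\hat\Omega_{k+1}$; uniform boundedness in $k$ is inherited from the $|\nabla u|\le C_3$ bound in \ref{item:A4} together with the fixed boundary values, so the stipulation in Data~(a) is maintained. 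For (iii): by Remark~\ref{rmk:alg}, $q_k$ is nondecreasing along descendants, and from \eqref{eq:iterstep} it increases by one exactly when the cell lies in the ``good'' part $\Omega_g$, which by the update of $j$ is exactly when $j$ is advanced; since $j$ advances mod $m$ and $l$ is only bumped out of $-1$ after the first full cycle of $j$-advances, a cell with $q_k(\widetilde\Omega)\ge m$ has had $j$ advanced at least $m$ times, hence $j$ has wrapped past $0$ at least once, hence $l_k(\widetilde\Omega)\ne-1$ — so by the dichotomy it is in case $(c_2)$.

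\textbf{Main obstacle.} The genuinely delicate point is the joint bookkeeping of the pair $(l_k,j_k)$: one must verify that the two ``$\mathrm{mod}\ m$''-type resets in \eqref{eq:iterstep}, the $l$-updates in $(c_1)$ and $(c_2)$, and the set identifications $\tilde U^m_k=U^m_k=U^0_{k+1}$ in \ref{item:A2} are mutually consistent, so that the membership invariant $\nabla u_k\in U^{j_k}_{l_k}$ (or $\in\tilde U^{j_k}_{m_0}$) is genuinely preserved at every transition, including at the ``boundary'' transition where $j$ cycles and the iteration depth $l$ increments. Everything else — applicability of \ref{item:A3tilde}/\ref{item:A3}, continuity of the patch, monotonicity of $q_k$ — is then essentially immediate from the cited assumptions.
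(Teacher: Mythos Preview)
Your proof is correct and follows essentially the same approach as the paper: induction on $k$ with the invariant that on every cell either $l_k=-1$ and $\nabla u_k\in\tilde U_{m_0}^{j_k}$, or $l_k\ge 0$ and $\nabla u_k\in U_{l_k}^{j_k}$, together with the identifications $\tilde U_{m_0}^{m}=U_{m_0}^{m}=U_{m_0+1}^{0}$ from \ref{item:A2} to handle the transition step. Your treatment of part (iii) is in fact slightly more explicit than the paper's, which simply asserts that $q_k\ge m$ forces $l_k\neq -1$; you spell out that $q$ and $j$ advance in lockstep so that $m$ increments of $q$ force a wrap of $j$ and hence a bump of $l$.
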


\begin{proof}  
We note that for $k=0$, we have that $l_0=-1$ and $\nabla u =M_0 \in
\tilde{U}_{m_0}^0$. Hence, we start the algorithm in the case ($c_1$).
The Assumption \ref{item:A3tilde} then implies that this remains unchanged as long as
$l_k(\tilde{\Omega})=-1$. Thus, initially, the claim is true.
\\
It therefore remains to show the induction step, i.e. that if the claim is true at the $k$-th iteration step, it is then also true for the iteration step $k+1$.

In order to observe this, let $\tilde{\Omega}_g := \bigcup\limits_{j=1}^{i_k}\Omega^{k,j}_g $.
By construction, for a.e. $x\in \Omega \setminus \tilde{\Omega}_g$ it holds that $\nabla
u_{k+1}(x)=\nabla u_k(x)$. Since on $\Omega \setminus \tilde{\Omega}_g$ we have $j_{k+1}=j_k, l_{k+1}=l_k, q_{k+1}=q_k$ in Algorithm \ref{alg:convex_int}, we may invoke the inductive hypothesis and conclude that on $\Omega \setminus \tilde{\Omega}_g$ the claim is true. 

Hence, we only need to consider sets $\tilde{\Omega} \in \tilde{\Omega}_{g}\cap
\hat{\Omega}_k$. Fix such a set and abbreviate $j_k:=j_{k}(\tilde{\Omega})$, $l_k:=l_k(\tilde{\Omega})$.
If on $\tilde{\Omega}$ the case $(c_2)$ occurs, i.e. if $l_{k}>-1$ and if for
a.e. $x\in \tilde{\Omega}$ we have $\nabla u_k(x) \in U^{j_k}_{l_k}$, then by
Assumption \ref{item:A3} for almost every $x\in \tilde{\Omega}$
\begin{align*}
  \nabla u_{k+1}(x)=\nabla w(x)  \in U^{j_k+1}_{l_k}=U^{j_{k+1}}_{l_{k+1}},
\end{align*}
where we used that $U^{(m-1)+1}_{l_k}= U^{m}_{l_k}=U^{0}_{l_k+1}$. Hence for all $\bar{\Omega}\in \mathcal{D}(\tilde{\Omega})\cap \hat{\Omega}_{k+1}$ we have that in the iteration step $k+1$ we are in the case ($c_2$). This in particular shows that once a domain reaches the case ($c_2$) its descendants will always remain in this case.

If in step $k$ and on the domain $\tilde{\Omega}$ we are in case ($c_1$), i.e.
for a.e. $x\in \tilde{\Omega}$ it holds $\nabla u_k(x) \in
\tilde{U}^{j_k}_{m_0}$, then an application of Assumption \ref{item:A3tilde} in Algorithm \ref{alg:convex_int} ensures that for a.e. $x\in \tilde{\Omega}$
\begin{align*}
  \nabla u_{k+1}(x)=\nabla w(x)  \in \tilde{U}^{j_k+1}_{m_0}.
\end{align*}
If $j_k+1 <m$, we thus obtain $ \nabla u_{k+1}(x) \in \tilde{U}^{j_{k+1}}_{m_0}$. Combined with the prescription of $l_{k+1}$ we conclude that in step $k+1$ we are again in the case ($c_1$).
If instead $j_k+1=m$, then 
\begin{align*}
  \tilde{U}^{(m-1)+1}_{m_0}= \tilde{U}^{m}_{m_0}=U_{m_0}^m=U_{m_0 +1}^0=U_{l_{k+1}}^0.
\end{align*}
As in this case for any $\bar{\Omega} \in \mathcal{D}(\tilde{\Omega})\cap \hat{\Omega}_{k+1}$ it holds $l_{k+1}(\bar{\Omega})=m_0+1$, in step $k+1$ we are in the case ($c_2$).
This concludes the induction argument showing that the cases ($c_1$), ($c_2$) cover all possibilities which appear in the iteration Algorithm \ref{alg:convex_int}.

The fact that $u_{k+1}$ is piecewise affine and continuous follows from the assumptions \ref{item:A3tilde}, \ref{item:A3} concerning the existence of piecewise affine replacement constructions.
Finally, we note that if $q_{k}(\tilde{\Omega})\geq m$, then
  $l_{k}(\tilde{\Omega}) \neq -1$, which implies that $\nabla u_{k}$ is in case
  ($c_{2}$) and by the above considerations remains in the case ($c_{2}$) in all
  subsequent steps.
\end{proof}

\section{Quantitative $L^1$ and $BV$ Estimates}
\label{sec:quant}

As in the previous section, in the whole of the following section we always suppose that the assumptions of Section \ref{sec:assume} hold.
Given these, we derive the desired quantitative estimates of Theorem
\ref{thm:reg_gen} for the convex integration solutions, which were constructed in
Algorithm \ref{alg:convex_int}. Here we argue in two steps: In Proposition \ref{prop:char_reg} we first prove the result for the underlying characteristic functions (c.f. Definition \ref{defi:character}). In Section \ref{sec:def} we then extend it to the deformation itself. In both cases, we rely on an interpolation result between suitable $L^p$ and $BV$ estimates. The critical Sobolev exponent is determined by the competition between the convergence of the $L^p$ and the growth of the $BV$ norms. For the convenience of the reader, we recall a variant of the interpolation result of \cite{CDDD03} in the form in which we are going to use it in the sequel (c.f. also Theorem 2 in \cite{RZZ16}):

\begin{prop}[\cite{CDDD03}, Remark 2.2 in \cite{RZZ16}]
\label{prop:CDDD}
Let $u\in L^{\infty}(\R^n)\cap BV(\R^n)\cap L^1(\R^n)$ and let $\theta_0\in (0,1)$ and $\tilde{\theta}=\theta_0 q^{-1}$ for some $q\in (1,\infty)$. Then,
\begin{align}
\label{eq:CDDD}
\|u\|_{W^{\tilde{\theta},q}} \leq \|u\|_{L^{\infty}(\R^n)}^{1-\frac{\tilde{\theta}}{\theta_0}}
\left( \|u\|_{L^{1}}^{1-\theta_0} \|u\|_{BV(\R^n)}^{\theta_0} \right)^{\frac{\tilde{\theta}}{\tilde{\theta}_0}}.
\end{align}
\end{prop}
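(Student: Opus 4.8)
The statement to prove is Proposition~\ref{prop:CDDD}, the interpolation inequality
\[
\|u\|_{W^{\tilde\theta,q}} \leq \|u\|_{L^\infty(\R^n)}^{1-\frac{\tilde\theta}{\theta_0}}\left(\|u\|_{L^1}^{1-\theta_0}\|u\|_{BV(\R^n)}^{\theta_0}\right)^{\frac{\tilde\theta}{\theta_0}}
\]
for $u\in L^\infty(\R^n)\cap BV(\R^n)\cap L^1(\R^n)$, $\theta_0\in(0,1)$, $q\in(1,\infty)$, $\tilde\theta=\theta_0 q^{-1}$.

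\textbf{Overall approach.} The plan is to obtain this in two interpolation stages, exactly as in \cite{CDDD03} and Remark~2.2 of \cite{RZZ16}. The first stage is the genuine content: the Besov-space embedding/interpolation identity $B^{s}_{q,q}$ lies ``between'' $L^1$-type and $BV$-type spaces. Concretely, one uses that $BV(\R^n)\hookrightarrow B^{1}_{1,\infty}(\R^n)$ (a classical fact; the difference quotients of a $BV$ function are bounded in $L^1$ uniformly after dividing by the increment) and that $L^1(\R^n)=B^0_{1,1}(\R^n)$ up to the usual caveats, or more robustly just $L^1 = B^0_{1,\infty}$-type control, and then invokes the real interpolation identity $(B^0_{1,\infty}, B^1_{1,\infty})_{\theta_0,1}=B^{\theta_0}_{1,1}\hookrightarrow B^{\theta_0}_{1,1}$ together with the elementary inequality for the $K$-functional
\[
K(t,u)\leq \|u\|_{L^1}^{1-\theta_0}\,\big(t\,\|u\|_{BV}\big)^{\theta_0}\quad\text{(after optimizing in the splitting)},
\]
which yields
\[
\|u\|_{B^{\theta_0}_{1,1}(\R^n)} \leq C\,\|u\|_{L^1(\R^n)}^{1-\theta_0}\,\|u\|_{BV(\R^n)}^{\theta_0}.
\]
(In fact \cite{CDDD03} arrange constants so that $C=1$ with the precise normalizations; since we only need boundedness of a fixed constant for the application in Theorem~\ref{thm:reg_gen}, the exact constant is immaterial, but I would cite \cite{CDDD03} for the sharp form.)

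\textbf{Second stage: interpolating with $L^\infty$.} Given the first-stage estimate in the scale $B^{\theta_0}_{1,1}$, I would then interpolate this endpoint against $L^\infty(\R^n)=B^0_{\infty,\infty}(\R^n)$-type information to land in $W^{\tilde\theta,q}=B^{\tilde\theta}_{q,q}$ with $\tilde\theta=\theta_0/q$. The relevant fact is the interpolation relation between Besov spaces with different integrability: for $\tfrac1q=\tfrac{1}{q}\cdot 1 + (1-\tfrac1q)\cdot 0$ and $\tilde\theta=\tfrac1q\cdot\theta_0$, one has
\[
\big(L^\infty,\,B^{\theta_0}_{1,1}\big)_{1/q,q}\hookrightarrow B^{\tilde\theta}_{q,q}=W^{\tilde\theta,q},
\]
and the corresponding interpolation inequality with the multiplicative (Hölder-in-exponents) form of the bound gives precisely
\[
\|u\|_{W^{\tilde\theta,q}} \leq C\,\|u\|_{L^\infty}^{1-\tilde\theta/\theta_0}\,\|u\|_{B^{\theta_0}_{1,1}}^{\tilde\theta/\theta_0}.
\]
Combining with the first-stage estimate $\|u\|_{B^{\theta_0}_{1,1}}\leq C\|u\|_{L^1}^{1-\theta_0}\|u\|_{BV}^{\theta_0}$ and raising to the power $\tilde\theta/\theta_0$ yields the claimed inequality (again with an absolute constant; the sharp constant-free version is the one recorded in \cite{CDDD03}, and I would simply quote it). An alternative, perhaps cleaner, route that avoids two separate citations is to reprove both stages at once via a single $K$-functional computation for the pair $(L^\infty,\,L^1\cap BV)$, but the two-step presentation matches \cite{RZZ16} and is easiest to check.

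\textbf{Main obstacle.} The substantive point — and the only place where care is genuinely needed — is the first stage, i.e. the embedding $BV\hookrightarrow B^1_{1,\infty}$ and the resulting sharp $K$-functional estimate producing the constant $1$ and the exact exponents $1-\theta_0$ and $\theta_0$. This is where the structure of $BV$ (rather than $W^{1,1}$) really enters, since $BV\neq B^1_{1,1}$, and one must work with the $\infty$-index Besov space on the $BV$ side; the rest is the standard reiteration/Hölder-in-exponents machinery for real interpolation. For the purposes of this paper none of this needs to be redone: I would state Proposition~\ref{prop:CDDD} as a direct quotation of \cite[Theorem~?]{CDDD03} (as already signalled by the attribution in the statement), and note that the only modification relative to the reference is the cosmetic reparametrization $\tilde\theta=\theta_0/q$, which is exactly the substitution used in Remark~2.2 of \cite{RZZ16}.
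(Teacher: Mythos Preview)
The paper does not prove this proposition at all; it is simply recalled from \cite{CDDD03} (and \cite{RZZ16}) without argument, as the sentence preceding it makes explicit. Your sketch of the two-stage interpolation (first $L^1$--$BV$ to a Besov endpoint, then against $L^\infty$) is a correct outline of how the result is obtained in the cited reference, and your final paragraph already draws the right conclusion: for the present paper one just quotes the result.
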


With this at hand, we introduce the \emph{characteristic functions associated with the
connected component of $K$} introduced in \ref{item:A1}.

\begin{defi}[Characteristic functions]
\label{defi:character}
Let $\Omega \subset \R^n$ with $\Omega \in \mathcal{C}$ and let $M_1,\dots, M_m$ be the matrices from \ref{item:A1}. Let $u_k:\Omega \rightarrow \R^n$ be the mapping obtained in the $k$-th iteration step of the Algorithm \ref{alg:convex_int}. Then, for $j=1,\dots,m$ we define \emph{the characteristic functions $\chi_k^{(j)}:\R^n \rightarrow \{0,1\}$ at step $k \in \N$ associated with $M_1,\dots, M_m$} by
\begin{align*}
\chi_k^{(j)}(x)=
\left\{
\begin{array}{ll}
1 &\mbox{ if } x \in \Omega \mbox{ and } \dist(\nabla u_k(x), GM_j)< \dist(\nabla u_k(x), \bigcup_{i\neq j} G M_{i}),\\
0 &\mbox{ else},
\end{array} \right.
\end{align*}
if $j<m$, and $\chi_k^{(m)}(x)=1- \sum\limits_{j=1}^{m-1}\chi_k^{(j)}(x)$.\\
Similarly, for a given solution $ u \in W^{1,\infty}(\Omega) $ to \eqref{eq:incl} we define \emph{the associated underlying characteristic functions by}
\begin{align*}
\chi_u^{(j)}(x)=
\left\{
\begin{array}{ll}
1 &\mbox{ if } x \in \Omega \mbox{ and } \dist(\nabla u(x), GM_j)< \dist(\nabla u(x),\bigcup_{i\neq j} G M_{i} ),\\
0 &\mbox{ else},
\end{array} \right.
\end{align*}
if $j<m$, and $\chi_u^{(m)}(x)=1-\sum\limits_{j=1}^{m-1}\chi_u^{(j)}(x)$.
\end{defi}

With this notation at hand, we can formulate a replacement result for the level sets of $\nabla u$:

\begin{prop}
\label{prop:char_reg}
Let $\Omega \subset \R^n$ with $\Omega \in \mathcal{C}$. There exists $ \theta_0
\in (0,1)$ such that for all values $s\in (0,1)$, $p\in (1,\infty)$ with $0<s p < \theta_0$ and for any $M_0 \in  \inte(K^{lc})$ there exist solutions $u \in W^{1,\infty}(\Omega)$ of \eqref{eq:incl_K}
with underlying characteristic functions $\chi^{(1)}_u,\dots,\chi^{(m)}_u \in W^{s,p}(\Omega)$.
The constant $\theta_0$ only depends on $n\in \N$ and the set $K$, but not on the choice of $M_0\in  \inte(K^{lc})$.
\end{prop}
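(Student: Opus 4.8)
The plan is to run Algorithm \ref{alg:convex_int} to produce a sequence $(u_k)_{k\in\N}$ of piecewise affine maps, show that $\nabla u_k$ converges in $L^1$ to the gradient of a solution $u$ of \eqref{eq:incl_K}, and then apply the interpolation inequality of Proposition \ref{prop:CDDD} to the characteristic functions $\chi_k^{(j)}$ to upgrade this to $W^{s,p}$ regularity. Concretely, I would first invoke Lemma \ref{lem:well-def} so that the algorithm is well-defined and every domain, after finitely many steps, enters case ($c_2$); this is where the uniform constants $C_1,C_2,v_1,c_2$ of \ref{item:A1}–\ref{item:A4} take over from the (initial-data dependent) constants $C_0$ associated with the $\tilde U_k^j$. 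The qualitative convergence $\nabla u_k \to \nabla u$ with $\nabla u \in K$ a.e.\ follows from the in-approximation property in \ref{item:A2} together with the exponential $L^1$-control in \ref{item:A4} (this is the analogue of the Müller–Šverák scheme \cite{MS}); the boundary condition $\nabla u = M_0$ outside $\Omega$ is preserved at every step by construction, since each replacement leaves $u_k$ unchanged on $\partial\Omega^{k,i}$.

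The quantitative heart of the argument is to track two competing quantities for the characteristic functions $\chi_k^{(j)}$: an $L^1$-convergence rate and a $BV$-growth rate. For the $L^1$ side, I would show $\|\chi_{k+1}^{(j)} - \chi_k^{(j)}\|_{L^1(\Omega)} \leq C \beta^k$ for some $\beta \in (0,1)$: on each level set contained in $\Omega^{k,i}_g$ the new gradient differs from the old by at most $C_3 c_2^k$ outside an exceptional set of relative measure $C_3 c_2^k$ (Assumption \ref{item:A4}), so $\chi_k^{(j)}$ can change only on a set whose measure decays geometrically — this is the content of a Lemma of the type of \cite{RZZ16}, which in the present framework should appear as Lemma \ref{lem:L1_in} referenced in the outline. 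Summing the geometric series gives $\chi_k^{(j)}\to\chi_u^{(j)}$ in $L^1$ with $\|\chi_u^{(j)} - \chi_k^{(j)}\|_{L^1} \leq C\beta^k$, and one checks that $\chi_u^{(j)}$ is indeed the characteristic function of the solution $u$. For the $BV$ side, I would use the perimeter estimates in \ref{item:A3}, \ref{item:A3tilde}: each replacement multiplies the total perimeter of the partition by at most a uniform factor (governed by $C_1$ and $C_2$ once one is in case ($c_2$), with the finitely many initial steps contributing a fixed multiplicative constant depending on $C_0$), so $\|\chi_k^{(j)}\|_{BV(\Omega)} \leq C \Lambda^k$ for some $\Lambda > 1$. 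Note the uniform boundedness $\|\nabla u_k\|_{L^\infty}\le C_3$ from \ref{item:A4} gives $\|\chi_k^{(j)}\|_{L^\infty}\le 1$ trivially.

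Finally, I would feed these two bounds into Proposition \ref{prop:CDDD}. Writing $f = \chi_u^{(j)}$ and using $f = \chi_k^{(j)} + (f - \chi_k^{(j)})$, one estimates $\|f\|_{W^{\tilde\theta,q}}$ by interpolating: the telescoped differences $\chi_{k+1}^{(j)} - \chi_k^{(j)}$ have $L^1$ norm $\lesssim \beta^k$ and $BV$ norm $\lesssim \Lambda^k$, hence $W^{\tilde\theta,q}$-norm $\lesssim \beta^{k(1-\theta_0)/q'\cdot\text{(stuff)}}\Lambda^{k\theta_0 (\cdots)}$, and the series over $k$ converges provided $\theta_0$ is small enough that the $\beta$-gain beats the $\Lambda$-loss — precisely, provided $\theta_0 < \log(1/\beta)/\log(\beta^{-1}\Lambda)$ or a similar explicit threshold depending only on $n$ and the constants in \ref{item:A1}–\ref{item:A4}, hence only on $n$ and $K$. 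Setting $\tilde\theta = sp/p = s$ with $q = p$ and $sp < \theta_0$ yields $\chi_u^{(j)}\in W^{s,p}(\Omega)$. The main obstacle is the bookkeeping in the $BV$ estimate: one must verify that the perimeter of the \emph{cumulative} partition $\hat\Omega_k$ grows only geometrically with a \emph{uniform} ratio, which requires combining the different cases \ref{item:A3tilde}(i)–(iii) correctly and controlling the finitely many $\tilde U$-steps separately so that the $C_0(j,k)$-dependence is absorbed into the prefactor $C$ rather than the exponent — this is what ultimately makes $\theta_0$ independent of $M_0$.
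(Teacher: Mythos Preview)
Your proposal is correct and follows essentially the same approach as the paper: you identify the two key ingredients (the $L^1$ decay of $\chi_{k+1}^{(j)}-\chi_k^{(j)}$ from Lemma \ref{lem:L1_in} and the geometric $BV$ growth from Lemma \ref{lem:BV_in}), feed them into the interpolation inequality of Proposition \ref{prop:CDDD} applied to the telescoped differences, and sum the resulting geometric series, choosing $\theta_0$ so that $\tilde{c}^{1-\theta_0}(3\max(C_1,C_2))^{\theta_0}<1$. You also correctly pinpoint the crucial bookkeeping point---that the $C_0(j,k)$-factors occur at most $m+1$ times along any descendant chain and hence enter only the prefactor, not the exponential rate---which is exactly the content of the paper's proof of Lemma \ref{lem:BV_in}.
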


\begin{rmk}
\label{rmk:frac}
We remark that as a direct consequence of Proposition \ref{prop:char_reg} we obtain a bound on the packing dimension of the sets $\Omega_i:=\{x\in \Omega: \chi_u^{(i)}(x)=1\}$. This follows as in Remark 2.3 in \cite{RZZ16}.
\end{rmk}

Proposition \ref{prop:char_reg} will be derived as a consequence of the $L^1$ and $BV$ estimates from Sections \ref{sec:L1} and \ref{sec:BV}.

\subsection{The $L^1$ bound}
\label{sec:L1}

We first discuss the $L^1$ estimate. In contrast to the piecewise affine convex
integration scheme that was used in \cite{RZZ16}, none of the level sets of the
gradient becomes ``stable" after a finite number of iteration steps. In spite of
this, the replacement constructions of \ref{item:A3tilde}, \ref{item:A3} allow us to conclude similar decay properties for the $L^1$ norm of differences of the characteristic functions $\chi_k^{(1)}, \dots, \chi_k^{(m)}$ (see Figure \ref{fig:tree}).

\begin{lem}[$L^1$ control]
\label{lem:L1_in}
Let $\Omega \subset \R^n$ with $\Omega \in \mathcal{C}$ and let the constants $c_{2}\in(0,1)$ and $v_{1}\in(0,1)$ be as in \ref{item:A3tilde}-\ref{item:A4}. 
Further suppose that $u_k:\Omega \rightarrow \R^n$ is the deformation, which is obtained in the $k$-th step of the convex integration scheme from Algorithm \ref{alg:convex_int}. 
Then, for $i\in\{1,\dots,m\}$ and for some constant $C>1$ depending only on the
uniform constants in \ref{item:A1}-\ref{item:A4}, it holds that
\begin{align*}
\|\chi_{k+1}^{(i)}-\chi_{k}^{(i)}\|_{L^1(\Omega) }
\leq C \tilde{c}^{k}|\Omega|,
\end{align*}
where $\tilde{c}: = v_{1}c_{2}^{1/m}+(1-v_{1})\in (0,1).$
\end{lem}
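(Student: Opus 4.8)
The plan is to track, on each domain $\tilde\Omega \in \hat\Omega_k$, how much the characteristic functions $\chi^{(i)}_k$ can change when we pass from step $k$ to step $k+1$, and then to sum these contributions over the tree of domains. The starting observation is that on $\Omega \setminus \tilde\Omega_g$ (where $\tilde\Omega_g := \bigcup_j \Omega^{k,j}_g$) nothing changes: $\nabla u_{k+1} = \nabla u_k$ there, so $\chi^{(i)}_{k+1} = \chi^{(i)}_k$ pointwise and these regions contribute nothing to $\|\chi^{(i)}_{k+1} - \chi^{(i)}_k\|_{L^1}$. Hence only the ``good'' sets $\Omega^{k,i}_g$ matter, and there $|\Omega^{k,i}_g| \geq v_1 |\Omega^{k,i}|$, i.e. at least a $v_1$-fraction of each domain has been touched. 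The subtle point, which replaces the ``stable level sets'' mechanism of \cite{RZZ16}, is that on a domain where the iteration has already run long enough (case $(c_2)$, which by Lemma \ref{lem:well-def} happens once $q_k \geq m$), Assumption \ref{item:A4} gives $|\nabla u_{k+1} - M| \leq C_3 c_2^k$ on a subset $\tilde\Omega_g^\star$ with $|\tilde\Omega_g \setminus \tilde\Omega_g^\star| \leq C_3 c_2^k |\tilde\Omega_g|$, and the matrix $M$ itself lies in the small neighborhood of $K$ coming from \ref{item:A2}; by the distance bound $c_1$ in \ref{item:A1}, for $k$ large the closest connected component $GM_i$ is the \emph{same} for $\nabla u_{k+1}$ and $\nabla u_k$ except on the exceptional set of relative measure $\lesssim c_2^k$. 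So on a domain in case $(c_2)$ the characteristic functions change only on a fraction $\lesssim c_2^k$ of it.

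Next I would organize the bookkeeping by the quantity $q_k(\tilde\Omega)$, the number of times a domain has been modified. At step $k$ the collection $\hat\Omega_k$ splits into domains with $q_k < m$ and domains with $q_k \geq m$. For the first group: each replacement step moves a $v_1$-fraction of a domain into the ``good'' set (where $q$ increments) and leaves $(1-v_1)$ untouched, so after $k$ steps the total measure of domains that have been modified fewer than $m$ times is bounded by a binomial-type tail; more precisely one shows $\sum_{\tilde\Omega:\, q_k(\tilde\Omega) < m} |\tilde\Omega| \leq C (1-v_1)^{\lfloor k/?\rfloor}\cdots$ — really one bounds it by the probability that a sum of $k$ Bernoulli$(v_1)$ trials is less than $m$, which for fixed $m$ decays like a constant times $(1-v_1)^{k}$ times a polynomial in $k$, hence is $\leq C \tilde c^k$ after absorbing. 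On these domains $\chi^{(i)}$ can change on all of the good part, contributing at most $\sum |\Omega^{k,i}_g| \leq \sum |\tilde\Omega| \leq C\tilde c^k|\Omega|$. For the second group ($q_k \geq m$), Lemma \ref{lem:well-def} puts us in case $(c_2)$, so by the previous paragraph the change of $\chi^{(i)}$ is supported on a set of measure $\leq C c_2^k |\tilde\Omega_g| \leq C c_2^k |\tilde\Omega|$; summing over this group gives $\leq C c_2^k |\Omega|$. Combining, $\|\chi^{(i)}_{k+1} - \chi^{(i)}_k\|_{L^1(\Omega)} \leq C\big((1-v_1)^k \,\mathrm{poly}(k) + c_2^k\big)|\Omega|$, and one checks this is $\leq C \tilde c^k |\Omega|$ with $\tilde c = v_1 c_2^{1/m} + (1-v_1)$, the point being that $\tilde c$ dominates both $(1-v_1)$ and $c_2^{1/m} > c_2$, with a bit of room to swallow the polynomial factor.

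The main obstacle is getting the constant $\tilde c = v_1 c_2^{1/m} + (1-v_1)$ exactly, rather than merely \emph{some} constant in $(0,1)$. The natural argument gives a max of two decaying terms; producing the stated weighted-average form requires a more careful, level-by-level accounting of the tree — essentially tracking, for the mass that sits in a domain with $q_k = q < m$, that it needs $m-q$ further ``good'' hits (each costing a factor $v_1$ in measure) before the $c_2^{1/m}$-per-step decay of \ref{item:A4} kicks in, and recognizing $\tilde c^k = (v_1 c_2^{1/m} + (1-v_1))^k$ as exactly the generating function of this branching process via the binomial theorem. A secondary technical point is making the ``closest component is constant'' step of \ref{item:A1}–\ref{item:A2} rigorous in the unbounded-$G$ setting: one uses the uniform bound $|\nabla u| \leq C_3$ from \ref{item:A4} to reduce to the compact set $K^{lc} \cap B_{C_3}(0)$, on which $\mathrm{dist}(\,\cdot\,, K) \to 0$ forces the nearest-component map to stabilize, quantitatively with threshold governed by $c_1$. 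Both are routine once the structure is laid out, so the heart of the proof is the combinatorial/measure-theoretic summation over the domain tree.
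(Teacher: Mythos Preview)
Your overall outline is right, and your final paragraph correctly identifies the mechanism the paper actually uses. But the crude two-group split you describe first has a genuine gap, and it is not merely a matter of missing the exact constant $\tilde c$.

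The error is in the second group. You claim that for $\tilde\Omega$ with $q_k(\tilde\Omega)\ge m$ the change of $\chi^{(i)}$ is supported on a set of measure $\le C c_2^{\,k}|\tilde\Omega|$, with $k$ the algorithmic step. But the exponent produced by \ref{item:A4} is the in-approximation depth of $M=\nabla u_k|_{\tilde\Omega}\in U_{l_k(\tilde\Omega)}^{j_k(\tilde\Omega)}$, namely $c_2^{\,l_k(\tilde\Omega)}$, not $c_2^{\,k}$. Since $l_k(\tilde\Omega)\sim q_k(\tilde\Omega)/m$, a domain that has only just reached case $(c_2)$ (say $q_k=m$) has $l_k\approx m_0+1$, a fixed constant, and $c_2^{\,l_k}$ gives no $k$-decay. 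So the bound $\sum_{q_k\ge m}Cc_2^{\,k}|\tilde\Omega|\le Cc_2^{\,k}|\Omega|$ is unjustified. Even after replacing $c_2^{\,k}$ by the correct $c_2^{\,q_k(\tilde\Omega)/m}$, the split does not close: one can only show $|\{q_k=q\}|\le \binom{k}{q}(1-v_1)^{k-q}|\Omega|$ (no $v_1^q$ factor, since the good fraction is bounded from \emph{below} by $v_1$), and then $\sum_{q} c_2^{\,q/m}\binom{k}{q}(1-v_1)^{k-q}=(c_2^{1/m}+1-v_1)^k$, whose base may exceed $1$. The weight $c_2^{\,q/m}$ and the measure distribution cannot be decoupled.

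The fix is exactly what you sketch at the end and what the paper does: set $\mathcal E_k:=\sum_{\tilde\Omega\in\hat\Omega_k} c_2^{\,q_k(\tilde\Omega)/m}|\tilde\Omega|$ and establish the one-step recursion $\mathcal E_{k+1}\le \tilde c\,\mathcal E_k$ by splitting each $\tilde\Omega$ into $\tilde\Omega_g$ (where $q$ increments, picking up a factor $c_2^{1/m}$) and its complement, and noting that $v\mapsto c_2^{1/m}v+(1-v)$ is decreasing so the worst volume fraction is $v=v_1$. Separately one proves the local estimate $\|\chi^{(i)}_{k+1}-\chi^{(i)}_k\|_{L^1(\tilde\Omega)}\le C c_2^{\,q_k(\tilde\Omega)/m}|\tilde\Omega|$: trivially for $q_k$ below a fixed threshold $l_0$ (absorbed into $C$), and for $q_k\ge l_0$ via your nearest-component stability argument on $\tilde\Omega_g^\star$ together with $|\tilde\Omega_g\setminus\tilde\Omega_g^\star|\le C_3c_2^{\,l_k}|\tilde\Omega|\le C c_2^{\,q_k/m}|\tilde\Omega|$. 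Summing gives $\|\chi^{(i)}_{k+1}-\chi^{(i)}_k\|_{L^1(\Omega)}\le C\mathcal E_k\le C\tilde c^{\,k}|\Omega|$.
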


\begin{figure}[t]
  \centering
 \includegraphics[width=0.5\linewidth, page=1]{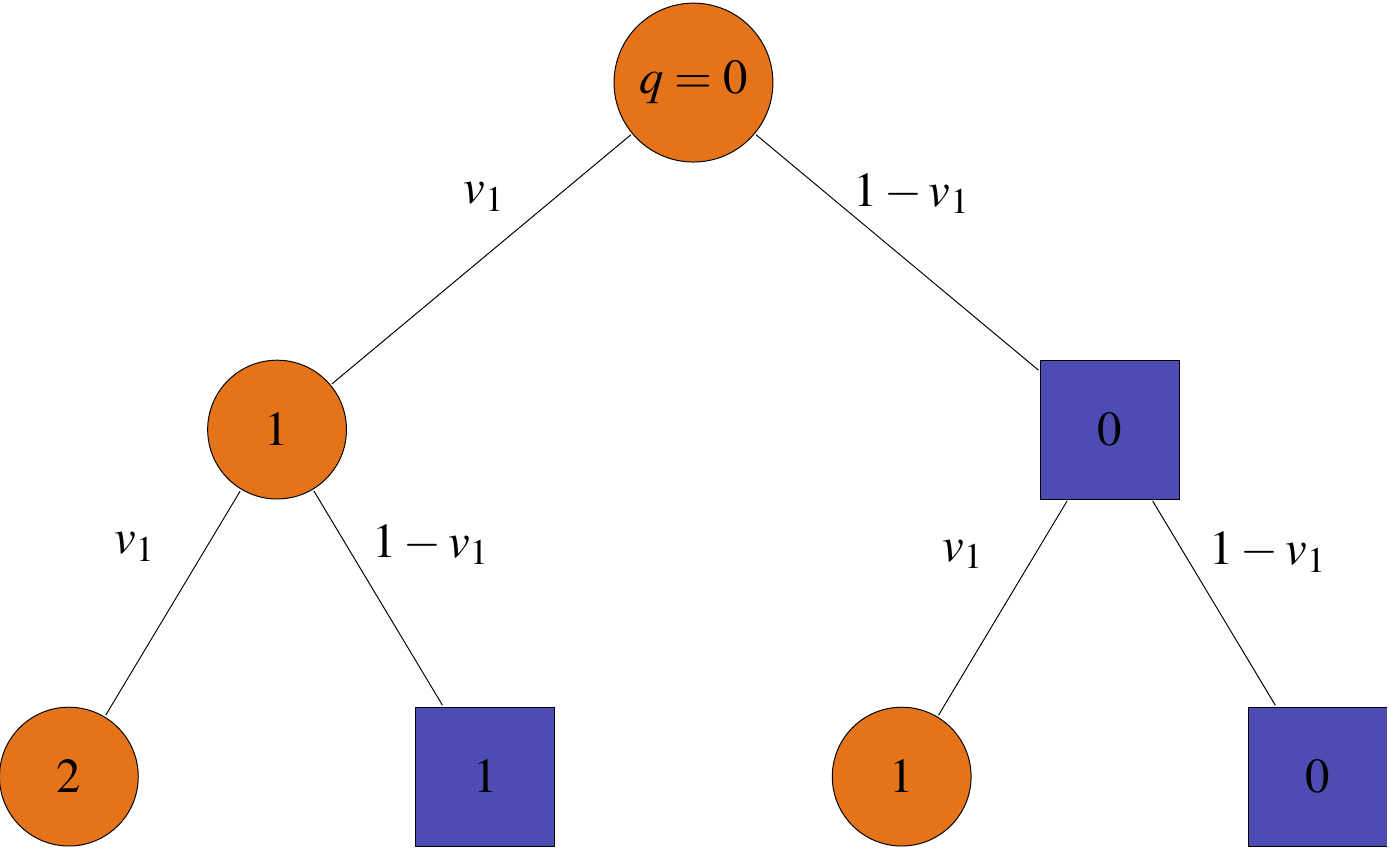}
  \caption{The binomial tree, which yields the desired $L^1$ bound in the case that the volume fractions are always given by $v_1$ and $1-v_1$. In this case the distribution of good versus bad sets at a certain stage would be binomially distributed. In the general case in which the ``good" volume fraction is only bounded below by $v_1$, the distribution is not exactly binomial, but still retains similar properties (see Lemma \ref{lem:L1_in}).}
  \label{fig:tree}
\end{figure}

\begin{proof}
    We claim that it suffices to show that
    \begin{align}
      \label{eq:3}
      \|\chi_{k+1}^{(i)}- \chi_{k}^{(i)}\|_{L^{1}(\tilde{\Omega})} \leq C c_{2}^{q_{k}(\tilde{\Omega})/m} |\tilde{\Omega}|
    \end{align}
    for each $\tilde{\Omega} \in \hat{\Omega}_{k}$.

    Indeed, assume that \eqref{eq:3} holds and sum over all $\tilde{\Omega} \in
    \hat{\Omega}_{k}$ to obtain that
    \begin{align*}
      \|\chi_{k+1}^{(i)}-\chi_{k}^{(i)}\|_{L^{1}(\Omega)}= \sum_{\tilde{\Omega} \in
      \hat{\Omega}_{k}}\|\chi_{k+1}^{(i)}-\chi_{k}^{(i)}\|_{L^{1}(\tilde{\Omega})} \\
      \leq C \sum_{\tilde{\Omega} \in
      \hat{\Omega}_{k}} c_{2}^{q_{k}(\tilde{\Omega})/m}|\tilde{\Omega}| =:C\mathcal{E}_{k}[c_{2}^{q/m}].
    \end{align*}
    We here interpret our partition of $\Omega$ into smaller sets as a ``Bernoulli-type"
    experiment with probabilities equal to the volume fractions (c.f. Figure
    \ref{fig:tree}) and think of $\mathcal{E}_{k}[c_{2}^{q/m}]$ as the expected
    value of the variable $c_2^{q_k(\cdot)/m}$.
    We remark that if our constructions in \ref{item:A3tilde}, \ref{item:A3} always used the same volume fraction $v=v_1$ in each
    step, then $q_k$ would be binomially distributed and hence
    \begin{align*}
      \mathcal{E}_{k}[c_{2}^{q/m}]= \sum_{l=0}^{k} {k \choose l} v^{l}_1(1-v_1)^{k-l} c_{2}^{l/m}|\Omega|= (1-v_1+c^{1/m}_{2}v_1)^{k}|\Omega| \leq \tilde{c}^{k}|\Omega|.
    \end{align*}
    In the present slightly more general case, this identity is replaced by an estimate which
    follows by first noting that
    \begin{align*}
      \mathcal{E}_{0}[c_{2}^{q/m}]= \mathcal{E}_{0}[1]=|\Omega|,
    \end{align*}
    and by secondly showing that 
    \begin{align}
      \label{eq:6}
      \mathcal{E}_{k+1}[c_{2}^{q/m}] \leq \tilde{c} \mathcal{E}_{k}[c_{2}^{q/m}],
    \end{align}
    which combined imply the result by induction on $k$.

    Let thus $k \in \N$, then
\begin{align*}
      \mathcal{E}_{k+1}[c_{2}^{q/m}] &= \sum_{\tilde{\Omega} \in
      \hat{\Omega}_{k+1}} c_{2}^{q_{k+1}(\tilde{\Omega})/m}|\tilde{\Omega}| 
      =\sum_{\overline{\Omega} \in
        \hat{\Omega}_{k}} \sum_{\tilde{\Omega} \in \mathcal{D}(\overline{\Omega})\cap \hat{\Omega}_{k+1}} c_{2}^{q_{k+1}(\tilde{\Omega})/m}|\tilde{\Omega}|\\
      &= \sum_{\overline{\Omega} \in
        \hat{\Omega}_{k}} \left(\sum_{\tilde{\Omega} \in \hat{\Omega}_{k+1}: \tilde{\Omega} \subset \overline{\Omega}_{g}}c_{2}^{q_{k+1}(\tilde{\Omega})/m}|\tilde{\Omega}| +
        \sum_{\tilde{\Omega} \in \hat{\Omega}_{k+1}: \tilde{\Omega} \subset (\overline{\Omega}\setminus \overline{\Omega}_{g})}c_{2}^{q_{k+1}(\tilde{\Omega})/m}|\tilde{\Omega}| \right) \\
       &= \sum_{\overline{\Omega} \in
        \hat{\Omega}_{k}} \left(\sum_{\tilde{\Omega} \in \hat{\Omega}_{k+1}: \tilde{\Omega} \subset \overline{\Omega}_{g}}c_{2}^{(q_{k}(\overline{\Omega})+1)/m}|\tilde{\Omega}| +
         \sum_{\tilde{\Omega} \in \hat{\Omega}_{k+1}: \tilde{\Omega} \subset (\overline{\Omega}\setminus \overline{\Omega}_{g})}c_{2}^{q_{k}(\overline{\Omega})/m}|\tilde{\Omega}| \right) \\
      &= \sum_{\overline{\Omega} \in \hat{\Omega}_{k}}  c_{2}^{q_{k}(\overline{\Omega})/m} \left(c_{2}^{1/m} |\overline{\Omega}_{g}| + |\overline{\Omega}\setminus \overline{\Omega}_{g}| \right) \\
      &= \sum_{\overline{\Omega} \in \hat{\Omega}_{k}} c_{2}^{q_{k}(\overline{\Omega})/m} |\overline{\Omega}| \left(c_{2}^{1/m} \frac{|\overline{\Omega}_{g}|}{|\overline{\Omega}|} + 1 \cdot \frac{ |\overline{\Omega}\setminus \overline{\Omega}_{g}|}{|\overline{\Omega}|}\right) \\
&\leq \sum_{\overline{\Omega} \in \hat{\Omega}_{k}} c_{2}^{q_{k}(\overline{\Omega})/m} |\overline{\Omega}| 
\max\limits_{v\geq v_1} \left( c_2^{1/m} v + (1-v) \right) \\      
&= \sum_{\overline{\Omega} \in \hat{\Omega}_{k}} c_{2}^{q_{k}(\overline{\Omega})/m} |\overline{\Omega}| 
\left( c_2^{1/m} v_1 + (1-v_1) \right)\\
      & = \tilde{c} \mathcal{E}_{k}[c_{2}^{q/m}],
    \end{align*}
where we used that $|\overline{\Omega}_g|\geq v_1 |\overline{\Omega}|$ by the conditions in \ref{item:A3tilde}, \ref{item:A3}.

It remains to present the argument for \eqref{eq:3} which is deduced from \ref{item:A4} in the following way:
    Let $l_{0} \in \N$ to be fixed later and consider the sets $\tilde{\Omega}$
    with $q_{k}(\tilde{\Omega})=l \leq l_{0}$.
    Then, 
    \begin{align*}
      \|\chi_{k+1}^{(i)}- \chi_{k}^{(i)}\|_{L^{1}(\tilde{\Omega})} \leq |\tilde{\Omega}| \leq C c_{2}^{l/m} |\tilde{\Omega}|,
    \end{align*}
    provided $C$ is chosen such that $C c_{2}^{l_{0}/m}\geq 1$.
    
    We may hence focus on sets such that $q_{k}(\tilde{\Omega})\geq l_{0}$.
    We claim that, if $l_{0}$ is chosen sufficiently large, the conditions \ref{item:A1},
    \ref{item:A2} and \ref{item:A4} ensure the following implication: 
    \begin{align}
    \label{eq:claim_a}
     \mbox{If } |\nabla u_{k+1}-\nabla u_k| \leq C c_{2}^{k+1} \mbox{on } \tilde{\Omega}_{g}^{\star}, \mbox{ then } \chi_{k+1}^{(i)}= \chi_{k}^{(i)} \mbox{ on } \tilde{\Omega}_{g}^{\star}.
    \end{align}
    Using \ref{item:A4}, we then estimate 
     \begin{align*}
     \|\chi_{k+1}^{(i)}- \chi_{k}^{(i)}\|_{L^{1}(\tilde{\Omega})} 
     &=\|\chi_{k+1}^{(i)}- \chi_{k}^{(i)}\|_{L^{1}(\tilde{\Omega}_{g})} 
       = \|\chi_{k+1}^{(i)}- \chi_{k}^{(i)}\|_{L^{1}(\tilde{\Omega}_{g}\setminus \tilde{\Omega}_{g}^{\star})}
\\ 
& \leq  |\tilde{\Omega}_{g}\setminus \tilde{\Omega}_{g}^{\star}| \leq C_3 c_2^{\max(l_{k}(\tilde{\Omega}),m_{0})}|\tilde{\Omega}| \leq C_3 c_2^{q_{k}(\tilde{\Omega})/m}|\tilde{\Omega}|.
    \end{align*}

    It remains to prove the claim in \eqref{eq:claim_a}, which follows from a triangle inequality.
    More precisely, choose $l_{0}$ sufficiently large such that
    $\sup\{\dist(M, K): M \in U_{i}^{j}\} \leq \frac{c_{1}}{3}$ for all $i
    \geq l_{0}/m$, where $c_{1}$ denotes the
    distance of the wells, as given in \ref{item:A1}.
    Then $\nabla u_{k} \in U_{l_{k}}^{j_{k}}$ implies that $\dist(\nabla u_k, G M_{i})<
    \frac{c_{1}}{3}$ for some $i \in \{1, \dots, m\}$.
    
    After possibly further increasing $l_{0}$, condition \ref{item:A4} then yields
    \begin{align*}
      |\nabla u_{k+1}- \nabla u_{k}| < C c_{2}^{l_{k}(\tilde{\Omega})} < \frac{c_{1}}{3}.
    \end{align*}
    
    Assume then for the sake of contradiction, that $\nabla u_{k+1}$ is closest
    to a different well $GM_{j}$, then
    \begin{align*}
      c_{1} &\leq \dist (GM_{i}, GM_{j}) \leq \dist(GM_{i}, \nabla u_k) + \dist(\nabla u_k, \nabla u_{k+1}) + \dist(\nabla u_{k+1}, GM_{j}) \\
      & < \frac{c_{1}}{3}+\frac{c_{1}}{3}+\frac{c_{1}}{3} <c_{1},
    \end{align*}
    which yields a contradiction.
\end{proof}

\begin{rmk}
\label{rmk:existence_pointwise_limit} 
As a consequence of Lemma \ref{lem:L1_in} we infer that $\{\chi_k^{(i)}\}$, $i\in\{1,\dots,m\}$, is a Cauchy sequence in $L^1(\Omega)$: Indeed, for $k_0,k_1 \in \N$ with $k_0<k_1$ we have that
\begin{align*}
\|\chi_{k_1}^{(i)}-\chi_{k_0}^{(i)}\|_{L^1(\Omega)}
\leq C|\Omega| \sum\limits_{j=k_0}^{k_1} \tilde{c}^{j} \leq C |\Omega| \frac{1}{1-\tilde{c}}\tilde{c}^{k_0} \rightarrow 0 \mbox{ as } k_0 \rightarrow \infty.
\end{align*}
In particular, there exist functions $\chi^{(i)} \in \{0,1\}$, $i\in\{1,\dots,m\}$, such that $\chi_k^{(i)} \rightarrow \chi^{(i)}$ as $k \rightarrow \infty$. \\
If $u\in W^{1,\infty}(\Omega)$ denotes the associated solution to \eqref{eq:incl} we have that $\chi^{(i)}=\chi^{(i)}_u$ for $i \in \{1,\dots,m\}$.
\end{rmk}

\subsection{The BV bound}
\label{sec:BV}

The $BV$ estimate for the characteristic functions $\chi_k^{(i)}$, $i \in \{1,\dots,m\}$, is obtained analogously to the estimates from \cite{RZZ16}. Here we estimate very crudely without taking into account a lot of structure of the underlying deformation. We only rely on the observation that the gradient is piecewise constant, that all jump heights between different values of $\nabla u_k$ are uniformly bounded by assumption \ref{item:A4} and that the overall size of the jump set of $\nabla u_k$ is bounded by the perimeter of the sets in $\hat{\Omega}_k$.

\begin{lem}[$BV$ control]
\label{lem:BV_in}
Let $\Omega \subset \R^n$ with $\Omega \in \mathcal{C}$. Let $U_k^j, \tilde{U}_k^j$ be the in-approximation from \ref{item:A3tilde}.
Let $u_k:\Omega \rightarrow \R^n$ denote the map, which is obtained in the $k$-th step of the convex integration scheme from Algorithm \ref{alg:convex_int} with prescribed boundary data $M_0 \in \inte(K^{lc})$.
Then, for $i\in\{1,\dots,m\}$ and $C_{0},C_{1},C_{2}$ as in \ref{item:A3} and \ref{item:A3tilde}
\begin{align*}
\| \chi_{k+1}^{(i)}-\chi_k^{(i)}\|_{BV(\Omega)} \leq \frac{\max(C_{0},C_1,C_2)^{m+1}}{\max(C_{1},C_{2})^{m+1}} \max(C_{1},C_{2})^{k} 3^{k} |\Omega|.
\end{align*}
\end{lem}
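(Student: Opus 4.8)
The goal is to control $\|\chi^{(i)}_{k+1}-\chi^{(i)}_k\|_{BV(\Omega)}$ by bounding separately the $L^1$ part (already done crudely: the difference is bounded by $|\Omega|$, but we want the same $BV$-type bound, so we subsume it) and, more importantly, the total variation part, i.e. the size of the jump set of $\chi^{(i)}_{k+1}-\chi^{(i)}_k$ weighted by jump heights. Since $\chi^{(i)}_{k+1}-\chi^{(i)}_k$ takes values in $\{-1,0,1\}$, its total variation is at most twice the $\mathcal H^{n-1}$-measure of the union of boundaries of the level sets of $\nabla u_{k+1}$ (any jump in the characteristic functions occurs only across a jump in $\nabla u_{k+1}$, or in $\nabla u_k$, whose jump set is contained in that of $\nabla u_{k+1}$). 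Hence it suffices to bound $\sum_{\overline\Omega \in \hat\Omega_{k+1}} \Per(\overline\Omega)$ by $C \max(C_1,C_2)^{k} 3^{k}|\Omega|$ (up to the stated prefactor), and then combine with $\|\chi^{(i)}_{k+1}-\chi^{(i)}_k\|_{L^1(\Omega)} \le |\Omega|$.

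\textbf{Key steps.} First I would set up the recursion on the perimeter functional $P_k := \sum_{\overline\Omega \in \hat\Omega_k}\Per(\overline\Omega)$. For each $\overline\Omega \in \hat\Omega_k$, Algorithm~\ref{alg:convex_int} replaces it by its children in $\hat\Omega_{k+1}$: the ``good'' part $\overline\Omega_g$ is subdivided into level sets satisfying, by \ref{item:A3tilde}(i) or \ref{item:A3}, a perimeter bound $\le C_0 \Per(\overline\Omega)$ (or $\le C_1\Per(\overline\Omega)$ in the uniform-$U^j_k$ case), while the ``bad'' part $\overline\Omega\setminus\overline\Omega_g$ contributes $\le C_2\Per(\overline\Omega)$ (using \ref{item:A3tilde}(ii)/(iii) or again \ref{item:A3}). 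So each set's perimeter contribution is multiplied by at most $C_0+C_2$ (resp. $C_1+C_2 \le 2\max(C_1,C_2)$) in one step — but this is the subtle point: the constant $C_0$ is allowed to depend on $(j,k)$ and only becomes the uniform $C_1$ once the iteration has entered the $U^j_k$-regime. However, by Lemma~\ref{lem:well-def}, case $(c_1)$ (where the non-uniform $C_0$ appears) persists only until $q_k$ reaches $m$, i.e. for a bounded number of initial steps determined by $m_0$ and $m$; after that every set is in case $(c_2)$ with uniform $C_1$. Thus $P_k \le P_{m_0+m}\cdot(2\max(C_1,C_2))^{k-(m_0+m)}$ for $k \ge m_0+m$, and $P_{m_0+m}$ is bounded by $(\max(C_0,C_1,C_2))^{m_0+m}\Per(\Omega)$ — but since we want a bound uniform in $M_0$ and the prefactor in the statement is $\frac{\max(C_0,C_1,C_2)^{m+1}}{\max(C_1,C_2)^{m+1}}$, I would organize the bookkeeping so that over any run of $m$ consecutive steps the cumulative multiplication is $\le \max(C_0,C_1,C_2)^{m}$ rather than $C_0^m$, i.e. one exploits that within each ``period'' only finitely many (at most one per $j$, so at most $m$) applications carry the non-uniform constant. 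Iterating $k$ times and keeping track of the crude factor $3$ per step (coming from bounding the number of children and the jump heights via \ref{item:A4}, exactly as in the corresponding estimate in \cite{RZZ16}) yields $P_{k} \le \frac{\max(C_0,C_1,C_2)^{m+1}}{\max(C_1,C_2)^{m+1}}\max(C_1,C_2)^k 3^k \Per(\Omega)$, and since $\Per(\Omega) \lesssim |\Omega|$ for $\Omega \in \mathcal C$ one absorbs this into $|\Omega|$.

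\textbf{Main obstacle.} The genuinely delicate bookkeeping is the interplay between the non-uniform constant $C_0=C_0(j,k)$ from \ref{item:A3tilde} and the uniform constants $C_1,C_2$: one must verify that $C_0$ enters the product only a bounded number of times (controlled by $m$), independently of how deep the iteration goes, so that the exponential growth rate in $k$ is governed purely by $\max(C_1,C_2)\cdot 3$ and the $M_0$-dependence sits only in a fixed multiplicative prefactor. This is where Lemma~\ref{lem:well-def} (once in case $(c_2)$, always in case $(c_2)$) and the structure of the $j$-counter mod $m$ in \eqref{eq:iterstep} do the essential work; everything else — bounding the characteristic-function jump set by the level-set boundaries, the factor $3$, and converting $\Per(\Omega)$ to $|\Omega|$ — is routine and parallels \cite{RZZ16}. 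I would also remark that the cases \ref{item:A3tilde}(ii) versus (iii) are handled identically at the level of this estimate, since both yield a bound of the asserted form with constants $C_0,C_2$.
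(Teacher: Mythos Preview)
Your overall strategy---bound the $BV$ norm by the total perimeter $\sum_{\overline\Omega\in\hat\Omega_{k+1}}\Per(\overline\Omega)$ and then control this perimeter sum inductively---matches the paper's approach. The gap is in how you control the perimeter sum.

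You attempt a \emph{global} recursion on $P_k=\sum_{\overline\Omega\in\hat\Omega_k}\Per(\overline\Omega)$ and claim that after $m_0+m$ steps every region has entered case~$(c_2)$, so that $P_k\le P_{m_0+m}\,(2\max(C_1,C_2))^{k-(m_0+m)}$. This is false: a region that repeatedly lands in the ``bad'' part $\tilde\Omega\setminus\tilde\Omega_g$ never increments $q_k$, so it can remain in case~$(c_1)$ for all $k$. There is no global step at which every level set has transitioned to $(c_2)$. Your later hedge (``within each period only finitely many applications carry the non-uniform constant'') gestures toward the right idea but does not repair the argument, because the bookkeeping must be done \emph{per chain of descendants}, not per block of $m$ global iterations.

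The paper instead weights each child $\Omega'$ by a factor $\delta\in\{1/C_0,1/C_1,1/C_2\}$ according to which subcase of \ref{item:A3tilde} or \ref{item:A3} produced it, so that the weighted perimeter satisfies a uniform one-step bound with factor $3$ (this is the origin of the $3^k$: three subcases, each contributing at most $\Per(\tilde\Omega)$ to the weighted sum---not jump heights or \ref{item:A4} as you suggest). Iterating yields, along any fixed chain $\Omega\supset\Omega^1\supset\cdots\supset\Omega^{k+1}$, a product of the corresponding $C_*$'s. The crucial combinatorial lemma, which your proposal is missing, is that along any single chain the factor $C_0$ can occur at most $m+1$ times: each occurrence either puts the chain into $\tilde\Omega_g$ (so $q$ increments) or into $\tilde\Omega^{[1]}\in\mathcal C^1$, from which the chain is trapped in case~(ii) with constant $C_2$ until it next enters a good set (again incrementing $q$). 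Since $q\ge m$ forces case~$(c_2)$, this caps the number of $C_0$'s. Summing over all chains then gives the stated bound. Finally, the conversion $\Per(\Omega)\lesssim|\Omega|$ you invoke is dimensionally inconsistent; the paper's bound is really in terms of $\Per(\Omega)$, with the $|\Omega|$ in the statement absorbing the fixed ratio for the given domain.
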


\begin{rmk}
\label{rmk:dep}
We emphasize that -- as $C_1,C_2$ are uniform constants -- the $BV$ estimate in Lemma \ref{lem:BV_in} is such that the exponentially growing constant does \emph{not} depend on the position of the boundary data $M_0$ in matrix space. It thus provides the basis of the \emph{uniform} dependence of the regularity modulus $sp$ in Proposition \ref{prop:char_reg}.
\end{rmk}

\begin{proof}
  We bound
    \begin{align*}
      \| \chi_{k+1}^{(i)}-\chi_k^{(i)}\|_{BV(\Omega)} \leq \sum_{\tilde{\Omega} \in \hat{\Omega}_{k}} \Per(\tilde{\Omega}),
    \end{align*}
    and use the perimeter growth bounds of \ref{item:A3tilde} and \ref{item:A3}
    to control the right-hand-side sum.

  More precisely, for $\tilde{\Omega} \in \hat{\Omega}_{k}$ denote 
  \begin{align*}
    \delta_{k}(\tilde{\Omega})=
    \begin{cases}
      \frac{1}{C_{0}}, & \mbox{ if } \tilde{\Omega} \subset \Omega_{g} \cup \Omega^{[1]} \mbox{ is in one of the cases } \ref{item:A3tilde} (i), (iii), \\
      \frac{1}{C_{2}}, & \mbox{ if } \tilde{\Omega} \subset \Omega^{[2]} \mbox{ is in case } \ref{item:A3tilde} (iii) \mbox{ or } \tilde{\Omega} \in \mathcal{C}^{1} \mbox{ is in case } \ref{item:A3tilde} (ii), \\
      \frac{1}{C_{1}}, & \mbox{ if } \nabla u_{k}|_{\tilde{\Omega}} \mbox{ is in case } \ref{item:A3}.
    \end{cases}
  \end{align*}
  With this convention the estimates of \ref{item:A3tilde} and \ref{item:A3}
  then imply that 
  \begin{align*}
    \sum_{\Omega' \in \mathcal{D}(\tilde{\Omega})\cap \hat{\Omega}_{k+1}} \delta_{k}(\tilde{\Omega}) \Per(\Omega') \leq  3 \Per(\tilde{\Omega}),
  \end{align*}
  where the factor $3$ is a consequence of the fact that we consider three separate cases.

  Iterating this estimate in $k$, we thus obtain
  \begin{align*}
    \sum\limits_{\Omega^{1} \in \mathcal{D}(\Omega) \cap \hat{\Omega}_1}\sum\limits_{\Omega^{2} \in \mathcal{D}(\Omega^{1}) \cap \hat{\Omega}_2} \dots \sum\limits_{\Omega^{k+1} \in \mathcal{D}(\Omega^{k}) \cap \hat{\Omega}_{k+1}} \Per(\Omega^{k+1}) \delta_{1}(\Omega^{1}) \dots \delta_{k}(\Omega^{k}) \leq 3^{k}\Per(\Omega).
  \end{align*}

  We then claim that for each summand for at most $m+1$ indices $i \in \{1, \dots, k\}$ it holds
  that $\delta_{i}(\Omega^{i})=\frac{1}{C_{0}}$ and thus
  \begin{align*}
    \sum_{\tilde{\Omega} \in \hat{\Omega}_{k}}\Per(\tilde{\Omega}) \leq C_{0}^{m+1} \max(C_{1},C_{2})^{k-m-1} 3^{k}\Per(\Omega),
  \end{align*}
  where we for simplicity of notation assumed that $C_{0}\geq
  \max(C_{1},C_{2})$.

  In order to prove this claim, we show that if $i_{1}<i_{2}< \dots $ is a
  sequence of indices such that
  $\delta_{i_{\cdot}}(\Omega_{i_{\cdot}})=\frac{1}{C_{0}}$, 
then it follows that
  \begin{align}
    \label{eq:7}
    q_{i_{l+1}}(\Omega^{i_{l+1}})\geq q_{i_{l}}(\Omega^{i_{l}})+1.
  \end{align}
  Assuming that this is true, we obtain that $q_{i_{l}}(\Omega^{i_{l}}) \geq l-1$ for any $l\geq 2$.
Moreover, if $q_{i}(\Omega^{i})\geq m$, Lemma
  \ref{lem:well-def} entails that the algorithm is in case
  ($c_{2}$) for this set and all its descendants. Thus, in this case
  $\delta_{i}(\Omega^{i})\neq \frac{1}{C_{0}}$. Hence, it follows that if \eqref{eq:7} holds, then we have $l\leq m+1$ as claimed.

  It remains to prove \eqref{eq:7}. Thus let $\Omega_{i_l}\in \hat{\Omega}_{i_l}$. We distinguish the following cases:
  \begin{itemize}
  \item  If $\Omega^{i_{l}} \subset \tilde{\Omega}_g$ for some set $\tilde{\Omega}\in  \hat{\Omega}_{i_l-1}$ is in the case \ref{item:A3tilde} (i),
  then $q_{i_{l+1}}(\Omega^{i_{l+1}})\geq q_{i_{l}+1}(\Omega^{i_{l}+1})= q_{i_{l}}(\Omega^{i_{l}})+1$ by the update
  step of $(c_{1})$ in Algorithm \ref{alg:convex_int}.
  \item
  If $\Omega^{i_{l}}$ is in the case 
  \ref{item:A3tilde} (iii) and $\Omega^{i_{l}} \subset \tilde{\Omega}^{[1]}$ (where we used the notation introduced in \ref{item:A3tilde}), then $\Omega^{i_l +1}$ will be in the case \ref{item:A3tilde} (ii).
Hence, for
  $i_{l+1}$ to exist, we have to exit this case at some step $i \in (i_{l}, i_{l+1})$. But then necessarily $q_{i+1}(\Omega^{i+1})= q_{i}(\Omega^i)+1\geq q_{i_l}(\Omega^{i_l})+1$. Therefore, by monotonicity \eqref{eq:7} holds.
\item To conclude the argument, we note that $\Omega^{i_l}$ can never be in the case \ref{item:A3tilde} (iii) with $\Omega^{i_l} \subset \tilde{\Omega}^{[2]}$, as in this case we would have $\delta_{i_l}(\Omega^{i_l})=C_2^{-1}$, contradicting the defining property of the sequence.
  \end{itemize}
 This concludes the argument for \eqref{eq:7} and thus the proof. 
\end{proof}

Combining the results of Lemmas \ref{lem:L1_in} and \ref{lem:BV_in} we infer the proof of Proposition \ref{prop:char_reg}:

\begin{proof}[Proof of Proposition \ref{prop:char_reg}]
The proof follows by applying the interpolation result of Proposition \ref{prop:CDDD}. Indeed, choosing $\theta_0 \in (0,1)$ such that
\begin{align*}
\tilde{c}^{1-\theta_0}3^{\theta_0} \max(C_{1},C_{2})^{\theta_0}=1,
\end{align*}
and $s\in (0,1), q \in (1,\infty)$ such that $0 < s q =\theta_1 < \theta_0$, we have by Lemmas \ref{lem:L1_in} and \ref{lem:BV_in} that for $i\in\{1,\dots,m\}$
\begin{align*}
&  \|\chi_{k+1}^{(i)}-\chi_k^{(i)}\|_{W^{s,q}(\R^n)}
\leq C_{\theta_{1}}  \|\chi_{k+1}^{(i)}-\chi_k^{(i)}\|_{L^{1}}^{1-\theta_{1}} \|\chi_{k+1}^{(i)}-\chi_k^{(i)}\|_{BV}^{\theta_{1}}
  \\
&\leq C_{\theta_1} C^{1-\theta_1}|\Omega|^{1-\theta_1}\left( \frac{\max(C_{0},C_1,C_2)}{\max(C_{1},C_{2})} \right)^{\theta_{1} (m+1)} \left(\tilde{c}^{1-\theta_1}3^{\theta_1}\max(C_{1},C_{2})^{\theta_1} \right)^{k}.
\end{align*}
Since $\theta_{1}<\theta_{0}$, $\tilde{c}<1$ and $C_1, C_2\geq 1$,  this sequence is exponentially decreasing and thus
a telescopic sum ensures the convergence of $\chi_k^{(i)}$ in
$W^{s,p}(\R^n)$.
\end{proof}

\subsection{Convergence and regularity}
\label{sec:def}

As the final part of the discussion of the general set-up, we present the proof of Theorem \ref{thm:reg_gen} and explain the derivation of the $W^{s,p}$ estimates for the gradient $\nabla u$.

\begin{proof}[Proof of Theorem \ref{thm:reg_gen}]
  Since $|\nabla u_{k}|<C_{3}$ by \ref{item:A4} and since $\nabla u_k$ is piecewise constant, by the same argument as in Lemma \ref{lem:BV_in} it holds
  that
  \begin{align*}
    \|\nabla u_{k+1}-\nabla u_{k}\|_{BV(\Omega)} 
    &\leq \|\nabla u_{k+1}-\nabla u_{k}\|_{L^{\infty}(\Omega)} \sum_{\Omega^{k} \in\hat{\Omega}_{k}} \Per(\Omega^{k}) \\
    &\leq 2C_{3}C_{0}^{m+1} \max(C_{1},C_{2})^{k-m-1} 3^{k}\Per(\Omega).
  \end{align*}
  We further claim that
  \begin{align}
    \label{eq:4}
    \|\nabla u_{k+1}-\nabla u_{k}\|_{L^{1}(\Omega)} \leq 2C_{3} C \tilde{c}^{k},
  \end{align}
  with $C, \tilde{c}$ as in Lemma \ref{lem:L1_in}.
  The result then follows by interpolation (using Proposition \ref{prop:CDDD}) as in Proposition
  \ref{prop:char_reg}.

 We remark that for a general domain $\tilde{\Omega} \in \hat{\Omega}_k$ it does not hold that 
\begin{align*} 
 \|\nabla u_{k+1}-\nabla
  u_{k}\|_{L^{1}(\tilde{\Omega})} \leq
   C_{3} \sum_{i=1}^m\|\chi_{k+1}^{(i)}-\chi_{k}^{(i)}\|_{L^{1}(\tilde{\Omega})},
   \end{align*}
  and hence the $L^{1}$ convergence does not follow as a corollary of Lemma
  \ref{lem:L1_in}.
  However, in order to establish \eqref{eq:4}, we can follow the same approach as in Lemma
  \ref{lem:L1_in} and claim that
  \begin{align}
    \label{eq:5}
    \|\nabla u_{k+1}-\nabla u_{k}\|_{L^{1}(\tilde{\Omega})} \leq C_{3} C c^{q_{k}(\tilde{\Omega})/m} |\tilde{\Omega}|,
  \end{align}
  for any $\tilde{\Omega} \in \hat{\Omega}_{k}$.
 
  Summing \eqref{eq:5} over all $\tilde{\Omega} \in \hat{\Omega}_{k}$ this then implies that
  \begin{align*}
    \|\nabla u_{k+1}-\nabla u_{k}\|_{L^{1}(\Omega)} \leq C_{3}C \mathcal{E}_{k}[c^{q/m}]
  \end{align*}
with $\mathcal{E}_{k}[c^{q/m}]$ being bounded as in the proof of Lemma \ref{lem:L1_in}, which would thus conclude the argument for \eqref{eq:4}.

    It hence remains to prove \eqref{eq:5}. As before, choosing $C$ sufficiently large, this estimate is true for
  $q_{k}(\tilde{\Omega}) \leq l_{0}$ since
  \begin{align*}
    \|\nabla u_{k+1}-\nabla u_{k}\|_{L^{1}(\tilde{\Omega})} \leq 2C_{3} |\tilde{\Omega}|.
  \end{align*}
  For $q_{k}(\tilde{\Omega})\geq l_{0}$, we instead make use of \ref{item:A4} and our
  construction:
  \begin{align*}
    & \quad  \|\nabla u_{k+1}-\nabla u_{k}\|_{L^{1}(\tilde{\Omega})}
    =  \|\nabla u_{k+1}-\nabla u_{k}\|_{L^{1}(\tilde{\Omega}_{g})} \\
    &= \|\nabla u_{k+1}-\nabla u_{k}\|_{L^{1}(\tilde{\Omega}_{g}^{\star})} + \|\nabla u_{k+1}-\nabla u_{k}\|_{L^{1}(\tilde{\Omega}_{g} \setminus \tilde{\Omega}_{g}^{\star})}  \\
    &\leq \|\nabla u_{k+1}-\nabla u_{k}\|_{L^{\infty}(\tilde{\Omega}_{g}^{\star})}|\tilde{\Omega}| + 2C_{3} |\tilde{\Omega}_{g} \setminus \tilde{\Omega}_{g}^{\star}| \\
    &\leq C_{3} c_{2}^{l_{k}(\tilde{\Omega}^{\star})}|\tilde{\Omega}| + 2 C_{3}^2 c_{2}^{l_{k}(\tilde{\Omega})} |\tilde{\Omega}| \\
    &\leq (C_{3}+2C_{3}^2) c_{2}^{q_{k}(\tilde{\Omega})/m} |\tilde{\Omega}|.
  \end{align*}
  Here, we used that $\nabla u_{k}|_{\tilde{\Omega}}$ is a constant matrix $M$
  and hence the first estimate in \ref{item:A4} is applicable, while
  $|\tilde{\Omega}\setminus \tilde{\Omega}_{g}^{\star}|$ is controlled by the second estimate. 
  Choosing $C$ possibly larger such that $C \geq C_{3}+2C_{3}^2$ thus establishes
  \eqref{eq:5}, which concludes the proof.
\end{proof}

\section{Examples}
\label{sec:Ex}

\subsection{The case $K=O(n)$}

In this section (in combination with the covering results of Section \ref{sec:covering}), we verify that the set $K=O(n)$ with $n=2,3$ 
satisfies the conditions \ref{item:A1}-\ref{item:A4} of Section
\ref{sec:assumptions}, which then implies the first part of Theorem \ref{thm:reg} (c.f. Section \ref{sec:proof1}).\\

We first note that
\begin{align*}
  K=SO(n)Id \cup SO(n)
  \diag(-1,1,\dots,1),
\end{align*}
has a two-well structure with $G=SO(n)$, which acts on $K$ (and $K^{lc}$) by multiplication from the left. In order to further describe the geometry of $O(n)$ and its properties, we briefly
recall the singular
value decomposition of a matrix.

\begin{prop}[\cite{DaM12}, Theorem 7.1]
\label{prop:singular_val_decomp}
Let $A \in \R^{n \times n}$. Then there exist $V_1,V_2 \in O(n)$ and a diagonal matrix $D(A)=\diag(\sigma_1(A),\dots,\sigma_n(A))$ with $0\leq \sigma_1(A)\leq \dots \leq \sigma_n(A)$ such that $A=V_1DV_2$. In particular $O(n)=\{A\in \R^{n\times n}: \sigma_i(A)=1 \mbox{ for all } i \in \{1,\dots,n\}\}$.
Moreover, the function $\R^{n\times n}\ni A \mapsto \sigma_n(A)$ is convex.
\end{prop}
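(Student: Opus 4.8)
The plan is to derive the decomposition from the spectral theorem applied to the symmetric, positive semidefinite matrix $A^{T}A$. First I would diagonalize $A^{T}A$: by the spectral theorem there is an orthogonal matrix $V_{2}$ whose rows $q_{1},\dots,q_{n}$ are orthonormal eigenvectors of $A^{T}A$, with eigenvalues that are nonnegative (since $\langle A^{T}A x,x\rangle=|Ax|^{2}\geq 0$) and which we order increasingly as $\sigma_{1}^{2}\leq\dots\leq\sigma_{n}^{2}$, $\sigma_{i}\geq 0$. The key elementary computation is $\langle Aq_{i},Aq_{j}\rangle=\langle A^{T}Aq_{i},q_{j}\rangle=\sigma_{i}^{2}\delta_{ij}$, so the vectors $Aq_{1},\dots,Aq_{n}$ are mutually orthogonal with $|Aq_{i}|=\sigma_{i}$.

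Next I would build $V_{1}$. For each index $i$ with $\sigma_{i}>0$ set $p_{i}:=\sigma_{i}^{-1}Aq_{i}$; these form an orthonormal system, which I would extend to an orthonormal basis $p_{1},\dots,p_{n}$ of $\R^{n}$ by assigning the newly chosen vectors to the indices $i$ with $\sigma_{i}=0$ --- by the increasing ordering these are precisely $i=1,\dots,r$ with $r=\#\{i:\sigma_{i}=0\}$. By construction $Aq_{i}=\sigma_{i}p_{i}$ for every $i$ (trivially so when $\sigma_{i}=0$). Collecting the $p_{i}$ as the columns of an orthogonal matrix $V_{1}$ and recalling that the $q_{i}$ are the rows of $V_{2}$, this is exactly $AV_{2}^{T}=V_{1}D$, i.e.\ $A=V_{1}DV_{2}$ with $D=\diag(\sigma_{1},\dots,\sigma_{n})$ and $V_{1},V_{2}\in O(n)$. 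The only mildly delicate point --- and the closest thing to an obstacle in an otherwise routine argument --- is the extension step in the non-invertible case: one must place the zero singular values at the smallest indices so that the required ordering $\sigma_{1}\leq\dots\leq\sigma_{n}$ is respected.

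For the characterization of $O(n)$, I note that $A\in O(n)$ forces $A^{T}A=\mathrm{Id}$ and hence $\sigma_{i}(A)=1$ for all $i$; conversely, if all $\sigma_{i}(A)=1$ then $D=\mathrm{Id}$ and $A=V_{1}V_{2}$ is a product of two orthogonal matrices, hence in $O(n)$.

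Finally, for the convexity of $A\mapsto\sigma_{n}(A)$, I would identify $\sigma_{n}$ with the operator (spectral) norm. The Rayleigh quotient characterization of the largest eigenvalue of $A^{T}A$ gives $\sigma_{n}(A)^{2}=\max_{|x|=1}\langle A^{T}Ax,x\rangle=\max_{|x|=1}|Ax|^{2}$, so that $\sigma_{n}(A)=\max_{|x|=1}|Ax|=\sup_{|x|=|y|=1}\langle Ax,y\rangle$. The last expression is a supremum of functions that are linear in $A$ (equivalently, $\sigma_{n}$ is a norm on $\R^{n\times n}$), and is therefore convex.
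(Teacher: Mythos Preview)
Your argument is correct and follows the standard textbook derivation of the singular value decomposition via the spectral theorem for $A^{T}A$, together with the identification $\sigma_{n}(A)=\|A\|_{2}$ for the convexity claim. Note, however, that the paper does not give its own proof of this proposition: it is stated as a quoted result (Theorem 7.1 in \cite{DaM12}) and used without further argument, so there is no in-paper proof to compare against.
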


If there is no danger of confusion, we will in the sequel also simply write $\sigma_j$, $j\in\{1,\dots,n\}$, to denote the \emph{singular values} of a matrix $A$.

\begin{rmk}
We remark that the full decomposition $A=V_1 D V_2$ is not necessarily unique.
  For instance, for $Id=V_{1}^{T}Id V_{1}$ many choices of $V_{1}$ and $V_{2}$
  are possible.
  However, the diagonal matrix $D$ is unique as
  we require that
  \begin{align*}
    0\leq \sigma_1 \leq \sigma_2 \leq\dots \leq \sigma_n.
  \end{align*}
\end{rmk}

Based on the singular value decomposition we introduce the following equivalence relation on $\R^{n\times n}$.

\begin{defi}
\label{defi:equiv}
Let $M,N \in \R^{n\times n}$ we write $M \sim N$ if $D(M)=D(N)$, where $D(M), D(N)$ are the diagonal matrices containing the singular values of $M,N$ as defined in Proposition \ref{prop:singular_val_decomp}.
\end{defi}

\begin{rmk}
\label{rmk:equiv}
We observe that for any matrix $M \in \R^{n\times n}$ and any $N \in O(n)M:=\{N\in \R^{n\times n}: N=V M, \ V \in O(n)  \}$ it holds that $M \sim N$. Indeed, this follows from noting that if $M=V_1DV_2$, then $N=(V V_1)DV_2$ for some $V\in O(n)$.

By definition, we have
\begin{align*}
  O(n)=\{A \in \R^{n \times n}: A \sim Id\}.
\end{align*}
\end{rmk}

In our iterative convex integration construction of Algorithm \ref{alg:convex_int}, it is important to obtain precise control on the potential change of the corresponding singular values. In order to quantify this, we recall the Lipschitz dependence of the singular values:

\begin{lem}[\cite{GvL}, Corollary 8.6.2, p.449]
\label{lem:cont}
Let $A, E \in \R^{n\times n}$. Let $\sigma_1(A),\dots,\sigma_n(A)$ and  $\sigma_1(A+E),\dots,\sigma_n(A+E)$ denote the singular values of $A$ and $A+E$. Then we have that for $k\in\{1,\dots,n\}$
\begin{align*}
|\sigma_k(A+E)-\sigma_k(A)| \leq \sigma_n(E)=\|E\|_2.
\end{align*}
\end{lem}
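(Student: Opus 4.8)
The statement I need to prove is the $k$-th singular value of a matrix depends Lipschitz-continuously on the matrix, with constant one: $|\sigma_k(A+E)-\sigma_k(A)| \leq \|E\|_2$ for all $k \in \{1,\dots,n\}$. Since this is stated as a citation to Golub--Van Loan, the "proof" expected here is at most a brief recollection of the standard argument; I would present the Courant--Fischer / Weyl-type reasoning rather than derive it from first principles.

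The plan is to reduce to eigenvalues of symmetric matrices. First I would recall that the singular values of $A$ are exactly the square roots of the eigenvalues of $A^t A$, or equivalently — and more convenient here — the $n$ largest eigenvalues of the symmetric $2n\times 2n$ matrix
\begin{align*}
\mathcal{A} := \begin{pmatrix} 0 & A \\ A^t & 0 \end{pmatrix},
\end{align*}
whose eigenvalues are $\pm\sigma_1(A),\dots,\pm\sigma_n(A)$. Then $\mathcal{A+E} = \mathcal{A} + \mathcal{E}$ where $\mathcal{E}$ is the analogous block matrix built from $E$, and one checks $\|\mathcal{E}\|_2 = \|E\|_2 = \sigma_n(E)$. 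So it suffices to prove the corresponding statement for eigenvalues of symmetric matrices, namely Weyl's inequality: if $S, T$ are symmetric with eigenvalues listed in increasing order, then $|\lambda_j(S+T) - \lambda_j(S)| \leq \|T\|_2$.

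For Weyl's inequality I would invoke the Courant--Fischer min-max characterization: $\lambda_j(S) = \min_{\dim V = j}\ \max_{x \in V,\, |x|=1} \langle Sx, x\rangle$ (with the appropriate convention for increasing ordering). Given any subspace $V$, for unit $x$ we have $\langle (S+T)x,x\rangle \leq \langle Sx,x\rangle + \|T\|_2$, so taking max over $x\in V$ and then min over $j$-dimensional $V$ yields $\lambda_j(S+T) \leq \lambda_j(S) + \|T\|_2$; swapping the roles of $S+T$ and $S$ gives the reverse inequality, hence $|\lambda_j(S+T)-\lambda_j(S)| \leq \|T\|_2$. Feeding this back through the block-matrix reduction with $S = \mathcal{A}$, $T = \mathcal{E}$, and reading off the top $n$ eigenvalues, gives $|\sigma_k(A+E) - \sigma_k(A)| \leq \|E\|_2 = \sigma_n(E)$ for each $k$, as claimed.

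The argument is entirely standard and there is no real obstacle; the only point requiring a moment of care is bookkeeping the ordering convention (the lemma orders singular values increasingly, $\sigma_1 \leq \dots \leq \sigma_n$, so one must match this with the min-max formula correctly) and the verification that passing to the symmetric dilation $\mathcal{A}$ preserves the operator norm of the perturbation. Given that the result is quoted from \cite{GvL}, I would keep the write-up to a short paragraph along these lines rather than a full self-contained proof.
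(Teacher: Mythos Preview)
Your proposal is correct and follows the standard Weyl/Courant--Fischer route via the symmetric dilation. The paper itself gives no proof of this lemma at all: it is simply stated with a citation to \cite{GvL}, so there is nothing to compare against beyond noting that your argument is exactly the kind of short justification one would expect behind such a citation.
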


Using the singular value decomposition, we infer that $O(n)^{lc}$ is very large.

\begin{lem}[\cite{DaM12}, Theorem 7.16]
\label{lem:sing_val_char}
We have that
\begin{align*}
\conv(O(n)) = O(n)^{lc} = \{A \in \R^{n\times n}: \sigma_n(A) \leq 1\}=R_n(O(n)).
\end{align*}
In particular, we have
\begin{align*}
\intconv(O(n))= \inte(O(n)^{lc})=\{A \in \R^{n\times n}: \sigma_n(A) < 1\}.
\end{align*}
\end{lem}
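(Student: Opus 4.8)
The plan is to establish the chain of inclusions
$$R_n(O(n)) \subseteq O(n)^{lc} \subseteq \conv(O(n)) \subseteq \{A \in \R^{n\times n} : \sigma_n(A) \leq 1\} \subseteq R_n(O(n)),$$
which forces equality throughout. The first inclusion is immediate from the definition $O(n)^{lc} = \bigcup_{l\geq 0} R_l(O(n))$ in Definition \ref{defi:lam_conv}, and the second is the general inclusion $K^{lc} \subseteq \conv(K)$ recalled thereafter. For the third inclusion I would use the convexity of $A \mapsto \sigma_n(A)$ from Proposition \ref{prop:singular_val_decomp} together with $\sigma_n \equiv 1$ on $O(n)$: for a convex combination $A = \sum_i \lambda_i A_i$ with $A_i \in O(n)$ one has $\sigma_n(A) \leq \sum_i \lambda_i \sigma_n(A_i) = 1$.

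The substantive part is the last inclusion $\{\sigma_n \leq 1\} \subseteq R_n(O(n))$. I would first note that both $O(n)$ and every set $R_l(O(n))$ are invariant under the maps $A \mapsto V_1 A V_2$ with $V_1, V_2 \in O(n)$: such a map is affine, preserves rank (hence rank-one connections), and maps $O(n)$ onto itself, so invariance of $R_l$ follows by induction on $l$. Consequently, given $A$ with $\sigma_n(A) \leq 1$ and a singular value decomposition $A = V_1 D V_2$ with $D = \diag(\sigma_1, \dots, \sigma_n)$, $0 \leq \sigma_1 \leq \dots \leq \sigma_n \leq 1$, it suffices to show $D \in R_n(O(n))$.

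For this I would prove by induction on $j \in \{0,1,\dots,n\}$ that every diagonal matrix $\diag(t_1, \dots, t_j, \varepsilon_{j+1}, \dots, \varepsilon_n)$ with $t_i \in [-1,1]$ and $\varepsilon_i \in \{-1,+1\}$ lies in $R_j(O(n))$. The base case $j = 0$ holds since such a matrix is in $O(n) = R_0(O(n))$. The inductive step rests on the rank-one splitting
$$\diag(t_1, \dots, t_{j+1}, \varepsilon_{j+2}, \dots, \varepsilon_n) = \tfrac{1+t_{j+1}}{2}\, \diag(t_1, \dots, t_j, 1, \varepsilon_{j+2}, \dots, \varepsilon_n) + \tfrac{1-t_{j+1}}{2}\, \diag(t_1, \dots, t_j, -1, \varepsilon_{j+2}, \dots, \varepsilon_n),$$
where the two matrices on the right lie in $R_j(O(n))$ by the inductive hypothesis and differ by $2$ times the rank-one matrix with a single nonzero entry at position $(j+1,j+1)$. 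Taking $j = n$ gives $D \in R_n(O(n))$, closing the chain of inclusions. For the assertion on interiors, since $\sigma_n$ is continuous (Lemma \ref{lem:cont}) and convex, the set $\{\sigma_n < 1\}$ is open and contained in $\conv(O(n))$, hence contained in its interior; conversely, if $\sigma_n(A) = 1$ and $A = V_1 D V_2$, then $V_1(D + \tau E_{nn})V_2$ with $E_{nn}$ the matrix having a single $1$ at $(n,n)$ converges to $A$ as $\tau \downarrow 0$ while having largest singular value $1+\tau > 1$, so $A$ is not interior; thus $\inte(\conv(O(n))) = \{\sigma_n < 1\}$.

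I expect the main obstacle to be the explicit lamination step $\{\sigma_n \leq 1\} \subseteq R_n(O(n))$: one must get the laminate order right (exactly $n$ steps, one per diagonal entry) and carefully exploit the two-sided orthogonal invariance of $R_l(O(n))$ to reduce the general matrix to its diagonal of singular values. The remaining inclusions and the interior statement are short consequences of the convexity and continuity of $\sigma_n$ and of the definitions.
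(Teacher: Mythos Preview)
Your proposal is correct and follows essentially the same approach as the paper: both reduce via the singular value decomposition to diagonal matrices and then use the identical rank-one splitting $\diag(\dots,t,\dots)=\tfrac{1+t}{2}\diag(\dots,1,\dots)+\tfrac{1-t}{2}\diag(\dots,-1,\dots)$, iterated over the $n$ diagonal entries, together with the convexity of $\sigma_n$ for the reverse inclusion. Your write-up is in fact somewhat more careful than the paper's, making the two-sided $O(n)$-invariance of $R_l(O(n))$ explicit and supplying an argument for the interior characterization that the paper leaves implicit.
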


\begin{proof}
We first show that $\conv(O(n)) = O(n)^{lc} = \{A \in \R^{n\times n}: \sigma_n(A) \leq 1\}$.
By the singular value decomposition (Proposition \ref{prop:singular_val_decomp}), it suffices to prove the result for $O(n)\cap \diag(n,\R)=:O_d(n)$, where $\diag(n,\R)$ denotes the $n\times n$ diagonal matrices. As the mapping $A \mapsto \sigma_n(A)$ is convex, we directly infer the inclusion
\begin{align*}
O_d(n)^{lc} \subset \conv(O_d(n)) \subset \{A \in \R^{n\times n}: \sigma_n(A) \leq 1\}.
\end{align*}
It thus suffices to prove that $\{A \in \R^{n\times n}_d: \sigma_n(A) \leq 1\} \subset O_d(n)^{lc} $.
This follows directly by considering rank-one connections in $O_d(n)$. Let thus $M\in \{A \in \R^{n\times n}_d: \sigma_n(A) \leq 1\}$. By a suitable permutation of coordinates and premultiplication, we may assume that
$M=\diag(\sigma_1,\dots, \sigma_n)$ with $0\leq \sigma_1 \leq \dots \leq \sigma_n \leq 1 $. Then,
\begin{align*}
M= \frac{1+\sigma_1}{2}\diag(1,\sigma_2,\dots,\sigma_n) + \frac{1-\sigma_1}{2} \diag(-1,\sigma_2,\dots,\sigma_n).
\end{align*}
Iterating this in the remaining components yields the claim and also implies that $O(n)^{lc} \subset R_n(O(n))$.
\end{proof}

Combining Lemma \ref{lem:sing_val_char} with the fact that the two components 
$$SO(n), SO(n)\diag(-1,1,\dots,1)$$ 
are disjoint and compact, implies that property \ref{item:A1} is satisfied by $K=O(n)$.

\subsubsection{Construction of an in-approximation for $O(n)$}

Due to the symmetries of the group $O(n)$, it will in the sequel be convenient to work with an in-approximation for $O(n)$, which only depends on the singular values of a given matrix.

\begin{lem}[In-approximation for $O(n)$]
\label{lem:in_approx}
For $k \in \N$ and $\kappa \in (0,1)$ define the set
\begin{align*}
  I_{k,\kappa}= [c_{k}- \kappa d_{k}, c_{k}+\kappa d_{k}]\subset (-1,1), 
\end{align*}
where $c_{k}=1 -3 \cdot 2^{-(k+2)}$, $d_{k}=2^{-(k+2)}$.
Let now $\kappa_{0}=1/4$ and define
\begin{align*}
  U_{k}^{j}&:= \{M \in \R^{n \times n}:
             \exists D=\diag(\mu_{1}, \dots, \mu_{n}): M \sim D, \mu_{1},\dots, \mu_{j} \in I_{k, \kappa_{0}j/n},\\
           & \quad \mu_{j+1}, \dots, \mu_{n} \in I_{k-1, \kappa_{0}(1+j/n)}  \} \subset \inte(K^{lc}), \\
  \tilde{U}_{k}^{j}&:= \{M \in \R^{n \times n}: \exists D=\diag(\mu_{1}, \dots, \mu_{n}): M \sim D, \mu_{1},\dots, \mu_{j} \in I_{k, \kappa_{0}j/n}, \\
           & \quad  |\mu_{j+1}|, \dots, |\mu_{n}| \leq 1- 2 d_{k} +d_{k} \kappa_{0}j/n \}\subset \inte(K^{lc}).
\end{align*}
Then the sequence
\begin{align}
  \label{eq:1}
  \tilde{U}_{k}^{0},\tilde{U}_{k}^{1}, \dots,
  \tilde{U}_{k}^{n}=U_{k+1}^{0}, U_{k+1}^{1}, \dots,
  U_{k+1}^{n}=U_{k+2}^{0}, U_{k+2}^{1}, \dots 
\end{align}
is an in-approximation for $O(n)$.
\end{lem}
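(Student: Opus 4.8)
The plan is to verify the two conditions of Definition \ref{defi:in_approx} for the sequence \eqref{eq:1}, after first disposing of the elementary bookkeeping. I would record that each of the sets $U_k^j$, $\tilde U_k^j$ is open (the defining conditions only constrain the singular values of $M$, which by Lemma \ref{lem:cont} depend continuously on $M$; one uses the interiors of the closed intervals $I_{k,\kappa}$, which does not affect anything below) and is contained in $\inte(K^{lc})=\inte(O(n)^{lc})=\{A:\sigma_n(A)<1\}$ by Lemma \ref{lem:sing_val_char}, since every interval $I_{k,\kappa}$ with $\kappa\le 1$ and every threshold occurring in the definition of $\tilde U_k^j$ lies strictly inside $(-1,1)$ for the explicit values $c_k=1-3\cdot 2^{-(k+2)}$, $d_k=2^{-(k+2)}$, $\kappa_0=1/4$. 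I would also check the ``gluing'' identities $\tilde U_k^n=U_k^n=U_{k+1}^0$, which are immediate from the formulas upon substituting $j=n$.

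For condition (ii) I would argue that along the sequence \eqref{eq:1} the auxiliary sets $\tilde U_k^j$ occur only as a finite initial block, so for a sequence $V_\ell$ with $V_\ell$ in the $\ell$-th set of \eqref{eq:1} and $\ell$ large, $V_\ell$ lies in some $U_{k(\ell)}^{j(\ell)}$ with $k(\ell)\to\infty$. By construction every singular value of such a matrix lies in $I_{k(\ell)-1,\cdot}\cup I_{k(\ell),\cdot}$, hence within distance $O(2^{-k(\ell)})$ of $1$; thus $\sigma_i(V_\ell)\to1$ for every $i$, and if $V_\ell\to V$ the Lipschitz bound of Lemma \ref{lem:cont} gives $\sigma_i(V)=1$ for all $i$, i.e.\ $V\in O(n)$ by Proposition \ref{prop:singular_val_decomp}.

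The substance is condition (i), the nestings $U_k^j\subset (U_k^{j+1})^{lc}$ and $\tilde U_k^j\subset(\tilde U_k^{j+1})^{lc}$ for $j=0,\dots,n-1$, with the conventions $U_k^n=\tilde U_k^n=U_{k+1}^0$ that make the passage within a block and from a $\tilde U$-block to the following $U$-block instances of the same inclusions. Membership in all these sets depends only on the singular values of $M$, and rank-one connectedness is preserved under $M\mapsto V_1MV_2$ with $V_1,V_2\in O(n)$ while this operation leaves singular values unchanged; hence it suffices to argue for diagonal representatives. Given $M$ in the smaller set, write $M=V_1DV_2$ with $D=\diag(\mu_1,\dots,\mu_n)$ realising the defining conditions, split $D$ inside the larger set, and transport the splitting back by $V_1(\cdot)V_2$. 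For the splitting I would mimic the construction in the proof of Lemma \ref{lem:sing_val_char}: ``promote'' exactly the one coordinate $\mu_{j+1}$ not yet pinned in $I_{k,\kappa_0 j/n}$ by writing $\mu_{j+1}=\lambda\nu+(1-\lambda)(-\nu)$ with $\nu$ chosen in the slightly wider target interval $I_{k,\kappa_0(j+1)/n}$ and $\lambda=\tfrac12(1+\mu_{j+1}/\nu)$. The two resulting diagonal matrices differ only in their $(j+1)$-st entry, so they are rank-one connected; each has exactly $j+1$ entries in $I_{k,\kappa_0(j+1)/n}$ and the rest in $I_{k-1,\kappa_0(1+(j+1)/n)}$ (for the $U$-sets), resp.\ below the $\tilde U_k^{j+1}$-threshold (for the $\tilde U$-sets), using the monotone nestings $I_{k,\kappa_0 j/n}\subset I_{k,\kappa_0(j+1)/n}$ and $I_{k-1,\kappa_0(1+j/n)}\subset I_{k-1,\kappa_0(1+(j+1)/n)}$; hence both pieces lie in the larger set and $M\in R_1(U_k^{j+1})\subset(U_k^{j+1})^{lc}$ (and analogously in the $\tilde U$ case).

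The step I expect to be the main obstacle is the verification that $\lambda\in(0,1)$ and that both pieces genuinely land in the larger set; this rests on a short list of quantitative inequalities among $c_{k-1},c_k,d_{k-1},d_k$ and the fractions $\kappa_0 j/n$ — principally the separation $\sup I_{k-1,\cdot}<\inf I_{k,\cdot}$, so that the value being promoted is strictly smaller in modulus than every element of the target interval, together with the analogous separation of the $\tilde U_k^j$-threshold below $I_{k,\cdot}$, and the nesting of the intervals in $j$. It is precisely to make all of these hold simultaneously for every $j\in\{0,\dots,n\}$ and every relevant $k$ that the parameters are chosen with the geometric spacing $d_k=2^{-(k+2)}$, the centres $c_k=1-3d_k$, the fixed margin $\kappa_0=1/4$, and the incremental widening of the intervals by $\kappa_0/n$ as $j$ increases; a little extra care with the first few values of $k$, where $I_{k-1,\cdot}$ may contain negative numbers but still has the required modulus bound (or where the sequence simply starts at a later index $m_0$), completes the argument.
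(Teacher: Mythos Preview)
Your approach is essentially the paper's: reduce to a diagonal representative via the singular value decomposition, replace the $(j{+}1)$-st diagonal entry by $\pm\nu$ (the paper takes the specific value $\nu=c_k$, the centre of every $I_{k,\cdot}$), observe that the two resulting diagonal matrices differ in one slot and hence are rank-one connected, and transport back by $V_1(\cdot)V_2$. Condition~(ii) and the gluing $\tilde U_k^n=U_k^n=U_{k+1}^0$ are handled in the same way.

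There is, however, a genuine gap in the list of ``quantitative inequalities'' you plan to check: the asserted ``separation of the $\tilde U_k^j$-threshold below $I_{k,\cdot}$'' is false. The $\tilde U_k^j$-threshold is $1-2d_k+d_k\kappa_0\tfrac{j}{n}$, whereas the upper endpoint of the target interval $I_{k,\kappa_0(j+1)/n}$ equals $c_k+\kappa_0\tfrac{j+1}{n}\,d_k=1-\bigl(3-\kappa_0\tfrac{j+1}{n}\bigr)d_k$; subtracting gives $d_k\bigl(1-\tfrac{\kappa_0}{n}\bigr)>0$, so the threshold lies strictly \emph{above} the entire interval. Consequently, for $M\in\tilde U_k^j$ with $|\mu_{j+1}|$ in the range $\bigl(c_k+\kappa_0\tfrac{j+1}{n}d_k,\,1-2d_k+d_k\kappa_0\tfrac{j}{n}\bigr]$ (for instance $M=0.7\,I$ when $n=2$, $k=1$, $j=0$) there is no $\nu\in I_{k,\kappa_0(j+1)/n}$ with $|\mu_{j+1}|<\nu$, and your splitting $\mu_{j+1}=\lambda\nu+(1-\lambda)(-\nu)$ with $\lambda\in(0,1)$ is unavailable. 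The paper's own proof glosses over exactly this point---it writes $M$ as a convex combination of the $\pm c_k$ matrices without verifying $|\mu_{j+1}|<c_k$ in the $\tilde U$ case---so this is a shared imprecision rather than a divergence between your argument and the paper's; but it is a step that, as written, would fail.
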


\begin{proof}
For $k\in \N$ by definition of the sets $\tilde{U}_{k}^{n}=U_{k}^{n}=U_{k+1}^{0}$. 
We show that the properties (i), (ii) from Definition \ref{defi:in_approx} are satisfied.
  Here, the property (ii) is immediately true by the definition of the sets $U_k^j$, by Remark \ref{rmk:equiv} and by Proposition \ref{prop:singular_val_decomp}. It remains to argue that (i) holds.

  Let thus $M \in \tilde{U}_{k}^{j}$ or $M \in U_{k}^{j}$ and without loss of
  generality assume that $j<m$ (otherwise replace $(k,m)\mapsto (k+1,0)$).
  Then after multiplying with elements of $G$ from the left and right we may
  assume that $M=\diag(\mu_{1}, \dots, \mu_{n})$.
  This matrix can then be expressed as a convex combination of
  $\tilde{M}=\diag(\mu_{1}, \dots, \mu_{j}, c_k, \mu_{j+2}, \dots, \mu_{n})$
  and $M'=\diag(\mu_{1}, \dots, \mu_{j}, -c_k, \mu_{j+2}, \dots, \mu_{n})$,
  with $\tilde{M},M'
  \in \tilde{U}^{j+1}_{k}$ or $U_{k}^{j+1}$, respectively, and $\tilde{M}-M'= \diag(0,\dots,0,2c_k,0,\dots,0)$ is rank-one.
\end{proof}

We further note that $\bigcup\limits_{k=1}^{\infty}\tilde{U}_{k}^{0}= \bigcup\limits_{k=1}^{\infty} \{M \in \R^{n\times n}: \sigma(M)\leq 1-2 d_{k}\}$ yields a covering of $\inte(K^{lc})$.
Moreover, since the mapping 
$$\R^{n\times n} \ni M \mapsto (\sigma_1(M),\dots,\sigma_n(M))\in \R^{n}_+$$ 
is invariant under multiplication with
  elements of $G=SO(n)$, it holds that $\tilde{U}_{k}^{j}= G \tilde{U}_{k}^{j}$ and
  $U_{k}^{j}=GU_{k}^{j}$. Similarly, we may additionally also multiply with
  $G$ from the right.
  
 Combined with these observations Lemma \ref{lem:in_approx} shows that for $m=n$ the condition \ref{item:A2} is satisfied for $K=O(n)$.

\subsubsection[The nonlinear replacement construction]{The nonlinear replacement construction without constraints}
\label{sec:nonl-repl-constr}

In the sequel, we recall replacement constructions for unconstrained, bounded
differential inclusions following \cite{C} (which has the advantage of being a
symmetric construction, c.f. also \cite{MS} and \cite{CT05}). This will imply the estimates stated in \ref{item:A3tilde}, \ref{item:A3}, \ref{item:A4} for the case $K=O(n)$ and for a special class of suitably ``adapted" domains. In Section \ref{sec:covering} we will then extend this to an arbitrary domain in the class $\mathcal{C}$.

We first present the two-dimensional construction (Lemma \ref{lem:MS}) and then extend this to three dimensions in Lemma \ref{lem:3D_O(n)}. In principle, the same argument could be used to apply this construction in arbitrary dimensions. As we are however mainly interested in $n=2,3$, we do not further pursue this here.

\begin{figure}[t]
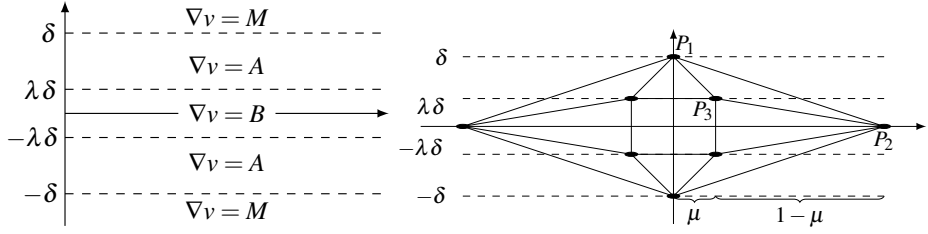

  \includegraphics[width=0.4\linewidth, page=2]{figures.pdf}
 \includegraphics[width=0.55\linewidth, page=3]{figures.pdf}
  \caption{Starting from a double laminate $v$ we construct a domain $\Omega$ in the shape of a diamond.}
  \label{fig:kite}
\end{figure}

\begin{lem}
\label{lem:MS}
Let $A,B \in \R^{2\times 2}$ with $A-B = a \otimes n$ for $a\in \R^2 \setminus \{0\}$, $n\in S^{1}$. Let further $\lambda \in (0,1)$ and assume that 
\begin{align*}
C = (1-\lambda)A + \lambda B.
\end{align*} 
Then for each $\delta \in (0,1/2)$ there exist 
\begin{itemize}
\item[(i)] a domain $\Omega_{\delta}$ in the shape of a diamond,
\item[(ii)] a map $u:\Omega_{\delta} \rightarrow \R^2$ with the properties
\begin{align}
\label{eq:dist_grad_2D}
\dist(\nabla u, \{A,B\}) &\leq \epsilon:= 4\sqrt{2} \delta \lambda(1-\lambda)|a|,\\
\label{eq:dist_infty_2D}
|u-Cx| &\leq \epsilon:= 4\sqrt{2} \delta \lambda (1-\lambda)|a|.
\end{align}
Furthermore, there exist $\Omega'$ (which is a union of level sets of $\nabla u$) and constants $v_A, v_B\in (1/2,2)$ such that
\begin{align*}
  \dist(\nabla u, A) &\leq \epsilon \mbox{ on } \Omega' \mbox{ and } |\Omega'|=(1-\lambda)v_A |\Omega_{\delta}|, \\
  \dist(\nabla u, B) &\leq \epsilon \mbox{ on } \Omega \setminus \Omega' \mbox{ and } |\Omega\setminus \Omega'|=\lambda v_B |\Omega_{\delta}|.
\end{align*}
\end{itemize}
\end{lem}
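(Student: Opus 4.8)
The plan is to construct the diamond $\Omega_\delta$ and the map $u$ explicitly by following the classical single-laminate replacement construction, which produces a simple laminate between the matrices $A$ and $B$ inside a small neighbourhood of the convex combination $C$, and then to modify it near the boundary so that $\nabla u$ returns to the affine value $C$ on $\partial \Omega_\delta$. First I would normalise the geometry: after a translation we may assume $C x$ is the affine reference, and by choosing coordinates aligned with $n$ we can take $n = e_1$ (so that the rank-one direction is horizontal), writing $A - B = a \otimes e_1$. The model construction is one-dimensional in the $e_1$-direction: on an interval one lets the slope of $u$ (in the $e_1$-direction) oscillate between the values compatible with $A$ and $B$, taking the value associated with $A$ on a set of relative length $1-\lambda$ and the value associated with $B$ on a set of relative length $\lambda$, so that the average slope is exactly that of $C$. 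Extending this trivially in the $e_2$-direction gives a laminate on a thin strip; the diamond shape is obtained by cutting off this strip with triangular ``caps'' on the left and right ends in which $u$ is interpolated back to the affine map $C x$. This is precisely the picture in Figure \ref{fig:kite}: one starts from the double (here single) laminate $v$ and builds $\Omega$ as the diamond on which $v$ has been glued to the affine background.

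The key steps in order are: (1) fix the rank-one line and build the one-dimensional sawtooth profile with slopes $s_A$, $s_B$ determined by $A$, $B$ and mixing fractions $1-\lambda$, $\lambda$; (2) define the diamond $\Omega_\delta$ as the union of a central region, where $u$ equals (up to the small cutoff) the laminate, and two triangular transition regions where $u$ is affinely interpolated between the laminate values and $C x$, with the geometry scaled by $\delta$ so that the transition triangles have volume a fixed small fraction of $|\Omega_\delta|$; (3) verify the gradient estimate: in the laminate region $\nabla u \in \{A,B\}$ exactly (or up to the cutoff error), while in the transition triangles $\nabla u$ is a convex-combination-type matrix whose distance to $\{A,B\}$ is bounded by the length scale of the triangle times $|a|$, which by the choice of scaling is $\leq 4\sqrt 2\, \delta\lambda(1-\lambda)|a|$; (4) verify the $L^\infty$ estimate $|u - Cx| \le \epsilon$ by noting that $u - Cx$ vanishes on $\partial\Omega_\delta$ and has gradient bounded by $O(\lambda(1-\lambda)|a|)$ on a region of diameter $O(\delta)$, so it is controlled by $\delta\lambda(1-\lambda)|a|$ up to the explicit constant $4\sqrt 2$; (5) read off $\Omega'$ as the union of those level sets on which $\nabla u$ is near $A$, and compute the volume fractions: since the central laminate has $A$-fraction $1-\lambda$ and $B$-fraction $\lambda$ and the triangular caps contribute lower-order corrections absorbed into the factors $v_A, v_B \in (1/2,2)$, one gets $|\Omega'| = (1-\lambda) v_A |\Omega_\delta|$ and $|\Omega\setminus\Omega'| = \lambda v_B|\Omega_\delta|$.

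The main obstacle I expect is bookkeeping the geometry so that \emph{all} the stated estimates hold simultaneously with the \emph{explicit} constant $4\sqrt 2$ and with the volume fractions in the precise form $(1-\lambda)v_A$, $\lambda v_B$ with $v_A,v_B \in (1/2,2)$ — in particular making sure that the transition triangles, which necessarily contain matrices at distance $O(\delta\lambda(1-\lambda)|a|)$ from $\{A,B\}$, carry only a negligible (not merely small) share of the volume, and that their contribution to the $A$- versus $B$-fractions can be lumped into the $v_A,v_B$ factors without pushing them outside $(1/2,2)$. The analytically substantive point is the interplay between $\delta$ (controlling the aspect ratio and hence the size of the transition regions and the error $\epsilon$) and $\lambda$: one must choose the cutoff width inside the laminate proportional to $\delta\lambda(1-\lambda)$ rather than just $\delta$, so that the $\lambda(1-\lambda)$ dependence appears in $\epsilon$; this is the same scaling philosophy as in \cite{C,MS,CT05}, and I would simply adapt their explicit construction, checking the constants. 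Everything else — piecewise affineness, continuity, the matching of $u$ to $Cx$ on $\partial\Omega_\delta$ — is automatic from the construction.
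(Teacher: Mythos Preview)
Your overall strategy—normalise, build a simple laminate between $A$ and $B$, then interpolate near the boundary to match $Cx$ on a diamond—is exactly what the paper does (following Conti~\cite{C}). But the specific geometry you describe does not match and, as written, would not deliver the $\lambda(1-\lambda)$ factor in $\epsilon$.

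In the paper the rank-one direction is taken as the \emph{short} axis of the diamond ($n=e_2$, diamond $=\conv(\pm e_1,\pm\delta e_2)$), and the laminate is a \emph{single} period with three layers $A$--$B$--$A$ across $x_2\in[-\delta,\delta]$, not a fine oscillation. The boundary correction is not a pair of ``caps'' at the ends of a strip, nor a cutoff layer; it is a single interpolation triangle $P_1P_2P_3$ in each quadrant, where $P_1=(0,\delta)$ and $P_2=(1,0)$ are adjacent diamond corners and $P_3=(\mu,\lambda\delta)$ lies on the laminate interface. Since $v(P_1)=v(P_2)=0$ and $v(P_3)=(1-\lambda)\lambda\delta\,a$, the $\lambda(1-\lambda)$ factor enters through the \emph{value of the laminate at $P_3$}, not through any cutoff width. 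The one free parameter is the position $\mu$ of $P_3$; the paper sets $\mu=(1-\lambda)\delta$ and checks $|\nabla\tilde v - A|\le 4\sqrt{2}\,\lambda(1-\lambda)\delta$ directly. Your suggestion to choose ``the cutoff width inside the laminate proportional to $\delta\lambda(1-\lambda)$'' is a different mechanism and, with a genuine cutoff function and a multi-period laminate, would require balancing the layer width against the cutoff width—extra bookkeeping that the single-period/single-triangle construction avoids entirely. The volume fractions $v_A=1-\lambda\delta$, $v_B=1+(1-\lambda)\delta$ then fall out by elementary area computation.
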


\begin{proof}
We follow the proof of \cite{C} (without considering the additional determinant constraint). Without loss of generality we may first suppose that $0<\lambda\leq 1-\lambda$.
By carrying out a suitable translation, rescaling and rotation in matrix space, we may without loss of generality assume that
\begin{align*}
A = - \lambda a \otimes e_2, \ B= (1-\lambda) a \otimes e_2, \ C = 0,  \ |a|=1.
\end{align*}
Then on the domain $\Omega=[-1,1]\times [-\delta,\delta]$ we begin with a deformation corresponding to a simple laminate
\begin{align*}
v(x) = \left\{
\begin{array}{ll}
-\lambda (x_2 + \delta) a \mbox{ if } x_2 \in [-\delta,-\lambda \delta],\\
(1-\lambda) a x_2 \mbox{ if } x_2 \in [-\lambda \delta, \lambda \delta],\\
-\lambda (x_2 - \delta)a \mbox{ if } x_2 \in [\lambda \delta, \delta].
\end{array}
\right.
\end{align*}
We next consider the triangle spanned by $P_1=(0,\delta)$, $P_2 =(1,0)$, $P_3 = (\mu, \lambda \delta)$ for some $\mu\in(0,1/2)$ and modify the deformation there by interpolating linearly between the values of $v(P_1), v(P_2), v(P_3)$ (see Figure \ref{fig:kite}). Since $v(P_1)=v(P_2)=0$, the resulting new deformation $ \tilde{v}$ vanishes along the whole line connecting $P_1$ and $P_2$. \\
We claim that by choosing $\mu\in(0,1/2)$ suitably we can ensure that 
\begin{align}
\label{eq:close}
|\nabla \tilde{v}-A|\leq 4\sqrt{2}\lambda (1-\lambda)\delta. 
\end{align}
Carrying out similar constructions in the other quadrants and using the odd and even symmetry of $u$ with respect to the $x_2$- and $x_1$-axes, respectively, thus entails \eqref{eq:dist_grad_2D} in each of the interpolation triangles. Defining the resulting function to be the desired deformation $u$ and the resulting diamond shaped domain as $\Omega_{\delta}$, we then also obtain the validity of the boundary data and of \eqref{eq:dist_infty_2D} (the latter by an application of the fundamental theorem in combination with \eqref{eq:dist_grad_2D}).
We thus focus on proving \eqref{eq:close}. To this end, we note that since $v(P_1)=v(P_2)=0$, $v(P_3)=(1-\lambda)\lambda \delta a$, we have
\begin{align*}
\nabla \tilde{v} \begin{pmatrix} -1 \\ \delta \end{pmatrix} = \begin{pmatrix} 0 \\ 0 \end{pmatrix},\
\nabla \tilde{v} \begin{pmatrix} \mu-1 \\ \lambda \delta \end{pmatrix} = (1-\lambda) \lambda \delta a.
\end{align*}
The first condition immediately implies that $\nabla \tilde{v} = b \otimes \begin{pmatrix} \delta \\ 1 \end{pmatrix}$, while the second condition yields $b= c a$ with $c= \frac{(1-\lambda)\lambda }{\lambda -1+\mu}$. Choosing $\mu=(1-\lambda)\delta$ and recalling that $\delta \in (0,1/2)$ then leads to
\begin{align*}
|\nabla \tilde{v}-A| 
&= \lambda \left| a \otimes \left( \begin{pmatrix} 0 \\ 1 \end{pmatrix} + \frac{1-\lambda}{\lambda -1+\mu} \begin{pmatrix} \delta \\ 1\end{pmatrix} \right)  \right|
= \lambda \left|\begin{pmatrix} \frac{1-\lambda}{1-\lambda-\mu}\delta \\- \frac{\mu}{1-\lambda-\mu}\end{pmatrix}  \right|\\
&\leq \sqrt{2} \min\{\lambda, (1-\lambda)\} \frac{\delta}{1-\delta} 
\leq 4\sqrt{2} \lambda (1-\lambda) \delta.
\end{align*}
This concludes the argument for \eqref{eq:close}. 

Finally, we observe that 
\begin{align*}
|\{x: \dist(\nabla u, B)< \dist(\nabla u, A)\}| &= 2\lambda \delta(1+(1-\lambda)\delta),\\
|\{x: \dist(\nabla u, A)\leq \dist(\nabla u, B)\}| &= 2 \delta(1-\lambda)(1-\lambda \delta),
\end{align*}
which for $\delta \in (0,1/2)$ also implies the claim on the volume fractions (with $v_A = 1-\lambda \delta$, $v_B = 1+(1-\lambda)\delta$). This concludes the argument.
\end{proof}

Next we extend the previous two-dimensional construction to a three-dimensional building block:

\begin{lem}
\label{lem:3D_O(n)}
Let $A,B \in \R^{3\times 3}$ with $A-B = a \otimes n$ for $a\in \R^3 \setminus \{0\}$, $n\in S^{2}$. Let further $\lambda \in (0,1)$ and assume that 
\begin{align*}
C = (1-\lambda)A + \lambda B.
\end{align*} 
Then for each $\delta \in (0,1/2)$ there exist
\begin{itemize}
\item[(i)] a domain $\Omega_{\delta}$ in the shape of a diamond of the aspect ratio $1:\delta:1$,
\item[(ii)] a map $u:\Omega_{\delta} \rightarrow \R^3$ with the properties
\begin{align*}
\dist(\nabla u, \{A,B\}) &\leq \epsilon,\\
|u-Cx| &\leq \epsilon,
\end{align*}
with $\epsilon$ as given in Lemma \ref{lem:MS}.
Furthermore, there exist $\Omega' \subset \Omega_{\delta}$ (which consists of a union of level sets of $\nabla u$) and constants $v_A,v_B \in (1/2,2)$ (which can be chosen as in Lemma \ref{lem:MS}) such that
\begin{align*}
  \dist(\nabla u, A) &\leq \epsilon \mbox{ on } \Omega' \mbox{ and } |\Omega'|=v_A (1-\lambda) |\Omega_{\delta}|, \\
  \dist(\nabla u, B) &\leq \epsilon \mbox{ on } \Omega \setminus \Omega' \mbox{ and } |\Omega\setminus \Omega'|= v_B\lambda|\Omega_{\delta}|.
\end{align*}
\end{itemize}
\end{lem}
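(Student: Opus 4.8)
The key structural point is that the rank-one connection $A-B=a\otimes n$ only involves the single domain direction $n$, so the three-dimensional building block is a geometric thickening of the two-dimensional one. The plan is therefore to reduce, exactly as in the proof of Lemma \ref{lem:MS}, to a normalized situation: after replacing $u$ by $u-Cx$, rescaling, and applying suitable rotations in the domain and in the target, one may assume $C=0$, $n=e_2$ and $A=-\lambda\,a\otimes e_2$, $B=(1-\lambda)\,a\otimes e_2$ with $a\in S^2$ and, without loss of generality, $0<\lambda\le 1/2$. In this frame the one-dimensional double laminate $v=v(x_2)$ from the proof of Lemma \ref{lem:MS}, now viewed as an $\mathbb R^3$-valued map with image along $a$, satisfies $\nabla v\in\{A,B\}$ on the slab $\{|x_2|<\delta\}$, vanishes on $\{x_2\in\{-\delta,0,\delta\}\}$, and has $|v(\pm\lambda\delta)|=\lambda(1-\lambda)\delta$.

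\textbf{The domain and the deformation.} For $\Omega_\delta$ I would take a genuine three-dimensional ``diamond'' of aspect ratio $1:\delta:1$ which is flat in the $n=e_2$ direction; one natural choice is the convex hull of a planar rhombus lying in $\{x_2=0\}$ with diagonals of length $2$ along $e_1$ and $e_3$ together with the two apices $\pm\delta e_2$. This polytope is bounded by $8$ triangular faces, each joining an edge of the rhombus (at height $0$, horizontal distance $\sim 1$) to an apex (at height $\delta$, horizontal distance $0$); hence every face makes an angle of order $\delta$ with $\{x_2=0\}$, i.e.\ its unit normal lies within $O(\delta)$ of $\pm e_2$. On $\Omega_\delta$ I define $u$ to be the continuous, piecewise affine map which coincides with $v(x_2)$ on a ``core'' region and is modified by affine interpolation on $8$ tetrahedral cells abutting the $8$ faces: each such tetrahedron has three vertices on its face of $\Omega_\delta$ (where the required boundary datum $Cx=0$ vanishes together with $v$) and one ``inner'' vertex lying on a laminate interface $\{x_2=\pm\lambda\delta\}$, pushed inward by an amount $\sim\delta$ --- this is precisely the three-dimensional analogue of the choice $P_3=(\mu,\lambda\delta)$ with $\mu=(1-\lambda)\delta$ in Lemma \ref{lem:MS}. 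The $8$ tetrahedra together with the core tile $\Omega_\delta$; on the core $\nabla u=\nabla v\in\{A,B\}$ exactly and $u=v$, so $u=0=Cx$ on $\partial\Omega_\delta$.

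\textbf{The estimates.} On each interpolation tetrahedron $u$ is affine and vanishes on its (nearly horizontal) face, so $\nabla u=b\otimes\nu$ is rank one, where $\nu$ is the unit normal to that face and $|\nu\mp e_2|=O(\delta)$; matching the inner-vertex value forces $b$ to be a scalar multiple $c\,a$ of $a$, and choosing the push-in depth as in Lemma \ref{lem:MS} makes $|c-(-\lambda)|$, respectively $|c-(1-\lambda)|$, of order $\delta\lambda(1-\lambda)$. Exactly as in the derivation of \eqref{eq:close} this gives $\dist(\nabla u,\{A,B\})\le \epsilon=4\sqrt2\,\delta\lambda(1-\lambda)|a|$ (with $|a|=1$ in the normalized frame, the factor reappearing after undoing the rescaling) together with a uniform bound on $|\nabla u|$; the bound $|u-Cx|=|u|\le\epsilon$ then follows from $u=0$ on $\partial\Omega_\delta$ and the fundamental theorem of calculus along segments, as in \eqref{eq:dist_infty_2D}. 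For the volume fractions, in the core the layers closest to $A$ occupy $x_2$-measure $2(1-\lambda)\delta$ out of $2\delta$, i.e.\ a fraction $1-\lambda$, while the tetrahedral cells contribute only $O(\delta)$ corrections; arranging the cells so that their cross-sections at each height mimic the planar triangles of Lemma \ref{lem:MS} yields $|\Omega'|=v_A(1-\lambda)|\Omega_\delta|$ and $|\Omega_\delta\setminus\Omega'|=v_B\lambda|\Omega_\delta|$ with the same $v_A,v_B\in(1/2,2)$. Undoing the rotations and the rescaling gives the statement for general $A,B,C$.

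\textbf{Main obstacle.} Everything analytic is a transcription of the two-dimensional argument, the extra domain direction entering only through the $O(\delta)$ tilt of the faces of $\Omega_\delta$. The one genuinely new ingredient is an elementary but slightly fussy piece of solid geometry: exhibiting a dissection of the flat three-dimensional diamond into a laminate core and $8$ interpolation tetrahedra whose outer faces lie on $\partial\Omega_\delta$ and are nearly orthogonal to $n$, whose edges from the inner vertex to the face vertices each remain within a single laminate layer (so that $u$ is continuous across core and cells), \emph{and} such that the constant gradients produced on these tetrahedra match $A$ or $B$ up to the \emph{prescribed} constant $\epsilon$ of Lemma \ref{lem:MS}. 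Concretely this amounts to checking that the planar ratio estimate $|\nabla\tilde v-A|\lesssim \delta/(1-\delta)$ survives unchanged in the tetrahedral cells.
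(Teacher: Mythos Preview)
Your plan departs from the paper's argument, and the place you yourself flag as the ``main obstacle'' is in fact a genuine gap.

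The paper does \emph{not} redo the interpolation in three dimensions. Instead it normalises so that $C=0$, $n=e_2$, $|a|=1$ and additionally rotates the target so that $a\in\operatorname{span}\{e_1,e_2\}$. Then the whole two-dimensional deformation $w$ of Lemma~\ref{lem:MS} (already including the four interpolation triangles) is placed in $\operatorname{conv}(\pm e_1,\pm\delta e_2)\subset\{x_3=0\}$, the two apices $P_\pm=(0,0,\pm1)$ are added with $u(P_\pm)=0$, and $u$ is obtained by affine interpolation between each 2D triangle and $P_\pm$. Because the third target component vanishes one has $\nabla u=\begin{pmatrix}\nabla w & d\\ 0&0\end{pmatrix}$, and the only new computation is the bound $|d|\le 4\lambda(1-\lambda)\delta$, which is immediate. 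Cavalieri's principle transfers the volume fractions verbatim from 2D.

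Your construction is different: you keep the \emph{pure} one-dimensional laminate $v(x_2)$ on a ``core'' and attach eight interpolation tetrahedra, one per face of the octahedral diamond, each with its own inner vertex on $\{x_2=\pm\lambda\delta\}$. The condition you isolate---that every edge from the inner vertex to a face vertex stays in a single laminate layer---guarantees continuity along those edges but \emph{not} across the internal faces. Concretely, for the face $(e_1,\delta e_2,e_3)$ with inner vertex $P$ at height $x_2=\lambda\delta$, the internal triangle $(e_1,\delta e_2,P)$ has $x_2$ ranging over $[0,\delta]$ and hence meets two laminate layers; on this triangle the laminate $v(x_2)$ has a kink at $x_2=\lambda\delta$ and cannot agree with the affine function $u_{\text{tet}}$. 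Since the eight tetrahedra (with distinct inner vertices, as you describe) share only boundary \emph{edges} and not faces, that triangle borders the core, and $u$ fails to be continuous there. Forcing shared inner vertices would repair continuity but destroys the ``pushed inward by~$\sim\delta$'' picture you rely on for the gradient estimate, and in any case you would be reinventing a coning construction. The paper's route---cone off the finished 2D map---bypasses all of this solid geometry.
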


\begin{proof}
By normalization we can assume that $C=0$, $n=e_1$, $|a|=1$ and $a\in \text{span}\{e_1,e_2\}$. Then we consider the two-dimensional deformation $v$ from Lemma \ref{lem:lin_Conti} in the two-dimensional domain $\conv(\pm e_1, \pm \delta e_2)$ and add the two points $P_{\pm}=(0,0,\pm 1)$. We extend the two-dimensional deformation $v$ to a three-dimensional one $u$ by linear interpolation between the corners of the triangles in $\conv(\pm e_1, \pm he_2)$ and $P_{\pm}=(0,0,\pm 1)$ by defining $u(P_{\pm})=0$. By linearity, $u_3 =0$ and hence
\begin{align*}
\nabla u = \begin{pmatrix} \nabla v & d \\ 0 & 0 \end{pmatrix}, \ d \in \R^{2}.
\end{align*}
Computing the value of $d\in \R^2$ in each of the tetrahedra, which are obtained as level sets of the interpolation, we infer that $|d| \leq \max\{\lambda \delta, (\lambda\delta)/(1-\delta)\}|a|\leq 4 \lambda (1-\lambda) \delta$. Moreover, the volume estimates follow by Cavalieri's principle with the same constants as in the two-dimensional setting of Lemma \ref{lem:MS}.
\end{proof}

\begin{rmk}
  \label{rmk:singv_close}
  Using Lemma \ref{lem:cont}, the condition
  \begin{align*}
    \dist(\nabla u, \{A,B\}) &\leq \epsilon
  \end{align*}
  also implies that the singular values of $\nabla u$ are locally within an $\epsilon$ neighbourhood of $A$ or $B$.
  More precisely, let $x^{\nabla u}:=(\sigma_{1}^{\nabla u}, \dots, \sigma_{n}^{\nabla u})$, $x^{A}:=(\sigma_{1}^A,
  \dots, \sigma_{n}^A)$ and $x^{B}:=(\sigma_{1}^B, \dots, \sigma_{n}^B)$ denote the vectors of the
  singular values of $\nabla u, A$ and $B$, respectively. Then it holds that
  \begin{align}
    \label{eq:epsilon_singv}
    \min(\|x^{\nabla u}- x^A\|,\|x^{\nabla u}- x^B\|) \leq c \epsilon.
  \end{align}
\end{rmk}

In the following we apply the preceding construction of Lemma
~\ref{lem:MS} to our in-approximation and verify that a replacement construction, which satisfies the estimates from \ref{item:A3tilde}-\ref{item:A4}, can be obtained in specific diamond-shaped domains.
Based on these constructions on model domains, in Section \ref{sec:covering} we provide a covering argument
extending the result to the full class $\mathcal{C}$ of admissible domains, thus yielding the full statement of \ref{item:A3tilde}-\ref{item:A4}.

\begin{lem}
  \label{lem:On_replace}
  Let $n\in\{2,3\}$ and let $U_{k}^j$ be given as in Lemma \ref{lem:in_approx}. Assume that $M\in
\intconv(O(n))$ with $M\in U_{k}^j$ for some $k\in
\N$ and  $j\in \{0,1,\dots,n-1\}$.
Then there exist a domain $\Omega^{\Diamond}$ in the shape of a diamond of aspect ratio
$1:\delta$ if $n=2$ (or $1:\delta:1$ if $n=3$), where $\delta=\kappa_02^{-10}/n$ and a set
$(\Omega^{\Diamond})_{g}^{\star} \subset \Omega^{\Diamond}$ such that with the notation from \ref{item:A3} and \ref{item:A4}
\begin{itemize}
\item[(i)] $\nabla w(x) \in U_{k}^{j+1} \mbox{ for a.e. } x \in \Omega^{\Diamond}$.
\item[(ii)] $|\nabla w(x) -M| \leq C 2^{-k} \mbox{ for a.e. } x \in (\Omega^{\Diamond})_{g}^{\star}$.
\item[(iii)] $w(x) = Mx \mbox{ for } x\in \partial \Omega^{\Diamond}$.
\item[(iv)] $\Omega^{\Diamond}_g =\Omega^{\Diamond}$ and $|(\Omega^{\Diamond})_{g}^{\star}|\geq (1-C 2^{-k}) |\Omega^{\Diamond}_{g}|$, where $C\geq 1$ is independent of $k$.
\item[(v)] Let $\Omega_{1},\dots, \Omega_{N} \subset \Omega^{\Diamond}$ denote
  the level sets of $\nabla w$. Then it holds that
  \begin{align*}
    \sum_{i=1}^{N} \Per(\Omega_{i}) \leq 2^{n+2} \Per(\Omega^{\Diamond}).
  \end{align*}
\end{itemize}
\end{lem}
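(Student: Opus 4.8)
The plan is to construct the replacement map $w$ by iterating the building block of Lemma~\ref{lem:MS} (for $n=2$) or Lemma~\ref{lem:3D_O(n)} (for $n=3$) finitely many times and then to verify the five listed properties by tracking the effect on the singular values. Since $M \in U_k^j$ with $j<n$, by the definition in Lemma~\ref{lem:in_approx} we may (after multiplying by elements of $G=SO(n)$ from the left and right, which changes nothing on the level of singular values) assume $M = \diag(\mu_1,\dots,\mu_n)$ with $\mu_1,\dots,\mu_j \in I_{k,\kappa_0 j/n}$ and $\mu_{j+1},\dots,\mu_n \in I_{k-1,\kappa_0(1+j/n)}$. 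The goal is to push $\mu_{j+1}$ onto a value in $I_{k,\kappa_0(j+1)/n}$ while keeping the other components essentially unchanged; as in the proof of Lemma~\ref{lem:in_approx}, the relevant rank-one decomposition is $M = \frac{1+c_k}{2}\diag(\dots,\mu_j,c_k,\mu_{j+2},\dots) + \frac{1-c_k}{2}\diag(\dots,\mu_j,-c_k,\mu_{j+2},\dots)$ wait --- more precisely $M$ is the convex combination with weight chosen so that the $(j+1)$-st entry $\mu_{j+1}$ is the average of $c_k$ and $-c_k$ scaled appropriately, i.e. the two endpoints $A = \diag(\mu_1,\dots,\mu_j, c_k, \mu_{j+2},\dots,\mu_n)$ and $B = \diag(\mu_1,\dots,\mu_j,-c_k,\mu_{j+2},\dots,\mu_n)$ with $C=M$ in the notation of Lemma~\ref{lem:MS}, which are rank-one connected via $a\otimes n$ with $n=e_{j+1}$ and $|a|=2c_k \leq 2$.

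The key steps, in order, are as follows. \textbf{Step 1: Apply the building block.} Invoke Lemma~\ref{lem:MS} (or Lemma~\ref{lem:3D_O(n)}) with this $A,B,C=M$ and with $\delta = \kappa_0 2^{-10}/n$ to obtain the diamond $\Omega^{\Diamond}$ of aspect ratio $1:\delta$ (resp. $1:\delta:1$) and a piecewise affine map $w:\Omega^{\Diamond}\to\R^n$ with $w = Mx$ on $\partial\Omega^{\Diamond}$ (this gives (iii)), with $\dist(\nabla w,\{A,B\}) \leq \epsilon = 4\sqrt2\,\delta\lambda(1-\lambda)|a|$, and with the level sets of $\nabla w$ enumerated together with the volume fractions $v_A,v_B \in (1/2,2)$. \textbf{Step 2: Singular value control.} By Remark~\ref{rmk:singv_close}, on each level set the singular value vector of $\nabla w$ lies within $c\epsilon$ of the singular value vector of $A$ or of $B$; since $A$ and $B$ differ from $M$ only in the $(j+1)$-st diagonal entry, which is $\pm c_k$, Lemma~\ref{lem:cont} gives that the singular values of $\nabla w$ on these level sets are within $O(\epsilon) + O(d_k)$ of being equal to $\mu_1,\dots,\mu_j, c_k, \mu_{j+2},\dots$, i.e. (recalling $c_k \in I_{k,\kappa_0(j+1)/n}$ roughly and $d_k = 2^{-(k+2)}$) they lie in the window defining $U_k^{j+1}$ because the tolerance $\kappa_0(j+1)/n$ is strictly larger than the tolerance $\kappa_0 j/n$ used for $M$, and $\epsilon \lesssim \delta \lesssim \kappa_0 2^{-10}/n \cdot 2^{-k}$ is much smaller than the extra slack $d_k\kappa_0/n$; this is exactly where the choice $\delta = \kappa_0 2^{-10}/n$ is calibrated. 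This yields (i), with $\Omega^{\Diamond}_g = \Omega^{\Diamond}$. \textbf{Step 3: The good set and $L^1$ closeness.} Set $(\Omega^{\Diamond})_g^{\star}$ to be the union of those level sets on which $\nabla w$ is within $O(\epsilon)$ of $A$ together with the ``bulk'' simple-laminate region; on this set $|\nabla w - M| \leq |\nabla w - A| + |A - M| \leq c\epsilon + 2c_k \cdot(\text{small weight})$. Here one uses that the weight $\lambda$ in the decomposition is comparable to $(1-c_k)/2 \approx d_k \approx 2^{-k}$ (since $\mu_{j+1}$ is close to $1$ for $M\in U_k^j$), so $|A-M|$ on the $A$-region is $O(2^{-k})$, giving (ii), and the complement $\Omega^{\Diamond}_g \setminus (\Omega^{\Diamond})_g^{\star}$ is the $B$-region which has measure $\lambda v_B |\Omega^{\Diamond}| \lesssim 2^{-k}|\Omega^{\Diamond}_g|$, giving (iv). \textbf{Step 4: Perimeter count.} The map $\nabla w$ has a bounded number of level sets --- in $n=2$ the diamond construction produces the three laminate slabs plus four interpolation triangles per quadrant, so $O(1)$ pieces, each a polygon inscribed in the diamond; summing perimeters and comparing to $\Per(\Omega^{\Diamond})$ gives the explicit bound $\sum_i \Per(\Omega_i) \leq 2^{n+2}\Per(\Omega^{\Diamond})$ by a direct geometric estimate (the factor $2^{n+2}$ absorbing the $n=3$ tetrahedral subdivision via Cavalieri/Lemma~\ref{lem:3D_O(n)}), yielding (v).

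I expect the main obstacle to be Step~2: carefully checking that after one application of the building block the perturbed singular values genuinely land inside the next set $U_k^{j+1}$ of the in-approximation, rather than merely near it. This requires quantifying three competing small quantities --- the error $\epsilon \sim \delta\lambda(1-\lambda)$ from the interpolation triangles, the gap $d_k = 2^{-(k+2)}$ between consecutive scales $c_k$, and the increment $\kappa_0/n$ in the relative tolerance --- and verifying that the accumulated perturbation is dominated by the increase in tolerance uniformly in $k$ and $j$. The precise bookkeeping with the intervals $I_{k,\kappa}$ and the constants $c_k, d_k, \kappa_0$ is somewhat delicate but elementary; once it is done, properties (i)--(v) follow as above, and the uniformity of the constant $C$ in $k$ (needed for \ref{item:A4}) is a consequence of the fact that $\delta$, $v_A$, $v_B$ and the number of level sets are all chosen independently of $k$.
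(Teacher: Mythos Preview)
Your approach is essentially identical to the paper's: one application of the building block from Lemma~\ref{lem:MS} (or Lemma~\ref{lem:3D_O(n)}) with $A,B$ obtained from $M$ by replacing the $(j{+}1)$-st diagonal entry by $\pm c_k$, then Lemma~\ref{lem:cont} to push the singular values of $\nabla w$ into $U_k^{j+1}$, together with the observation $\lambda=(c_k-\mu_{j+1})/(2c_k)\sim 2^{-k}$ to get (ii) and (iv), and a direct count of level sets for (v). The small slips in your write-up---the opening reference to ``iterating \dots\ finitely many times'' (only one application is needed) and the chain $\epsilon\lesssim\delta\lesssim\kappa_0 2^{-10}/n\cdot 2^{-k}$ (it is $\epsilon\lesssim\delta\cdot\lambda\lesssim\delta\cdot 2^{-k}$, with $\delta$ fixed)---do not affect the argument.
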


As we discuss in Section \ref{sec:2D}, the level sets $\Omega_{i}$ are
given by triangles in the two-dimensional setting and are hence contained in the
class $\mathcal{C}$ (which is defined by using triangles). In the three-dimensional setting, the definition of the
class $\mathcal{C}$ and the inclusion of the level sets is established in
Section \ref{sec:3D}. This then allows for an iterative application of the replacement construction, which is for instance used in Algorithm \ref{alg:convex_int}.

\begin{proof}
Let $M \in U_{k}^j$. By the singular value decomposition (Proposition
  \ref{prop:singular_val_decomp}), we then have that $M = V_1 D_M V_2 \sim D_M$ with $V_1, V_2 \in O(n)$. Here
$D_M = \diag(\sigma_1,\dots,\sigma_n)$ is a positive definite
diagonal matrix with $\sigma_{1}, \dots, \sigma_{j} \in
I_{k,\kappa_0 j/n}$ and $\sigma_{j+1}, \dots, \sigma_{n} \in I_{k-1,\kappa_0(1+j/n)}$.
Here, we without loss of generality assume that $\sigma_{j+1}=\min\limits_{i\in\{1,\dots,n\}} \sigma_i$.\\

For $c_k \in \R$ as in Lemma \ref{lem:in_approx}, we then define two matrices $A,B$ by
\begin{align*}
  A&= V_1\diag(\sigma_1,\dots, \sigma_{j}, c_k, \sigma_{j+2}, \dots,  \sigma_n)V_2 \in U_{k}^{j+1},\\
  B&= V_1\diag(\sigma_1,\dots, \sigma_{j}, -c_k, \sigma_{j+2}, \dots,  \sigma_n)V_2 \in U_{k}^{j+1},
\end{align*}
with $V_1,V_2$ as in the singular value decomposition of $M$.
We note that $A-B$ is rank one.
Since $\sigma_{j+1} \in I_{k-1,\kappa_0(1+j/n)} \subset
(-c_k,c_k)$, there exists $\lambda \in (0,1)$ such that
\begin{align}
\label{eq:convex}
  M=(1-\lambda) A+ \lambda B.
\end{align}
A direct computation further shows that 
\begin{align*}
  \lambda= \frac{c_k-\sigma_{j+1}}{2c_k}.
\end{align*}
The inclusion $\sigma_{j+1} \in I_{k-1,\kappa_0(1+j/n)} \subset I_{k-1,1}$
then implies that 
\begin{align*}
  d_k=2^{-(k+2)}\leq c_k-\sigma_{j+1} \leq 3\cdot 2^{-(k+1)} ,
\end{align*}
and that hence $\lambda \leq c 2^{-k}$ and $\lambda(1-\lambda) \leq c 2^{-k}$.

Applying Lemma \ref{lem:MS} we thus obtain a domain $\Omega_{\delta}$ and a function $w:\Omega_{\delta} \rightarrow \R^n$ satisfying
\begin{align*}
\dist(\nabla w, \{A,B\}) &\leq \epsilon_k :=C\delta \lambda(1-\lambda) |a| \leq c 2^{-k}\delta ,\\
  |w-Mx| &\leq \epsilon_k :=C\delta \lambda (1-\lambda)|a| \leq c 2^{-k}\delta.
\end{align*}
Denoting $\Omega'\subset \Omega_{\delta}$ as the set in which $\dist(\nabla w, A)\leq \dist(\nabla w, B)$, we in particular also infer
\begin{align*}  
  |\nabla w - A| & \leq \epsilon_k   \mbox{ on } \Omega'.
\end{align*}
Here, we note that by Lemma \ref{lem:cont}, we have that the singular values of $\nabla w$
are in a $c\epsilon_k$ neighbourhood of the singular values of $A$ and $B$.
Using that $\kappa_0 j/n +2^{k}\epsilon_k \leq \kappa_0 (j+1)/n$, we thus
conclude that $\nabla w \in U_{k}^{j+1}$ almost everywhere in $\Omega_{\delta}$, which shows (i).

Furthermore,
\begin{align}
\label{eq:A5}
  |A-M| = \lambda |A-B| \leq C \lambda  \leq C 2^{-k} \mbox { on } \Omega',
\end{align}
where $\Omega'$ satisfies
\begin{align}
\label{eq:A51}
  |\Omega'| =(1-\lambda)|\Omega_{\delta}| \geq (1-C 2^{-k})|\Omega_{\delta}|.
\end{align}
Setting $\Omega^{\Diamond}:= \Omega_{\delta}$ and combining \eqref{eq:A5}, \eqref{eq:A51} then also proves (ii). As by Lemma \ref{lem:MS} we also have that $w(x)=Mx$ on $\partial \Omega_{\delta}=\partial \Omega^{\Diamond}$, we also directly conclude the validity of (iii). Condition (iv) follows from \eqref{eq:A51} and the boundedness of $O(n)$ by defining $(\Omega^{\Diamond})_g^{\star}:=\Omega'$ (where $\Omega'$ was defined in Lemmas \ref{lem:MS}, \ref{lem:3D_O(n)}).
Finally, we note that $\Omega^{\Diamond}$ is composed of at most $2^{n}$ level sets of
$\nabla w$ (which are all in the shape of tetragons), whose perimeter is controlled
by that of $\Omega^{\Diamond}$. Summing the perimeter bounds over all these level sets, we obtain the
desired perimeter estimate stated in (v).
\end{proof}

\begin{lem}
\label{lem:On_replace_in}
Let $n\in\{2,3\}$ and let $\tilde{U}_{k}^j$ be given as in Lemma \ref{lem:in_approx}. Assume that $M\in
\intconv(O(n))$ with $M\in \tilde{U}_{k}^j$ for some $k\in
\N$ and  $j\in \{0,1,\dots,n-1\}$.
Then there exists a domain $\Omega^{\Diamond}$ in the shape of a diamond of aspect ratio
$1:\delta_{k}$ (or $1:\delta_k:1$), where $\delta_{k}=\kappa_02^{-k-10}/n$ such that 
\begin{enumerate}[label=(\roman*)]
\item $\nabla w(x) \in \tilde{U}_{k}^{j+1} \mbox{ for a.e. } x \in \Omega^{\Diamond}$.
\item $w(x) = Mx  \mbox{ for } x\in \partial \Omega^{\Diamond}$.
\item $\Omega^{\Diamond}_g = \Omega^{\Diamond}$.
\item Let $\Omega_{1},\dots, \Omega_{N} \subset \Omega^{\Diamond}$ denote
  the level sets of $\nabla w$. Then it holds that
  \begin{align*}                                       
    \sum_{i=1}^{N} \Per(\Omega_{i}) \leq 2^{n+2} \Per(\Omega^{\Diamond}).
  \end{align*} 
\end{enumerate}
\end{lem}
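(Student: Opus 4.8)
The plan is to run the argument from the proof of Lemma~\ref{lem:On_replace} essentially verbatim, but now for the auxiliary in-approximation sets $\tilde U_k^j$ of Lemma~\ref{lem:in_approx} and with a $k$-dependent aspect ratio. The only structural difference is that for the $\tilde U$-sets the convex-combination parameter $\lambda$ need not be of size $O(2^{-k})$ --- it may be of order one --- so one loses the quantitative $L^1$-closeness of $\nabla w$ to $M$ (which is why items (ii) and (iv) of Lemma~\ref{lem:On_replace} do not appear here), and one must instead shrink the building block in order to keep $\nabla w$ inside $\tilde U_k^{j+1}$.

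Concretely, given $M \in \tilde U_k^j$ I would first use the singular value decomposition (Proposition~\ref{prop:singular_val_decomp}) to write $M = V_1 D_M V_2$ with $V_1,V_2 \in O(n)$ and $D_M = \diag(\sigma_1,\dots,\sigma_n)$, ordered so that $\sigma_1,\dots,\sigma_j \in I_{k,\kappa_0 j/n}$ and $|\sigma_{j+1}|,\dots,|\sigma_n| \leq 1 - 2 d_k + d_k\kappa_0 j/n$, with $\sigma_{j+1}$ the entry to be fixed next. Exactly as in the proof of Lemma~\ref{lem:in_approx} I would set
\begin{align*}
A &= V_1 \diag(\sigma_1,\dots,\sigma_j, c_k, \sigma_{j+2},\dots,\sigma_n) V_2, \\
B &= V_1 \diag(\sigma_1,\dots,\sigma_j, -c_k, \sigma_{j+2},\dots,\sigma_n) V_2,
\end{align*}
so that $A - B$ is rank one with $|A-B| = 2c_k \leq 2$, and $M = (1-\lambda)A + \lambda B$ with $\lambda = (c_k-\sigma_{j+1})/(2c_k) \in (0,1)$. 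Both $A$ and $B$ lie in $\tilde U_k^{j+1}$: their singular values are $|\sigma_1|,\dots,|\sigma_j| \in I_{k,\kappa_0 j/n} \subset I_{k,\kappa_0(j+1)/n}$, one singular value equal to the common midpoint $c_k \in I_{k,\kappa_0(j+1)/n}$, and the remaining ones bounded by $1-2d_k+d_k\kappa_0 j/n \leq 1-2d_k+d_k\kappa_0(j+1)/n$. I would then feed $M = (1-\lambda)A+\lambda B$ into the building block of Lemma~\ref{lem:MS} (if $n=2$) or Lemma~\ref{lem:3D_O(n)} (if $n=3$), with the choice $\delta = \delta_k = \kappa_0 2^{-k-10}/n$. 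This produces the diamond $\Omega^\Diamond := \Omega_{\delta_k}$ of aspect ratio $1:\delta_k$ (resp.\ $1:\delta_k:1$) and a piecewise affine continuous map $w\colon \Omega^\Diamond \to \R^n$ which has at most $2^n$ level sets, satisfies $w(x) = Mx$ on $\partial\Omega^\Diamond$ (this is (ii)), and obeys
\begin{align*}
\dist(\nabla w, \{A,B\}) \leq \epsilon_k := 4\sqrt 2\,\delta_k\,\lambda(1-\lambda)|A-B| \leq 2\sqrt 2\,\delta_k
\end{align*}
everywhere on $\Omega^\Diamond$, using $\lambda(1-\lambda)\leq 1/4$. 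Since this bound holds on all of $\Omega^\Diamond$, the entire diamond is ``good'', i.e.\ $\Omega^\Diamond_g = \Omega^\Diamond$, which is (iii); and (iv) is then identical to part (v) of Lemma~\ref{lem:On_replace}, $\Omega^\Diamond$ being the union of its at most $2^n$ level sets, each of perimeter controlled by a fixed multiple of $\Per(\Omega^\Diamond)$.

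The one point that requires care is (i), namely that $\nabla w \in \tilde U_k^{j+1}$ a.e.\ on $\Omega^\Diamond$. By Lemma~\ref{lem:cont} (cf.\ Remark~\ref{rmk:singv_close}) the vector of singular values of $\nabla w$ stays within distance $c\,\epsilon_k$ of the vector of singular values of $A$ or of $B$, for a dimensional constant $c$. Both $A$ and $B$ have $j$ singular values in $I_{k,\kappa_0 j/n}$, one equal to $c_k$, and the rest of modulus $\leq 1-2d_k+d_k\kappa_0 j/n$; since $I_{k,\kappa_0(j+1)/n}$ enlarges $I_{k,\kappa_0 j/n}$ by the half-width $\kappa_0 d_k/n$ on each side, and $1-2d_k+d_k\kappa_0(j+1)/n$ exceeds $1-2d_k+d_k\kappa_0 j/n$ by the same amount $\kappa_0 d_k/n$, it suffices to guarantee $c\,\epsilon_k \leq \kappa_0 d_k/n$. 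As $d_k = 2^{-(k+2)}$ and $\epsilon_k \leq 2\sqrt 2\,\kappa_0 2^{-k-10}/n$, the exponent $-10$ in the definition of $\delta_k$ secures this for $n\in\{2,3\}$ with a comfortable margin, so $\nabla w \in \tilde U_k^{j+1}$ a.e., which is (i). The genuinely new feature compared with Lemma~\ref{lem:On_replace} --- and the step I would check most carefully --- is exactly this balance between the aspect ratio $\delta_k$ and the width $d_k$ of the in-approximation intervals: because $\lambda$ is no longer forced to be small, the building block's error $\epsilon_k \sim \delta_k$ must be chosen exponentially small in $k$ rather than $k$-independently, which is precisely why the aspect ratio in the statement depends on $k$.
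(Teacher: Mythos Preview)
Your proposal is correct and follows essentially the same approach as the paper: you reduce to Lemma~\ref{lem:On_replace}, observe that the only point requiring modification is the inclusion $\nabla w\in\tilde U_k^{j+1}$ because $\lambda(1-\lambda)$ is now merely bounded by $1/4$, and compensate by taking the $k$-dependent aspect ratio $\delta_k=\kappa_0 2^{-k-10}/n$ so that $\epsilon_k\le c\,\delta_k$ is small enough relative to the gap $\kappa_0 d_k/n$ between successive $\tilde U$-sets. Your write-up is in fact more explicit than the paper's (which simply says ``the proof follows as the one of Lemma~\ref{lem:On_replace}'' and records only the $\epsilon_k$ estimate), in particular in checking that $A,B\in\tilde U_k^{j+1}$ and in spelling out which margin in the definition of $\tilde U_k^{j+1}$ absorbs the perturbation.
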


As remarked after Lemma \ref{lem:On_replace}, in Section \ref{sec:covering} we
verify that $\Omega_i \in \mathcal{C}$.

\begin{proof}
The proof follows as the one of Lemma \ref{lem:On_replace}, the only difference being in the proof of the inclusion $\nabla w \in \tilde{U}^{j+1}_k$, which differs from the argument in Lemma \ref{lem:On_replace}, since in the setting of Lemma \ref{lem:On_replace_in},
  we can in general only estimate
  \begin{align*}
    \lambda (1-\lambda)\leq \frac{1}{4}.
  \end{align*}
 But, due to the requirement that $\delta_k=\kappa_02^{-k-10}/n$, this then implies that
  \begin{align*}
   0< \epsilon_k \leq c  \lambda (1-\lambda) \delta_{k} \leq c\kappa_0 2^{-k-10}/n.
  \end{align*}
  As in the previous lemma, we may thus conclude that
  \begin{align*}
    \kappa_0 j/n + 2^k \epsilon_k \leq \kappa_0 (j+1)/n,
  \end{align*}
  and $\nabla w(x) \in \tilde{U}_{k}^{j+1}$ on $\Omega^{\Diamond}$.
\end{proof}

\subsubsection{Proof of Theorem \ref{thm:reg} for $K=O(2)$ and $K=O(3)$}
\label{sec:proof1}

\begin{proof}
Using the results of Lemmas \ref{lem:sing_val_char}, \ref{lem:in_approx}, \ref{lem:On_replace}, \ref{lem:On_replace_in} and the boundedness of $O(n)$ we conclude that for $K=O(n)$ the conditions \ref{item:A1}, \ref{item:A2} and the estimates in \ref{item:A3tilde}-\ref{item:A4} are satisfied for a specific class of diamond-shaped domains $\Omega^{\Diamond}$. The construction for these specific domains as such is however not sufficient, as the class of diamond-shaped domains is not ``closed" in the sense that it produces level sets for the replacement construction, which can not be covered by finitely many diamond-shaped domains. In order to iterate the construction and to apply Theorem \ref{thm:reg_gen}, we hence need to enlarge our class of admissible domains.
In Section \ref{sec:covering} we show that there are replacement constructions satisfying conditions \ref{item:A3tilde}-\ref{item:A4} for the full class of domains $\mathcal{C}$, if we restrict to $n=2,3$. In particular, combining the results of the present section with the covering arguments from Lemmas \ref{lem:covering_reduction} and \ref{lem:covering_reduc_3D} implies the proof of Theorem \ref{thm:reg}, first for domains which consist of a finite union of elements of $\mathcal{C}$ and then, by applying Lemma \ref{lem:Lip}, for general Lipschitz domains.
\end{proof}

\subsection[Geometrically linearised constructions]{The hexagonal-to-rhombic and the cubic-to-orthorhombic phase transformation}
\label{sec:lin_trans}

We consider the sets $K=\tilde{K} + \Skew(n)$, where $n=2$ and $\tilde{K}=\tilde{K}_h$ is given by the (geometrically linearised) matrices of the hexagonal-to-rhombic transformation,
\begin{equation}
\label{eq:K3}
\begin{split}
&\tilde{K}_{h}:=\{e^{(1)}, e^{(2)}, e^{(3)}\} \subset \R^{2\times 2}_{sym,0} \mbox{ with }\\
&e^{(1)}:= \begin{pmatrix}
1 & 0 \\
0& -1
\end{pmatrix},
e^{(2)}:= \frac{1}{2}\begin{pmatrix}
-1 & \sqrt{3} \\
\sqrt{3}& 1
\end{pmatrix},
e^{(3)}:= \frac{1}{2}\begin{pmatrix}
-1 & -\sqrt{3}\\
-\sqrt{3}& 1
\end{pmatrix},
\end{split}
\end{equation}
or $n=3$ and $\tilde{K}=\tilde{K}_{co}$ corresponding to the cubic-to-orthorhombic transformation,
\begin{equation}
\label{eq:K3b}
\begin{split}
&\tilde{K}_{co}:=\{e^{(1)}, \dots, e^{(6)}\} \subset \R^{3\times 3}_{sym,0} \mbox{ with }\\
&e^{(1)}:= \begin{pmatrix}
1 & \delta & 0\\
\delta& 1 & 0\\
0 & 0 & -2
\end{pmatrix},
e^{(2)}:= \begin{pmatrix}
1 & -\delta & 0\\
-\delta& 1 & 0\\
0 & 0 & -2
\end{pmatrix},
e^{(3)}:= \begin{pmatrix}
1 & 0 & \delta\\
0& -2 & 0\\
\delta & 0 & 1
\end{pmatrix},\\
&e^{(4)}:= \begin{pmatrix}
1 & 0 & -\delta\\
0& -2 & 0\\
-\delta & 0 & 1
\end{pmatrix},
e^{(5)}:= \begin{pmatrix}
-2 & 0 & 0\\
0& 1 & \delta\\
0 & \delta & 1
\end{pmatrix},
e^{(6)}:= \begin{pmatrix}
-2 & 0 & 0\\
0& 1 & -\delta\\
0 & -\delta & 1
\end{pmatrix}.
\end{split}
\end{equation} 
Here $\R^{2\times 2}_{sym,0}$ and $\R^{3\times 3}_{sym,0}$ denote the vector space of symmetric, trace free matrices in two and three dimensions.
We seek to verify the assumptions \ref{item:A1}-\ref{item:A4} for these inclusion problems. 
More generally, the described properties remain valid, if 
\begin{align*}
\tilde{K}_{l} := \{e^{(1)},\dots,e^{(m)}\},
\end{align*}
with $e^{(i)}, e^{(j)}$ being pairwise rank-one connected for $i\neq j$, and if $\conv(\{e^{(1)},\dots,e^{(m)}\})$ is sufficiently large (e.g. $\dim\conv(\{e^{(1)},\dots,e^{(m)}\})=\frac{n(n+1)}{2}$ or $\dim\conv(\{e^{(1)},\dots,e^{(m)}\})=\frac{n(n+1)}{2}-1$ if there is an additional trace constraint as in our examples). As this only requires minor modifications, we focus on the examples $\tilde{K}_{h}$ and $\tilde{K}_{co}$ in the sequel.

We note that, by definition, as required in the condition \ref{item:A1} the sets $K$ are of the form 
\begin{align*}
K= \bigcup\limits_{j=1}^m \{e^{(j)}+\Skew(n)\},
\end{align*}
with $G=\Skew(n)$. This group acts on $K$ (and $K^{lc}$) by addition.

In order to simplify notation, we use the following convention: For a matrix $M\in \R^{n\times n}$ we set
\begin{align*}
e(M):=\frac{1}{2}(M+M^t), \ \omega(M):=\frac{1}{2}(M-M^t).
\end{align*}
Thus $M=e(M)+\omega(M)$ is the unique decomposition of $M$ into its symmetric and antisymmetric parts. Based on this, we also adopt the point of view that $\R^{n\times n}:=\R^{n\times n}_{sym}\times\Skew(n)$.

\subsubsection{Convex hull and in-approximation}

Since we are working in the context of symmetrized matrices, we replace the notation of the lamination convex hull by the notion of symmetrized lamination convex hull. It is defined as follows:

\begin{defi}
\label{defi:symm_lam_hull}
Let $\tilde{K} \subset \R^{n\times n}_{sym}$. Then $\tilde{K}^{lc,sym}:= \bigcup\limits_{l=0}^{\infty} R_{l,sym}(\tilde{K})$, where $R_{0,sym}(\tilde{K})=\tilde{K}$ and for $l\geq 1$
\begin{align*}
R_{l,sym}(\tilde{K})&:= \{M \in \R^{n\times n}_{sym}: M = \lambda A + (1-\lambda) B \mbox{ for some } \lambda \in (0,1), \ A,B \in R_{l-1}(\tilde{K}) \\
& \quad \quad \mbox{and } A-B = a \odot b \mbox{ for some } a\in \R^{n}\setminus \{0\}, b\in S^{n-1} \},
\end{align*}
where $a \odot b = \frac{1}{2}(a\otimes b + b \otimes a)$.
\end{defi}

Next we recall that it is always possible to pull-up a symmetrized rank-one connection for symmetric matrices with the same trace.

\begin{lem}[Rank-one vs symmetrized rank-one connectedness, \cite{R16}, Lemma 9]
\label{lem:sym_rk1}
Let $e_1,e_2 \in \R^{n\times n}_{sym}$ with $\tr(e_1)=0=\tr(e_2)$. Then the following statements are equivalent:
\begin{itemize}
\item[(i)] There exist vectors $a\in \R^{n}\setminus\{0\}, b \in S^{n-1}$ such that 
\begin{align*}
e_1-e_2 = a\odot b.
\end{align*}
\item[(ii)] There exist matrices $M_1,M_2 \in \R^{n\times n}$ and vectors $a\in \R^{n}\setminus\{0\}, b \in S^{n-1}$ such that
\begin{align*}
M_1 - M_2 &= a\otimes b,\\
e(M_1) &= e_1, e(M_2) = e_2.
\end{align*}
\item[(iii)] $\rank(e_{1}-e_2)\leq 2$.
\end{itemize}
\end{lem}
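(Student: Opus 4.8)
<br>

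The plan is to prove the three equivalences $(i)\Leftrightarrow(ii)\Leftrightarrow(iii)$ for symmetric trace-free matrices $e_1,e_2$. Set $e := e_1 - e_2$, which is symmetric with $\tr(e) = 0$. The implications $(ii)\Rightarrow(i)$ and $(ii)\Rightarrow(iii)$ are essentially immediate: if $M_1 - M_2 = a\otimes b$ then symmetrizing gives $e_1 - e_2 = e(a\otimes b) = a\odot b$, which is $(i)$; and $a\otimes b$ has rank one, so $e = e(a\otimes b)$ has rank at most two, which is $(iii)$. Likewise $(i)\Rightarrow(iii)$ is clear since $a\odot b = \frac12(a\otimes b + b\otimes a)$ is a sum of two rank-one matrices. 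So the content is in the two ``upgrading'' implications $(iii)\Rightarrow(i)$ and $(i)\Rightarrow(ii)$.

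For $(i)\Rightarrow(ii)$, I would argue constructively: given $e = e_1 - e_2 = a\odot b$ with $a\neq 0$, $b\in S^{n-1}$, I want to perturb $a\otimes b$ by an antisymmetric matrix to land exactly at $e$. Write $M_1 = e_1 + \omega$ and $M_2 = e_2 + \omega$ for a skew matrix $\omega$ to be chosen; then $M_1 - M_2 = e = a\odot b$, which is symmetric, but I need $M_1 - M_2$ to be an \emph{honest} rank-one matrix $a'\otimes b'$. The clean way is: since $a\odot b = \frac12(a\otimes b + b\otimes a)$, set $M_1 - M_2 := a\otimes b$ directly, i.e. take $M_1 = e_1 + \frac12(a\otimes b - b\otimes a)$ and $M_2 = e_2 + \frac12(a\otimes b - b\otimes a)$ — wait, that changes the symmetric parts. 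The correct bookkeeping: we want $e(M_1) = e_1$, $e(M_2) = e_2$, and $M_1 - M_2 = a\otimes b$. Since $e(a\otimes b) = a\odot b = e_1 - e_2$, the choice $M_1 := e_1 + \frac12(a\otimes b - b\otimes a)$, $M_2 := e_2 - \frac12(a\otimes b - b\otimes a)$ does \emph{not} work because then $M_1 - M_2 = (e_1 - e_2) + (a\otimes b - b\otimes a) = a\odot b + (a\otimes b - b\otimes a) = a\otimes b$; good — and $e(M_1) = e_1 + 0 = e_1$, $e(M_2) = e_2 - 0 = e_2$ since $\frac12(a\otimes b - b\otimes a)$ is skew. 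So $(i)\Rightarrow(ii)$ holds with this explicit choice.

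The genuinely nontrivial step, and the one I expect to be the main obstacle, is $(iii)\Rightarrow(i)$: given that $e = e_1 - e_2$ is symmetric, trace-free, and has rank $\le 2$, I must produce $a\neq 0$ and $b\in S^{n-1}$ with $e = a\odot b$. The idea is to diagonalize: $e$ is symmetric so $e = \sum_i \lambda_i v_i\otimes v_i$ in an orthonormal eigenbasis $\{v_i\}$, with at most two nonzero eigenvalues $\lambda_i$, and (since $\tr e = 0$) if exactly two are nonzero they have opposite signs, say $\lambda_1 = \mu > 0$, $\lambda_2 = -\nu < 0$ (the rank-$0$ case is trivial with $a=0$, and the rank-$1$ trace-free case forces the eigenvalue to be $0$, also trivial). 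Then on the plane spanned by $v_1,v_2$ the matrix $e$ acts as $\mathrm{diag}(\mu, -\nu)$, and one writes $\mathrm{diag}(\mu,-\nu) = a\odot b$ with $a,b$ in that plane by solving the resulting quadratic system: writing $a = \alpha v_1 + \beta v_2$, $b = \gamma v_1 + \delta v_2$ with $|b| = 1$, the equations $\alpha\gamma = \mu$, $\beta\delta = -\nu$, $\alpha\delta + \beta\gamma = 0$ have a solution (this is the standard fact that a symmetric matrix with one positive and one negative eigenvalue is a symmetrized rank-one product — indeed one can take, e.g., $b$ along an isotropic-type direction and scale). I would either cite this elementary linear algebra computation or carry it out in the two-dimensional block explicitly; the main care needed is ensuring $a\neq 0$ and normalizing $b$, which is fine because $\mu,\nu \ne 0$. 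This reduces everything to a $2\times 2$ computation and closes the chain of equivalences.
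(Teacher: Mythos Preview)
The paper does not actually prove this lemma; it is stated with a citation to \cite{R16} and then used. The closely related construction in Remark~\ref{rmk:skew} is the paper's explicit version of your $(i)\Rightarrow(ii)$ step. Your overall approach is the standard one and is essentially what \cite{R16} does; two points need correction.

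First, your computation in $(i)\Rightarrow(ii)$ is wrong. With your choice $M_1 := e_1 + \tfrac12(a\otimes b - b\otimes a)$ and $M_2 := e_2 - \tfrac12(a\otimes b - b\otimes a)$ you get
\[
M_1 - M_2 = a\odot b + (a\otimes b - b\otimes a) = \tfrac{3}{2}\,a\otimes b - \tfrac{1}{2}\,b\otimes a,
\]
which is not rank one in general. The skew correction must sit entirely on one side (or, as in Remark~\ref{rmk:skew}, be split with weights $(1-\lambda)$ and $-\lambda$ summing to one): take $M_1 = e_1 + \tfrac12(a\otimes b - b\otimes a)$ and $M_2 = e_2$, so that $M_1 - M_2 = a\odot b + \omega(a\otimes b) = a\otimes b$.

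Second, in $(iii)\Rightarrow(i)$ you write that the rank-$0$ case is ``trivial with $a=0$'', but the statement requires $a\in\R^n\setminus\{0\}$; since $a\odot b = 0$ forces $a=0$ when $|b|=1$, the equivalence as literally stated fails for $e_1=e_2$. This is a quirk of the statement (in all applications in the paper $e_1\neq e_2$), not of your argument, but you should flag it rather than claim it is trivial. Also note that for a trace-free symmetric matrix of rank exactly two the nonzero eigenvalues satisfy $\lambda_1+\lambda_2=0$, so your $\mu$ and $\nu$ are equal; this makes the $2\times 2$ computation immediate via $e=\mu(v_1\otimes v_1 - v_2\otimes v_2)= (\sqrt{2}\,\mu(v_1+v_2))\odot((v_1-v_2)/\sqrt{2})$, and there is no need to solve a general quadratic system.
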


\begin{rmk}
\label{rmk:skew}
For later reference, we note that if for some matrices $e,e_1,e_2 \in \R^{n\times n}_{sym}$ with $e_1-e_2= a\odot b$ for $a\in \R^{n}\setminus \{0\}$, $b\in S^{n-1}$ we have 
\begin{align*}
e= \lambda e_1 + (1-\lambda)e_2, \ \lambda \in (0,1),
\end{align*}
and $e=e(M)$ for some $M \in \R^{n\times n}$, then there exist matrices $M_1, M_2$ such that
\begin{align*}
M = \lambda M_1 + (1-\lambda) M_2, \ e(M_1)=e_1, \ e(M_2)=e_2, \ \rank(M_1- M_2)=1.
\end{align*}
Indeed, setting
\begin{align*}
M_1 &= e_1 + \omega(M) + (1-\lambda) S,\\
M_2 & = e_2 + \omega(M) - \lambda S,
\end{align*}
with $S=\pm \omega(a\otimes b)$ yields
\begin{align*}
M_1 - M_2 = a\odot b + S = \pm a\otimes b.
\end{align*}
Thus, given a symmetrized rank-one connection for symmetric matrices, it is always possible to ``pull this up" to obtain a rank-one connection for the associated non-symmetric matrices. Here we have the choice between $\pm \omega(a\otimes b)$ for the matrix $S$.
\end{rmk}

We note that by a well-known result (c.f. for instance \cite{R16}, Lemma 4, where this is detailed) we obtain that the symmetrized lamination convex hulls of $\tilde{K}_{h}$ and $\tilde{K}_{co}$ coincide with the corresponding convex hulls. Combined with Lemma \ref{lem:sym_rk1} this also allows us to characterise the full convex hulls of the sets $K$. 

\begin{lem}
\label{lem:hulls}
Let $\tilde{K}=\tilde{K}_{h}$ or $\tilde{K}= \tilde{K}_{co}$ be as above. Then, $\tilde{K}^{lc,sym}= \conv(\tilde{K})$. More specifically,
\begin{align*}
R_{2,sym}(\tilde{K}_h)=\tilde{K}_h^{lc,sym}, \ R_{5,sym}(\tilde{K}_{co})=\tilde{K}^{lc,sym}_{co}.
\end{align*}
In particular, $\dim(K_{h}^{lc,sym})=2$ and $\dim(K_{h}^{lc,sym})=5$.

Furthermore, for $K=\tilde{K}_h \times \Skew(2)$ or $K=\tilde{K}_{co}\times \Skew(3)$ we have 
\begin{align*}
\conv(K) = K^{lc} = \tilde{K}^{lc,sym}\times \Skew(n),
\end{align*}
with $n=2$ (if $K=\tilde{K}_h \times \Skew(2)$) or $n=3$ (if $K=\tilde{K}_{co}\times \Skew(3)$).

\end{lem}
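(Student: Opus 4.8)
The plan is to establish the four assertions of the lemma in turn: (1) $\tilde K^{lc,sym}=\conv(\tilde K)$ together with the stated laminate orders, (2) the dimensions $2$ and $5$, (3) the product identity $K^{lc}=\tilde K^{lc,sym}\times\Skew(n)$, and (4) $\conv(K)=K^{lc}$. For (1), the coincidence $\tilde K^{lc,sym}=\conv(\tilde K)$ is the well-known result recalled above (\cite{R16}, Lemma 4), so only the laminate orders require comment. For $\tilde K_h\subset\R^{2\times2}_{sym,0}$ this is immediate: since $\dim\R^{2\times2}_{sym,0}=2$, the difference of any two distinct trace-free symmetric $2\times2$ matrices has rank $\leq2$, hence is a symmetrized rank-one connection by Lemma \ref{lem:sym_rk1}. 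Therefore $R_{1,sym}(\tilde K_h)$ contains the three edges of the triangle $\conv(e^{(1)},e^{(2)},e^{(3)})$, and since every point of that triangle is a convex combination of two points lying on distinct edges (which are again symmetrized rank-one connected), one gets $R_{2,sym}(\tilde K_h)=\conv(\tilde K_h)$. For $\tilde K_{co}$ I would follow the explicit lamination procedure underlying \cite{R16}, Lemma 4, and simply track the number of steps: a direct determinant computation exploiting the block structure of the $e^{(i)}$ shows $\rank(e^{(i)}-e^{(j)})\leq2$ for all $i\neq j$, so $R_{1,sym}(\tilde K_{co})$ already contains every edge of the $5$-simplex $\conv(\tilde K_{co})$, and in particular the diagonal midpoints $m_i:=\tfrac12(e^{(2i-1)}+e^{(2i)})$; one then laminates the diagonal triangle $\conv(m_1,m_2,m_3)$ (the trace-free diagonal directions with a vanishing entry being exactly its available symmetrized rank-one directions) and afterwards exhausts the three off-diagonal directions, the bookkeeping yielding $R_{5,sym}(\tilde K_{co})=\conv(\tilde K_{co})$.

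For (2) one argues by elementary linear algebra. The three matrices $e^{(1)},e^{(2)},e^{(3)}$ are not collinear (they form an equilateral triangle under the threefold symmetry of the hexagonal-to-rhombic setting), so $\conv(\tilde K_h)$ is two-dimensional. For $\tilde K_{co}$, introducing coordinates $(d_1,d_2,o_{12},o_{13},o_{23})$ on $\R^{3\times3}_{sym,0}$ — the two free diagonal entries together with the three off-diagonal entries — one checks that the five vectors $e^{(i)}-e^{(1)}$, $i=2,\dots,6$, are linearly independent; hence the six matrices are affinely independent and $\conv(\tilde K_{co})$ is a five-dimensional simplex.

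For (3) I would prove the sharper statement $R_l(K)=R_{l,sym}(\tilde K)\times\Skew(n)$ for every $l\in\N\cup\{0\}$, from which $K^{lc}=\tilde K^{lc,sym}\times\Skew(n)$ follows by taking the union over $l$. The base case $l=0$ is $K=\tilde K\times\Skew(n)$. For the inclusion $R_l(K)\subseteq R_{l,sym}(\tilde K)\times\Skew(n)$ assume inductively that $M_1,M_2\in R_{l-1,sym}(\tilde K)\times\Skew(n)$ with $M_1-M_2=a\otimes b$ rank one; then $e(M_1)-e(M_2)=a\odot b$ is a normalized symmetrized rank-one connection, so $\lambda e(M_1)+(1-\lambda)e(M_2)\in R_{l,sym}(\tilde K)$ while the skew part stays in $\Skew(n)$, giving $\lambda M_1+(1-\lambda)M_2\in R_{l,sym}(\tilde K)\times\Skew(n)$. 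For the reverse inclusion, let $e\in R_{l,sym}(\tilde K)$, write $e=\lambda e_1+(1-\lambda)e_2$ with $e_i\in R_{l-1,sym}(\tilde K)$ and $e_1-e_2=a\odot b$, and let $\omega\in\Skew(n)$ be arbitrary; the pull-up of Remark \ref{rmk:skew} applied with $M=e+\omega$ (so $e(M)=e$) produces $M_1,M_2$ with $e(M_i)=e_i$, $\rank(M_1-M_2)=1$ and $e+\omega=\lambda M_1+(1-\lambda)M_2$, and since $M_i=e_i+\omega(M_i)\in R_{l-1}(K)$ by the induction hypothesis, we conclude $e+\omega\in R_l(K)$. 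Finally, for (4), $\conv(K)=\conv(\tilde K\times\Skew(n))=\conv(\tilde K)\times\Skew(n)$ because $\Skew(n)$ is convex, and this equals $\tilde K^{lc,sym}\times\Skew(n)=K^{lc}$ by (1) and (3).

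The main obstacle is the quantitative part of (1) for $\tilde K_{co}$, i.e. verifying that five lamination steps suffice to exhaust the five-dimensional simplex; this is the only point where one genuinely needs the explicit geometry of the cubic-to-orthorhombic wells rather than soft arguments, and it is cleanest to dispatch it by reproducing the step-by-step construction of \cite{R16}. Everything else — the two-dimensional case, the dimension count, and the product structure — is routine once Remark \ref{rmk:skew} and Lemma \ref{lem:sym_rk1} are available.
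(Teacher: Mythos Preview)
Your proposal is correct and follows essentially the same approach that the paper indicates: the paper does not give a proof at all but merely cites \cite{R16}, Lemma 4, for the identity $\tilde{K}^{lc,sym}=\conv(\tilde{K})$ and points to Lemma~\ref{lem:sym_rk1} for the passage to the full hull $K^{lc}$. Your argument supplies the details the paper omits, in particular the clean inductive proof of $R_l(K)=R_{l,sym}(\tilde K)\times\Skew(n)$ via Remark~\ref{rmk:skew}, and correctly identifies the laminate-order count for $\tilde K_{co}$ as the only place where the specific geometry of the wells (and hence the construction from \cite{R16}) is genuinely needed.
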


In particular, we remark that in the both examples, i.e. for $K=K_{h}$ and $K=K_{co}$ we are working with an additional trace constraint. Hence, in the sequel, the conditions stated in \ref{item:A1}-\ref{item:A4} are understood as conditions on the relative interior of $\R^{n\times n}$, i.e. on the interior of $\R^{n\times n}$ with an additional trace constraint.
Based on the observation of Lemma \ref{lem:hulls} and in order to simplify notation, we introduce barycentric coordinates: As any element $e\in \conv(\tilde{K})$ with $\tilde{K}$ as above can be written as 
\begin{align*}
e= \sum\limits_{j=1}^{m} \mu_j e^{(j)}, \quad \sum\limits_{j=1}^{m}\mu_j = 1, \quad \mu_j \in [0,1], 
\end{align*}
where the matrices $e^{(1)},\dots,e^{(m)}$ are the ones from $\tilde{K}_{h}$ or from $\tilde{K}_{co}$ (in which case $m=3$ or $m=6$).
We identify $e$ with the coordinates $\mu=(\mu_1,\dots,\mu_m)$.\\

With these barycentric coordinates at hand, we can describe a possible and convenient in-approximation. This makes use of the following sets:

\begin{defi}
\label{defi:in_approx_sym}
Let $m\in \N$ be as above.
For $k\in \N$ and $\kappa \in (0,1)$ define the sets $J_{k,\kappa}=[ 2^{-(k+2m+2)}(3-\kappa), 2^{-(k+2m+2)}(3+\kappa)]$. 
Let $\kappa_0=1/4$ and $j\in\{0,\dots,m-1\}$. Then define
\begin{align*}
U_{k,l}^{j}&:= \{\mu=(\mu_1,\dots,\mu_m): \mbox{ There exist } i_1,\dots,i_j \in \{1,\dots,m\}\setminus \{l\} \mbox{ s.t. }\\
&\quad  \mu_{i_1},\dots,\mu_{i_j} \in J_{k,\kappa_0(j/(m-1))}
\mbox{ and for } i_{j+2},\dots,i_{m}\in \{1,\dots,m\}\setminus \{l,i_1,\dots,i_j\} \\
&\quad \mu_{i_{j+2}},\dots,\mu_{i_m} \in J_{k-1,\kappa_0(1+j/(m-1))}, \ \sum\limits_{j=1}^{m}\mu_j =1 \},\\
\hat{U}_{k}^{j}&:= \{\mu=(\mu_1,\dots,\mu_m): \mbox{ There exist } i_1,\dots,i_j \in \{1,\dots,m\} \mbox{ s.t. }\\
&\quad  \mu_{i_1},\dots,\mu_{i_j} \in J_{k,\kappa_0(j/(m-1))}
\mbox{ and for } i_{j+1},\dots,i_{m}\in \{1,\dots,m\}\setminus \{i_1,\dots,i_j\} \\
&  \quad  \mu_{i_{j+1}},\dots, \mu_{i_m} \geq 2^{-(k+2m)}(1+\kappa_0 j/(m-1)), \ \sum\limits_{j=1}^{m}\mu_j =1 \}.
\end{align*}
Based on this, we define for $k\in \N$, $j\in\{0,\dots,m-1\}$
\begin{align*}
U_{k}^j = \bigcup\limits_{l=1}^{m} U_{k,l}^j \times  \Skew(n), \quad
\tilde{U}_{k}^j = \hat{U}_{k}^j \times  \Skew(n).
\end{align*}
\end{defi}

\begin{rmk}
\label{rmk:inapprox}
We note that for $m\geq 2$ in the definition of $U^{j}_{k,l}$ we have that $\mu_l \geq 1- \sum\limits_{j\neq l}\mu_j \geq 1-m2^{-k-2m+2}\geq \frac{3}{4}$.
\end{rmk}

We claim that these sets form a convenient in-approximation.

\begin{lem}
\label{in_approx_sym}
Let $n=2$ and $m=3$ or $n=3$ and $m=6$.
Let $k\in\N$, $j\in\{0,\dots,m-1\}$ and let $U^j_k, \tilde{U}^j_k$ be the sets from Definition \ref{defi:in_approx_sym}. Then the sequence of sets
\begin{align*}
\tilde{U}_{k}^j, \tilde{U}_k^{j+1},\dots, \tilde{U}_k^{m-1}=U_{k+1}^0, \dots, U_{k+1}^{m-1}=U_{k+2}^0, U_{k+2}^1,\dots
\end{align*}
forms an in-approximation for $K$, where $K=K_{h}=\tilde{K}_{h} \times \Skew(n)$ or $K=K_{co}=\tilde{K}_{h} \times \Skew(n)$.
Moreover,
\begin{align*}
\tilde{U}^{j}_k, U_k^j \subset \inte(K^{lc}), \ \inte(K^{lc})= \bigcup\limits_{k,j} \tilde{U}^{j}_k.
\end{align*}
\end{lem}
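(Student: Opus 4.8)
The plan is to verify, directly from Definition~\ref{defi:in_approx_sym}, the containment $\tilde{U}_k^j,U_k^j\subset\inte(K^{lc})$, the coverage $\inte(K^{lc})=\bigcup_{k,j}\tilde{U}_k^j$, and the two conditions of Definition~\ref{defi:in_approx} for the displayed chain. Throughout I would work in barycentric coordinates and use Lemma~\ref{lem:hulls}, which identifies $\inte(K^{lc})$ with $\{\mu:\mu_i>0,\ \sum_i\mu_i=1\}\times\Skew(n)$. The containment is immediate: every interval $J_{k,\kappa}$ with $\kappa<1$ lies in $(0,1)$, and in $U_{k,l}^j$ the pivot coordinate satisfies $\mu_l\ge 3/4$ by Remark~\ref{rmk:inapprox} while all other coordinates lie in such intervals, so all barycentric coordinates of a point of $U_{k,l}^j$ (or of $\hat{U}_k^j$) are strictly between $0$ and $1$. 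For the coverage it suffices to use the sets with $j=0$: by definition $\hat{U}_k^0=\{\mu:\mu_i\ge 2^{-(k+2m)}\ \forall i,\ \sum\mu_i=1\}$, and any $\mu$ with all coordinates positive lies in $\hat{U}_k^0$ as soon as $2^{-(k+2m)}<\min_i\mu_i$; together with the containment this yields $\bigcup_k\tilde{U}_k^0=\inte(K^{lc})$.

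Next I would record the two ``gluing'' identities $\tilde{U}_k^{m-1}=U_{k+1}^0$ and $U_k^{m-1}=U_{k+1}^0$, which hold because in each description one ends up with exactly $m-1$ coordinates in $J_{k,\kappa_0}$ and a single coordinate which, by $\sum_i\mu_i=1$, automatically exceeds $2^{-(k+2m)}(1+\kappa_0)$. Condition (ii) of Definition~\ref{defi:in_approx} is then easy: the chain has only finitely many terms before it consists entirely of sets $U_k^{\cdot}$ with $k\to\infty$, and in each such set at least $m-1$ of the barycentric coordinates are bounded by $C_m2^{-k}$; hence if $V_l$ lies in the $l$-th set of the chain and $V_l\to V$, then $e(V_l)$ converges to some vertex $e^{(i)}$, the skew parts $\omega(V_l)$ converge, and $V=e^{(i)}+\omega(V)\in K$.

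The heart of the matter is condition (i), i.e.\ $\tilde{U}_k^j\subset(\tilde{U}_k^{j+1})^{lc}$ and $U_k^j\subset(U_k^{j+1})^{lc}$ for $0\le j\le m-2$. I would argue in the spirit of the proof of Lemma~\ref{lem:in_approx}: given $\mu$ in the earlier set, one singles out the coordinate that must pass from the coarser interval level to the finer one, and writes $\mu$ as an iterated laminate whose building blocks have that coordinate at the finer level with the removed mass redistributed onto the remaining (non-small) coordinates. All the rank-one splittings used are available because every difference $e^{(p)}-e^{(q)}$ has rank $\le 2$, hence is symmetrised rank-one by Lemma~\ref{lem:sym_rk1}, and each symmetrised splitting is lifted to a genuine rank-one splitting of the full matrices via Remark~\ref{rmk:skew} — this is where one uses that the sets are products with all of $\Skew(n)$, so the skew part of the decomposition can be chosen freely. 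The real work is to check that every building block again lies in $\tilde{U}_k^{j+1}$ (resp.\ $U_k^{j+1}$): the coordinate being moved must land inside the narrower target interval, the coordinates that stay put must still fit into the (slightly wider) intervals prescribed at level $j+1$, and the coordinates receiving extra mass must still exceed the (slightly larger) threshold. This is exactly what the $\kappa$-dependence $\kappa_0 j/(m-1)$ and $\kappa_0(1+j/(m-1))$ of the intervals $J_{k,\cdot}$, $J_{k-1,\cdot}$ is designed to make room for. I expect this multiscale bookkeeping — carried out explicitly for $m=3$ and $m=6$, just as the singular-value estimates are for $O(n)$ in Lemma~\ref{lem:in_approx} — to be the main obstacle; the remaining points are routine.
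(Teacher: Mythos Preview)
Your outline is correct and covers all four points. The treatment of the containment, the coverage $\inte(K^{lc})=\bigcup_k \tilde{U}_k^0$, the gluing identities, and condition~(ii) is essentially the same as in the paper (the paper is even terser, dismissing these as ``a direct consequence of the definition'').

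Where you differ from the paper is in the verification of condition~(i). You plan to build, for each $M\in U_k^j$, an explicit iterated rank-one decomposition with leaves in $U_k^{j+1}$, and you correctly flag the ``multiscale bookkeeping'' of the $\kappa$-dependent intervals as the main hurdle. The paper bypasses this entirely: it observes that because all pairs $e^{(p)},e^{(q)}$ are symmetrised rank-one connected (Lemma~\ref{lem:sym_rk1}), the argument behind Lemma~\ref{lem:hulls} applies to any subset of the simplex, yielding $(U_k^{j+1})^{lc,sym}=\conv(U_k^{j+1})$ and hence, after pulling up via Remark~\ref{rmk:skew}, $(U_k^{j+1})^{lc}=\conv(U_k^{j+1})\times\Skew(n)$. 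Condition~(i) then reduces to the \emph{convex} inclusion $U_k^j\subset\conv(U_k^{j+1})$ in barycentric coordinates, which is elementary and avoids the explicit laminate bookkeeping. Your route would work but is noticeably heavier; the paper's route trades the constructive decomposition for a single structural observation about lamination-convex hulls of subsets of a pairwise rank-one connected simplex.
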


\begin{proof}
As the last statement is a direct consequence of the definition of the sets $U_{k}^j, \tilde{U}^k_j$, it suffices to prove the statement on the in-approximation.
To this end, we first note that the symmetrized rank-one connectedness of the matrices $e^{(1)},\dots,e^{(m)}$ implies
$(U_k^{j})^{lc,sym}=\conv(U_k^{j})$ and $(\tilde{U}_k^{j})^{lc,sym}=\conv(\tilde{U}_k^{j})$
by an argument similar to the one from Lemma \ref{lem:hulls}. By translating in skew space and recalling Lemma \ref{lem:sym_rk1}, this also yields $(U_k^{j})^{lc}=\conv(U_k^{j})\times \Skew(n)$ and $(\tilde{U}_k^{j})^{lc}=\conv(\tilde{U}_k^{j})\times \Skew(n)$. This shows condition (i) in the definition of the in-approximation. Property (ii) is a direct consequence of the definition of the sets $\tilde{U}_k^j$ and $U_k^j$.
\end{proof}

Noting that $e^{(j)}+\Skew(n)$ and $e^{(i)}+\Skew(n)$ with $i\neq j$ are parallel, disjoint surfaces in $\R^{n\times n}$ and invoking Lemma \ref{lem:hulls} implies the condition \ref{item:A1}. Combining Lemma \ref{lem:in_approx} with the invariance of $\tilde{U}^j_k$ and $U^j_k$ with respect to actions of $\Skew(2)$ (if $n=2$) and $\Skew(3)$ (if $n=3$) also yields the property \ref{item:A2}.

\subsubsection{Replacement constructions}

In the sequel, we seek to verify the conditions \ref{item:A3tilde}, \ref{item:A3} in the geometrically linearised setting. Here we rely on an analogue of Lemma \ref{lem:MS}, which however takes the additional trace constraint into account. This is achieved by linearising a construction due to Conti \cite{C} (see also \cite{R16}, Lemmas 5 and 6).\\

We begin by discussing the two-dimensional construction.

\begin{lem}[\cite{C}, Lemma 2.3, and \cite{R16}, Lemma 5]
\label{lem:lin_Conti}
Let $A,B\in \R^{2\times 2}$ with $\tr(A)=\tr(B)=0$ be such that
\begin{align*}
A-B = a \otimes n \mbox{ for } a\in \R^2\setminus\{0\},\ n\in S^1.
\end{align*}
Assume that $M= \lambda A + (1-\lambda)B$ for some $\lambda \in(0,1)$. There exist a diamond shaped domain $\Omega_{\delta} \subset \R^2$ with side ratio $1:\delta$ and a piecewise affine function $u:\Omega_{\delta} \rightarrow \R^2$ such that
\begin{itemize}
\item[(i)] $u(x)=M x$ on $\partial \Omega_{\delta}$.
\item[(ii)] $\nabla \cdot u = 0 \mbox{ in } \Omega_{\delta}$.
\item[(iii)] $\dist(\nabla u, A\cup B)  \leq C \lambda (1-\lambda)h |a|$ and 
\begin{align*}
|\{x\in \Omega_{\delta}:\dist(\nabla u, A)<\dist(\nabla u, B)\}|&=v_A \lambda |\Omega_{\delta}|, \\
|\{x\in \Omega_{\delta}:\dist(\nabla u, B)\geq \dist(\nabla u, A)\}|&= v_B (1-\lambda) |\Omega_{\delta}|,
\end{align*}
for some constants $v_A, v_B \in (1/2,2)$.
\item[(iv)] $\nabla u$ attains at most five different values.
\item[(v)] The domain is given as $\Omega_{\delta}=Q(\conv(\{\pm e_1, \pm \delta e_2\}))$, $\delta= \delta(\lambda, |a|) \in (0,1/2)$, $Q \in SO(2)$. It can be divided into 10 triangles on which $u$ is affine.
\item[(vi)] $\|u-Mx\|_{L^{\infty}(\Omega)} \leq C \delta \lambda (1-\lambda)|a|$.
\end{itemize}
\end{lem}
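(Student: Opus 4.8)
The plan is to argue exactly as in Lemma~\ref{lem:MS}, the only genuinely new ingredient being the divergence constraint~(ii). So the statement is the linearised, trace-constrained counterpart of that lemma.

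First I would normalise. Subtracting $Mx$ I may assume $M=0$, whence $A=(1-\lambda)\,a\otimes n$ and $B=-\lambda\,a\otimes n$; composing the domain with a rotation $Q\in SO(2)$ and the target with $Q^{T}$ (which conjugates $\nabla u$ by $Q$ and hence preserves both its trace and the boundary regularity) and rescaling, I may take $n=e_{2}$ and $|a|=1$. Since $\tr A=\tr B=0$ forces $a\cdot n=0$, in this frame $a=\pm e_{1}$, so $A$ and $B$ are scalar multiples of $e_{1}\otimes e_{2}$; in particular any laminate built from $A$ and $B$ automatically has trace-free (indeed nilpotent) gradient, and all of the above reductions preserve the divergence-free character of the construction.

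Next, as in Lemma~\ref{lem:MS}, I would start from a simple laminate $v$ on the thin rectangle $[-1,1]\times[-\delta,\delta]$: three layers stacked in the $x_{2}$-direction with $\nabla v\in\{A,B\}$, volume fractions $\lambda$ and $1-\lambda$, $v$ odd in $x_{2}$, and $v=0$ on $x_{2}=\pm\delta$. I would then restrict attention to the rhombus $\Omega_{\delta}=Q(\conv\{\pm e_{1},\pm\delta e_{2}\})$ and modify $v$ near its four tips so that the boundary datum becomes exactly $Mx$; using the odd and even symmetries in $x_{1}$ and $x_{2}$ this reduces to a single tip. Here lies the departure from Lemma~\ref{lem:MS}: the naive one–triangle interpolation used there yields an affine piece whose gradient has nonzero trace, so instead I would invoke Conti's refinement (\cite{C}, Lemma~2.3; see also \cite{R16}, Lemma~5), in which each tip region is split into two triangles and one further gradient value is introduced, chosen so that every resulting affine piece is divergence-free and lies within $C\lambda(1-\lambda)\delta|a|$ of $A$ or $B$. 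Counting pieces then yields the decomposition of $\Omega_{\delta}$ into $10$ triangles carrying at most $5$ distinct values of $\nabla u$ -- i.e.\ (iv), (v) and the inclusion part of (iii) -- together with (i) and (ii).

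Finally, with the building block fixed (and $\delta=\delta(\lambda,|a|)\in(0,1/2)$ chosen small), the distance estimate in (iii) would follow from the explicit form of the correction gradients exactly as \eqref{eq:close} is derived in Lemma~\ref{lem:MS}; the $L^{\infty}$ bound (vi) would then follow by integrating this bound along segments, since $u-Mx$ vanishes on $\partial\Omega_{\delta}$; and the volume fractions in (iii), with $v_{A},v_{B}\in(1/2,2)$, would come from computing the areas of the five level sets and using $\delta<1/2$, just as at the end of the proof of Lemma~\ref{lem:MS}. The main obstacle is precisely the tip modification of the previous paragraph: arranging a single interpolation scheme that restores the affine boundary data, preserves $\di u=0$, \emph{and} keeps $\nabla u$ within $O(\lambda(1-\lambda)\delta)$ of $\{A,B\}$ simultaneously is what forces the more elaborate $5$-value/$10$-triangle geometry, and it is the step I would import essentially verbatim from Conti's construction; once it is in place the remaining estimates are routine.
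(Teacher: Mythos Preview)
Your normalisation and overall plan match the paper: subtract $Mx$, rotate so that $n=e_2$, use $\tr A=\tr B=0$ to conclude $a\perp n$ and hence $a=\pm e_1$, construct in one quadrant and extend by symmetry. The observation that the simple laminate is automatically trace-free is correct, and your final paragraph on volume fractions and the $L^\infty$ bound would indeed go through once the building block is in hand.

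The gap is in the building block itself. You describe Conti's refinement as keeping the \emph{simple} $x_2$-laminate in the bulk and only splitting each interpolation (``tip'') region into two triangles with one new gradient. This cannot work. With the simple laminate as bulk one has $u_2\equiv 0$ there, and any vertex $P_4$ added on an edge of $P_1P_2P_3$ inherits $u_2(P_4)=0$ either from the boundary condition or from the bulk. Thus every vertex of each sub-triangle has $u_2=0$, so $u_2\equiv 0$ on each affine piece, whence $\tr(\nabla u)=\partial_1 u_1$; forcing this to vanish would make $u_1$ independent of $x_1$, contradicting the prescribed corner values. Two triangles per tip therefore cannot enforce $\nabla\cdot u=0$.

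What the paper (following Conti) actually does is different in kind: it superposes onto the $x_2$-laminate a \emph{second} laminate in the $x_1$-direction, with gradients $M_2=-q(1-\mu)\,e_2\otimes e_1$ and $M_3=q\mu\,e_2\otimes e_1$ and interface at $x_1=\mu$. This perturbs the bulk \emph{everywhere} by $O(q)$ and, crucially, gives $u$ a nonzero second component. One then interpolates in a \emph{single} triangle $P_1P_2P_3$ per quadrant; the interpolated gradient $D$ is trace-free precisely when $q=\lambda(1-\lambda)\delta^2/(\mu(1-\mu))$, and the choice $\mu=(1-\lambda)\delta$ then yields $|D-B|\le C\lambda(1-\lambda)\delta$. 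The five gradient values and ten triangles come from this double-laminate-plus-interpolation picture, not from a local tip splitting.
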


\begin{figure}[h]
  \centering
  \includegraphics[width=0.55\linewidth, page=4]{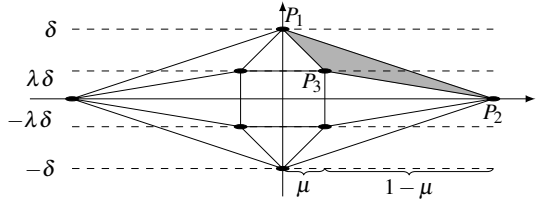}
  \caption{The diamond shaped domain of the construction of Lemma
    \ref{lem:lin_Conti}. Here, the triangle $P_1P_2P_3$ is highlighted.}
  \label{fig:conti}
\end{figure}

\begin{proof}
By a translation in matrix space, without loss of generality, we may assume that
$M=0$. By a further rotation and scaling (depending on $A-M$, $B-M$) in real
space we may assume that $n=e_2$ and that $a=e_1$ (this in particular involves the passage $u \rightarrow \frac{u}{|a|}$), where we used that $a\perp n$ (by the imposed trace constraint). Hence, we have that
\begin{align*}
A= \begin{pmatrix}
0 & 1-\lambda \\
0 & 0 
\end{pmatrix}, \ B= \begin{pmatrix}
0 & -\lambda \\
0 & 0 
\end{pmatrix}, 
\end{align*}
Without loss of generality we may further assume that $0<\lambda \leq 1-\lambda$ (else replace $A$ by $-A$ and $B$ by $-B$ and rename $\tilde{\lambda}=1-\lambda$).
As in Conti's original construction \cite{C} we construct a solution in the diamond with length scales $\delta,\lambda,\mu,1$ as depicted in Figure \ref{fig:conti}. As in the original construction, we focus on the deformation in the first quadrant and then extend it to the full diamond by symmetry afterwards. We define the affine functions
\begin{align*}
&v^{M_0}(x):= \begin{pmatrix} (1-\lambda)x_2\\0\end{pmatrix}, \
v^{M_1}(x):= \begin{pmatrix} - \lambda x_2 + \lambda \delta \\ 0  \end{pmatrix},\\
&v^{M_2}(x):= \begin{pmatrix} 0 \\ - q(1-\mu) x_1 \end{pmatrix}, \
v^{M_3}(x):= \begin{pmatrix} 0\\ q \mu x_1 - q \mu  \end{pmatrix},
\end{align*}
which have the gradients
\begin{align*}
&M_0 = \begin{pmatrix} 0 & 1-\lambda \\ 0 & 0 \end{pmatrix}, \ 
M_1 = \begin{pmatrix} 0 & -\lambda \\ 0 & 0 \end{pmatrix},\\
&M_2 = \begin{pmatrix} 0 & 0 \\ -q(1-\mu) & 0 \end{pmatrix}, \ M_3 = \begin{pmatrix} 0 & 0 \\ q \mu & 0 \end{pmatrix},
\end{align*}
where $q\in \R$ is to be specified in the sequel.
As $M_0, M_1$ and $M_2, M_3$ are each respectively rank-one connected, we can define the following 
\begin{align*}
 \tilde{v}(x):= \left\{ \begin{array}{ll} 
 &v^{M_0}(x) + v^{M_2}(x) \mbox{ in }   [0,\mu] \times [0, \delta \lambda],\\
 & v^{M_0}(x) + v^{M_3}(x) \mbox{ in }  [\mu,1] \times [0, \delta \lambda] ,\\
 &v^{M_1}(x) + v^{M_2}(x) \mbox{ in }  [0,\mu] \times [\delta \lambda, \delta],\\
 &v^{M_1}(x) + v^{M_3}(x) \mbox{ in }  [\mu,1] \times [\delta \lambda, \delta],
                        \end{array} \right.   
\end{align*}
as a piecewise affine (in particular continuous) function.
Setting $P_1 :=(0,\delta)$ and $P_2:=(1, 0)$, we infer that
\begin{align*}
\tilde{v}(P_1)= 0 = \tilde{v}(P_2).
\end{align*}
Furthermore, setting $P_3:=(\mu,\delta \lambda)$ and interpolating linearly in the shaded triangle in Figure \ref{fig:conti}, which is defined as the convex hull of $P_1P_3P_2$, we deduce that on the line segment $P_1,P_2$ the zero boundary conditions are satisfied. We define a new function $u$ on $\conv(P_1,P_2,(0,0))$ by setting it equal to $\tilde{v}$ outside the triangle $P_1 P_2 P_3$ and defining it as the interpolated function in this triangle. In order to ensure that $\nabla \cdot u = 0$, we choose $q= \frac{\lambda(1-\lambda)}{\mu(1-\mu)}\delta^2$ (indeed, this can for instance be seen by computing the gradient in the interpolated region. It is given by
\begin{align*}
D = \frac{q(1-\mu)\mu}{\delta(1-\lambda-\mu)} \begin{pmatrix}
- \frac{\lambda (1-\lambda)\delta^2}{q(1-q)\mu} &
- \frac{\lambda (1-\lambda)\delta}{q(1-q)\mu} \\ \delta  &  1 
 \end{pmatrix}.
\end{align*}
Alternatively, it is possible to argue by Gauß's theorem). Inserting the value of $q$ into the expression for the gradient hence yields
\begin{align*}
D = \frac{\lambda(1-\lambda)}{1-\lambda-\mu} \begin{pmatrix} -\delta & -1\\ \delta^2 & \delta \end{pmatrix}.
\end{align*}
Recalling that $\lambda \in(0,1/2]$ and that $\delta \in (0,1/2)$, we infer that for $\mu=(1-\lambda)\delta$
\begin{align*}
|D-M_1|
&\leq 3\frac{\delta \lambda (1-\lambda)}{1-\lambda -\mu} + \frac{\lambda\mu(1-\lambda)}{1-\lambda-\mu}\\
&\leq 3\frac{\delta \lambda (1-\lambda)}{(1-\lambda)(1 -\delta)} + \frac{\lambda (1-\lambda)^2 \delta}{(1-\lambda)(1 -\delta)}
 \leq 5\frac{\delta \lambda }{(1 -\delta)} .
\end{align*}
Similarly, we infer the closeness condition
\begin{align*}
\dist(\nabla u, A\cup B) \leq 20 \delta \lambda (1-\lambda).
\end{align*}
Using these observations and the distribution of the gradients then also entails the result on the volume fractions which is stated in (iii). More precisely, as in Lemma \ref{lem:MS} we have that

\begin{align*}
|\{x: \dist(\nabla u, B)< \dist(\nabla u, A)\}| &= 2 \delta(1-\lambda)(1-\lambda \delta),\\
|\{x: \dist(\nabla u, A)\leq \dist(\nabla u, B)\}|  &= 2\lambda \delta(1+(1-\lambda)\delta),
\end{align*}
Finally, we estimate the $L^{\infty}$ error. We have
\begin{align*}
|M x - u(x)| &\leq |(M-B)|(1-\lambda)\delta + \dist(\nabla u, A\cup B) ((1-\lambda)\delta +1)\\
& \quad + |M-A|\lambda \delta\\
& \leq C\lambda (1-\lambda) \delta.
\end{align*}
Undoing the rescaling with $|a|$ then implies the claims.
This concludes the proof.
\end{proof}

We next show that as in Lemma \ref{lem:3D_O(n)} it is possible to pass from the
two-dimensional to the  three-dimensional variant of Lemma \ref{lem:lin_Conti}.

\begin{lem}[\cite{C}, Lemma 2.4, and \cite{R16}, Lemma 6]
\label{lem:lin_Conti_3D}
Let $A,B\in \R^{3\times 3}$ with $\tr(A)=\tr(B)=0$ be such that
\begin{align*}
A-B = a \otimes n \mbox{ for } a\in \R^3\setminus\{0\},\ n\in S^2.
\end{align*}
Assume that $M= \lambda A + (1-\lambda)B$ for some $\lambda \in(0,1)$. There exist a diamond shaped domain $\Omega_{\delta} \subset \R^3$ with side length ratio $1:\delta:1$ (with $\delta \in (0,1/2)$) and a piecewise affine function $u:\Omega_{\delta} \rightarrow \R^3$ such that
\begin{itemize}
\item[(i)] $u(x)=M x$ on $\partial \Omega_{\delta}$.
\item[(ii)] $\nabla \cdot u = 0 \mbox{ in } \Omega_{\delta}$.
\item[(iii)] $\dist(\nabla u, A\cup B)  \leq C \lambda (1-\lambda)\delta |a|$ and 
\begin{align*}
|\{x\in \Omega_{\delta}:\dist(\nabla u, A)<\dist(\nabla u, B)\}|&=v_A\lambda |\Omega_{\delta}|, \\
|\{x\in \Omega:\dist(\nabla u, B)\geq \dist(\nabla u, A)\}|&=v_B(1-\lambda) |\Omega_{\delta}|,
\end{align*}
for some constants $v_A,v_B\in(1/2,2)$ which can be chosen as in Lemma \ref{lem:lin_Conti}.
\item[(iv)] $\nabla u$ attains at most ten different values.
\item[(v)] The domain is given as $\Omega_{\delta}= Q \conv(\pm e_1, \pm \delta e_2, \pm e_3)$, $Q \in SO(3)$, can be divided into 20 triangles on which $u$ is affine.
\item[(vi)] $\|u-Mx\|_{L^{\infty}(\Omega_{\delta})} \leq C \delta \lambda (1-\lambda)|a|$.
\end{itemize}
\end{lem}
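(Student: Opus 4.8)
The plan is to mimic the proof of Lemma~\ref{lem:3D_O(n)}: place the two-dimensional building block of Lemma~\ref{lem:lin_Conti} in a coordinate plane, add two apex points on the orthogonal axis, and extend the deformation by coning. The divergence-free constraint will then be inherited automatically, since the added third component of the extension is affine with vanishing boundary values. Concretely, since $\tr(A)=\tr(B)=0$ and $A-B=a\otimes n$, we have $a\cdot n=\tr(A-B)=0$, so $\{a/|a|,n\}$ is an orthonormal pair; completing it to an orthonormal basis and applying the corresponding rotation $Q\in SO(3)$, a translation in matrix space sending $M$ to $0$, and the rescaling $u\mapsto u/|a|$ as in Lemma~\ref{lem:lin_Conti}, we may assume $n=e_2$, $a=e_1$, $|a|=1$, $M=0$, and (after possibly exchanging $A$ and $B$) $0<\lambda\le 1-\lambda$. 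In this normalization $A$ and $B$ are supported in the $(1,2)$-entry, i.e.\ $A=\left(\begin{smallmatrix}A' & 0\\ 0 & 0\end{smallmatrix}\right)$, $B=\left(\begin{smallmatrix}B' & 0\\ 0 & 0\end{smallmatrix}\right)$ with $A',B'\in\R^{2\times 2}$ the matrices of Lemma~\ref{lem:lin_Conti}.

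\emph{Construction.} Apply Lemma~\ref{lem:lin_Conti} to $A',B'$ and $M'=0$ to obtain $\delta=\delta(\lambda,|a|)\in(0,1/2)$, the diamond $D_2=\conv(\pm e_1,\pm\delta e_2)$ in $\{x_3=0\}$, its subdivision into the ten triangles $T$ of the construction, and the piecewise affine $v\colon D_2\to\R^2$, viewed as $\R^3$-valued with vanishing third component. Put $\Omega_\delta:=Q\,\conv(\pm e_1,\pm\delta e_2,\pm e_3)$ and $P_\pm:=\pm e_3$. Then $\Omega_\delta$ is the union of the $20$ tetrahedra $\conv(T\cup\{P_\pm\})$, and we define $u\colon\Omega_\delta\to\R^3$ on each of them as the affine interpolation of $v$ at the three vertices of $T$ together with the value $0$ at $P_\pm$. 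Adjacent tetrahedra share either a face in $\{x_3=0\}$, on which $u=v$, or a face through an apex, on which $u$ is the common interpolant; hence $u$ is well defined, continuous and piecewise affine, and on the slice $\{x_3=t\}$ it is the cone extension $u((1-|t|)\eta,t)=(1-|t|)v(\eta)$ for $\eta\in D_2$, $|t|\le 1$. Since $v=0$ on $\partial D_2$, we get $u=Mx=0$ on $\partial\Omega_\delta$, which is (i).

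\emph{Gradient structure and the key estimate.} Writing an affine piece of $v$ as $v|_T(x)=L_Tx+c_T$, the coning formula gives on the corresponding tetrahedron
\[
\nabla u=\begin{pmatrix} L_T & d_T\\ 0 & 0\end{pmatrix},\qquad d_T=\mp c_T,
\]
with the sign determined by whether the apex is $P_+$ or $P_-$. In particular $u_3\equiv0$ and $\tr\nabla u=\tr L_T=0$ by Lemma~\ref{lem:lin_Conti}(ii), which is (ii). The main point — and the step I expect to be the actual technical obstacle — is the bound $|d_T|=|c_T|\le C\lambda(1-\lambda)\delta|a|$, i.e.\ that every affine piece of the planar construction has a \emph{small} constant term. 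This is verified by inspecting the explicit pieces in the proof of Lemma~\ref{lem:lin_Conti}: the pieces built from $v^{M_0},v^{M_2}$ have constant term $0$; those involving $v^{M_1}$ contribute $(\lambda\delta,0)$ (and $\lambda\le 1-\lambda$); those involving $v^{M_3}$ contribute $(0,-q\mu)$ with $q\mu\le 2\lambda(1-\lambda)\delta^2$; and the interpolated piece has gradient $D$ of norm $\lesssim\lambda(1-\lambda)$ whose affine extension vanishes at $P_1=(0,\delta)$, so its constant term has norm $\le\delta\|D\|\lesssim\lambda(1-\lambda)\delta$. The reflections used to pass to the other quadrants preserve these bounds, and undoing the $|a|$-rescaling restores the factor $|a|$.

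\emph{Remaining properties.} Granting the bound on $|d_T|$, the rest is routine. Since $\nabla u-A$ and $\nabla u-B$ differ from $L_T-A'$ and $L_T-B'$ only in the last column (by the common $d_T$) and in the vanishing last row, $|\nabla u-A|^2=|L_T-A'|^2+|d_T|^2$ and likewise for $B$; hence $\dist(\nabla u,\{A,B\})\le\dist(\nabla v,\{A',B'\})+|d_T|\le C\lambda(1-\lambda)\delta|a|$, and $\dist(\nabla u,A)<\dist(\nabla u,B)$ iff $\dist(\nabla v,A')<\dist(\nabla v,B')$. Thus the partition of $\Omega_\delta$ into $\{\dist(\nabla u,A)<\dist(\nabla u,B)\}$ and its complement is the cone over the corresponding partition of $D_2$; as the cross-section at height $t$ is $D_2$ scaled by $1-|t|$, one has $|\Omega_\delta|=\frac{2}{3}|D_2|$ and the same factor for the coned subsets, so Cavalieri's principle reproduces exactly the volume fractions $v_A,v_B$ of Lemma~\ref{lem:lin_Conti}; this is (iii). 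For (vi), on $\{x_3=t\}$ we have $|u-Mx|=(1-|t|)|v(\eta)|\le\sup_{D_2}|v|\le C\lambda(1-\lambda)\delta|a|$ by Lemma~\ref{lem:lin_Conti}(vi). Finally, $u$ is affine on each of the $20$ tetrahedra (two cones over the ten planar triangles) and $\nabla u$ there is determined by $(L_T,d_T)$ with $d_T=\mp c_T$; since $\nabla v$ takes at most five values, $\nabla u$ takes at most ten, which gives (iv) and (v). (As in Lemma~\ref{lem:3D_O(n)}, the same argument following \cite{C} and \cite{R16} applies.)
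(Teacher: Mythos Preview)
Your proof is correct and follows essentially the same approach as the paper: cone the two-dimensional construction of Lemma~\ref{lem:lin_Conti} over the apex points $P_\pm=\pm e_3$, read off the block structure $\nabla u=\left(\begin{smallmatrix}\nabla v & d\\ 0 & 0\end{smallmatrix}\right)$ (which immediately gives the trace constraint), bound $|d|$, and recover the volume fractions by Cavalieri's principle. The paper's own proof is just a two-line reference to Lemma~\ref{lem:3D_O(n)}; your version is more explicit in identifying $d_T=\mp c_T$ as the constant term of each affine piece and in checking the bound $|c_T|\lesssim\lambda(1-\lambda)\delta|a|$ case by case, which is precisely the computation the paper alludes to.
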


\begin{proof}
The proof and error bounds follow as in Lemma \ref{lem:3D_O(n)} and by noting that as 
\begin{align*}
\nabla \tilde{u}= \begin{pmatrix} \nabla' v & d \\ 0& 0  \end{pmatrix}, \ d \in \R^2,
\end{align*}
the trace constraint is still satisfied.
\end{proof}

The previous replacement construction in combination with the covering results from Section \ref{sec:covering} allow us to deduce the properties (A3) and (A4), which were formulated in Section \ref{sec:assumptions}:

\begin{lem}
  \label{lem:K_lin_replace}
 Let $n\in\{2,3\}$ and let $K=K_{h}$ (with $m=3$) or $K=K_{co}$ (with $m=5$).
  Let $U_{k}^j$ be given by Definition \ref{defi:in_approx_sym}.
Assume that $M\in \inte(K^{lc})$ with $M\in U_{k}^j$ for some $k\in
\N$ and  $j\in \{0,1,\dots,m-1\}$.
Let $\delta=\kappa_02^{-(10+2m)}/m$.
Then there exist a domain $\Omega^{\Diamond}$ of aspect ratio $1:\delta$ (or $1:\delta:1$), a piecewise affine map $w:\Omega^{\Diamond} \rightarrow \R^n$ and a domain $(\Omega^{\Diamond})_g^{\star} \subset \Omega^{\Diamond}$ such that with the notation from \ref{item:A3}, \ref{item:A3}
\begin{enumerate}[label=(\roman*)]
\item $\nabla w(x) \in U_{k}^{j+1} \mbox{ for a.e. } x \in \Omega^{\Diamond}_g$.
\item $|\nabla w(x) - M|\leq C 2^{-k}$ for a.e. $x\in (\Omega^{\Diamond})_g^{\ast}$.
\item $w(x) = Mx \mbox{ for } x\in (\Omega^{\Diamond} \setminus \Omega_g^{\Diamond}) \cup \partial \Omega^{\Diamond}$.
\item $\Omega^{\Diamond}_g = \Omega^{\Diamond}$ and $|(\Omega_{g}^{\Diamond})^{\star}|\geq (1-C 2^{-k}) |\Omega_{g}^{\Diamond}|$.
\item Let $\Omega_{1},\dots, \Omega_{N} \subset \Omega^{\Diamond}$ denote
  the level sets of $\nabla w$. Then it holds that
  \begin{align*}
    \sum_{i=1}^{N} \Per(\Omega_{i}) \leq 2^{n+2} \Per(\Omega^{\Diamond}).
  \end{align*}
\end{enumerate}
\end{lem}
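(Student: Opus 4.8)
The plan is to follow the proof of Lemma~\ref{lem:On_replace}, replacing the $O(n)$ building blocks of Lemmas~\ref{lem:MS}--\ref{lem:3D_O(n)} by the trace-free, divergence-free Conti constructions of Lemmas~\ref{lem:lin_Conti}--\ref{lem:lin_Conti_3D}, and working throughout in the barycentric coordinates $\mu=(\mu_1,\dots,\mu_m)$ of Definition~\ref{defi:in_approx_sym}. Since $U_k^{j}=\bigcup_{l}U_{k,l}^{j}\times\Skew(n)$, I first write $M=e(M)+\omega(M)$, where $e(M)$ has barycentric coordinates $\mu\in U_{k,l}^{j}$ for some $l\in\{1,\dots,m\}$; the skew part $\omega(M)$ is admissible for free and is carried along unchanged.

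The first real step is to exhibit $M$ as a single laminate with \emph{both} endpoints in $U_k^{j+1}$. Pick one coordinate of $\mu$ lying in $J_{k-1,\kappa_0(1+j/(m-1))}$, call it $\mu_p$, and let $c:=3\cdot 2^{-(k+2m+2)}$ be the common centre of the intervals $J_{k,\kappa}$. Define $e(A)$ by replacing $\mu_p$ with $c$ and adding the lost mass to $\mu_l$; then $A\in U_{k,l}^{j+1}$. Define $e(B)$ by instead replacing $\mu_l$ with $c$ and adding the lost mass to $\mu_p$; since $\mu_l\ge 3/4$ (Remark~\ref{rmk:inapprox}), $B$ now concentrates near the competing well $e^{(p)}$ and lies in $U_{k,p}^{j+1}$. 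By construction $e(A)-e(B)$ is an exact scalar multiple of $e^{(l)}-e^{(p)}$, hence symmetric, trace-free and of rank $\le 2$, so by Lemma~\ref{lem:sym_rk1} it is a symmetrized rank-one connection. Since $\mu_p$ lies strictly between $c$ (the $p$-th coordinate of $A$) and the $p$-th coordinate of $B$, which is $\approx\mu_l$, there is a unique $\lambda\in(0,1)$ with $e(M)=(1-\lambda)e(A)+\lambda e(B)$, and $\lambda\le C(\mu_p-c)\le C\,2^{-(k+2m+2)}\le C2^{-k}$. By Remark~\ref{rmk:skew} this lifts to a genuine rank-one connection $\hat A-\hat B=a\otimes b$ with $e(\hat A)=e(A)$, $e(\hat B)=e(B)$, $\hat A,\hat B\in U_k^{j+1}$ and $M=(1-\lambda)\hat A+\lambda\hat B$, the skew parts being fixed by the available choice $S=\pm\omega(a\otimes b)$.

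Now I feed $\hat A$, $\hat B$, $M$, $\lambda$ and $\delta=\kappa_0 2^{-(10+2m)}/m$ into Lemma~\ref{lem:lin_Conti} (if $n=2$) or Lemma~\ref{lem:lin_Conti_3D} (if $n=3$), producing the diamond $\Omega^{\Diamond}$ of aspect ratio $1{:}\delta$ (resp.\ $1{:}\delta{:}1$) and a piecewise affine $w$ with $w(x)=Mx$ on $\partial\Omega^{\Diamond}$; since $\Omega^{\Diamond}_g=\Omega^{\Diamond}$, this is already (iii). The gradient error is $\epsilon_k:=C\lambda(1-\lambda)\delta|\hat A-\hat B|\le C2^{-k}\delta$, and since the barycentric coordinates depend linearly -- hence Lipschitz-continuously -- on the symmetrized gradient, those of $e(\nabla w)$ lie within $C\epsilon_k$ of those of $\hat A$ or of $\hat B$; because $\kappa_0 j/(m-1)+2^k C\epsilon_k\le\kappa_0(j+1)/(m-1)$ by the choice of $\delta$, and the skew part plays no role, we conclude $\nabla w\in U_k^{j+1}$ a.e.\ in $\Omega^{\Diamond}$, i.e.\ (i). Taking $(\Omega^{\Diamond})_g^{\star}=\{x:\dist(\nabla w,\hat A)\le\dist(\nabla w,\hat B)\}$, which by Lemma~\ref{lem:lin_Conti}(iii) has measure $\ge(1-C2^{-k})|\Omega^{\Diamond}|$, gives (iv), and there $|\nabla w-M|\le|\nabla w-\hat A|+|\hat A-M|\le\epsilon_k+\lambda|\hat A-\hat B|\le C2^{-k}$, which is (ii). Finally, the level sets of $\nabla w$ are the boundedly many triangles (resp.\ tetrahedra) of the Conti construction, and summing their perimeters yields $\sum_i\Per(\Omega_i)\le 2^{n+2}\Per(\Omega^{\Diamond})$ by the same bookkeeping as in Lemma~\ref{lem:On_replace}(v).

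The main obstacle -- and the only place where the geometrically linear setting genuinely differs from $O(n)$ -- is the construction of the laminate in the second paragraph: the endpoint $\hat B$ must be placed near the \emph{competing} well $e^{(p)}$, which is the trace-constrained substitute for the $\pm c_k$ sign flip used for $O(n)$, and this is what simultaneously keeps $\hat B\in U_k^{j+1}$ and forces $\lambda=O(2^{-k})$. The trace constraint is otherwise absorbed painlessly by Lemma~\ref{lem:sym_rk1}, Remark~\ref{rmk:skew} and the divergence-free building blocks. The remaining issue, that the level sets $\Omega_1,\dots,\Omega_N$ lie in the admissible class $\mathcal C$ so that the construction can be iterated inside Algorithm~\ref{alg:convex_int}, is deferred to Section~\ref{sec:covering}.
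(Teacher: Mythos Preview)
Your proof is correct and follows essentially the same route as the paper: you pick the coordinate $\mu_p$ to be pushed to the centre $c$ of $J_k$, form the two endpoints by redistributing mass between $\mu_l$ and $\mu_p$ (so that one endpoint lands in $U_{k,l}^{j+1}$ and the other in $U_{k,p}^{j+1}$), lift the symmetrized rank-one connection via Remark~\ref{rmk:skew}, and then apply the Conti building block with the fixed $\delta$, exactly as the paper does. The only cosmetic differences are notational (your $(A,B,1-\lambda)$ correspond to the paper's $(\hat e,\tilde e,1-\lambda)$) and that you phrase the error control on the barycentric coordinates via Lipschitz dependence rather than writing the inequality $\kappa_0 j/(m-1)+2^{k+2m+2}\epsilon_k\le \kappa_0(j+1)/(m-1)$ at the correct scale $2^{-(k+2m+2)}$; this is harmless since your $\epsilon_k\le C2^{-k}\delta$ with $\delta=\kappa_0 2^{-(10+2m)}/m$ gives exactly the needed smallness.
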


As remarked after Lemma \ref{lem:On_replace}, in Section \ref{sec:covering} we further
verify that $\Omega_i \in \mathcal{C}$.

\begin{proof}
The proof follows similarly as in the analogous case of $O(n)$ and is based on a suitable application of the replacement construction of Lemma \ref{lem:lin_Conti}.
By assumption we have that $e(M) =(\mu_1,\dots,\mu_m) \in U_{k,l}^j$ for some $l\in\{1,\dots,m\}$. Without loss of generality assume that $l=1$ and that $\mu_2 \in J_{k-1,\kappa(1+j/(m-1))}$. Let
\begin{align*}
\tilde{e}&= (3 \cdot 2^{-k-2-2m}, \mu_1 + \mu_2 - 3\cdot 2^{-k-2-2m},\mu_3 ,\dots,\mu_{m}),\\
\hat{e}&= (\mu_1+ \mu_2 -3\cdot 2^{-k-2-2m}, 3 \cdot 2^{-k-2-2m}, \mu_3 ,\dots,\mu_{m}).
\end{align*}
We note that 
\begin{align}
\label{eq:rank_one}
\begin{split}
\tilde{e}-\hat{e}
&= (3 \cdot 2^{-k-1-2m}-\mu_1-\mu_2)(e^{(1)}-e^{(2)})\\
&= (3 \cdot 2^{-k-1-2m}-\mu_1-\mu_2) a_{12} \odot n_{12}.
\end{split}
\end{align}
Moreover the assumption that $e(M)\in U^{j}_{k,1}$ combined with the construction of $\tilde{e},\hat{e}$ implies that $\tilde{e}\in U_{k,2}^{j+1}$ and $\hat{e}\in U_{k,1}^{j+1}$.
Also,
\begin{align*}
e(M)= \lambda \tilde{e} + (1-\lambda) \hat{e},
\end{align*}
where $\lambda = \frac{\mu_2 - 3 \cdot 2^{-k-2-2m}}{\mu_1+ \mu_2 - 3 \cdot
  2^{-k-1-2m}}$. 
Recalling Remark \ref{rmk:inapprox}, we infer that $\lambda \in (0,1)$ and $\lambda \leq C  2^{-k}$.

Lemma \ref{lem:sym_rk1} and Remark \ref{rmk:skew} ensure that there exist matrices $M_1,M_2 \in \R^{n\times n}$ such that
\begin{align*}
M= \lambda M_1 + (1-\lambda) M_2, \ e(M_1)= \tilde{e}, \ e(M_2)=\hat{e}, \ \rank(M_1-M_2)=1.
\end{align*}
Since $\tilde{e}\in U_{k,2}^{j+1}$ and $\hat{e}\in U_{k,1}^{j+1}$, we have that $M_1, M_2 \in U_{k}^{j+1}$. 
We apply the construction from Lemmas \ref{lem:lin_Conti} or \ref{lem:lin_Conti_3D} with $\delta = \kappa_0 2^{-10-2m}/m$.
This yields a domain $\Omega_{\delta}$ and deformation $w:\Omega_{\delta}\rightarrow \R^n$ (with $n=2,3$). We remark that in the application of Lemma \ref{lem:sym_rk1} we have the choice between two possible skew directions, the matrices $\pm S$ in the notation of Remark \ref{rmk:skew}. In order to ensure that the gradient remains bounded (i.e. to satisfy our assumption (A5)), we have to prescribe the skew part carefully. The discussion of this is however postponed to Lemma \ref{lem:skew} in Section \ref{sec:skew}.

Ignoring for the moment the issue of choosing the skew part in such a way that the gradient remains bounded, thus leaves us to verify that $\nabla w \in U^{j+1}_{k}$. 
By construction of $\tilde{e}, \hat{e}$ we have that
\begin{align*}
\dist(e(\nabla w), e(M_1)\cup e(M_2)) \leq \frac{\epsilon_k}{2}:=c \lambda (1-\lambda) \delta.
\end{align*}
Recalling the construction of the skew part from Remark \ref{rmk:skew} (independently of which sign is chosen) then also yields that 
\begin{align*}
\dist(\nabla w, M_1 \cup M_2)  \leq \epsilon_k.
\end{align*}
Thus, it suffices to show that the error $\epsilon_k$ is sufficiently small. This however follows from the fact that
\begin{align*}
0<\epsilon_k \leq 2c \lambda (1-\lambda) \delta \leq c 2^{-k+2m} \kappa_0 2^{-9}/m .
\end{align*}
Defining 
\begin{align*}
\Omega^{\Diamond}:=\Omega_{\delta}, \
(\Omega^{\Diamond})_g^{\star}:=\{x\in \Omega^{\Diamond}: \dist(e(\nabla w), \hat{e})\leq \dist(e(\nabla w),\tilde{e})\},
\end{align*}
 and using the notation introduced in \ref{item:A3tilde}, \ref{item:A3} then gives properties (i), (ii), (iii), (v) and the first property in (iv).
The result on the volume fractions of $\Omega^{\star}_g$ which is stated in (iv) follows from the volume fraction estimates in Lemmas \ref{lem:lin_Conti_3D} and \ref{lem:lin_Conti} and the explicit expression for $\lambda\in(0,1)$ from above.
\end{proof}

Analogously, we infer the replacement construction for the sets $\tilde{U}_k^j$:

\begin{lem}
  \label{lem:K_lin_replace_b}
  Let $K=K_{h}$ or $K=K_{co}$.
  Let $\tilde{U}_{k}^j$ be given by Definition \ref{defi:in_approx_sym}.
Assume that $M\in
\inte(K^{lc})$ with $M\in \tilde{U}_{k}^j$ for some $k\in
\N$ and  $j\in \{0,1,\dots,m-1\}$.
Let $\delta_k=\kappa_02^{-(10+k+2m)}/m$.
Then there exist a domain $\Omega^{\Diamond}$ of aspect ratio $1:\delta_k$ (or $1:\delta_k:1$) and a piecewise affine map $w:\Omega^{\Diamond} \rightarrow \R^n$ such that
\begin{enumerate}[label=(\roman*)]
\item $\nabla w(x) \in \tilde{U}_{k}^{j+1} \mbox{ for a.e. } x \in \Omega^{\Diamond}_g$.
\item $\Omega^{\Diamond}_g = \Omega^{\Diamond}$.
\item $w(x) = Mx \mbox{ for } x\in \partial \Omega^{\Diamond}$.
\item Let $\Omega_{1},\dots, \Omega_{N} \subset \Omega^{\Diamond}$ denote
  the level sets of $\nabla w$. Then,
  \begin{align*}
    \sum_{i=1}^{N} \Per(\Omega_{i}) \leq 2^{n+4} \Per(\Omega^{\Diamond}).
  \end{align*}
\end{enumerate} 
\end{lem}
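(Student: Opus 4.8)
The plan is to repeat the proof of Lemma~\ref{lem:K_lin_replace} almost verbatim, the only change being that the fixed aspect ratio $\delta$ there is replaced by the $k$-dependent ratio $\delta_k=\kappa_0 2^{-(10+k+2m)}/m$. The reason for this change is exactly as in the passage from Lemma~\ref{lem:On_replace} to Lemma~\ref{lem:On_replace_in} in the $O(n)$ case: on the auxiliary sets $\tilde U_k^j$ one cannot guarantee $\lambda\le C2^{-k}$ for the lamination parameter $\lambda$, but only $\lambda(1-\lambda)\le 1/4$, so the extra decay of $\delta_k$ in $k$ is what keeps the error $\epsilon_k$ produced by the building block of Lemma~\ref{lem:lin_Conti} small enough to stay inside $\tilde U_k^{j+1}$.

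In detail, I would write $e(M)=(\mu_1,\dots,\mu_m)\in\hat U_k^j$ in the barycentric coordinates of Lemma~\ref{lem:hulls}: there are $j$ coordinates lying in $J_{k,\kappa_0 j/(m-1)}$, while the remaining $m-j$ coordinates are bounded below by $2^{-(k+2m)}(1+\kappa_0 j/(m-1))$. I would then pick two of the latter and, as in Lemma~\ref{lem:K_lin_replace} (but with the role of the ``$J_{k-1}$-coordinate'' there now played by one of these large coordinates), define $\tilde e$ by moving one of the two chosen coordinates to the common midpoint $3\cdot 2^{-(k+2m+2)}$ of all intervals $J_{k,\kappa}$ and letting the other absorb the deficit, and $\hat e$ by the same construction with the two roles swapped. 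Then $\tilde e-\hat e$ is a nonzero multiple of $e^{(a)}-e^{(b)}$, hence a symmetrised rank-one connection by Lemma~\ref{lem:sym_rk1}, and $e(M)=\lambda\tilde e+(1-\lambda)\hat e$ for an explicit $\lambda\in(0,1)$ with $\lambda(1-\lambda)\le 1/4$. Using $\kappa_0=1/4$, $m\ge 3$ and Remark~\ref{rmk:inapprox} one verifies that the promoted coordinate lies in the wider interval $J_{k,\kappa_0(j+1)/(m-1)}$ and that the other large coordinates still exceed $2^{-(k+2m)}(1+\kappa_0(j+1)/(m-1))$, so that $\tilde e,\hat e\in\hat U_k^{j+1}$. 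Pulling this symmetrised rank-one connection up via Lemma~\ref{lem:sym_rk1} and Remark~\ref{rmk:skew} produces $M_1,M_2\in\tilde U_k^{j+1}$ with $M=\lambda M_1+(1-\lambda)M_2$ and $\rank(M_1-M_2)=1$; the sign of the skew correction is fixed so as to keep $\nabla w$ bounded, which (as in Lemma~\ref{lem:K_lin_replace}) is deferred to Lemma~\ref{lem:skew}.

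Next I would apply Lemma~\ref{lem:lin_Conti} ($n=2$) or Lemma~\ref{lem:lin_Conti_3D} ($n=3$) with aspect ratio $\delta_k$, producing $\Omega^{\Diamond}:=\Omega_{\delta_k}$ and a divergence-free piecewise affine $w$ with $w=Mx$ on $\partial\Omega^{\Diamond}$ (item (iii)), $\dist(\nabla w,M_1\cup M_2)\le\epsilon_k$, and $\nabla w$ taking at most $5$ (resp.\ $10$) values on at most $10$ (resp.\ $20$) simplices of perimeter $\le c\,\Per(\Omega^{\Diamond})$, whose sum gives (iv) with the constant $2^{n+4}$; since the whole diamond is modified, $\Omega^{\Diamond}_g=\Omega^{\Diamond}$, which is (ii). For (i) one uses $0<\epsilon_k\le c\,\lambda(1-\lambda)\delta_k\le c\,\kappa_0 2^{-(10+k+2m)}/m$, so that $2^k\epsilon_k$ is much smaller than both the half-width $\kappa_0/(m-1)\cdot 2^{-(k+2m+2)}$ of the interval increment and the threshold increment $2^{-(k+2m)}\kappa_0/(m-1)$; hence $e(\nabla w)$ stays in $\hat U_k^{j+1}$ and, reconstructing the skew part as in Remark~\ref{rmk:skew}, $\nabla w\in\tilde U_k^{j+1}$ a.e.\ in $\Omega^{\Diamond}=\Omega^{\Diamond}_g$.

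The main obstacle, exactly as in Lemma~\ref{lem:K_lin_replace}, is the bookkeeping: one has to check that the single symmetrised rank-one split along $e^{(a)}-e^{(b)}$ genuinely lands both of its endpoints in $\hat U_k^{j+1}$ (consistency of the index shift $j\mapsto j+1$ with the level-$k$-fixed intervals $J_{k,\cdot}$ and the $j$-dependent lower thresholds) and that the perturbation $\epsilon_k$ coming from the Conti block does not undo this inclusion. The choice $\delta_k\sim 2^{-k}$, in place of the fixed $\delta$ admissible in Lemma~\ref{lem:K_lin_replace}, is precisely what makes the latter estimate close; no analytic ingredient beyond Lemmas~\ref{lem:lin_Conti}, \ref{lem:lin_Conti_3D}, \ref{lem:sym_rk1} and Remark~\ref{rmk:skew} is required.
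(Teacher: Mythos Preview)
Your proposal is correct and follows essentially the same approach as the paper. The paper's own proof is very terse: it simply states that the argument is analogous to that of Lemma~\ref{lem:K_lin_replace}, with the single modification that the bound $\lambda\le C2^{-k}$ is no longer available on $\tilde U_k^j$, so the $k$-dependent aspect ratio $\delta_k$ is used to force $\epsilon_k\le C\lambda(1-\lambda)\delta_k\le 2^{-k-2-2m}$; your write-up makes exactly this point and supplies the intermediate details (choice of the two coordinates, construction of $\tilde e,\hat e$, pull-up via Remark~\ref{rmk:skew}, application of Lemmas~\ref{lem:lin_Conti}/\ref{lem:lin_Conti_3D}) that the paper leaves implicit.
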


\begin{proof}
The proof proceeds analogous to the previous one, but as we can in general only bound $\lambda(1-\lambda)\leq \frac{1}{4}$, we use the smallness assumption for the ratio $\delta_k$ to control the error $\epsilon_k$. 
Indeed, as above we have that
\begin{align*}
0\leq \epsilon_k \leq C\lambda (1-\lambda) \delta_k \leq \frac{1}{4}\kappa_0 2^{-(k+10+2m)}/m \leq 2^{-k-2-2m},
\end{align*}
which then also concludes the argument.
\end{proof}

\subsubsection{Skew control}
\label{sec:skew}

We seek to verify the assumption (A5). While the condition on the size of $\Omega^{\star}_g$ follows similarly as in the $O(n)$ case, the unboundedness of the sets $K$ implies that we additionally have to argue that it is possible to construct \emph{uniformly bounded} sequences $\nabla u_k$, in order to ensure the validity of the assumption (A5). To this end we use the flexibility in the choice of the skew part (c.f. Remark \ref{rmk:skew}). Heuristically, we give ourselves a ball of a fixed radius such that as long as the skew part of our constructions remains in this ball, we choose the skew part freely. If the skew part leaves this radius, we choose the sign of the skew part, such that we move from the exterior of the ball back into its interior. While this can directly be made rigorous in the case of $\Skew(2)$ (which is a one-dimensional linear space), the case of matrices in $\Skew(3)$ requires a little more care.

\begin{lem}
\label{lem:skew}
Consider the constructions from Lemma \ref{lem:K_lin_replace} and
\ref{lem:K_lin_replace_b}. Then in the application of Algorithm
  \ref{alg:convex_int} for any initial data $M \in \inte(K^{lc})$, we can choose
  the signs of the skew-parts in the application of the Lemmata such that  for
  any $k \in \N$ it holds that
\begin{align*}
\omega(\nabla u_k) \in B_{CR}(\omega(M)),
\end{align*}
where $R=R(m,n)>0$ is independent of $M$ and  $B_{CR}(\omega(M)):=\{\omega \in
\Skew(n): \ |\omega - \omega(M)|\leq CR\}$ with a constant $C=C(n,c_2, \diam(K))>1$.
\end{lem}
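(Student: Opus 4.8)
The plan is to track the skew part of $\nabla u_k$ along the iteration of Algorithm \ref{alg:convex_int} and exploit the fact that in each replacement step of Lemmas \ref{lem:K_lin_replace} and \ref{lem:K_lin_replace_b} the skew increment is bounded in norm by $C\|a_{12}\otimes n_{12}\|\,\lambda(1-\lambda)\leq C$ (uniformly, since the matrices $e^{(i)}-e^{(j)}$ range over a finite set and $\lambda(1-\lambda)\le 1/4$) and comes with a free sign, i.e.\ we may add either $+S$ or $-S$ where $S=\pm\omega(a\otimes b)$ as in Remark \ref{rmk:skew}. First I would set up the bookkeeping: fix $\tilde\Omega\in\hat\Omega_k$ with constant value $\nabla u_k|_{\tilde\Omega}=N$ and, on each level set $\Omega'$ of the replacement $w$, record that $\omega(\nabla w)=\omega(N)+t(1-\lambda)S$ or $\omega(N)-t\lambda S$ with $t\in[-1,1]$ (these are the only two skew increments produced, up to the sign choice), so in all cases $|\omega(\nabla w)-\omega(N)|\le |S|\le C$. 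Thus a single step moves the skew part by at most a fixed amount $C$.

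Next I would run the "reflecting random walk'' argument. Introduce the fixed radius $R:=2C$ (with $C$ the per-step skew-increment bound above). I claim that if $|\omega(N)-\omega(M)|\le R$ then, choosing the sign of $S$ appropriately, we can keep $|\omega(\nabla w)-\omega(M)|\le R+C\le 3C$ on every descendant level set; and moreover that once the skew part has wandered out to distance in $(R,2R]$ we can always choose the sign so that the increment points strictly toward $\omega(M)$, decreasing the distance (or at worst keeping it $\le 2R$). Concretely, write $\xi:=\omega(N)-\omega(M)$; if $|\xi|\le R$ either sign is fine. If $R<|\xi|\le 2R$, pick the sign so that $\langle \pm S,\xi\rangle\le 0$ (possible since one of the two choices satisfies this), which gives $|\omega(\nabla w)-\omega(M)|^2=|\xi|^2 \pm 2(1-\lambda)\langle S,\xi\rangle + (1-\lambda)^2|S|^2\le |\xi|^2 + |S|^2\le |\xi|^2+C^2\le 4R^2+C^2$, and with $R=2C$ this is $\le (2R)^2$ up to adjusting $R$ by a dimensional constant; a cleaner version: since the skew subspace is a normed space and the increment has the free-sign property, the map "step'' never leaves $B_{3R}(\omega(M))$ once started in $B_{R}(\omega(M))$, by an elementary halfspace argument. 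In dimension $n=2$, $\Skew(2)\cong\R$ is one-dimensional and this is immediate: the increment is $\pm s$ with $s>0$, and we pick the sign toward $\omega(M)$, so the walk stays in $B_{C}(\omega(M))$ at distance at most $C$ plus one overshoot. For $n=3$, $\Skew(3)\cong\R^3$, and the two available increments $+S$ and $-S$ are antipodal, so exactly the halfspace $\{\eta:\langle\eta-\omega(N),\ \omega(N)-\omega(M)\rangle\le 0\}$ contains at least one of them; this is the point requiring the "more care'' alluded to in the statement.

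Then I would close the induction on $k$: since $\omega(\nabla u_0)=\omega(M)$ and each step adds a skew increment of norm $\le C$ with a sign chosen by the above rule, the sequence $\omega(\nabla u_k)$ stays within a ball of radius $CR$ around $\omega(M)$ with $R=R(m,n)$ depending only on the finite geometry of $\tilde K$ (through the bounds on $\|a_{ij}\otimes n_{ij}\|$) and on $n$; the constant $C=C(n,c_2,\diam(K))$ absorbs the dimensional overshoot factor and the dependence on $c_2$ coming from the fact that, for the $U_k^j$ construction, $\lambda(1-\lambda)\le C_3 c_2^k$ so increments actually decay and the walk is eventually nearly stationary. Finally, since $e(\nabla u_k)$ is automatically bounded (it lies in $K^{lc}$ restricted to the trace-free symmetric matrices, which is bounded, by Lemma \ref{lem:hulls}), the bound on $\omega(\nabla u_k)$ yields a uniform bound on $\nabla u_k=e(\nabla u_k)+\omega(\nabla u_k)$, which is precisely the content of assumption \ref{item:A4} on boundedness.

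\textbf{The main obstacle} I anticipate is the three-dimensional sign-selection: in $\Skew(3)$ one must verify that for an arbitrary current displacement $\xi=\omega(\nabla u_k)-\omega(M)$ and an arbitrary increment direction $S$ (determined by the rank-one connection being used at that step, hence not under our control), at least one of $\xi\pm(1-\lambda)S$ (resp.\ $\xi\mp\lambda S$) has norm no larger than $\max(|\xi|,R)+O(1)$ — this is the elementary but slightly fiddly "reflecting walk stays bounded'' lemma, and one has to be careful that the bound is genuinely uniform over all the finitely many increment directions and over $\lambda\in(0,1)$. Everything else — the per-step increment bound, the reduction to $e(\nabla u_k)$ bounded, the induction — is routine given Remark \ref{rmk:skew} and Lemma \ref{lem:hulls}.
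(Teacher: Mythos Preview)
Your overall strategy matches the paper's heuristic exactly: track the skew part along the iteration, split into geometrically decaying increments that sum absolutely and order-one increments that require a sign choice, and exploit the $\pm S$ freedom from Remark \ref{rmk:skew}. However, the key step --- the sign selection in $\Skew(3)$ --- has a genuine gap.

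Your halfspace rule (choose the sign with $\langle \pm S,\xi\rangle\le 0$) only yields $|\xi'|^{2}\le |\xi|^{2}+|S|^{2}$, which does \emph{not} prevent growth. Your explicit check fails: with $R=2C$ you get $4R^{2}+C^{2}=17C^{2}>16C^{2}=(2R)^{2}$. More structurally, if the increment direction happens to be nearly orthogonal to $\xi$, neither sign decreases $|\xi|$, and iterating such steps can push $|\xi|$ arbitrarily far; with two nearly parallel directions $v_{1}=(1,\epsilon')$, $v_{2}=(1,\epsilon)$ in $\R^{2}\cong\Skew(3)_{\text{slice}}$ one can force $|\xi|$ to reach order $1/\epsilon$ before the pattern breaks. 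So an inner-product-based rule cannot give the clean $O(R)$ bound you claim without further input from the geometry of the increment set.

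The paper's fix is different and is the missing idea: it uses that the large increments $S_{i}$ take only the finitely many values $v_{1},\dots,v_{N}$ given by the skew parts $\omega(a_{kl}\otimes n_{kl})$ of the rank-one connections between the wells, and maintains the invariant that the partial sum lies in the parallelepiped $\{\sum_{j}\mu_{j}v_{j}:\mu_{j}\in[-1,1]\}$. At each step one adds $\pm\lambda v_{l}$ with $\lambda\in(0,1]$; choosing $\epsilon=-\sgn(\mu_{l})$ keeps $\mu_{l}+\epsilon\lambda\in[-1,1]$. This is a \emph{coordinatewise} rule in the $\{v_{j}\}$ frame, not the Euclidean halfspace rule, and it immediately gives the bound $|\sum|\le N\max_{j}|v_{j}|\le 2Rm^{2}$. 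A smaller point: your invocation of $\lambda(1-\lambda)\le C_{3}c_{2}^{k}$ for the decay of increments is off --- the skew increments are $(1-\lambda)S$ and $-\lambda S$, so it is $\min(\lambda,1-\lambda)$ that decays geometrically along the construction, not the product.
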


\begin{proof}
  Before coming to the formal proof, we give a brief overview of our strategy:
  
  As noted in Remark \ref{rmk:skew}, we may freely choose the sign of the skew part $S$
  in our rank-one connection.
  On any level set $\tilde{\Omega} \in \hat{\Omega}_{k}$ of our construction,
  we thus obtain a net change of the skew part that is (up to a controlled, geometrically decaying error) a sum of the terms
  \begin{align*}
    \pm \lambda S \mbox{ or } \pm (1-\lambda) S, 
  \end{align*}
  depending on how $\tilde{\Omega}$ was constructed.

  Since by definition of the in-approximation, we may assume that along the construction $(1-\lambda)$ converges to zero geometrically and
  each jump $S$ is bounded by two times the diameter $R$ of $\tilde{K}\subset \R^{n\times n}_{sym}$, we note that any
  series of the form
  \begin{align}
  \label{eq:sum_a}
    \sum_{i}  \pm (1-\lambda_{i})S_{i} \leq \sum c_{2}^{k} 2R \leq \frac{2R}{1-c_2} \leq C R
  \end{align}
  is absolutely convergent and hence all its partial sums are uniformly bounded.

  In contrast, the remaining series is of the form
  \begin{align}
  \label{eq:sum_b}
    \sum_{i} \pm \lambda_{i} S_{i}.
  \end{align}
 Here in general neither the coefficients $\lambda_i$ nor the matrices $S_i$ tend to zero, so there is no hope
  for the series \eqref{eq:sum_b} to be absolutely convergent.
  Instead we rely on a good choice of signs to ensure that all partial sums
  are uniformly bounded. The argument for this essentially reduces to a one-dimensional argument.
  Since any series of jumps in the skew part decomposes into the two cases \eqref{eq:sum_a}, \eqref{eq:sum_b}, this then
  establishes the desired bound.\\

  More formally, we make the following claim: 
\begin{claim}
\label{claim:skew}  
  There is a choice of signs
  \begin{align*}
    \epsilon: (\hat{\Omega}_{k})_{k \in \N} \mapsto \{-1,1\}
  \end{align*}
  with the following property:
  Let $\Omega^{1} \supset \Omega^{2} \supset \dots \supset
  \Omega^{k}$ be any sequence of descendants in our construction and denote by $\epsilon_{i}S_{i}$, where $\epsilon_i:=\epsilon(\Omega^{i})$, the
  choice of the respective skew part and by $\lambda_{i}$ the choice of the associated coefficient $\lambda_i$ in the convex splitting.
  Let further $\Lambda_{g} \subset \{1, \dots, k\}$ denote the iteration steps in which the
  descendant picked up the change by $(1-\lambda_{i}) \epsilon_{i}S_{i}$ and
  $\Lambda_{b}=\{1, \dots, k\} \setminus \Lambda_{g}$, the ``bad'' set, its complement.
  Then, it holds that 
  \begin{align}
      \label{eq:geometric}
    \sum_{i \in \Lambda_{g}} |(1-\lambda_{i}) \epsilon_{i} S_{i}| \leq \frac{2R}{1-c_2},
  \end{align}
  and 
  \begin{align}
    \label{eq:kinchin}
    |\sum_{i \in \Lambda_{b}} \lambda_{i} \epsilon_{i} S_{i} | \leq 2 R m^{2}. 
  \end{align}
  In particular, the net change of the skew part is uniformly bounded by
  \begin{align*}
    \frac{2R}{1-c} + 2 R m^{2}.
  \end{align*}
  \end{claim}
  We remark that \eqref{eq:geometric} is independent of the choice of $\epsilon$ and indeed
  follows by the geometric convergence of $(1-\lambda_{i})$ and the boundedness of $S_i$ as explained above.
  
  It hence remains to verify \eqref{eq:kinchin}.
  We note that $S_{i}$ always satisfies $|S_i|\leq 2\diam(\tilde{K})$ and is (up to normalization) always given by the skew part of a rank-one connection
  between the wells $e^{(1)}, \dots, e^{(m)}$ and can thus only take the $N=m(m-1)$
  values $\omega(a_{kl}\otimes n_{kl})$ (c.f. Lemma \ref{lem:skew}) for some $k,l \in \{1,\dots,m\}$, which we denote by $v_{1}, \dots, v_{N}$.
  
  We then claim that for any $k_{0} \in \N$ and any $\Lambda_{b}$ corresponding to a sequence of length
  up to $k_{0}$, $\epsilon_i$ can be chosen such that
  \begin{align}
    \label{eq:kinchin_sum}
    \begin{split}
    \sum_{i \in \Lambda_{b}} \lambda_{i} \epsilon_{i} S_{i}
 & \in \left\{ S \in \R^{n\times n}: \ S = \sum\limits_{j=1}^{N} \mu_j v_j, \ \mu_j \in [-1,1] \mbox{ for all } j \in \{1,\dots,N\} \right\}  \\
& \subset B_{2NR}(0),
\end{split}
  \end{align}
  which implies \eqref{eq:kinchin}.

  We establish \eqref{eq:kinchin_sum} by induction over $k_{0}$ and note that for
  $k_{0}=0,1$ it is satisfied (for any choice of $\epsilon$ by the triangle
  inequality).
  Thus suppose that the statement is true for any sequence up to length $k_{0}-1$
  and consider a sequence of length $k_{0}$.
  If in the $k_{0}$-th step, we chose
  $(1-\lambda_{k_{0}})\epsilon_{k_{0}}S_{k_{0}}$, then $k_{0} \not \in
  \Lambda_{b}$ and hence the statement follows by considering the truncation of
  the sequence to length $k_{0}-1$.
  Thus suppose $k_{0} \in \Lambda_{b}$, we then claim that for a suitable choice
  of $\epsilon_{k_{0}} (\Omega^{k_{0}})$ we achieve \eqref{eq:kinchin_sum} for
  this sequence. Since we did not modify $\epsilon_i$ on any other sequence up to level $k_{0}$
   (ancestors are unique), we can use this choice of $\epsilon_{k_{0}}
   (\Omega^{k_{0}})$ as our definition of the value of $\epsilon|_{\Omega^{k_0}}$ and thus conclude the induction step.
   It hence remains to show that such a choice is possible.
   We note that $\Lambda_{b}= \Lambda_{b}' \cup \{k_{0}\}$, where $\Lambda_b'$ is the
   ``bad'' set of the truncated sequence. Hence, by the induction assumption
   \begin{align}
   \label{eq:induc_kin}
   \begin{split}
     \sum_{i \in \Lambda_{b}} \lambda_{i} \epsilon_{i} S_{i} = \lambda_{k_{0}} \epsilon_{k_{0}} S_{k_{0}} +  \sum_{i \in \Lambda_{b}'} \lambda_{i} \epsilon_{i} S_{i} \\
     = \lambda_{k_{0}} \epsilon_{k_{0}} S_{k_{0}} + \sum_{i=1}^{N} \mu_{i} v_{i},
     \end{split}
   \end{align}
   for some $\mu \in [-1,1]^{N}$.
   Let further $l \in \{1, \dots, N\}$ be such that $S_{k_{0}}=v_{l}$ and write \eqref{eq:induc_kin}
   as
   \begin{align*}
     \lambda_{k_{0}} \epsilon_{k_{0}} v_{l} + \mu_{l} v_{l} + \sum_{i=1,i\neq l}^{N} \mu_{i} v_{i}.
   \end{align*}
  Choosing $\epsilon_{k_{0}}= -\sgn(\lambda_{k_{0}}) \sgn(\mu_{l})$ then implies
  that $\lambda_{k_{0}}\epsilon_{k_{0}} +\mu_{l} \in [-1,1]$ and thus establishes the claim.
\end{proof}

We summarize that the arguments in this section show that for any $M \in \inte(K^{lc})$ the constructions in Lemmas \ref{lem:K_lin_replace}, \ref{lem:K_lin_replace_b} can always be prescribed such that the full statement of \ref{item:A4} holds.

\subsubsection{Proof of Theorem \ref{thm:reg} in the case $K=K_h$ or $K=K_{co}$}
\label{sec:proof2}

\begin{proof}
The proof follows as in Section \ref{sec:proof1}:
The constructions of Lemmas \ref{lem:K_lin_replace} and \ref{lem:K_lin_replace_b} together with the covering arguments from Section \ref{sec:covering} (which are needed in order to work with a ``closed" class of domains) show the validity of assumptions (A3) and (A4) as well as the estimate 
\begin{align*}
|e(\nabla u) - e(M)|<C c_2^k \mbox{ on } (\Omega^{\Diamond})^{\star}_g.
\end{align*}
The argument in Section \ref{sec:skew} proves the boundedness of the gradient,
which hence yields \ref{item:A4}. As a consequence, combining these observations
with Lemmas \ref{lem:covering_reduction} and \ref{lem:covering_reduc_3D} implies that Theorem \ref{thm:reg_gen} is applicable. This thus provides the proof of Theorem \ref{thm:reg} for domains, which are given as a finite union of elements of $\mathcal{C}$. Invoking Lemma \ref{lem:Lip} then also yields the claim for general Lipschitz domains. 
\end{proof}

\section{The Precise Covering Constructions}
\label{sec:covering}

\subsection{The 2D covering construction}
\label{sec:2D}

In this section we introduce several coverings which allow us to extend our
constructions on the model domains $\Omega^{\Diamond}$ introduced in Section
\ref{sec:Ex} to the more general class $\mathcal{C}$ as defined below. 

\begin{defi}
The class $\mathcal{C}$ consists of arbitrary triangles.  The class
$\mathcal{C}^1$ is given by certain isosceles triangles whose aspect ratio (axis of symmetry to base) is $1:\delta$ for a given $\delta\in (0,1/2)$ and which are oriented in the same way as the current diamond-shaped replacement construction $\Omega^{\Diamond}$ (c.f. Lemma \ref{lem:covering_reduction} (i) for a precise statement on this).
\end{defi}

In Section \ref{sec:Ex} we showed that it is possible to verify the estimates
from Section \ref{sec:assumptions} for special diamond-shaped domains
$\Omega^{\Diamond}$. 
The class of diamond-shaped domains $\Omega^{\Diamond}$ is however not ``closed" under the replacement construction, in that it for instance produces level sets, which are triangles, which cannot be covered by finitely many diamonds. In particular, with only the class of diamonds at hand, we cannot directly apply Algorithm \ref{alg:convex_int} and iterate the replacement constructions.
Hence, it is necessary to extend the set of domains to a larger collection of sets denoted as $\mathcal{C}$ and to provide constructions which can be used on any domain in the class $\mathcal{C}$.
As a consequence, the class $\mathcal{C}$ is chosen in such a way that it is ``closed" under the replacement construction in the sense that if $\Omega \in \mathcal{C}$, all the level sets of the replacement deformation are again elements of $\mathcal{C}$. In particular, it contains all types level sets of the deformations arising in the replacement constructions from Section \ref{sec:Ex}. 

In order to achieve this, we begin with the construction, which we have at hand already, i.e. the diamond-shaped domains $\Omega^{\Diamond}$ and show that this can be used as a building block to construct more general domains with the desired properties stated in \ref{item:A3tilde}-\ref{item:A4}.
This is the content of Lemma \ref{lem:covering_reduction} where we argue by a step by step construction:
We first introduce specific isosceles triangles and square-shaped
domains $\Omega^{\Box}$ such that $\Omega^{\Box}\setminus \Omega_{g} \in
\mathcal{C}^{1}$ has an (essentially) self-similar geometry allowing for a very simple covering
construction. Here and in the following, we will call a set {\em self-similar} if it is similar to the set we started with. 
With this at hand, we then prove that for any $\Omega \in \mathcal{C}$ there exists a suitable
covering/partition involving $\Omega^{\Box}$ such that \ref{item:A3tilde}-\ref{item:A4} are satisfied.

\begin{figure}[t]
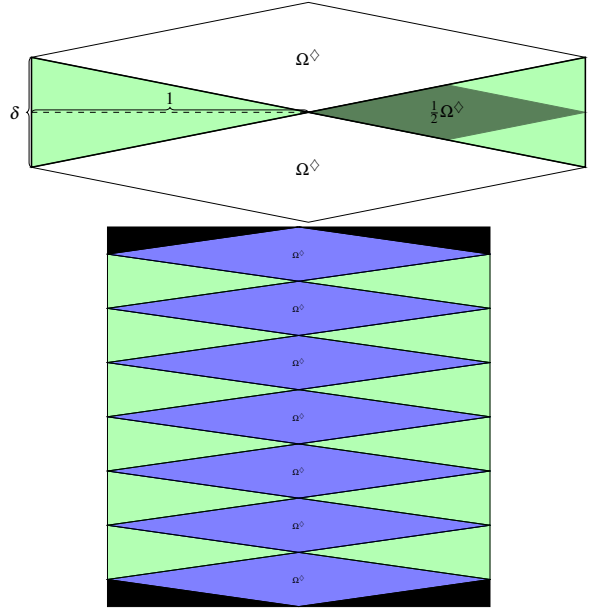

  \includegraphics[width=0.6\linewidth, page=5]{figures.pdf}
  \includegraphics[width=0.4\linewidth, page=6]{figures.pdf}
  \caption{
  Illustration of the constructions used in Lemma \ref{lem:covering_reduction} (i), (ii). The figure on the top shows two isosceles triangles between two diamonds $\Omega^{\Diamond}$ (in white), to whose geometry these triangles are adapted. The triangles in $\mathcal{C}^1$ are thus constructed to fit into the gaps between vertical stackings of the diamonds $\Omega^{\Diamond}$. The right isosceles triangle is covered as explained in the proof of Lemma \ref{lem:covering_reduction} (i): We insert a self-similar diamond, and obtain as remainders two self-similar isosceles triangles. Due to the self-similarity of the covering, it is obvious that this covering strategy can be iterated.
The figure on the bottom illustrates the argument from the proof of Lemma \ref{lem:covering_reduction} (ii):
 Stacking $\lfloor\frac{1}{\delta}\rfloor$ many diamonds, we can
    construct a rectangle of aspect ratio similar to a square.
  Here, the light green areas are of type $\mathcal{C}^{1}$ and the black right-angled triangles are of controlled perimeter. }
  \label{fig:self-similar}
\end{figure}

\begin{figure}[t]
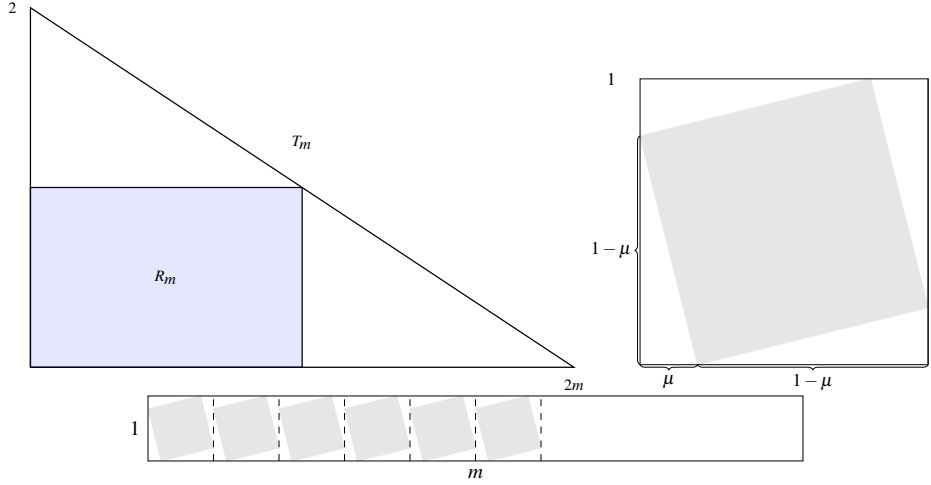

  \centering
  \includegraphics[width=0.6\linewidth, page=7]{figures.pdf}
  \includegraphics[width=0.35\linewidth, page=8]{figures.pdf}
  \includegraphics[width=0.7\linewidth, page=9]{figures.pdf}
  \caption{Any right-angled triangle $T_{m}$ can be partitioned into a rectangle
    $R_{m}$ and two self-similar triangles.
    At least half the volume of $R_{m}$ can then be filled by axis-parallel
    squares $S_i$, $i\in \{1,\dots,m\}$, and a rotated square $\Omega^{\Box}$ can be fitted inside each
    axis-parallel one.}
  \label{fig:fitting_square}
\end{figure}

\begin{lem}
  \label{lem:covering_reduction}
Assume that the estimates in conditions \ref{item:A3tilde}-\ref{item:A4} are satisfied, if $\Omega = \Omega^{\Diamond}$, where $\Omega^{\Diamond}$ is a diamond-shaped domain of aspect ratio $1: \delta$ for some $\delta \in (0,1/2)$. In the case \ref{item:A3tilde} the parameter $\delta$ may depend on
$k,j$, while in the case \ref{item:A4} it is required to be independent of
$k,j$. Suppose further that $\Omega_g = \Omega^{\Diamond}$, which in particular
yields that the conditions \ref{item:A3tilde} (ii), (iii) are empty for $\Omega
= \Omega^{\Diamond}$. Denote the constants from conditions \ref{item:A3tilde}
(i) and \ref{item:A3} by $C_{0}^{\star},C_{1}^{\star}$, if $\Omega = \Omega^{\Diamond}$, and assume that both are uniform in $j,k$.

\begin{enumerate}[label=(\roman*)]
  \item \label{item:selfsimilar_case}
If $\Omega$ is an isosceles triangle of aspect ratio $1:\delta$ oriented in in the same way as $\Omega^{\Diamond}$ (i.e. as depicted in Figure \ref{fig:self-similar} top), there exists a
replacement construction which satisfies \ref{item:A3tilde} (ii) or \ref{item:A3} with $C_{0}=C_{0}^{\star}$, $C_{1}=C_{1}^{\star}$, $C_{2}=2$ and $v_{1}=1/2$.
We include the corresponding isosceles triangle in the set $\mathcal{C}^1$.
\item \label{item:box_case}
If $\Omega = \Omega^{\Box}$ is a rectangle of aspect ratio $1: \delta \cdot \lfloor{\frac{1}{\delta}} \rfloor$, there is a replacement construction such that the conditions \ref{item:A3tilde}-\ref{item:A4} are satisfied with constants
  $C_{0}=C_{0}^{\star}/\delta$, $C_{1}=C_{1}^{\star}/\delta$, $C_{2}=8$ and $v_{1}=1/2$.
\item \label{item:general_case}
  For any $\Omega \in \mathcal{C}$ there exists a construction such that the conditions \ref{item:A3tilde}-\ref{item:A4} are satisfied with constants $C_{0}=100C_{0}^{\star}/\delta$,
  $C_{1}=100C_{1}^{\star}/\delta$, $C_{2}=100$ and $v_{1}\geq \frac{1}{100}$. 
  \end{enumerate}
\end{lem}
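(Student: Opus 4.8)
The plan is to prove the three items in order, bootstrapping from the diamond-shaped construction $\Omega^{\Diamond}$ (which is assumed to satisfy the required estimates by the hypothesis) to isosceles triangles in $\mathcal{C}^1$, then to near-square rectangles $\Omega^{\Box}$, and finally to an arbitrary triangle in $\mathcal{C}$. The geometric heart of each step is a covering/partition lemma: decompose the target domain into finitely many pieces, each of which is either a scaled copy of a domain for which the construction is already available (so we can insert the known replacement deformation, rescaled), or a piece of controlled perimeter on which we do nothing (i.e. $\nabla w = M$ there), or a self-similar leftover piece that we can recurse into. Throughout, one works with a fixed affine datum $M$; since all the relevant classes $U_k^j,\tilde U_k^j$ are $G$-invariant and the constructions are affine-equivariant, it suffices to treat a normalized $M$ and a normalized domain.

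\textbf{Step (i): isosceles triangles of aspect ratio $1:\delta$.} Given such a triangle $\Omega$ oriented like $\Omega^{\Diamond}$, I would inscribe a self-similar diamond $\Omega^{\Diamond}$ of the same aspect ratio inside it, anchored at the base, so that the complement $\Omega \setminus \Omega^{\Diamond}$ consists of exactly two isosceles triangles, each similar to $\Omega$ with half the volume (see Figure \ref{fig:self-similar}, top). On the inscribed diamond we apply the given replacement construction, which yields $\nabla w \in \tilde U_k^{j+1}$ (resp. $U_k^{j+1}$) on a good set $\Omega_g = \Omega^{\Diamond}$ with $|\Omega_g| = \tfrac12|\Omega|$, so $v_1 = 1/2$. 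On the two leftover triangles we set $w(x) = Mx$; these are the $\mathcal{C}^1$-pieces, and since they are similar to $\Omega$ the self-similarity makes the covering trivially iterable, giving case \ref{item:A3tilde} (ii) directly. The level sets contained in $\Omega_g$ satisfy the perimeter bound with $C_0 = C_0^\star$ because they are exactly the level sets of the known diamond construction, whose perimeters are controlled by $\Per(\Omega^{\Diamond}) \le \Per(\Omega)$; the two leftover triangles contribute $\Per \le 2\Per(\Omega)$, giving $C_2 = 2$. (In case \ref{item:A3} one uses the uniform constant $C_1^\star$ instead, with the analogous bookkeeping, and $C_1 = C_1^\star$.)

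\textbf{Step (ii): the near-square rectangle $\Omega^{\Box}$.} Here I would stack $\lfloor 1/\delta \rfloor$ copies of $\Omega^{\Diamond}$ vertically (all sharing the same orientation), which by elementary geometry tiles a rectangle of aspect ratio $1 : \delta\lfloor 1/\delta\rfloor$ up to the triangular gaps along the two vertical sides; those gaps are filled by pieces of type $\mathcal{C}^1$ (handled by part (i)) together with finitely many right-angled triangles of perimeter comparable to $\Per(\Omega^{\Box})$ on which $w = Mx$ (see Figure \ref{fig:self-similar}, bottom). On each inscribed diamond one runs the known construction; the total good volume is at least half the volume of $\Omega^{\Box}$, giving $v_1 = 1/2$. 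Summing the diamond perimeter bounds over the $\lfloor 1/\delta\rfloor$ stacked copies costs a factor $\lfloor 1/\delta\rfloor \le 1/\delta$ relative to $\Per(\Omega^{\Box})$, whence $C_0 = C_0^\star/\delta$ and $C_1 = C_1^\star/\delta$; the leftover $\mathcal{C}^1$-pieces and the $O(1)$ right-angled triangles contribute a uniform $C_2 = 8$. The pointwise closeness and boundedness statements in \ref{item:A4} are inherited verbatim on the good set (which is a union of the diamonds' good sets) since they hold on each $\Omega^{\Diamond}$ with constants independent of $k,j$.

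\textbf{Step (iii): an arbitrary triangle $\Omega \in \mathcal{C}$.} Drop an altitude to split $\Omega$ into (at most) two right-angled triangles, and for each right-angled triangle $T_m$ proceed as in Figure \ref{fig:fitting_square}: partition $T_m$ into a rectangle $R_m$ and two right-angled triangles similar to $T_m$ (so again self-similar, recursable pieces on which $w = Mx$), then fill at least half the volume of $R_m$ by finitely many axis-parallel squares $S_i$, and inside each square inscribe a rotated near-square copy $\Omega^{\Box}$ (of aspect ratio $1:\delta\lfloor 1/\delta\rfloor$, which fits since a square contains such a rectangle with a fixed volume fraction). Applying part (ii) on each $\Omega^{\Box}$, a (generous) accounting of the nested volume fractions — half for $R_m$ inside $T_m$, a fixed fraction for the squares inside $R_m$, a fixed fraction for $\Omega^{\Box}$ inside each square, and $v_1 = 1/2$ inside $\Omega^{\Box}$ — yields $v_1 \ge 1/100$; the perimeters pick up only a bounded multiplicative loss, so $C_0 = 100 C_0^\star/\delta$, $C_1 = 100 C_1^\star/\delta$, and the uniform $C_2 = 100$ absorbs the finitely many self-similar leftover triangles and auxiliary right-angled pieces. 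The closeness/boundedness estimates of \ref{item:A4} again transfer because they hold on each building block with uniform constants.

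\textbf{The main obstacle} is not any single estimate but the careful geometric bookkeeping that makes the covering genuinely \emph{closed} and \emph{uniform}: one must check that every leftover piece produced at every level is either of type $\mathcal{C}^1$ (and oriented correctly so that part (i) applies) or a right-angled/triangular piece whose perimeter is controlled by that of the parent — otherwise the recursion would generate pieces outside $\mathcal{C}$ or blow up the perimeter constant. Tracking the orientation compatibility (the $\mathcal{C}^1$ triangles must be aligned with the \emph{current} $\Omega^{\Diamond}$, which changes from step to step of Algorithm \ref{alg:convex_int}) and verifying that the volume fractions multiply to a constant bounded below independently of $k,j,\delta$ — while the perimeter constants $C_0,C_1$ are allowed to carry the $1/\delta$ factor but $C_2$ and $v_1$ are not — is the delicate part, and is exactly why the assumption \ref{item:A3tilde} was split into the three cases (i)--(iii).
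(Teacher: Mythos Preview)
Your proposal is correct and follows essentially the same approach as the paper's proof: the same self-similar diamond-in-triangle decomposition for (i), the same vertical stacking of $\lfloor 1/\delta\rfloor$ diamonds for (ii) with the complement split into $\mathcal{C}^1$ gaps plus four corner right-angled triangles, and the same altitude-drop / rectangle / axis-parallel squares / inscribed rotated $\Omega^{\Box}$ chain for (iii). Your closing paragraph accurately captures the paper's motivation for the (i)--(iii) splitting in \ref{item:A3tilde}; the only imprecision is the phrase ``each similar to $\Omega$ with half the volume'' in step (i), where the two leftover triangles each have a \emph{quarter} of the volume (half the linear scale), though your conclusion $|\Omega_g|=\tfrac12|\Omega|$ is correct.
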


\begin{rmk}
\label{rmk:constants}
Due to the uniform dependence of the constants $C_0^{\star}, C_1^{\star}$ on $k,j$ and the $k,j$ (in-)dependence of $\delta$, we in particular infer the required $k,j$ (in-)dependence of the constants $C_0, C_1, C_2$.
\end{rmk}

We recall that the replacement construction for the model domains $\Omega^{\Diamond}$ has been
established for several differential inclusion problems in Section \ref{sec:Ex}.
The following construction makes use of self-similar tiling properties of these
model domains (c.f. Figure \ref{fig:self-similar}) in order to derive sufficiently strong perimeter
estimates. We recall that this allows us to establish the good $BV$ estimates from
Section \ref{sec:BV} and to deduce $W^{s,p}$ regularity of our
convex integration solution with $s,p$ \emph{independent} of our initial data.
If we instead accepted such a dependence, we could allow $C_{2}=C_{0}$ in
\ref{item:A3tilde} and $\mathcal{C}^{1}=\emptyset$ and employ a simpler covering
strategy.

\begin{proof}
In this proof we proceed step by step and construct each covering/partition using the preceding ones.
We remark that in all these constructions $\Omega_{g}$ is composed of copies of $\Omega^{\Diamond}$.  Denoting the replacement deformation in $\Omega^{\Diamond}$ by $w$, we always obtain the new replacement construction $u:\Omega \rightarrow \R^2$ as follows:
The function $u$ is always constructed to be piecewise affine, such that $u(x)=Mx+b$ in $\Omega \setminus \Omega_g$ (where $Mx+b$ with $M \in U^{k,j}$ or $\tilde{U}^{k,j}$ is the deformation that is to be replaced) and such that $u$ is a translated and rescaled version of $w$ in the respective diamonds, into which $\Omega_g$ is decomposed.
In particular, $\nabla u \in U^{j}_{k}$ is constant and unchanged outside $\Omega_{g}$. As $|(\Omega^{\Diamond})^{\star}_g| \geq (1-C2^{-k})|\Omega^{\Diamond}|$, condition \ref{item:A4} is always satisfied. It hence remains to ensure that $v_1=\frac{|\Omega_g|}{|\Omega|}$ is sufficiently large and that the perimeter bounds are true.

In the sequel, we will hence not discuss the construction of $u$, but only describe the underlying partitioning of the respective domains, the function $u$ being understood to be constructed as just outlined.\\
  
\underline{Case \ref{item:selfsimilar_case}:}
Let $\Omega \in \mathcal{C}^{1}$ be an isosceles triangle compatible with the geometry of
  $\Omega^{\Diamond}$ as depicted in Figure \ref{fig:self-similar}, top. Here when writing that the isosceles triangle is compatible with the geometry of $\Omega^{\Diamond}$, we mean that if $\Omega^{\Diamond}=\conv(\{(0,\pm\delta),(\mp 1,0)\})$, then the isosceles triangle is up to rescaling and translation given by $\conv(\{(0,\pm \delta),(-1,0)\})$ or $\conv(\{(0,\pm \delta),(-1,0)\})$. If $\Omega^{\Diamond}$ is rotated, then any compatible isosceles triangle is analogously rotated.
We cover $\Omega$ by a (by a factor $t\in \R_+$ rescaled) version $\Omega_{t}^{\Diamond}$ of $\Omega^{\Diamond}$ and two rescaled copies of $\Omega$, 
  rescaled by a factor $1/2$ (these are the dark green diamond and the two light green triangles in Figure \ref{fig:self-similar} on the top right). Setting $\Omega_g:= \Omega_t^{\Diamond}$, all conditions on $\Omega\setminus \Omega_{g}$ are thus satisfied by self-similarity with $C_{2}\in [1,4)$, while all conditions
  on $\Omega_{g}$ are satisfied by assumption on $\Omega^{\Diamond}$ and
  $|\Omega_{g}|=\frac{1}{2}|\Omega|$.
In addition to \ref{item:A4}, this shows the validity of \ref{item:A3tilde} (i), (ii) and \ref{item:A3} (the other cases in \ref{item:A3tilde} being empty).  \\

\underline{Case \ref{item:box_case}:}
Given a rectangle $\Omega^{\Box}$, we can cover half of its area by stacking $\lfloor{\frac{1}{\delta}}\rfloor$ copies of $\Omega^{\Diamond}$ as depicted in Figure \ref{fig:self-similar} bottom. Denoting the union of these copies of
$\Omega^{\Diamond}$ by $\Omega_{g}$, we
  observe that $\Omega^{\Box}\setminus \Omega_{g}$ consists of $2
  (\lfloor{\frac{1}{\delta}} \rfloor -1)$ triangles of type $\mathcal{C}^{1}$ and four right-angled triangles
  at the top and bottom. 
  In the notation of \ref{item:A3tilde}, these collections correspond to
  $\Omega^{[1]}$ (consisting of the $2
  (\lfloor{\frac{1}{\delta}} \rfloor -1)$ triangles of type $\mathcal{C}^{1}$) and $\Omega^{[2]}$ (consisting of the four remaining triangles), respectively.

  Then it holds that
  \begin{align*}
    \sum_{\tilde{\Omega} \in \Omega^{[1]}} \Per(\tilde{\Omega}) &\leq \sum_{\tilde{\Omega} \in \Omega^{[1]}} 2 \Per(\Omega) \leq 2 \lfloor{ \frac{1}{\delta} \rfloor} \Per(\Omega)=: C_{0} \Per(\Omega), \\
    \sum_{\tilde{\Omega} \in \Omega^{[2]}}\Per(\tilde{\Omega}) &\leq 8 \Per(\Omega) \leq  C_2 \Per(\Omega), \\
    |\Omega_{g}|&=\frac{1}{2}|\Omega^{\Box}|.
  \end{align*}
  In addition to \ref{item:A4}, this shows the validity of \ref{item:A3tilde} (i), (iii) and \ref{item:A3} (the other cases in \ref{item:A3tilde} being empty).\\

\underline{Case \ref{item:general_case}:} Let $\Omega \in \mathcal{C}$ be a given triangle, then drawing a perpendicular,
  we may partition this triangle into two right-angled triangles whose perimeters are controlled by $\Per(\Omega)$. We may thus without loss of generality assume that $\Omega$ is a right-angled triangle and
  may also choose a coordinate system such that $\Omega$ is axis-parallel. Further by scaling and rotation, we may assume that it has side-lengths $2$ and $2m$ with $m\in \R_+, m \geq 2$ as depicted in Figure \ref{fig:fitting_square} (left).

Then, we can partition $\Omega$ into two self-similar triangles of half the lengths and a rectangle $R_m$, as depicted in Figure \ref{fig:fitting_square} (left).
  Since the perimeter of both triangles is controlled by $1/2 \Per(\Omega)$, the perimeter of $R$ by $\Per(\Omega)$, and $|R_m|=\frac{1}{2}|\Omega|$, it suffices to show that there exists a replacement construction on $R_m$.
We can cover at least
  half the volume of $R_m$ by $\lfloor{\frac{m}{2}}\rfloor$ squares $S_{i}$ of side length
  $1$ as illustrated in Figure \ref{fig:fitting_square} (bottom).
  The remainder of $R_m$ can then be split into two (self-similar) right-angled triangles, whose
  perimeter is controlled by $\Per(R_m)$ and the combined perimeter of all squares
  can be controlled by $2 \Per(R_m)$. Thus, again it suffices to provide a
  construction on each square $S_{i}$.
  As illustrated in Figure \ref{fig:fitting_square} (right), a suitable rescaling
  $\Omega^{\Box}_{i}$ of $\Omega^{\Box}$ (i.e. of the domain from \ref{item:box_case}) can be fitted inside $S_{i}$ such that
  $|\Omega^{\Box}_{i}|\geq \frac{1}{4}|S_{i}|$ and such that $S_{i}\setminus \Omega^\Box$
  consists of four right-angled triangles of comparable perimeter.
  Finally, we apply the construction on $\Omega^{\Box}_{i}$ inside each
  $S_{i}$ and note that $\Omega\setminus \bigcup \Omega^{\Box}_{i}$ consists of right-angled triangles that we collect in  $
  \Omega^{[1]}$.  The conditions \ref{item:A3tilde}-\ref{item:A3} are then
  satisfied with the claimed constants. 
    In addition to \ref{item:A4}, this hence shows the validity of \ref{item:A3tilde} (i), (iii) and \ref{item:A3}.
\end{proof}

\subsection{The 3D covering construction}
\label{sec:3D}

In this section we introduce several coverings and partitions in the three-dimensional setting to extend the replacement
construction on a symmetric, three-dimensional diamond $\Omega^{\Diamond}$ to more
general domains $\Omega \in \mathcal{C}$, as defined in Definition \ref{defi:C3D}.
In this context, the outline of the argument is analogous to the two-dimensional setting. Given a certain class of model domains, we first argue that it always suffices to construct a covering of a box by the diamond shaped domains from Lemmas \ref{lem:K_lin_replace_b}, \ref{lem:K_lin_replace}, \ref{lem:On_replace_in}, \ref{lem:On_replace}. This is the content of Section \ref{sec:boxes}, where we introduce a number of auxiliary domains, which are needed to obtain a ``closed" collection of sets $\mathcal{C}$. In Section \ref{sec:3d_self_similar} we then show that these boxes can indeed be covered by diamonds and provide a self-similar refinement which is analogous to Lemma \ref{lem:covering_reduction} (ii). Combined with the results in Section \ref{sec:boxes} this then concludes the three-dimensional covering construction in such a way that we obtain $W^{s,p}$ estimates, whose differentiability and integrability exponents $sp$ are \emph{uniform} in the position of the boundary data in $\inte(K^{lc})$. We again emphasize that if were willing to give up the uniformity of these estimates, a simpler covering argument would suffice.

\subsubsection{Reduction to boxes}
\label{sec:boxes}

We begin by introducing an auxiliary set $\mathcal{C}^0$ of model domains. Together with the class $\mathcal{C}^1$, which is defined in Section \ref{sec:3d_self_similar} this then constitutes the class of domains $\mathcal{C}$.

\begin{defi}
  \label{defi:3D_shapes}
In the three-dimensional setting the class $\mathcal{C}^0$ is built from several model shapes (c.f. Figure \ref{fig:3D_shapes}):
\begin{enumerate}
\item[(i)] A \emph{ramp} or \emph{triangle prism} is a domain of the type $T' \times I$, where
  $T'$ is a triangle and $I$ is an interval. Bisecting the triangle using a perpendicular, we may
  without loss of generality assume that $T'$ is a right-angled triangle. 
\item[(ii)] A \emph{triangle pyramid} is the convex hull of a right-angled triangle $T'$ and a
  point $P$, which we require to be vertically above one of the corners. That
  is, there exists a corner $Q$ of the triangle such that $P-Q$ is orthogonal to
  the plane containing $T'$. 
\item[(iii)] A \emph{rectangle pyramid} is the convex hull of a rectangle $R$ and a
  point $P$, which we require to be vertically above one of the corners.
\end{enumerate}
A set $\Omega$ is then an element of the class $\mathcal{C}^0$ if it can be expressed as the
union of up to $30$ of these model sets.
\begin{figure}[t]
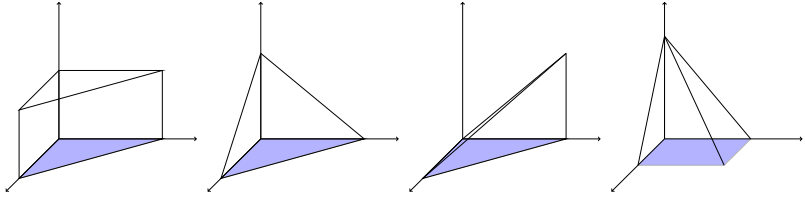

  \centering
  \includegraphics[width=0.2\linewidth, page=10]{figures.pdf}
  \includegraphics[width=0.2\linewidth,page=11]{figures.pdf}
   \includegraphics[width=0.2\linewidth,page=12]{figures.pdf}
   \includegraphics[width=0.2\linewidth, page=13]{figures.pdf}
  \caption{The model domains for our 3D construction are ramps, triangle
    pyramids and rectangle pyramids.}
  \label{fig:3D_shapes}
\end{figure}
\end{defi}

In the sequel, we discuss how the existence of the replacement construction for $\Omega^{\Diamond}$ (c.f. Lemmas \ref{lem:On_replace_in}, \ref{lem:On_replace}, \ref{lem:K_lin_replace_b}, \ref{lem:K_lin_replace}) implies the existence of suitable replacement constructions satisfying the requirements \ref{item:A3tilde}-\ref{item:A4} for the building blocks from Definition \ref{defi:3D_shapes}. To this end, it suffices to concentrate on the necessary covering. Indeed, constructing this in a way such that for any $\Omega \in \mathcal{C}$ we always have that $\Omega_g$ consists of a union of translated and rescaled model domains $\Omega^{\Diamond}$, we implicitly understand the associated replacement deformation $u:\Omega \rightarrow \R^3$ to be a piecewise affine function, which is a translated and scaled model deformation on $\Omega_g$ (i.e. one of the deformations from Lemmas \ref{lem:On_replace}, \ref{lem:On_replace_in}, \ref{lem:K_lin_replace}, \ref{lem:K_lin_replace_b}), and which is unchanged in $\Omega \setminus \Omega_g$. Provided that $|\Omega_g|$ is sufficiently large compared to $|\Omega|$, all the requirements on $u$ are therefore satisfied. Hence, we only discuss the covering construction in the sequel.

\begin{lem}
  \label{lem:3D_reduction2}
  Let $R$ be a rectangle and let $P_{5}$ be a point such that there exists
  $\tilde{P}_{5} \in R$ so that $P_{5}-\tilde{P}_{5}$ is orthogonal to $R$. Then 
  the shape obtained as the convex hull of $R$ and $P_{5}$ can be partitioned in
  four rectangle pyramids as described in Definition \ref{defi:3D_shapes}.

  Similarly, let $T'$ be a triangle and let $P_{4}$ be a point such that there
  exists $\tilde{P}_{4} \in T'$  so that $P_{4}-\tilde{P}_{4}$ is orthogonal to
  $T'$. Then the tetrahedron obtained as the convex hull of $T'$ and $P_{4}$ can
  be partitioned into six triangle pyramids as described in Definition \ref{defi:3D_shapes}.
\end{lem}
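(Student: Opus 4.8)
The plan is to treat the two claims separately, each by an explicit subdivision based on dropping the perpendicular foot $\tilde P$ into the base and cutting along the edges joining $\tilde P$ to the vertices of the base.

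\emph{The rectangle pyramid case.} Let $R$ be the rectangle with vertices $Q_1, Q_2, Q_3, Q_4$ (in cyclic order) and let $\tilde P_5 \in R$ be the foot of the perpendicular from $P_5$. First I would connect $\tilde P_5$ to the four vertices $Q_i$ of $R$; this partitions $R$ into (at most) four triangles $T_i = \conv(\tilde P_5, Q_i, Q_{i+1})$ (degenerate ones, occurring when $\tilde P_5$ lies on an edge or at a vertex, are simply discarded). Lifting this to the convex hull $\conv(R, P_5)$, I would take the four tetrahedra $\conv(Q_i, Q_{i+1}, \tilde P_5, P_5)$. The key point is that each of these is a rectangle pyramid in the sense of Definition \ref{defi:3D_shapes}: indeed, if one does \emph{not} drop $\tilde P_5$ to all four corners but instead partitions $R$ into two congruent rectangles by the line through $\tilde P_5$ parallel to one pair of sides — wait, that only works if $\tilde P_5$ lies on the perpendicular bisector. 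So instead I would argue directly that $\conv(Q_i, Q_{i+1}, \tilde P_5, P_5)$ has the required structure: the base is the triangle $\conv(Q_i, Q_{i+1}, \tilde P_5)$, and $P_5$ sits vertically above the corner $\tilde P_5$ of this triangle, since $P_5 - \tilde P_5$ is orthogonal to the plane of $R$. Hence each piece is in fact a \emph{triangle} pyramid; but a triangle pyramid is already one of the admissible model shapes, so it belongs to $\mathcal{C}^0$, and four of them partition $\conv(R, P_5)$. (If one insists on literal rectangle pyramids, note that one may alternatively split $R$ by the two lines through $\tilde P_5$ parallel to the sides into four sub-rectangles $R_i$, and take the four genuine rectangle pyramids $\conv(R_i, P_5)$ — each $R_i$ shares the corner $\tilde P_5$, which is exactly the corner that $P_5$ lies above. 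This is the cleaner statement and matches the claim.)

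\emph{The triangle pyramid (tetrahedron) case.} Now let $T'$ have vertices $Q_1, Q_2, Q_3$ and let $\tilde P_4 \in T'$ be the foot of the perpendicular from $P_4$. I would connect $\tilde P_4$ to the three vertices of $T'$, partitioning $T'$ into three triangles $\conv(\tilde P_4, Q_i, Q_{i+1})$; then, to obtain \emph{right-angled} triangles as required in Definition \ref{defi:3D_shapes}(ii), I would further drop a perpendicular from $\tilde P_4$ to each of the three edges $[Q_i, Q_{i+1}]$, splitting each of the three triangles into two right-angled triangles with the right angle at the foot on the edge. This yields six right-angled triangles tiling $T'$, each having $\tilde P_4$ as a vertex. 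Lifting to $\conv(T', P_4)$ gives six tetrahedra, and in each of them $P_4$ is vertically above the corner $\tilde P_4$ of the right-angled base triangle, so each is a triangle pyramid in the admissible sense. Thus the tetrahedron $\conv(T', P_4)$ is partitioned into six triangle pyramids, as claimed.

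\emph{Main obstacle.} The routine part is the combinatorics of the subdivision; the only genuine issue is degeneracy, i.e.\ when $\tilde P_4$ (resp.\ $\tilde P_5$) fails to lie in the relative interior of the base — it may lie on an edge or coincide with a vertex, in which case some of the listed pieces collapse. I would handle this by the standard observation that a partition into fewer non-degenerate pieces is still a partition, and the stated bounds ($\le 4$, $\le 6$) are upper bounds; alternatively one notes that the foot of a perpendicular from $P$ need not lie inside the base at all if $\conv(\text{base}, P)$ is ``slanted'', but here the hypothesis is precisely that such a $\tilde P$ exists \emph{in} the base, so this case does not arise. The other mild point is verifying orthogonality is preserved under the lift: since $P - \tilde P$ is orthogonal to the whole base plane, it is a fortiori orthogonal to each sub-polygon lying in that plane, so the ``vertically above a corner'' condition holds for every piece. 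With these remarks the proof is complete.
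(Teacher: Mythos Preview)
Your proposal is correct and, once you settle on the right constructions, is essentially identical to the paper's proof: for the rectangle, split $R$ by the two lines through $\tilde P_5$ parallel to the sides into four sub-rectangles sharing the corner $\tilde P_5$; for the triangle, drop perpendiculars from $\tilde P_4$ to each side to cut $T'$ into six right-angled triangles, each with $\tilde P_4$ as a vertex, and lift to six triangle pyramids. The paper's write-up is just more direct---it skips your false start with the diagonals in the rectangle case and goes straight to the axis-parallel subdivision.
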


\begin{proof}
  We partition $R$ into four rectangles $R_{1}, R_{2}, R_{3}, R_{4}$, which all
  have $\tilde{P}_{5}$ and one of the corners of $R$ as opposite corners.
  Then the convex hulls of each rectangles and $P_{5}$ are rectangle pyramids
  due to the assumed orthogonality and yield a partition of the convex hull of
  $R$ and $P_{5}$.
  \\

  Let $P_{1}, P_{2}, P_{3}$ denote the corners of $T'$. Then we may draw
  perpendiculars from $\tilde{P}_{4}$ to the sides of the triangle opposite of $P_{1}, P_{2}$ or $P_{3}$
  and obtain points $Q_{1}, Q_{2},Q_{3}$, respectively. In this way we
  may partition $T'$ into six right-angled triangles $\tilde{P}_{4}Q_{i}P_{j}$
  with $i, j \in \{1,2,3\}$ and $i\neq j$.
  Then, by the assumed orthogonality of $P_{4}-\tilde{P}_{4}$, each tetrahedron $\tilde{P}_{4}Q_{i}P_{j} P_{4}$ is a triangle pyramid.
\end{proof}

We next show that given a possibly rotated box (which should be thought of as the convex hull of a stacking of diamonds analogous to the two-dimensional stacking described in the proof of Lemma \ref{lem:covering_reduction} (ii), c.f. Section \ref{sec:3d_self_similar}), it is possible to fit this in a controlled way into a slightly larger axis parallel cube.

\begin{lem}
\label{lem:Euler_angles}
There are constants $c\in (0,1)$ and $C>1$ with the following properties: 
Let $W_\delta\subset\R^3$ be a box of side length ratio $1:1:\delta \lfloor \frac{1}{\delta}\rfloor$. Then there is an axis-parallel cube $W$ such that 
\begin{itemize}
\item $W_\delta\subset W$, $|W\setminus W_\delta|\leq c|W|$,
\item $W\setminus W_\delta$ is either empty or can be written as a union of at most $20$ elements $B_i \in \mathcal{C}^0$, and 
\[\sum_j \Per(B_j)+\Per(W_\delta)\leq C\Per (W). \]
\end{itemize}
\end{lem}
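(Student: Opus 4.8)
The plan is to take $W$ to be the smallest axis-parallel cube containing $W_\delta$ and then to dissect the convex-minus-convex set $W\setminus W_\delta$. First I would record the orientation of $W_\delta$ by its three orthonormal outer face normals $n_1,n_2,n_3$ (equivalently, by three Euler angles); using the symmetries of a box ($n_i\mapsto -n_i$ and permutations of the $n_i$) one passes to a fundamental domain in which $|n_i\cdot e_i|=\max_j|n_i\cdot e_j|\ge 1/\sqrt{3}$ for each $i$, so that $W_\delta$ is ``nearly axis parallel''. Since the side lengths of $W_\delta$ are in the ratio $1:1:\delta\lfloor\frac{1}{\delta}\rfloor$ with $\delta\lfloor\frac{1}{\delta}\rfloor\in(\frac{1}{2},1]$, writing $s$ for the longest side we get $\diam(W_\delta)\le\sqrt{3}\,s$, hence $W$ has side length $\le\sqrt{3}\,s$ and
\begin{align*}
\frac{|W_\delta|}{|W|}\ \ge\ \frac{s^{3}/2}{3\sqrt{3}\,s^{3}}\ =\ \frac{1}{6\sqrt{3}}\,,
\end{align*}
so that $|W\setminus W_\delta|\le(1-\tfrac{1}{6\sqrt{3}})|W|=:c|W|$ with $c\in(0,1)$; likewise $\Per(W_\delta)\le 6s^{2}\le\Per(W)$ up to the normalisation constant of $\Per$. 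If $W_\delta$ is itself axis parallel, then $W=W_\delta$ and $W\setminus W_\delta=\emptyset$, so assume from now on that it is not.

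Next I would dissect $W\setminus W_\delta$. Let $\{x\cdot n_i=a_i^\pm\}$, $i=1,2,3$, be the six planes carrying the faces of $W_\delta$. Slicing $W$ successively by the two $n_3$-planes, then by the two $n_2$-planes, then by the two $n_1$-planes, one writes $W=A_3^+\sqcup S_3\sqcup A_3^-$ (with $S_3$ the slab between the $n_3$-planes), $S_3=A_2^+\sqcup S_{23}\sqcup A_2^-$, and $S_{23}=A_1^+\sqcup W_\delta\sqcup A_1^-$, hence
\begin{align*}
W\setminus W_\delta\ =\ \bigsqcup_{i=1}^3\bigl(A_i^+\cup A_i^-\bigr),
\end{align*}
a disjoint union of six convex polytopes, each the intersection of $W$ with at most five of the half-spaces defining $W_\delta$, hence of uniformly bounded combinatorial complexity. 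Each such ``cap'' is a box-like region of the cube truncated by a single tilted plane (possibly clipping a cube edge or corner), with the tilt controlled by the normalisation $n_i\cdot e_i\ge 1/\sqrt{3}$. I would decompose it into the model shapes of Definition \ref{defi:3D_shapes} by splitting the tilted face into two single-coordinate-axis tilts and using, in each of those directions, the affine ``un-shear'' together with the elementary two-dimensional fact that a rotated rectangle inside its bounding square equals that rectangle plus four right-angled triangles; this reduces the cap to boxes (pairs of ramps), oblique triangular prisms, and residual tetrahedra, and the last two are split into triangle pyramids and rectangle pyramids by Lemma \ref{lem:3D_reduction2}, after bisecting a tetrahedron further whenever the foot of an altitude fails to lie in the chosen base face. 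This produces, for each of the six caps, a set $B_i\in\mathcal{C}^0$ (a union of at most $30$ model shapes), so at most $20$ pieces in total. The perimeter estimate is then immediate: every model shape occurring above lies in $W$, has diameter $\le\diam(W)$ and at most six faces each of area $\le\diam(W)^2$, and the total number of such shapes is uniformly bounded, whence $\sum_j\Per(B_j)+\Per(W_\delta)\le C\,\Per(W)$.

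The step I expect to be the main obstacle is the dissection of the caps in the preceding paragraph: one must arrange the cuts so as to obtain genuinely the three \emph{admissible} shapes (rather than an uncontrolled simplicial subdivision) while keeping the number of pieces below a fixed bound, and this is exactly where the ``fundamental domain'' normalisation of the Euler angles and Lemma \ref{lem:3D_reduction2} are used. I would stress, however, that the explicit constants $c$, $20$, $30$ are irrelevant for the applications — only their uniformity in the position of the boundary data matters — so it suffices to exhibit \emph{some} such finite decomposition, which makes this last step routine, if lengthy.
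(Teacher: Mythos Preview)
Your approach is genuinely different from the paper's. You enclose $W_\delta$ in its smallest axis-parallel cube in one shot and then try to dissect the six convex ``caps'' $A_i^{\pm}$ into the model shapes of Definition~\ref{defi:3D_shapes}. The paper instead factors the rotation $R$ carrying an axis-parallel box to $W_\delta$ as a product $R=R_zR_yR_x$ of coordinate-axis rotations (Euler angles) and builds $W$ in three nested stages: at each stage one fits a \emph{single-axis}-rotated box into an axis-parallel cube, and the complement is then, by the two-dimensional picture of a rotated rectangle inside a square, a union of right-angled triangles times an interval---i.e.\ ramps. Since a rotated ramp is still a ramp, the cumulative complement $W\setminus W_\delta$ consists only of ramps, the simplest shape in $\mathcal{C}^0$, and no further dissection is needed. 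This is precisely what sidesteps the step you identify as the main obstacle.

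Your volume and perimeter bounds are fine, but the cap dissection has a real gap. You reduce to tetrahedra and propose to apply Lemma~\ref{lem:3D_reduction2}, ``bisecting a tetrahedron further whenever the foot of an altitude fails to lie in the chosen base face''. However, there are tetrahedra---for instance $\conv\{(0,0,0),(1,0,0),(0,1,0),(10,10,1)\}$---in which \emph{no} vertex projects orthogonally into the opposite face, so Lemma~\ref{lem:3D_reduction2} never applies directly, and you give no argument that your bisection scheme terminates after a \emph{uniformly bounded} number of steps (which is exactly what the lemma demands). One could close this gap by invoking the classical fact that every $3$-simplex decomposes into a bounded number of orthoschemes, which are triangle pyramids in the sense of Definition~\ref{defi:3D_shapes}; but you do not do so, and your phrase ``splitting the tilted face into two single-coordinate-axis tilts'' is in fact closer in spirit to the paper's Euler-angle iteration than to a direct dissection of the caps.
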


\begin{figure}[t]
  \centering
  \includegraphics[width=0.3\linewidth, page=14]{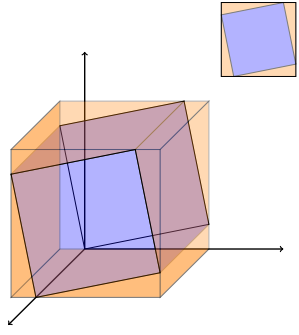}
  \caption[Euler angles]{Fitting a rotated cube into an axis-parallel one in the
    case of an axis-parallel rotation. Decomposing a general rotation into Euler
    angles allows to reduce to this situation. As the original box $W_{\delta}$ is in general not exactly a box, we potentially need to fill the remainder of the inner-most cube up with ramps.}
  \label{fig:Euler_angles}
\end{figure}

\begin{proof}
There exists a rotation $R\in SO(3)$ such that $R^t W_\delta$ is
axis-parallel. In the case that the rotation axis is given by a coordinate axis, we can use the $2$D construction to place the box into a slightly larger axis parallel cube, where the remainder is given by ramps, see Figure \ref{fig:Euler_angles}. 

Consider now the case of a general rotation $R\in SO(3)$. Using Euler angles,
every rotation can be decomposed into three rotations $R_x$, $R_y$, $R_z$ around
the respective coordinate axes, $R=R_zR_yR_x$. We thus iterate the above
construction to build an axis-parallel box: First start with an axis parallel
box $\tilde{W}$. Consider the rotated box $R_x\tilde{W}$. As described above,
this box can be placed in a slightly larger axis-parallel  cube $W_1$, where the
complement consists of ramps. Now consider $R_y W_1$. Note that inside
$W_1$ the box $\tilde{W}$ is now rotated by $R_yR_x$. Again, there exists
a slightly bigger axis-parallel cube such that $R_yW_1$ lies inside it and
the complement is filled by ramps. Finally rotating this cube by $R_z$ and
repeating the construction yields the desired cube $W$. Here, the triangle
inequality ensures that in every step the exterior box is at most increased by a
factor of $\sqrt{2}$. Again, we fill the remainder up with ramps. 
\end{proof}

As in the two-dimensional case, given an axis-parallel rectangle, we can cover at least half of its volume by axis-parallel cubes. 
We hence next address the problem of fitting a rectangle into an arbitrary element of $\mathcal{C}^0$.

\begin{lem}\label{lem:box3Dint}
Every building block $B$ as described in Definition \ref{defi:3D_shapes}, contains a three-dimensional rectangle $R$ of volume fraction at least $3/16$, such that the complement $B\setminus R$ is (up to null-sets) a disjoint union of at most seven building blocks $U_\ell \in \mathcal{C}^{0}$, and  $ \sum_{\ell} \Per(U_\ell)+\Per (R)\leq 8\Per(B)$.
\end{lem}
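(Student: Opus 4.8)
The plan is to treat each of the three model shapes from Definition \ref{defi:3D_shapes} separately and, in each case, first cut out an axis-parallel (relative to the natural coordinates of the building block) three-dimensional rectangle $R$ capturing a definite volume fraction, and then verify that the complement decomposes into a bounded number of ramps, triangle pyramids and rectangle pyramids. Since a ramp is $T'\times I$ with $T'$ a right-angled triangle, for the ramp case I would simply slice the right-angled triangle $T'$ into a rectangle and two self-similar right-angled triangles exactly as in the two-dimensional construction in the proof of Lemma \ref{lem:covering_reduction} \ref{item:general_case}: one draws the rectangle $R'$ inside $T'$ with one vertex at the right angle, with $|R'|\geq\frac12|T'|$, and the two leftover pieces are again right-angled triangles, hence ramps after taking the product with $I$. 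Thus $R=R'\times I$ has $|R|\geq\frac12|B|\geq\frac3{16}|B|$ and $B\setminus R$ is a union of two ramps, with perimeters controlled by $\Per(B)$ since all the pieces are obtained by a single straight cut.

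For the triangle pyramid $B=\conv(T',P)$, with $P$ vertically above a corner $Q$ of the right-angled triangle $T'$, I would foliate $B$ by horizontal slices: at height $t\in[0,h]$ the cross-section is a scaled copy of $T'$ (scaled by $1-t/h$ about $Q$). Inside each slice one can inscribe the rectangle from the two-dimensional construction above, and stacking these rectangles over an interval $[0,\theta h]$ (for a fixed small $\theta$) produces a genuine three-dimensional rectangle $R$ inside $B$ — here I would choose $\theta$ so that $|R|\geq\frac3{16}|B|$ is comfortably satisfied, which amounts to an elementary integral $\int_0^\theta(1-t)^2\,dt$ compared with $\int_0^1(1-t)^2\,dt=\frac13$. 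The complement $B\setminus R$ then splits into: the "cap" above height $\theta h$, which is a smaller triangle pyramid; and the region between the inscribed rectangle and the pyramid walls over $[0,\theta h]$, which one cuts into a bounded number of ramps and triangle/rectangle pyramids using vertical cuts emanating from the edges of $R$ — Lemma \ref{lem:3D_reduction2} is invoked to split any tetrahedral or $\conv(R,P)$-type remainder into the canonical building blocks. A careful but routine bookkeeping shows this uses at most seven such pieces. The rectangle pyramid case $B=\conv(R_0,P)$ is entirely analogous, using horizontal slices that are scaled copies of $R_0$ and inscribing a sub-rectangle directly (no two-dimensional triangle argument needed), so the count is even more favorable.

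The perimeter bound $\sum_\ell\Per(U_\ell)+\Per(R)\leq 8\Per(B)$ I would obtain by observing that every new facet introduced lies inside a single face of $B$ or inside one of the finitely many cutting planes, each of which has area bounded by a fixed multiple of $\Per(B)$; since the number of pieces is bounded by a universal constant and each piece has boundedly many faces, summing yields a universal multiplicative constant, which a slightly careful count keeps below $8$. The main obstacle I anticipate is purely combinatorial/geometric: organizing the decomposition of the "corner" regions of the pyramids (the wedge-shaped leftovers between an inscribed box and a slanted face) into exactly the allowed shapes — ramps, triangle pyramids, rectangle pyramids — while keeping the total count at most seven and the perimeter constant at most $8$. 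This requires choosing the inscribed rectangle and the cutting planes in a coordinated way so that each leftover piece is visibly one of the three model types (after at most one application of Lemma \ref{lem:3D_reduction2}), rather than some more complicated polyhedron that would need further subdivision.
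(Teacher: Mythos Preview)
Your treatment of the ramp case is correct and coincides with the paper's.

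For the two pyramid cases, however, there is a genuine gap. You assert that ``stacking these rectangles over an interval $[0,\theta h]$ produces a genuine three-dimensional rectangle $R$'', but the inscribed rectangle in the cross-section at height $t$ is a copy of the base rectangle scaled by the factor $1-t/h$ about the corner $Q$; stacking these \emph{varying} rectangles yields a frustum of a pyramid, not a rectangular box. Your own volume computation via $\int_0^\theta(1-t)^2\,dt$ is precisely the (normalized) volume of that frustum, which confirms the confusion. One could instead fix the rectangle to be the one inscribed at the top level $\theta h$ and extend it straight down; that does give a box, but then the volume fraction becomes $\tfrac{3}{2}\theta(1-\theta)^2$ (for the triangle pyramid with apex over the right angle), which requires a separate optimization and is not what you wrote.

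Even after this correction, the claim that the complement decomposes into at most seven ramps and pyramids ``by vertical cuts emanating from the edges of $R$'' is where the real work lies, and it is not routine: the region between an axis-parallel box and the slanted lateral faces of a triangle pyramid over a generic height interval does not obviously split into the three allowed shapes with the required orthogonality conditions on the apex. The paper avoids this entirely by a different construction: for the rectangle pyramid it takes the corner box of half the length, width and height, whose complement is visibly two ramps and two self-similar rectangle pyramids; for the triangle pyramid it uses half-scale cuts aligned with the pyramid's own edges to produce self-similar triangle pyramids and a ramp (and, when the apex sits over the right angle, one additional rectangle pyramid handled via Lemma~\ref{lem:3D_reduction2}), and then extracts the box from that ramp using case~(i). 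This last step is exactly where the worst volume fraction $3/16=\tfrac12\cdot\tfrac38$ and the count of seven pieces come from. The key idea you are missing is that cuts at half-scale, parallel to the pyramid's own faces, generate \emph{self-similar} remainder pieces; a horizontal slab at a generic height does not.
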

\begin{proof}
We consider the three cases of Definition \ref{defi:3D_shapes} separately (c.f. Figure \ref{fig:3D_fit}). 
\begin{figure}[t]
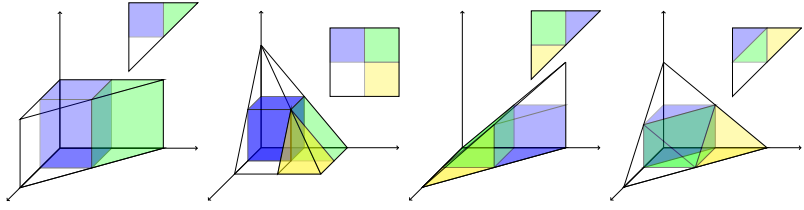

  \centering
  \includegraphics[width=0.2\linewidth, page=15]{figures.pdf}
   \includegraphics[width=.2\linewidth, page=18]{figures.pdf}
   \includegraphics[width=.2\linewidth, page=17]{figures.pdf}
   \includegraphics[width=.2\linewidth, page=16]{figures.pdf}
  \caption{Fitting a rectangle into a building block.}
  \label{fig:3D_fit}
\end{figure}
\begin{itemize}
\item[(i)] 
For ramps $B=T' \times I$, we use the $2$D-construction, see left panel of Figure \ref{fig:3D_fit}. Precisely, in the right-angled base triangle $T'$ we fit a rectangle $\tilde{R}$ of half the area, and define the $3$D-rectangle $R$ as $\tilde{R}\times I$. Then this rectangle has half the volume of the ramp, and the complement consists of two ramps $U_1,U_2$ that are similar to the original ramp. Denoting by $a$ and $b$ the lengths of the legs of the base triangle, and by $h$ the height of the ramp, we have $\Per (B)=ab+h\sqrt{a^2+b^2}+ah+bh$ and $\Per(R)=\frac{1}{2}ab+ah+bh$. Since the two ramps in $B\setminus R$ have only faces which are (up to translation) contained in the surfaces of $B$ and $R$, we can very roughly estimate $ \sum_{\ell=1}^{2}\Per(U_\ell)+\Per (R)\leq 3 \Per(B)$.
\item[(ii)] Consider now a rectangle pyramid $B$, see second panel of Figure \ref{fig:3D_fit}. The base rectangle contains a rectangle of half the length and half the width which has as one corner the vertex of the pyramid above which the top is located. Then the $3$D rectangle which has this smaller rectangle as one face and is of half the height of the pyramid is contained in the pyramid, and has $1/8$ of its volume. Further, denoting the side lengths of the base rectangle by $a$ and $b$, and the height by $h$, we have $\Per(B)=ab+\frac{1}{2}ah+\frac{1}{2}bh+\frac{1}{2}a\sqrt{b^2+h^2}+\frac{1}{2}b\sqrt{a^2+h^2}$, and $\Per (R)=\frac{1}{2}(ab+ah+bh)$. \\
Consider now the complement, which has four components $U_1,\dots,U_4$. We claim that two of them are ramps and the other two are rectangle-based pyramids which are similar to the original one. Consider first the rectangle-based pyramids (the yellow and the top part in the figure). By construction, their base is a rectangle of half the side lengths and the height is half the height of the original rectangle-based pyramid. The remaining two parts which lie above the green and the white part of the base rectangle are ramps by construction. For every set $U_\ell$, we have $\Per(U_\ell)\leq \Per (B)$, and the assertion follows.
\item[(iii)] Consider now triangle-based pyramids, see the two right panels of Figure
  \ref{fig:3D_fit}.

 We begin by discussing the case when the corner $Q$ is not above the right angle of the
  triangle $T'$, which is illustrated in the third panel of Figure \ref{fig:3D_fit}.
  Then the base triangle can be partitioned into two self-similar triangles of
  half the side lengths and a rectangle of half the side lengths.
  This partition of the base then yields a partitioning of the pyramid into two
  self-similar triangle pyramids (yellow and white) and two ramps (green and
  purple).
The volume of the ramps is then $3/8$ of the volume of the triangle-pyramid, and the perimeters of all four objects are not bigger than the perimeter of the triangle-pyramid. Hence, the assertion follows by (i).

  Finally, consider the case when the top $Q$ is above the right angle of the
  base triangle $T'$, which is depicted in the fourth panel of Figure \ref{fig:3D_fit}. We first introduce a ramp of half the side lengths of the
  pyramid around the corner of the base triangle at which the right angle is
  located (violet in the figure). The pyramid on top of this ramp also has a
  base of half the side lengths of the original one, and half its height, and is
  therefore similar to it with $1/8$ of its volume. Similarly, for the
  yellow ones. It remains to consider the green part, which is a rectangle-based
  pyramid whose base is the face that it shares with the ramp. Its top is not
  above a corner but in view of Lemma \ref{lem:3D_reduction2}, we can split this
  pyramid into shapes of class (ii) in Definition \ref{defi:3D_shapes}.
  Since the top corner lies already above an edge, we only need two pyramids. We
  now use the construction from (i) to fit a box into the ramp. Note that this
  box has $1/2$ of the volume of the ramp, and thus $1/2\cdot 3/8=3/16$ of the
  volume of the original triangle-based pyramid. Denoting the legs of the base triangle by $a,b$ and the height of the pyramid by $h$, we have
  $\Per(B)=\frac{1}{2}(ab+ah+bh +\sqrt{a^2+b^2} \sqrt{a^2/4+b^2/4+h^2})$ and $\Per(R)= \frac{1}{8} ab + \frac{1}{4}a h + \frac{1}{4} bh$. The assertion follows.
\end{itemize}
\end{proof}

As an additional step compared to the two-dimensional setting, in three dimensions we have to
verify that each level set of the construction in $\Omega^{\Diamond}$ (c.f. Lemmas \ref{lem:3D_O(n)}, \ref{lem:lin_Conti_3D}) can be
partitioned into (a small number of) the above model domains.
\begin{lem}
  Let $\Omega_{1}, \dots, \Omega_{N}$ denote the level sets of the
  three-dimensional replacement construction from Lemmas \ref{lem:3D_O(n)}, \ref{lem:lin_Conti_3D} (c.f. Figure \ref{fig:3D_diamond}).
  Then each set is contained in the class $\mathcal{C}^0$ given in Definition \ref{defi:3D_shapes}.
\end{lem}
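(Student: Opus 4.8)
The plan is to verify, shape by shape, that each level set appearing in the three-dimensional replacement constructions of Lemmas \ref{lem:3D_O(n)} and \ref{lem:lin_Conti_3D} admits a decomposition into at most $30$ of the model domains (ramps, triangle pyramids, rectangle pyramids) of Definition \ref{defi:3D_shapes}, and hence lies in $\mathcal{C}^0$. The key structural observation, which I would recall first, is that in both constructions the three-dimensional deformation $u$ is obtained from a two-dimensional deformation $v$ on a planar diamond $\conv(\pm e_1,\pm\delta e_2)$ by linear interpolation between the planar triangles (on which $v$ is affine) and the two apex points $P_\pm=(0,0,\pm1)$. Consequently each three-dimensional level set is the convex hull of a planar triangle $T'$ (one of the roughly $10$ or $20$ triangles into which the planar diamond is subdivided, cf. Lemmas \ref{lem:MS}, \ref{lem:lin_Conti}) together with a point $P_+$ or $P_-$ lying on the symmetry axis orthogonal to the plane of the diamond. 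In other words, each level set is a tetrahedron $\conv(T',P_\pm)$ of exactly the type treated in the second statement of Lemma \ref{lem:3D_reduction2}.

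First I would make this reduction precise: fix the coordinate system so that the planar diamond lies in the $\{x_3=0\}$ plane and $P_\pm=(0,0,\pm1)$; note that the foot of the perpendicular from $P_\pm$ to the plane containing any level-set triangle $T'$ is the origin, which lies in the closure of the diamond but not necessarily inside the particular triangle $T'$. If the origin happens to lie inside (or on an edge of) $T'$, the level set is already a triangle pyramid in the sense of Definition \ref{defi:3D_shapes} (iii), possibly after the harmless bisection of $T'$ by a perpendicular into two right-angled triangles, giving at most $2$ model domains. If the foot of the perpendicular lies outside $T'$, I would instead invoke Lemma \ref{lem:3D_reduction2} directly: writing $\tilde P_4$ for the foot of the perpendicular and $T'$ for the base triangle, that lemma partitions $\conv(T',P_+)$ into at most six triangle pyramids. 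Thus each of the $\le 20$ level sets decomposes into at most $6$ model domains, and therefore lies in $\mathcal{C}^0$ with room to spare (well below the threshold $30$). The same argument covers both Lemma \ref{lem:3D_O(n)} and Lemma \ref{lem:lin_Conti_3D}, since the only difference between them is the precise list of planar triangles, not the mechanism by which the third dimension is generated.

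The main point requiring a little care — and what I expect to be the only genuine obstacle — is the hypothesis of Lemma \ref{lem:3D_reduction2}: it requires the foot of the perpendicular $\tilde P_4$ from $P_4$ to lie \emph{in} the triangle $T'$ (so that the six pieces $\tilde P_4 Q_i P_j$ are genuine triangles with the apex above them). When the origin lies strictly outside $T'$, one cannot directly form those six subtriangles. I would handle this by a preliminary subdivision step: extend the edges of $T'$ to the lines through the origin, or equivalently first triangulate the larger triangle $\conv(\text{origin},\,\text{vertices of }T')$ — which does contain the origin as a vertex — and express $\conv(T',P_+)$ as a signed combination, or more cleanly, observe that each level-set triangle in these specific constructions is a sub-triangle of one of the four quadrant-triangles of the diamond, each of which does have the origin as a vertex; hence $\conv(T',P_\pm)$ is a sub-tetrahedron of $\conv(Q,P_\pm)$ where $Q$ is a quadrant-triangle with a vertex at the origin, and for such $Q$ the origin is a corner, so the convex hull is itself (after at most one perpendicular bisection of $Q$) a triangle pyramid. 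Slicing that triangle pyramid by the planes spanned by $P_\pm$ and the edges of $T'$ then exhibits $\conv(T',P_\pm)$ as a union of a bounded number (at most, say, $3$) of smaller triangle pyramids and ramps, all in $\mathcal{C}^0$. Counting generously, every level set is a union of at most $6$ model domains, comfortably within the bound $30$, which completes the proof.

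Finally I would remark that, since the level sets are convex polytopes with a uniformly bounded number of faces and the decompositions above involve only perpendicular bisections and the constructions of Lemma \ref{lem:3D_reduction2}, the perimeters of the model pieces are controlled by a fixed multiple of the perimeter of the level set; this is not needed for membership in $\mathcal{C}^0$ per se but will be used in the subsequent covering lemmas, and it follows immediately from the explicit nature of the subdivisions.
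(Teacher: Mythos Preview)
Your reduction is right up to the point where you try to handle the base triangles that do \emph{not} contain the origin. The foot of the perpendicular from $P_+=(0,0,1)$ to the $\{x_3=0\}$ plane is indeed the origin, and for the interpolation triangle $T'=P_1P_2P_3$ (with $P_1=(0,\delta,0)$, $P_2=(1,0,0)$, $P_3=(\mu,\lambda\delta,0)$) the origin lies strictly outside $T'$. Lemma~\ref{lem:3D_reduction2} is therefore not applicable with base $T'$ and apex $P_+$, and your proposed repair does not work: if you embed $T'$ in the quadrant triangle $Q=OP_1P_2$ and slice $\conv(Q,P_+)$ by the planes through $P_+$ and the edges $P_1P_3$, $P_2P_3$, you recover $\conv(T',P_+)$ only as a \emph{difference} $\conv(Q,P_+)\setminus(\conv(OP_1P_3,P_+)\cup\conv(OP_2P_3,P_+))$, not as a union. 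And any direct partition of $\conv(T',P_+)$ into tetrahedra with apex $P_+$ will have bases that are sub-triangles of $T'$; since none of these can contain the origin, none of them is a triangle pyramid in the sense of Definition~\ref{defi:3D_shapes}. The same obstruction arises for the tetrahedra over $P_3\tilde{P}_3P_2$ and $P_3\hat{P}_3P_1$, whose bases also exclude the origin.

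The paper's proof gets around this by abandoning the constraint that the base lie in the horizontal plane. For $P_1P_2P_3P_4$ it takes $P_3$ as the apex and $P_1P_2P_4$ as the base, and then verifies by an explicit computation (introducing the auxiliary point $P_9=(-1,-\delta^{-1},0)$) that the foot of the perpendicular from $P_3$ onto the plane of $P_1P_2P_4$ actually lands inside that triangle, so that Lemma~\ref{lem:3D_reduction2} applies after all. For the tetrahedra $P_3\tilde{P}_3P_2P_4$ and $P_3\hat{P}_3P_1P_4$ it uses a different device: it locates a point $P_5$ (resp.\ $P_7$) on the edge $P_2P_4$ (resp.\ $P_1P_4$) such that the triangle $P_3\tilde{P}_3P_5$ (resp.\ $P_3\hat{P}_3P_7$) is orthogonal to that edge, which splits the tetrahedron into two pieces each having its apex directly above a corner of the new base. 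Both tricks rely on the specific coordinates of the construction and require short but genuine orthogonality computations; there does not appear to be a soft argument of the kind you sketch.
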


\begin{figure}[t]
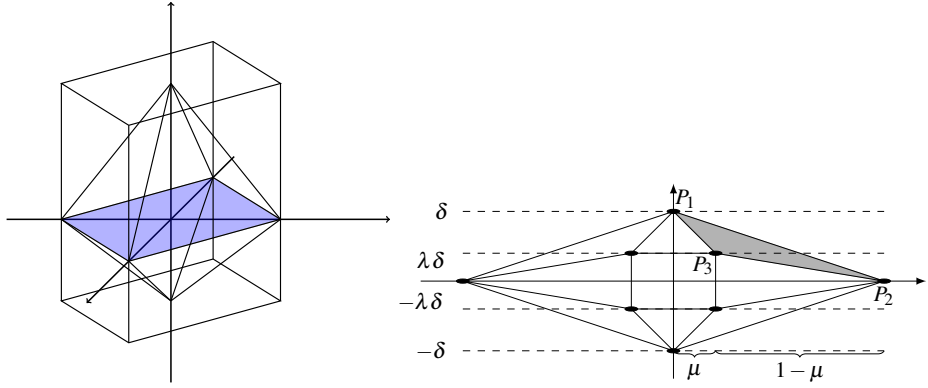

  \centering
      \includegraphics[width=0.4\linewidth, page=19]{figures.pdf}
  \includegraphics[width=0.55\linewidth, page=4]{figures.pdf}
  \caption{The 3D diamond and the 2D diamond. The three-dimensional diamond is obtained by embedding the two-dimensional construction which is depicted on the right into the $(x,y,0)$-plane and adding the points $P_4^{\pm}:=(0,0,\pm 1)$. Setting $u(P_4^{\pm})=0$ and interpolating linearly between the boundary points of the two-dimensional construction then yields the piecewise affine three-dimensional construction (c.f. Lemmas \ref{lem:3D_O(n)}, \ref{lem:lin_Conti_3D}).}
  \label{fig:3D_diamond}
\end{figure}

\begin{proof}
We recall that the set $\Omega^{\Diamond}$ was obtained by
interpolating the 2D construction from Lemmas \ref{lem:MS}, \ref{lem:lin_Conti} with a point vertically
above the centre of the 2D diamond (c.f. Figure \ref{fig:3D_diamond} and also Figure \ref{fig:kite}).

Let us thus fix a coordinate system such that $P_{4}=(0,0,1)$ is the vertical
point at the top and $P_{2}=(1,0,0)$, $P_{1}=(0,\delta,0)$, $P_{3}=(\mu,
\lambda\delta,0)$ and $O=(0,0,0)$ are corners of the level sets of the 2D construction.
Furthermore, we introduce $\tilde{P}_{3}=(\mu,-\lambda \delta,0)$ and
$\hat{P}_{3}=(-\mu, \lambda \delta,0)$.

We note that the tetrahedron $OP_{3}\tilde{P}_{3}P_{4}$ can be bisected into
two triangle pyramids by introducing the point $P_{6}= \frac{1}{2}P_{3} +
\frac{1}{2}\tilde{P}_{3}$.
Hence, by symmetry it only remains to discuss the three tetrahedra
$P_{3}\tilde{P}_{3}P_{2}P_{4}$, $P_{3}\hat{P}_{3}P_{1}P_{4}$ and
$P_{1}P_{2}P_{3}P_{4}$.\\

\underline{The tetrahedron $P_{3}\tilde{P}_{3}P_{2}P_{4}$:}
We claim that there exists a point $P_{5}$ contained in the line $P_{2}P_{4}$
  such that the triangle $P_{3}\tilde{P}_{3}P_{5}$ is orthogonal to the vector
  $P_{2}-P_{4}$ (with slight abuse of notation, we do not distinguish carefully between vectors and points in the sequel).
  If this is the case, the tetrahedron $P_{3}\tilde{P}_{3}P_{2}P_{4}$  can be
  expressed as the disjoint (up to null sets) union of two triangle pyramids
  $P_{3}\tilde{P}_{3}P_{5}P_{4}$ and $P_{3}\tilde{P}_{3}P_{5}P_{1}$.

  Indeed, let $l=\frac{\mu+1}{2} \in (0,1)$ and define $P_{5}= l P_{2}
  + (1-l)P_{4}$ and $P_{6}=(\mu,0,0)=\frac{1}{2}P_{3}+\frac{1}{2}\tilde{P}_{3}$.
  Then a short computation yields
  \begin{align*}
    P_{5}-P_{6}&= \frac{(1-\mu)}{2} (1,0,1), \\
    P_{3}-\tilde{P}_{3} &= 2 \lambda \delta (0,1,0), \\
    P_{4}-P_{2}&=(-1,0,1).
  \end{align*}
  We note that the first two vectors are linearly independent and hence span the
  plane containing $P_{5}P_{3}\tilde{P_{3}}$.
  Since both vectors are orthogonal to $P_{4}-P_{2}$, it follows that
  $P_{4}-P_{2}$ is orthogonal to $P_{5}P_{3}\tilde{P_{3}}$, as claimed.\\

\underline{The tetrahedron $P_{3}\hat{P}_{3}P_{1}P_{4}$:}  
We argue similarly as in the previous case and introduce $P_{7}=l P_{1}
+(1-l)P_{4}$ with $l=\frac{\delta^{2}\lambda+1}{\delta^{2}+1} \in (0,1)$
and $P_{8}=(0,\lambda \delta, 0)= \frac{1}{2}P_{3}+\frac{1}{2}\hat{P}_{3}$.
  Then a short calculation yields
  \begin{align*}
    P_{7}-P_{8}&=\frac{(1-\lambda)\delta }{\delta^{2}+1} (0,1,\delta), \\
    P_{3}-\hat{P}_{3}&= (2 \mu, 0, 0), \\
    P_{4}-P_{1}&= (0, -\delta , 1).
  \end{align*}
  We note that $P_{4}-P_{1}$ is orthogonal to both other vectors and hence
  orthogonal to $P_{3}\hat{P}_{3}P_{7}$.
  Thus, the tetrahedra $P_{3}\hat{P}_{3}P_{7}P_{4}$ and
  $P_{3}\hat{P}_{3}P_{7}P_{1}$ are triangle pyramids.\\

\underline{The tetrahedron $P_{1}P_{2}P_{3}P_{4}$:}
 We claim that there exists $P_{3}^{\star} \in P_{1}P_{2}P_{4}$ such that
 $P_{3}-P_{3}^{\star}$ is orthogonal to $P_{1}P_{2}P_{4}$.
 If this is the case, then Lemma \ref{lem:3D_reduction2} yields a partition of
 $P_{1}P_{2}P_{3}P_{4}$ into six triangle pyramids.

Indeed, let $P_{9}=(-1,-\delta^{-1},0)$, then it holds that $P_{9}-P_{4}$ is orthogonal to the plane containing $P_{4}P_{2}P_{1}$, since
  \begin{align*}
    P_{4}-P_{9}&= (1,\delta^{-1}, 1), \\
    P_{2}-P_{1}&=(1,-\delta, 0), \\
    P_{4}-P_{2}&= (-1,0,1),
  \end{align*}
  satisfy $(P_{9}-P_{4}) \bot (P_{2}-P_{1})$ and $(P_{9}-P_{4}) \bot (P_{4}-P_{2})$.
  Thus $P_{4}$ is the orthogonal projection of $P_{9}$ onto $P_{4}P_{2}P_{1}$.
  Finally, we note that the set of all points in the $xy$-plane such that their
  orthogonal projection onto the plane containing $P_{1}P_{2}P_{4}$ is contained in
  $P_{1}P_{2}P_{4}$ is a convex set and contains $P_{9}, P_{1}$ and $P_{2}$.
  The claim thus follows by noting that $P_{3}$ is contained in  $P_{9}P_{1}P_{2}$. 
\end{proof}

\subsubsection{The self-similar case in 3D}
\label{sec:3d_self_similar}

Following a similar approach as in the two-dimensional setting which was considered in Section \ref{sec:2D} and which is depicted in Figure
\ref{fig:self-similar}, we explain how to cover a box by diamonds and a controlled remainder. As in the two-dimensional situation, we here introduce a self-similar covering by diamonds of
dyadic sizes. This gives rise to the cases \ref{item:A3tilde} (ii), (iii) involving the class of domains in $\mathcal{C}^1$.
Due to the more complicated geometry in three dimensions we introduce several
additional building blocks:
\begin{itemize}
\item We consider a \emph{symmetric diamond} as constructed in Lemmas \ref{lem:3D_O(n)}, \ref{lem:lin_Conti_3D}. Without loss of generality after rotation and scaling, we may assume that it is the convex hull of the points $(0,0,\pm \delta), (\pm 1, \pm 1, 0), (\mp 1, \pm 1,0)$. In particular, its base is a square.
We call this shape, as well as translates and rescalings of
it, a \emph{symmetric diamond}.
\item Since our covering by diamonds leaves some gaps, we further introduce
  \emph{symmetric tetrahedra} $T$, which up to rescaling, translation and switching of the $x$- and $y$- axes
  have corners $(-2,0,-\delta),(2,0,-\delta),(0,-2,\delta)$ and $(0,2,\delta)$
  (c.f. Figure \ref{fig:tetra}).
\item In analogy to the self-similar triangles in the 2D setting, we introduce a
  \emph{ring} shaped domain $\mathcal{R}_j$ for $j \in \N_0$.
  These domains are given by non-convex polygons with corners $(\pm
  1, \pm 1, \pm \delta), (\mp
  1, \pm 1, \pm \delta)$ and $(\pm(1-2^{-j}), \pm (1-2^{-j}),0), (\mp(1-2^{-j}), \pm (1-2^{-j}),0)$ and are
  depicted in Figure \ref{fig:ring}.
  Their projections onto the $yz$-plane then include isosceles triangles as in
  Figure \ref{fig:self-similar}, while the $xy$-projection is a square annulus.
  We remark that in the special case $j=0$ the ring consists of $8$ rectangle
  pyramids and instead of an annulus the projection is a rectangle.
\end{itemize}

Given a symmetric diamond, we define the class $\mathcal{C}^1$ as the collection of tetrahedra $T$ and rings $\mathcal{R}_j$ described above.
For a rotated diamond, we also rotate the tetrahedra and rings correspondingly.\\

\begin{figure}[t]
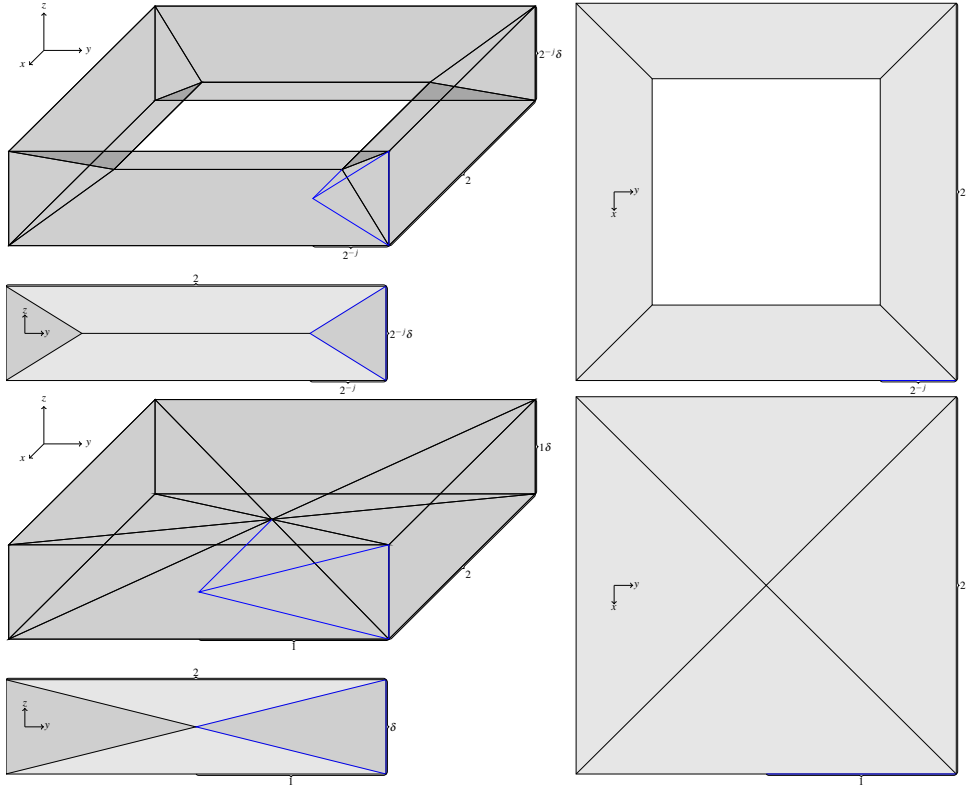

  \centering
  \includegraphics[width=1\linewidth, page=20]{figures.pdf}
  \includegraphics[width=1\linewidth, page=21]{figures.pdf}
  \caption{Ring-shaped domains in various projections. Top: $j>0$, Bottom: $j=0$.}
  \label{fig:ring}
\end{figure}

With these additional building blocks at hand, we formulate our main covering lemma, which should be viewed as analogous to the two-dimensional statement of Lemma \ref{lem:covering_reduction}.

\begin{lem}
\label{lem:covering_reduc_3D}
Assume that the estimates from \ref{item:A3tilde}-\ref{item:A4} are satisfied,
if $\Omega = \Omega^{\Diamond}$, where $\Omega^{\Diamond}$ is a symmetric diamond for some $\delta \in (0,1/2)$. In the case $\ref{item:A3tilde}$ the parameter $\delta$ may depend on $
k,j$, while in the case $\ref{item:A3}$ it is required to be independent of $k,j$. Suppose further that $\Omega_g = \Omega^{\Diamond}$, which in particular yields that the conditions \ref{item:A3tilde} (ii), (iii) are empty for $\Omega = \Omega^{\Diamond}$. Denote the constants from conditions \ref{item:A3tilde} (i) and \ref{item:A3} by $C_{0}^{\star},C_{1}^{\star}$, if $\Omega=\Omega^{\Diamond}$, and assume that both are uniform in $j,k$.
  \begin{enumerate}[label=(\roman*)]
  \item \label{item:selfsimilar_case_3D}
If $\Omega=\mathcal{R}_j$ is a ring or if $\Omega=T$ is a symmetric tetrahedron, there is a replacement construction, which
satisfies \ref{item:A3tilde} (ii) or \ref{item:A3} with
  $C_{0}=C_{0}^{\star}$, $C_{1}=C_{1}^{\star}$, $C_{2}=2$.
  The volume fractions are given by $v_{1}=1/3$ in the case of a ring and by
  $v_1=\frac{1}{2}$ in the case of a tetrahedron. We define the set $\mathcal{C}^1$ to consist of these symmetric tetrahedra or rings, which are oriented in the same way as the relevant current symmetric diamond.
\item \label{item:box_case_3D}
If $\Omega = \Omega^{\Box}$ is a box of aspect ratio $1: 1:  \delta \cdot
  \lfloor{\frac{1}{\delta}} \rfloor$, there exists a replacement construction
  such that \ref{item:A3tilde} and \ref{item:A3} are satisfied with constants
  $C_{0}=C_{0}^{\star}/\delta$, $C_{1}=C_{1}^{\star}/\delta$, $C_{2}=8$ and $v_{1}=1/3$.
\item \label{item:general_case_3D}
  For any $\Omega \in \mathcal{C}$ there exists a construction such that \ref{item:A3tilde}-\ref{item:A4} are satisfied with constants $C_{0}=100C_{0}^{\star}/\delta$,
  $C_{1}=100C_{1}^{\star}/\delta$, $C_{2}=100$ and $v_{1}\geq 10^{-6}$. 
  \end{enumerate}
\end{lem}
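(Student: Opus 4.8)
The plan is to mirror, one dimension higher, the three-stage argument of the two-dimensional Lemma~\ref{lem:covering_reduction}, with the ring domains $\mathcal{R}_j$ and the symmetric tetrahedra $T$ playing the role of the self-similar isosceles triangles, and the vertical stacking of symmetric diamonds playing the role of the $2$D stacking in Figure~\ref{fig:self-similar}. As in the two-dimensional case, throughout the construction the ``good'' set $\Omega_g$ of any domain will by definition be a finite union of translated and rescaled copies of $\Omega^{\Diamond}$, and the replacement map $u$ will be taken to be the correspondingly translated and rescaled model deformation from Lemmas~\ref{lem:On_replace}, \ref{lem:On_replace_in}, \ref{lem:K_lin_replace}, \ref{lem:K_lin_replace_b} on each such copy and affine (equal to the datum $Mx+b$) on $\Omega\setminus\Omega_g$. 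Consequently the uniform bound $|\nabla u|\le C_3$, the smallness of $|\Omega_g\setminus\Omega_g^{\star}|$, and the pointwise estimate for $\nabla u-M$ on $\Omega_g^{\star}$ are all inherited from the estimates assumed on $\Omega^{\Diamond}$, so \ref{item:A4} holds automatically once $v_1=|\Omega_g|/|\Omega|$ is bounded below; it therefore only remains, in each of (i)--(iii), to exhibit the partition, verify the stated volume fraction, and verify the perimeter bounds. I would also record at the outset, using the preceding lemma, that every level set of the three-dimensional replacement deformation on $\Omega^{\Diamond}$ lies in $\mathcal{C}^0\subset\mathcal{C}$, so that the construction can be iterated.

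For part~\ref{item:selfsimilar_case_3D} I would argue by self-similarity exactly as for the isosceles triangles in Lemma~\ref{lem:covering_reduction}~\ref{item:selfsimilar_case}. Given a ring $\mathcal{R}_j$, insert a suitably rescaled symmetric diamond $\Omega^{\Diamond}_t$ into the central gap and set $\Omega_g:=\Omega^{\Diamond}_t$; from the explicit vertex coordinates of $\mathcal{R}_j$, $T$ and $\Omega^{\Diamond}$ one checks that the complement $\mathcal{R}_j\setminus\Omega^{\Diamond}_t$ decomposes, up to null sets, into a ring $\mathcal{R}_{j'}$ and finitely many symmetric tetrahedra $T$, all rescaled by a factor $1/2$. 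Since all faces of these pieces are (up to translation) contained in faces of $\mathcal{R}_j$ or of $\Omega^{\Diamond}_t$, their combined perimeter is at most $2\Per(\mathcal{R}_j)$, which gives $C_2=2$; the bounds on $\Omega_g$ hold by hypothesis, and a volume count gives $|\Omega_g|\ge\tfrac13|\mathcal{R}_j|$. The case of a symmetric tetrahedron is analogous, with the insertion of a half-scale symmetric diamond leaving self-similar tetrahedra and rings as remainder and yielding $v_1=\tfrac12$. This places $\mathcal{R}_j$ and $T$ consistently in the class $\mathcal{C}^1$, and verifies \ref{item:A3tilde}(ii) or \ref{item:A3} (the remaining cases of \ref{item:A3tilde} being empty).

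For part~\ref{item:box_case_3D} I would stack $\lfloor\tfrac1\delta\rfloor$ copies of $\Omega^{\Diamond}$ along the short axis of the box $\Omega^\Box$, the three-dimensional analogue of the bottom picture in Figure~\ref{fig:self-similar}. Since a squished square bipyramid fills exactly one third of its bounding box, the union $\Omega_g$ of these diamonds satisfies $|\Omega_g|\ge\tfrac13|\Omega^\Box|$; the gaps between consecutive diamonds are rings $\mathcal{R}_0\in\mathcal{C}^1$, which one collects into $\Omega^{[1]}$ with total perimeter $\le 2\lfloor\tfrac1\delta\rfloor\Per(\Omega^\Box)$, absorbing the constant $C_0^\star$ from the diamonds into $C_0=C_0^\star/\delta$; and the finitely many ramps and pyramids at the top and bottom faces form $\Omega^{[2]}$ with total perimeter $\le 8\Per(\Omega^\Box)=:C_2\Per(\Omega^\Box)$. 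For part~\ref{item:general_case_3D} I would then reduce an arbitrary $\Omega\in\mathcal{C}$ to part~\ref{item:box_case_3D} along a chain of the auxiliary lemmas: write $\Omega$ as a union of at most $30$ model shapes of Definition~\ref{defi:3D_shapes} (rings and tetrahedra being already handled by part~\ref{item:selfsimilar_case_3D}); for each ramp or pyramid $B$, apply Lemma~\ref{lem:box3Dint} to extract a box $R\subset B$ with $|R|\ge\tfrac{3}{16}|B|$ and $B\setminus R$ a union of $\le 7$ model shapes with controlled perimeter; inside $R$ cover at least half the volume by axis-parallel cubes with a remainder of ramps; inside each cube use Lemma~\ref{lem:Euler_angles} to fit a box $W_\delta$ of aspect ratio $1:1:\delta\lfloor\tfrac1\delta\rfloor$ with $|W\setminus W_\delta|\le c|W|$ and a ramp remainder; and finally apply part~\ref{item:box_case_3D} inside $W_\delta$, splitting any pyramids whose apex is not above an edge into the standard shapes via Lemma~\ref{lem:3D_reduction2}. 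Multiplying the volume fractions along this chain yields $v_1\ge 10^{-6}$, and adding up the perimeter losses (each step multiplies by a uniform factor, the only $\delta$-dependence being the one already present in part~\ref{item:box_case_3D}) yields $C_0=100C_0^\star/\delta$, $C_1=100C_1^\star/\delta$, $C_2=100$; since all remainder pieces are ramps, triangle pyramids, rectangle pyramids, or $\mathcal{C}^1$-shapes, the collection $\mathcal{C}$ is closed under the construction.

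I expect the \textbf{main obstacle} to be part~\ref{item:selfsimilar_case_3D}: one must choose the dissection of the non-convex ring $\mathcal{R}_j$ so that the pieces of the complement $\mathcal{R}_j\setminus\Omega^{\Diamond}_t$ are \emph{exactly} rescaled rings and symmetric tetrahedra, not some new family of shapes, because it is precisely this self-similarity that forces the remainder constant $C_2=2$ and hence, through Lemma~\ref{lem:BV_in} and Proposition~\ref{prop:char_reg}, the position-independence of the Sobolev exponent $sp$. Carrying out the three-dimensional bookkeeping of which faces of the pieces lie in which faces of $\mathcal{R}_j$ and of the inserted diamond, and checking that the inner ring is genuinely the correctly rescaled $\mathcal{R}_{j'}$, is the delicate point; by comparison, the constant-tracking in part~\ref{item:general_case_3D} is lengthy but entirely routine once the reduction is set up.
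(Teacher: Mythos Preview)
Your overall architecture is correct and matches the paper: cases~\ref{item:box_case_3D} and~\ref{item:general_case_3D} are essentially right, and you have correctly identified case~\ref{item:selfsimilar_case_3D} as the crux. However, your proposed construction for the ring $\mathcal{R}_j$ does not work, and this is a genuine gap rather than a detail to be filled in.

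The ring $\mathcal{R}_j$ for $j\geq 1$ is \emph{not} a solid with a ``central gap'' into which one can insert a single rescaled diamond: its $xy$-projection is a square \emph{annulus}, so the central region is not part of the domain at all. The paper's construction is quite different from what you describe. One covers the annular ring $\mathcal{R}_j$ by placing $4(2^{j+1}-1)$ small symmetric diamonds of size $2^{-j-1}$ in a band around the annulus, together with $4(2^{j+1}-2)$ symmetric tetrahedra filling the gaps between adjacent diamonds; only the diamonds form $\Omega_g$, and since the diamonds occupy twice the volume of the tetrahedra while together they fill half of $|\mathcal{R}_j|$, one gets $v_1=\tfrac13$. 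The complement of this band is then \emph{two} copies of $\mathcal{R}_{j+1}$ (one inner, one outer), not a single rescaled ring. The self-similarity is thus $\mathcal{R}_j \mapsto 2\,\mathcal{R}_{j+1}$ plus tetrahedra, with the key point being that $\Per(\mathcal{R}_{j+1})\leq \Per(\mathcal{R}_j)$, which is what yields the uniform $C_2$. Your tetrahedron case is closer to correct (one diamond is inserted, $v_1=\tfrac12$), but the remainder consists of four self-similar tetrahedra only; no rings appear there.
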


Using the additional building blocks from above, we define the full class of objects, which are used in three-dimensions:

\begin{defi}
\label{defi:C3D}
Let $\mathcal{C}^0$ be as in Definition \ref{defi:3D_shapes} and let $\mathcal{C}^1$ be as described at the beginning of this subsection.
Then we define the class $\mathcal{C}$ as $\mathcal{C}=\mathcal{C}^0 \cup \mathcal{C}^1$.
\end{defi}

\begin{proof}[Proof of Lemma \ref{lem:covering_reduc_3D}]
  We largely follow the same strategy as in the proof of Lemma
  \ref{lem:covering_reduction}.
  \\
  
  \underline{Case \ref{item:selfsimilar_case_3D}:}
  We begin by discussing the case of a tetrahedron $T$, which after rescaling,
  translation and possibly relabelling of the $x$- and $y$-axes we may assume to have
  corners $(-2,0,-\delta),(2,0,-\delta),(0,-2,\delta)$ and
$(0,2,\delta)$.
  We note that the cross-section at $z=0$ is given by the square with corners
$(\pm 1, \pm 1, 0), (\pm 1, \mp 1,0)$, since $\frac{1}{2} (\pm 2, 0, -\delta) + \frac{1}{2}(0,\pm 2,
\delta)=(\pm 1, \pm 1, 0)$.
We then introduce the points $(0,0,-\delta)$ and $(0,0,\delta)$. Connecting the square
with these points, we obtain a diamond $D$ with this square as base and as a
remainder we obtain four self-similar copies of the tetrahedron
$T_1,T_2,T_3,T_4$ whose lengths are rescaled by a factor $1/2$ (c.f. Figure
\ref{fig:tetra}).
\begin{figure}[h]
  \centering
  \includegraphics[width=0.7\linewidth, page=22]{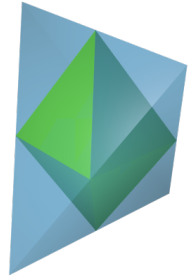}
  \caption{Fitting a symmetric diamond into a symmetric tetrahedron.}
  \label{fig:tetra}
\end{figure}

It follows that $|T_1|+|T_2|+|T_3|+|T_4|=4 \cdot \left( \frac{1}{2}
\right)^3|T|=\frac{1}{2}|T|$ and thus $|D|=\frac{1}{2}|T|$. Furthermore, by
direct computation
\begin{align*}
  \Per(D)+\sum_{i=1}^4\Per(T_i) \leq 2 \Per(T).
\end{align*}
Hence, the lemma is proven for the case of a symmetric tetrahedron.
\\

Next let $j \in \N_0$ and consider the ring shaped domain $\mathcal{R}_j$.
We then may cover half of this domain's volume using $d_j:=4\cdot(2^{j+1}-1)$ symmetric
diamonds, $D_1,\dots,D_{d_j}$, of size $2^{-j-1}$ as well as $t_j:=4\cdot(2^{j+1}-2)$ symmetric tetrahedra, $T_1,\dots,T_{t_j}$
(c.f. Figure \ref{fig:ring_a}).
\begin{figure}[t]
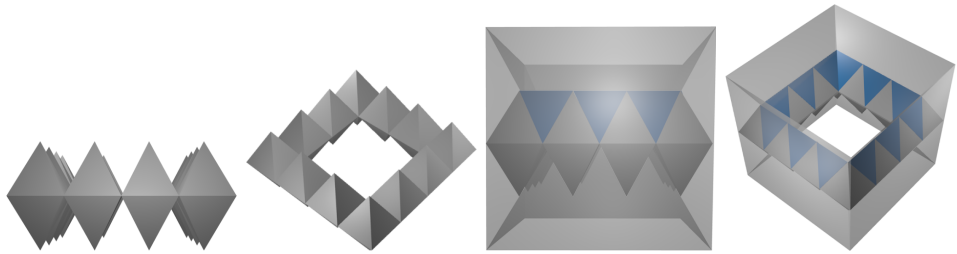

  \centering
  \includegraphics[width=0.24\linewidth, page=23]{figures.pdf}
  \includegraphics[width=0.24\linewidth, page=24]{figures.pdf}
  \includegraphics[width=0.24\linewidth, page=25]{figures.pdf}
  \includegraphics[width=0.24\linewidth, page=26]{figures.pdf}
  \caption{Combining symmetric diamonds and tetrahedra, we can cover half the volume
    of a ring $\mathcal{R}_j$. The complement then is given by two copies of $\mathcal{R}_{j+1}$. More precisely, we begin with a ring $\mathcal{R}_j$, which is shown on the two right hand side figures as the domain shaded in grey. We seek to cover half of its volume by tetrahedra and diamonds. This is achieved by forming an inner square shaped annulus by first stacking diamonds (as shown in the two figures on the left) and then filling the gaps in between by tetrahedra (depicted in blue in the two figures on the right hand side).
    }
  \label{fig:ring_a}
\end{figure}

\begin{figure}[t]
  \includegraphics[width=0.9\linewidth, page=27]{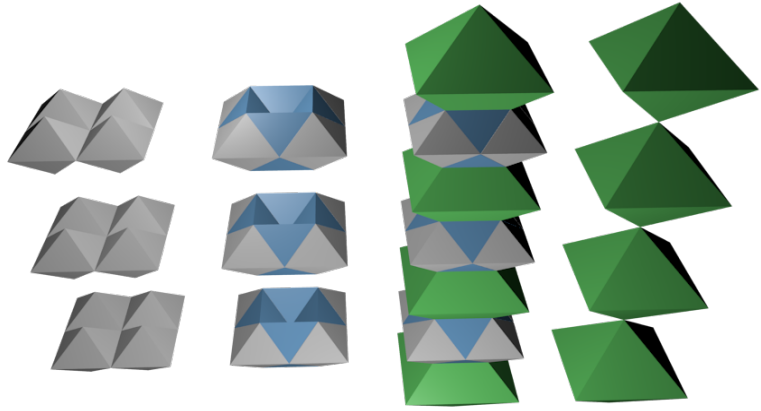}
  \caption{Steps of the box covering, here for $\delta=1/4$.
  We first stack the diamonds, which contain the replacement construction from Lemmas \ref{lem:K_lin_replace_b}, \ref{lem:K_lin_replace}, \ref{lem:On_replace_in}, \ref{lem:On_replace}. This corresponds to the stacking of the green diamonds on the very right. Then we iteratively seek to fill the gaps between the green diamonds in such a way that in each step, the volume of the remaining gaps is reduced by a fixed factor. To this end, we note that the gaps between the green diamonds are always rings $\mathcal{R}_j$, for which we can hence invoke the covering construction from Lemma \ref{lem:covering_reduc_3D} (i). The construction filling the gap in the first iteration step is depicted in the two left most figures: We first stack the grey diamonds which are of half the size of the green diamonds, fill these up by the blue tetrahedra, to create a band, which is then put around the green diamonds in the third figure.}
  \label{fig:box_3D}
\end{figure}

Scaling then yields a total perimeter bound 
\begin{align*}
&\sum\limits_{l=1}^{d_j}\Per(D_l)+ \sum\limits_{l=1}^{t_j}\Per(T_l)
+ \Per(R_j \setminus \left(\bigcup\limits_{l=1}^{t_j}T_l \cup \bigcup\limits_{l=1}^{d_j}D_l  \right)) \leq C \left( 2^{-j-1}
\right)^3 2^{j+1}\\
& =C 2^{-2j-2} \leq C \Per(\mathcal{R}_j).
\end{align*}
Concerning the volume fractions, we note that the diamonds cover twice as much
volume as the symmetric tetrahedra and hence $v_1=\frac{1}{3}$ corresponds to
the volume fraction covered by the diamonds.
Finally, we note that the complement of this cover is given by two copies of
$\mathcal{R}_{j+1}$ with $\Per(\mathcal{R}_{j+1})\leq \Per(\mathcal{R}_j)$, so we we can
iterate self-similarly on this complement, as well as on the symmetric tetrahedra.
\\

\underline{Case \ref{item:box_case_3D}:}
As in the $2D$ setting, we vertically stack $\left\lfloor \frac{1}{\delta} \right\rfloor$ copies of the square diamond (c.f. Figure \ref{fig:box_3D}).
This then covers a volume fraction $v_1=\frac{1}{3}$ of the enveloping
axis-parallel box $\Omega^{\Box}$ and its perimeter is controlled by $1/\delta$
times the perimeter of the box.
As in the 2D setting, the top and bottom can be decomposed into $8$ rectangle
pyramids of small perimeter, while the remainder is given by
$\left\lfloor \frac{1}{\delta} \right\rfloor -1$ copies of $\mathcal{R}_0 \in \mathcal{C}^1$.
\\

\underline{Case \ref{item:general_case_3D}:}
Let $\Omega \in \mathcal{C}$ be one of the building blocks as described in
Definition \ref{defi:3D_shapes}. Then using Lemma \ref{lem:box3Dint} it suffices to
provide a covering for a box $R$ contained in $\Omega$.
Furthermore, as sketched in Figure \ref{fig:fitting_square} in 2D, we can cover
at least a quarter of the volume of a box of lengths $1:a:b$ with $a,b\geq 1$ by
$\lfloor a \rfloor \cdot \lfloor b \rfloor \approx
\Per(R)$ many unit cubes.
Using Lemma \ref{lem:Euler_angles}, we can cover a large volume fraction of each
of these cubes by an axis-parallel cube.
Finally, in each axis-parallel cube, we make use of case \ref{item:box_case_3D}
and thus conclude our proof.
\end{proof}

\subsection{General Lipschitz domains}

In this section briefly comment on the ideas which are used to extend the setting from domains in the class $\mathcal{C}$ to general bounded Lipschitz domains. As the argument proceeds as in \cite{RZZ16}, we omit most details and refer to Section 6 in \cite{RZZ16}.

\begin{lem}
\label{lem:Lip}
Let $\theta_0 \in (0,1)$ and $n\in \N$.
Assume that for any cube $Q \subset \R^n$ and for any limit $u\in W^{1,\infty}(Q, \R^n)$ of a sequence $u_k:Q \rightarrow \R^n$, obtained through Algorithm \ref{alg:convex_int}, it holds $\nabla u \in W^{s,p}(Q,\R^n)$ for all $s\in(0,1)$, $p \in (1,\infty)$ with $sp <\theta_0$. Moreover assume that the bound
\begin{align}
\label{eq:est_cube}
\|\nabla u_{k+1} - \nabla u_{k}\|_{L^1(Q)}^{1-\theta}\|\nabla u_{k+1} - \nabla u_{k}\|_{BV(Q)}^{\theta} \leq C_Q \mu^{k}
\end{align}
holds, where $\mu=\mu(s,p)\in(0,1)$ and $\theta=\theta(s,p)\in(0,1)$ is the interpolation exponent for the $W^{s,p}$ interpolation from \cite{CDDD03} and $C_Q>1$ is independent of $k$.
Then for any Lipschitz domain $\Omega \subset \R^n$ and any $s\in(0,1)$, $p \in (1,\infty)$ with $sp <\theta_0$ there are deformations $u \in W^{1,\infty}(\Omega)$ solving \eqref{eq:incl} with the additional property that $\nabla u \in W^{s,p}_{loc}(\Omega, \R^n)$ and
\begin{align*}
\|\nabla u\|_{W^{s,p}(\Omega)} \leq C(\Omega, \mu, sp, C_Q)<\infty.
\end{align*}
\end{lem}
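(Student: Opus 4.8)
The plan is to follow the Whitney–covering argument of \cite[Section~6]{RZZ16}, which reduces a general bounded Lipschitz domain to the cube case assumed in the hypothesis.

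First I would take a Whitney decomposition $\{Q_i\}_{i\in\N}$ of $\Omega$: countably many pairwise disjoint open cubes with $\bigcup_i\overline{Q_i}=\overline{\Omega}$, $\sum_i|Q_i|=|\Omega|$, and $\diam(Q_i)\leq 4\dist(Q_i,\partial\Omega)$. Each $Q_i$ is a finite union of elements of $\mathcal{C}$, so Algorithm~\ref{alg:convex_int} and the standing hypothesis apply on it: running the algorithm on $Q_i$ with affine datum $M_0 x$ produces $u^{(i)}_k\rightarrow u^{(i)}$ with $u^{(i)}_k=M_0 x$ on $\partial Q_i$, $|\nabla u^{(i)}_k|\leq C_3$, $\nabla u^{(i)}\in K$ a.e.\ in $Q_i$, and $\nabla u^{(i)}\in W^{s,p}(Q_i)$. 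Gluing, i.e.\ setting $u:=u^{(i)}$ on $Q_i$ and $u:=M_0 x$ on $\R^n\setminus\bigcup_i Q_i$, yields — since every piece carries boundary datum $M_0 x$ — a continuous, globally Lipschitz function $u\in W^{1,\infty}_{\mathrm{loc}}(\R^n)$, and because $|\Omega\setminus\bigcup_i Q_i|=0$ it solves \eqref{eq:incl} with $\nabla u=M_0$ in $\R^n\setminus\overline{\Omega}$ and $\nabla u\in K$ a.e.\ in $\Omega$.

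Next I would quantify the regularity on one cube and record its scaling. Applying Proposition~\ref{prop:CDDD} to the differences $\nabla u^{(i)}_{k+1}-\nabla u^{(i)}_k$ (extended by zero; this adds no boundary term to the $BV$ norm because at each finite step $\nabla u^{(i)}_k$ equals $M_0$ on a collar around $\partial Q_i$, the non-replaced level sets adjacent to $\partial Q_i$ always being leftover triangles/ramps) together with the assumed bound \eqref{eq:est_cube} gives $\|\nabla u^{(i)}_{k+1}-\nabla u^{(i)}_k\|_{W^{s,p}(\R^n)}\leq (2C_3)^{1-1/p}\big(C_{Q_i}\mu^{k}\big)^{1/p}$, whence, summing the geometric series in $k$, $\nabla u^{(i)}_k\rightarrow\nabla u^{(i)}$ in $W^{s,p}$ with a quantitative bound. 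Since all building-block constructions of Section~\ref{sec:Ex} are covariant under dilations, the solution on a cube of side $\ell$ is the rescaling of the one on the unit cube, so a change of variables in the Gagliardo integral yields
\begin{align*}
[\nabla u^{(i)}]_{W^{s,p}(Q_i)}^{p}\leq C_{*}\,\ell_i^{\,n-sp},\qquad \|\nabla u^{(i)}\|_{L^\infty(Q_i)}\leq C_3,
\end{align*}
where $\ell_i$ is the side length of $Q_i$ and $C_{*}$ depends only on $C_Q$ (the unit-cube constant in \eqref{eq:est_cube}), $\mu$, $s$ and $p$.

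Finally I would sum over the cubes. One has $\|\nabla u\|_{L^p(\Omega)}^p\leq C_3^p|\Omega|$, and splitting the Gagliardo double integral into the diagonal blocks $Q_i\times Q_i$ and the off-diagonal part, using $|\nabla u(x)-\nabla u(y)|^p\leq 2^{p-1}(|\nabla u(x)-M_0|^p+|\nabla u(y)-M_0|^p)$, the tail bound $\int_{\R^n\setminus Q_i}|x-y|^{-n-sp}\,dy\leq c_{n,s,p}\dist(x,\partial Q_i)^{-sp}$ for $x\in Q_i$, and $\int_{Q_i}\dist(x,\partial Q_i)^{-sp}\,dx\leq c_{n,s,p}\ell_i^{\,n-sp}$ (finite precisely because $sp<1$), one obtains
\begin{align*}
[\nabla u]_{W^{s,p}(\Omega)}^{p}\leq C\big(C_{*}+(C_3+|M_0|)^{p}\big)\sum_i \ell_i^{\,n-sp}.
\end{align*}
By the Whitney property there are $\lesssim 2^{m(n-1)}$ cubes of side $\sim 2^{-m}$, hence $\sum_i\ell_i^{\,n-sp}\lesssim\sum_m 2^{-m(1-sp)}<\infty$ since $sp<\theta_0<1$, with value controlled by $n$ and the Lipschitz character and diameter of $\Omega$. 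Together with the $L^p$ estimate this gives $\|\nabla u\|_{W^{s,p}(\Omega)}\leq C(\Omega,\mu,sp,C_Q)$, and $\nabla u\in W^{s,p}_{\mathrm{loc}}(\Omega)$ follows a fortiori. The only genuinely new content beyond the hypothesis is the bookkeeping in this last step — that the dilation scaling of the per-cube constant ($\ell^{\,n-sp}$) beats the growth $\ell^{-(n-1)}$ in the number of Whitney cubes of a given scale exactly when $sp<1$; the scale-covariance of the constructions, the harmlessness of the zero-extensions and the elementary tail estimates are as in \cite[Section~6]{RZZ16}, and no essential obstacle arises.
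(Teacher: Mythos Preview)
Your proposal is correct and follows essentially the same approach as the paper: both defer to the cube-exhaustion argument of \cite[Section~6]{RZZ16}, and your Whitney decomposition with explicit Gagliardo scaling $\ell_i^{\,n-sp}$ and the counting estimate $\#\{Q_i:\ell_i\sim 2^{-m}\}\lesssim 2^{m(n-1)}$ makes precise exactly the ``telescope sum estimate on $\Omega$'' the paper alludes to. The paper's sketch phrases the covering via local Lipschitz-graph charts rather than a global Whitney decomposition, but these are interchangeable here and the summability mechanism ($sp<1$) is identical.
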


\begin{proof}
The proof follows as in Section 6 in \cite{RZZ16}. Its consists of two main ingredients, for whose details we refer to \cite{RZZ16}. In a first step we note that locally the boundary of $\Omega$ can be written as a Lipschitz graph. In a second step, we exhaust the set below the graph by cubes. Using \eqref{eq:est_cube}, it is thus possible to obtain a telescope sum estimate on $\Omega$, which is similar to \eqref{eq:est_cube}.
\end{proof}

As any cube can be partitioned into two right-angled triangles or two ramps, we always have that cubes are contained in our admissible class $\mathcal{C}$. Hence Lemma \ref{lem:Lip} is applicable, extending the result from polygonal domains $\Omega$ which can be decomposed into finitely many elements of $\mathcal{C}$ to general Lipschitz domains.

\bibliographystyle{alpha}
\bibliography{citations1}

\end{document}